\renewcommand{\mathrm}{\mathscr}
\setlist[enumerate]{leftmargin=*, labelindent=0pt}
\setlist[itemize]{}
\numberwithin{equation}{section}
\newtheorem{theorem}{Theorem}[section]
\newtheorem{theorem*}{Theorem}
\newtheorem{remark}[theorem]{Remark}
\newtheorem{lemma}[theorem]{Lemma}
\newtheorem{corollary}[theorem]{Corollary}
\newtheorem*{question*}{Question}
\newtheorem*{conjecture*}{Conjecture}
\newtheorem{example}[theorem]{Example}
\newtheorem{definition}[theorem]{Definition}
\def\XXint#1#2#3{{\setbox0=\hbox{$#1{#2#3}{\int}$}
     \vcenter{\hbox{$#2#3$}}\kern-.5\wd0}}
\let\originalleft\left
\let\originalright\right
\renewcommand{\left}{\mathopen{}\mathclose\bgroup\originalleft}
\renewcommand{\right}{\aftergroup\egroup\originalright}
\author{Young-Heon Kim}
\address{University of British Columbia, Vancouver, Canada}
\email{yhkim@math.ubc.ca}
\author{Yuan Long Ruan}
\address{Beihang University, Beijing, China}
\email{ruanyl@buaa.edu.cn}
\subjclass[2020]{Primary 49, 60; secondary 52.}
\keywords{Optimal transport, Wasserstein projection, Stochastic order, Contraction, Expansion}
\thanks{YHK is partially supported by the
Natural Sciences and Engineering Research Council of Canada (NSERC) as well as  Exploration Grant  from the New Frontiers in Research Fund (NFRF). YLR is partially supported by the National Natural Science Foundation of China (NSFC).\\
\copyright 2021 by the author.
}
\begin{document}
\title[Wasserstein Projection]{Backward and Forward Wasserstein Projections in Stochastic Order}
\date{\today}

\begin{abstract}
We study metric projections onto cones in the Wasserstein space of probability measures, defined by stochastic orders. Dualities for backward and forward projections are established under general conditions. Dual optimal solutions and their characterizations require study on a case-by-case basis. Particular attention is given to convex order and subharmonic order. While backward and forward cones possess distinct geometric properties, strong connections between backward and forward projections can be obtained in the convex order case. Compared with convex order, the study of subharmonic order is subtler. In all cases, Brenier–Strassen type polar factorization theorems are proved, thus providing a full picture of the decomposition of optimal couplings between probability measures given by deterministic contractions (resp. expansions) and stochastic couplings. Our results extend to the forward convex order case the decomposition obtained by Gozlan and Juillet, which builds a connection with Caffarelli's contraction theorem. A further noteworthy addition to the early results is the decomposition in the subharmonic order case where the optimal mappings are characterized by volume distortion properties. To our knowledge, this is the first time in this occasion such results are available in the literature.
\end{abstract}
\maketitle

\setcounter{tocdepth}{1}
\tableofcontents

\section{Introduction}

Stochastic ordering of distributions is ubiquitous in probability and
statistics. Depending on its context of application, the order of
distributions are determined according to their behavior under a given group
$\mathcal{A}$ of test functions. Two probability measures $\mu,$ $\nu$ are
called increasing in a stochastic order defined by $\mathcal{A}$ if they
satisfy%
\[
\int\varphi d\mu\leqslant\int\varphi d\nu\text{ for all }\varphi\in
\mathcal{A}\text{.}%
\]
Convex order and subharmonic order are two frequently used stochastic orders
which corresponds respectively to $\mathcal{A}$\ being the set of convex
functions and subharmonic functions. The larger the test set $\mathcal{A}$ is,
the stronger the stochastic order becomes. So subharmonic order is more
restrictive than convex order. There are also many other widely used
stochastic orders, e.g. in one dimension, an increasing concave order is
defined by increasing concave functions \cite{johnson2018stochastic}.
Stochastic ordering of high dimensional distributions can also be defined
according to the orderings of their one dimensional projections
\cite{davidov2013linear}. Stochastic ordering of multiple distributions is
defined similarly \cite{el2005inferences}. We refer to
\cite{shaked2007stochastic} for a full account of various stochastic orders
and their applications in operations research and economics etc.

Stochastic order is a functional way to characterize the properties of
couplings between a pair of probability measures. Strassen theorem shows that
convex order relationship is equivalent to the existence of a martingale
coupling \cite{strassen1965existence}. This is generalized to subharmonic
order which is proved to be necessary and sufficient for the existence of a
Brownian martingale \cite{ghoussoub2020optimal}. Generalizations to other
stochastic orders are also considered \cite{bowles2019mather}. Strassen
theorem is the starting point of many recent studies on optimal martingale
transport and its applications in mathematical finance
\cite{beiglbock2013model, galichon2014stochastic}, Skorokhod embedding and
related topics \cite{beiglbock2017optimal, ghoussoub2019pde,
ghoussoub2021solution}.

Despite its wide applications, stochastic ordering is usually hard to
implement in practice. For one thing, sampling probability measures via naive
simulation is costly. For another, discretizing probabilities in a given
stochastic order is tricky, since stochastic order relation is usually
unstable, i.e. discretized measures could easily violate the original
stochastic order. This has been observed on the level of convex order. A large
amount of research has been devoted to the stability issues of optimal
martingale transport. Under some conditions in one dimension, it is stable
\cite{juillet2016stability, guo2019computational, backhoff2019stability}, but
not in high dimensions \cite{bruckerhoff2021instability}. These issues have
become a great hindrance to the numerical pursuits of stochastic orders.

As a general tool of sampling probability measures in stochastic order, we
propose to study Wasserstein projections onto the cones defined by a given
stochastic order. One such projection for convex order was employed by Gozlan
and Juillet \cite{gozlan2018characterization} to obtain a martingale version
of Brenier's polar factorization \cite{brenier1991polar}. Note that Brenier's
motivation for the investigation of polar factorization was to address the
instability issues in the numerical study of perfect incompressible fluids. We
intend to report numerical benefits offered by Wasserstein projections in a
separate article. In the current article, we instead build the necessary
mathematical framework required of downstream applications, and demonstrate
its uses in exploring the properties of optimal mappings between probability measures.

For any probability measures $\mu$, $\nu$ and cost function $c,$ we define the
Wasserstein transport cost as%

\[
\mathcal{T}_{c}\left(  \mu,\nu\right)  =\inf_{\pi\in\Pi\left(  \mu,\nu\right)
}\int_{X\times Y}c\left(  x,y\right)  d\pi\left(  x,y\right)  .
\]
We also write
\[
\mathcal{T}_{k}\left(  \mu,\nu\right)  =\mathcal{T}_{c}\left(  \mu,\nu\right)
\text{ if }c\left(  x,y\right)  =\left\vert x-y\right\vert ^{k},\text{
}k\geqslant1.
\]
and denote%
\[
\mathcal{W}_{k}\left(  \mu,\nu\right)  =\left(  \mathcal{T}_{k}\left(  \mu
,\nu\right)  \right)  ^{1/k}.
\]

Given a pair of probability measures $\mu$, $\nu$ and a stochastic order
determined by a function class $\mathcal{A}$, we study projections onto
backward cone with vertex $\nu$ and forward cone with vertex $\mu$.
Specifically, the backward cone $\mathrm{P}_{\leqslant\nu}^{\mathcal{A}}$ is
the set of probability measures less than $\nu$ w.r.t. the stochastic order
defined by $\mathcal{A}$. The forward cone $\mathrm{P}_{\mu\leqslant
}^{\mathcal{A}}$ is the set of probability measures greater than $\mu$ w.r.t.
the stochastic order defined by $\mathcal{A}$. The projection problems we have
in mind is defined w.r.t. the Wasserstein transport cost $\mathcal{T}_{c}$ for
a given cost function $c(x,y)$:%
\begin{equation}
\text{(backward projection) }\inf_{\bar{\mu}\in\mathrm{P}_{\leqslant\nu
}^{\mathcal{A}}}\mathcal{T}_{c}\left(  \mu,\bar{\mu}\right)  ,
\label{img_backproj}%
\end{equation}%
\begin{equation}
\text{(forward projection) }\inf_{\bar{\nu}\in\mathrm{P}_{\mu\leqslant
}^{\mathcal{A}}}\mathcal{T}_{c}\left(  \bar{\nu},\nu\right)  .
\label{img_forproj}%
\end{equation}
The word \textit{backward} emphasizes the fact that the projection we look for
locates in the direction "\textit{backward in time axis"} relative to the
vertex $\nu$ of the cone$.$ Similarly, the word \textit{forward} emphasizes
the fact that the projection we look for locates in the direction
"\textit{forward in time axis"} relative to the vertex $\mu$ of the cone$.$
These are illustrated in Figure \ref{fig:cones}.\begin{figure}[h]
\includegraphics[width=0.9\linewidth]{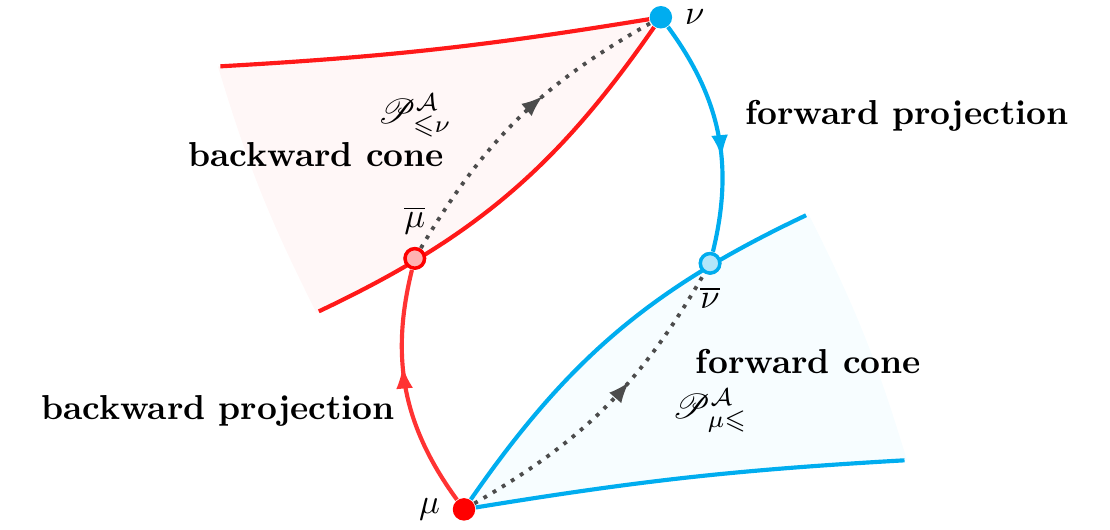}\caption{Illustration of
backward and forward projections onto cones defined by stochastic order
$\mathcal{A}$. Dotted line indicates the direction of increasing stochastic
order.}%
\label{fig:cones}%
\end{figure}

We first present the dual theorems for backward and forward projections.%

\begin{theorem*}%
\label{thm_mainI}Let $X,$ $Y$ be locally compact polish spaces. Given $\mu\in
P\left(  X\right)  ,$ $\nu\in P\left(  Y\right)  ,$ a cost function $c\left(
x,y\right)  $ and a defining function class $\mathcal{A}$ associated with a
stochastic order, the following dualities hold under appropriate conditions.

\begin{enumerate}
\item backward duality (Theorem \ref{thm_dual_bwg}):%
\begin{equation}
\inf_{\bar{\mu}\in\mathrm{P}_{\leqslant\nu}^{\mathcal{A}}}\mathcal{T}%
_{c}\left(  \mu,\bar{\mu}\right)  =\sup_{\left(  u,\varphi\right)  }\left\{
\int_{X}ud\mu-\int_{Y}\varphi d\nu\right\}  , \label{intro_dual_bk}%
\end{equation}
where $u\left(  x\right)  -\varphi\left(  y\right)  \leqslant c\left(
x,y\right)  $ with $u$ continuous and $\varphi\in\mathcal{A}.$

\item forward duality (Theorem \ref{thm_dual_fwg}):%
\begin{equation}
\inf_{\bar{\nu}\in\mathrm{P}_{\mu\leqslant}^{\mathcal{A}}}\mathcal{T}%
_{c}\left(  \bar{\nu},\nu\right)  =\sup_{\left(  \varphi,v\right)  }\left\{
\int_{X}\varphi d\mu-\int_{Y}vd\nu\right\}  , \label{intro_dual_fw}%
\end{equation}
where $\varphi\left(  x\right)  -v\left(  y\right)  \leqslant c\left(
x,y\right)  $ with $\varphi\in\mathcal{A}$ and $v$ continuous$.$
\end{enumerate}

%

\end{theorem*}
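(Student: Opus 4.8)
The plan is to recast each projection as a single linear program over couplings, dualize the stochastic-order constraint through the cone $\mathcal{A}$, and justify a minimax exchange. I focus on the backward duality \eqref{intro_dual_bk}; the forward duality \eqref{intro_dual_fw} will follow by the symmetric argument. First I would merge the two infima defining the left-hand side of \eqref{intro_dual_bk}: since every admissible $\bar{\mu}\in\mathrm{P}_{\leqslant\nu}^{\mathcal{A}}$ arises as the second marginal of the coupling realizing $\mathcal{T}_{c}(\mu,\bar{\mu})$, the backward projection equals
\[
\inf\left\{\int_{X\times Y}c\,d\pi:\ \pi\geqslant 0,\ \pi_{X}=\mu,\ \int_{Y}\varphi\,d\pi_{Y}\leqslant\int_{Y}\varphi\,d\nu\ \text{ for all }\varphi\in\mathcal{A}\right\},
\]
where $\pi_{X},\pi_{Y}$ denote the marginals of $\pi$ on $X$ and $Y$ (the first being an equality, the second a family of inequalities). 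Using the convex-cone structure of $\mathcal{A}$, the stochastic-order constraint is precisely the indicator $\sup_{\varphi\in\mathcal{A}}\{\int_{Y}\varphi\,d\pi_{Y}-\int_{Y}\varphi\,d\nu\}$, which vanishes on the feasible set and equals $+\infty$ otherwise. This rewrites the backward projection as $\inf_{\pi}\sup_{\varphi\in\mathcal{A}}L(\pi,\varphi)$ with $L$ affine in each argument.

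The core of the argument is to exchange this infimum and supremum, which is where the "appropriate conditions" enter. Fixing the first marginal $\mu$, I would invoke a minimax theorem, using lower semicontinuity of $c$ together with a tightness (Prokhorov) argument to secure compactness of the relevant set of couplings, and the bilinearity of $L$ for the convex--concave structure. Growth and integrability hypotheses on $c$ relative to $\mu,\nu$ are needed both to keep the value finite and to guarantee that the dual potential produced below is genuinely continuous. After the swap one obtains
\[
\inf_{\bar{\mu}\in\mathrm{P}_{\leqslant\nu}^{\mathcal{A}}}\mathcal{T}_{c}(\mu,\bar{\mu})=\sup_{\varphi\in\mathcal{A}}\left\{-\int_{Y}\varphi\,d\nu+\inf_{\pi\geqslant 0,\ \pi_{X}=\mu}\int_{X\times Y}\left[c(x,y)+\varphi(y)\right]d\pi\right\}.
\]

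Finally I would evaluate the inner transport problem, in which only the first marginal is constrained. Sending each $x$ to a cost-minimizing target shows the inner infimum equals $\int_{X}u\,d\mu$ with $u(x)=\inf_{y}[c(x,y)+\varphi(y)]$, the largest function satisfying $u(x)-\varphi(y)\leqslant c(x,y)$; a measurable-selection / $c$-transform argument makes this rigorous and, under the stated conditions, yields $u$ continuous. Substituting gives exactly $\sup_{(u,\varphi)}\{\int_{X}u\,d\mu-\int_{Y}\varphi\,d\nu\}$ over $u(x)-\varphi(y)\leqslant c(x,y)$ with $\varphi\in\mathcal{A}$, which is \eqref{intro_dual_bk}. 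The forward identity \eqref{intro_dual_fw} is obtained verbatim after interchanging the roles of the two marginals and replacing $\bar{\mu}\leqslant\nu$ by $\mu\leqslant\bar{\nu}$, so that the \emph{first} marginal is the free one and the $c$-transform now produces the continuous potential $v(y)=\sup_{x}[\varphi(x)-c(x,y)]$.

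I expect the minimax exchange to be the main obstacle. On locally compact Polish spaces the feasible couplings need not be tight without moment control, so the crux is to isolate hypotheses on $c,\mu,\nu$ under which compactness and the absence of a duality gap can \emph{both} be guaranteed — equivalently, to verify the qualification condition in a Fenchel--Rockafellar formulation of the same program. The remaining steps (the cone encoding and the reduction of the free-marginal transport to a $c$-transform) are essentially routine once this is in place.
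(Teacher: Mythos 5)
Your skeleton --- encode the order constraint as $\sup_{\varphi\in\mathcal{A}}$ of a linear penalty, exchange $\inf_{\pi}$ and $\sup_{\varphi}$, then collapse the free-marginal transport to the $c$-transform $Q_{c}\left(  \varphi\right)  $ --- is, in the compact case, essentially a minimax rephrasing of the paper's Fenchel--Rockafellar argument (Theorem \ref{thm_dual_bw}), and there it can be completed: the set $\left\{  \pi\geqslant0:\pi_{X}=\mu\right\}  $ is weak-$\ast$ compact in $\left(  C\left(  X\times Y\right)  \right)  ^{\ast}$, the map $\pi\mapsto\int c\,d\pi$ is weak-$\ast$ lower semicontinuous for lsc $c$, and Sion's minimax applies. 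The genuine gap is in the general locally compact case, which is what Theorems \ref{thm_dual_bwg} and \ref{thm_dual_fwg} actually assert, and it sits exactly at the step you defer. The mechanism you propose (Prokhorov tightness) fails there: once the order constraint has been dualized, the inner infimum runs over all $\pi\geqslant0$ with $\pi_{X}=\mu$ and a completely free second marginal, and this set is not tight when $Y$ is unbounded; restricting instead to order-constrained couplings would re-impose the constraint you just dualized, and the cone $\mathcal{A}$ on the other side is not compact either, so no off-the-shelf minimax theorem applies. Nor does a naive Fenchel--Rockafellar qualification rescue this: on non-compact spaces $C_{b}$ contains no nontrivial elements of $\mathcal{A}$ (e.g.\ no non-constant convex functions), while in $C_{0}$ the indicator functionals of the constraints $p\geqslant-c$, $q\geqslant0$ are continuous at no point, since their domains have empty interior. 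The paper's real work consists of the devices that repair this: the weighted spaces $C_{b,k}$ with the growth hypothesis (a2) on $c$; the $\epsilon_{1},\epsilon_{2}$-perturbation of the constraints to manufacture a point of continuity for $\Theta_{\epsilon}$; and, crucially, Lemmas \ref{lm_decmp_Cbk} and \ref{lm_margin_Cbk}, which show that the dual element of $\left(  C_{b,k}\left(  X\times Y\right)  \right)  ^{\ast}$ produced by the duality splits as a measure plus a functional supported at infinity, and that the marginal identities force the part at infinity to vanish so that the primal optimizer is a genuine probability coupling. Your proposal has no counterpart to any of these, and they are not routine.

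A secondary gap: you assert that under the stated conditions $u=Q_{c}\left(  \varphi\right)  $ is continuous. For merely lower semicontinuous $c$ this is unjustified (an infimum over $y$ of lsc functions need not be semicontinuous in $x$), and the paper does not argue this way: continuity of the potentials in form (i) comes for free because the Fenchel--Rockafellar dual variables live in $C_{b}$, resp.\ $C_{b,k}$, and the $Q_{c}$-form (ii) is then obtained by a two-sided sandwich --- $\sup$ over continuous triples $\leqslant$ the $Q_{c}$-form $\leqslant\mathcal{T}_{c}$ by weak duality --- which also avoids the measurable-selection argument you invoke. So your derivation of (i) from (ii) via continuity of the inf-convolution runs in the wrong direction relative to what can actually be proved.
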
%

The duality theorems for Wasserstein projections given by Theorem
\ref{thm_mainI} include as a special case the classical Kantorovich duality.
This happens when the stochastic order becomes \textit{degenerate}, see Remark
\ref{rmk_trivial}. Another special case is the duality for backward convex
order projection, this is previously proved by first establishing the
equivalence between backward convex order projection and the weak optimal
transport introduced in \cite{gozlan2017kantorovich}, and then using the
duality theorem for the weak optimal transport. In one dimension, the
equivalence is proved in \cite{gozlan2018characterization}, then generalized
to higher dimensions \cite{samson2017concentration} under the condition that
$\mu$ has a density w.r.t. the Lebesgue measure. The general case is proved in
\cite{gozlan2020mixture} and \cite{alfonsi2020sampling}. The duality for weak
optimal transport is proved in \cite{gozlan2017kantorovich} and
\cite{backhoff2019existence} via different approaches. In the compact case it
is also proved in \cite{alibert2019new} without using measurable selection theorems.

It is worth noting that in proving the backward duality $\left(
\ref{intro_dual_bk}\right)  $ in Theorem \ref{thm_mainI}, we do not rely on
weak optimal transport or other indirect reformulation via the method
described in the last paragraph. The approach taken here is direct and general
enough to handle all linear stochastic orders (Definition \ref{def_lin_order})
at the same time. The way we handle stochastic orders is more in line with the
perspective of Strassen theorem and has not been studied in the current context.

In appearance, the backward duality $\left(  \ref{intro_dual_bk}\right)
$\ and the forward duality $\left(  \ref{intro_dual_fw}\right)  $ look so
famililar that people might unconsciously fall under the illussion that
backward and forward projections are conjugate or even equivalent to each
other. But actually their relationship depends critically on the defining
class $\mathcal{A}$. Except in some special situations, there is no immediate
connection between backward and forward projection for general stochastic
orders, this will be explained in detail in section \ref{sec_bf}. In fact, in
the most promising case, i.e. the convex order case, the difference is already
promient in numerical computation \cite{alfonsi2020sampling} and it is
observed that, compared with the forward convex order projection, the backward
convex order projection is easier to manage due to its natural connection with
the weak optimal transport. Such a connection is not available for forward
convex order projection.

The defining class $\mathcal{A}$ determines the admissible functions of the
dual, thus gives properties of the projections specific to $\mathcal{A}$. We
now focus on the convex order and the subharmonic order where we obtain
interesting geometric properties of the optimal mappings such as contraction
and expansion. We summarize the main results about convex order projections as
below, notations are defined later.%

\begin{theorem*}%
\label{thm_main2}Let $\mu,$ $\nu\in P_{2}\left(  \mathbb{R}^{d}\right)  ,$
$c\left(  x,y\right)  =\left\vert x-y\right\vert ^{2}$. Denote by
$\mathcal{T}_{2}\left(  \mu,\mathrm{P}_{\leqslant\nu}^{\text{cx}}\right)  $,
$\mathcal{D}_{2}\left(  \mu,\mathrm{P}_{\leqslant\nu}^{\text{cx}}\right)  $
the optimal primal, dual values of the backward convex order projection, and
respectively $\mathcal{T}_{2}\left(  \mathrm{P}_{\mu\leqslant}^{\text{cx}}%
,\nu\right)  $, $\mathcal{D}_{2}\left(  \mathrm{P}_{\mu\leqslant}^{\text{cx}%
},\nu\right)  $ of the forward convex order projection. Then under appropriate
conditions we have the following.

\begin{enumerate}
\item The duality for backward convex order projection,%
\[
\mathcal{T}_{2}\left(  \mu,\mathrm{P}_{\leqslant\nu}^{\text{cx}}\right)
=\mathcal{D}_{2}\left(  \mu,\mathrm{P}_{\leqslant\nu}^{\text{cx}}\right)
\text{,}%
\]
and the duality for forward convex order projection,%
\[
\mathcal{T}_{2}\left(  \mathrm{P}_{\mu\leqslant}^{\text{cx}},\nu\right)
=\mathcal{D}_{2}\left(  \mathrm{P}_{\mu\leqslant}^{\text{cx}},\nu\right)
\text{.}%
\]

\item The optimal dual value for backward convex order projection is attained
(Theorem \ref{thm_dual_att_cx_bw}), and the optimal mapping from $\mu$ to the
unique projection is characterized by convex contraction (Definition
\ref{def_cx_contr}\ and Theorem \ref{thm_cx_bw_prop}).

\item The optimal dual value for forward convex order projection is attained
(Theorem \ref{thm_dual_att_cx}), and the optimal mapping from $\nu$ to the
unique projection is characterized by convex expansion (Definition
\ref{def_cx_exp}\ and Theorem \ref{thm_cx_prop}).

\item The optimal mappings for backward and forward convex order projections
are inverse to each other (Theorem \ref{thm_cx_bfequ} and Corollary
\ref{cor_cx_bfequ}).
\end{enumerate}

%

\end{theorem*}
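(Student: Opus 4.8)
The four assertions will be proved by combining the general dualities of Theorem~\ref{thm_mainI} with the special structure of the quadratic cost, which turns the dual admissibility constraint $u(x)-\varphi(y)\leqslant|x-y|^2$ into a Moreau--Yosida / inf-convolution relation between the two potentials. Part (1) is the specialization of Theorems~\ref{thm_dual_bwg} and~\ref{thm_dual_fwg} to $\mathcal{A}=\{\text{convex functions}\}$ and $c(x,y)=|x-y|^2$. First I would verify that convex order is a linear stochastic order in the sense of Definition~\ref{def_lin_order} and that the quadratic cost together with the finite second moments of $\mu,\nu\in P_2(\mathbb{R}^d)$ meets the hypotheses of the general duality theorems; the equalities $\mathcal{T}_2=\mathcal{D}_2$ for both the backward and forward convex order projections then follow with no further work.

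For part (2), I would establish dual attainment by the direct method. Fixing the convex potential $\varphi$, the optimal continuous potential is forced to be the $c$-transform $u(x)=\inf_y\{|x-y|^2+\varphi(y)\}$, i.e.\ (up to the $|x|^2,|y|^2$ terms) the Moreau envelope of $\varphi$; this reduces the dual to a maximization over convex $\varphi$ alone. Taking a maximizing sequence, I would normalize the potentials by fixing a value at one point, use the quadratic-cost–induced local semiconvexity and local Lipschitz bounds to obtain equicontinuity, extract a locally uniformly convergent subsequence by Arzel\`a--Ascoli, and pass to the limit using upper semicontinuity of the dual functional to produce a maximizer $(\bar u,\bar\varphi)$. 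Complementary slackness on the support of the optimal coupling then reads $\bar u(x)-\bar\varphi(y)=|x-y|^2$, and differentiating in $y$ forces the optimal map to be the proximal map $T(x)=\arg\min_y\{|x-y|^2+\bar\varphi(y)\}=(\mathrm{Id}+\tfrac12\partial\bar\varphi)^{-1}(x)$. Since the resolvent of $\tfrac12\partial\bar\varphi$ is firmly nonexpansive and equals the gradient of a convex function, $T$ is a convex contraction in the sense of Definition~\ref{def_cx_contr}, and the projection is $\bar\mu=T_\#\mu$; uniqueness follows from the convexity of the cone $\mathrm{P}_{\leqslant\nu}^{\text{cx}}$ together with the strict convexity of $\bar\mu\mapsto\mathcal{T}_2(\mu,\bar\mu)$ along generalized geodesics issuing from $\mu$. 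This yields Theorems~\ref{thm_dual_att_cx_bw} and~\ref{thm_cx_bw_prop}.

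Part (3) is the forward counterpart, carried out by the same variational scheme, but now the convex potential $\varphi$ is paired with $\mu$ and the continuous potential $v$ with $\nu$, so the roles of the two marginals in the $c$-transform are exchanged. The attainment argument is analogous, and the first-order conditions identify the optimal map from $\nu$ onto the forward projection as a gradient of a convex function that expands distances, i.e.\ a convex expansion (Definition~\ref{def_cx_exp}); the asymmetry with the backward case, discussed in Section~\ref{sec_bf}, appears because the convex-order constraint is now imposed on the source marginal rather than the target. This gives Theorems~\ref{thm_dual_att_cx} and~\ref{thm_cx_prop}.

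Finally, for part (4) I would show that the backward map $T$ of part (2) and the forward map $S$ of part (3) are gradients of mutually conjugate convex functions. Writing $T=\nabla\phi$ and identifying the forward potential with $\phi^*$, the classical fact that $\nabla\phi^*=(\nabla\phi)^{-1}$ on the relevant supports yields $S=T^{-1}$, which is precisely Theorem~\ref{thm_cx_bfequ} and Corollary~\ref{cor_cx_bfequ}. I expect the main obstacle to be twofold: securing the compactness needed for dual attainment, since the potentials live on noncompact $\mathbb{R}^d$ and are controlled only through the quadratic cost, so the normalization and equicontinuity estimates must be handled carefully; and matching the backward and forward dual potentials as an exact conjugate pair, because—as the paper stresses—the two projection problems are genuinely different and this conjugacy is a feature special to the convex order.
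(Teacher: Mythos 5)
Your overall architecture is right and in places coincides with the paper's: part (1) is indeed just the specialization of Theorems \ref{thm_dual_bwg} and \ref{thm_dual_fwg} (the paper's Theorem \ref{thm_dual_cx}, which additionally checks that the $k=1$ and $k=2$ cones and optimal values agree), and your part (4) — matching the backward and forward dual potentials as an exact conjugate pair so that $\nabla\phi^{\ast}=(\nabla\phi)^{-1}$ gives $S=T^{-1}$ — is precisely the route of Theorem \ref{thm_cx_bfequ} and Corollary \ref{cor_cx_bfequ}, resting as you say on $Q_{2}$, $Q_{\bar{2}}$ preserving convexity. The genuine gap is in part (2). Your direct Arzel\`a--Ascoli attainment for the \emph{backward} dual fails on noncompact $\mathbb{R}^{d}$: the backward optimizer is in general only an extended-real-valued convex function in $\mathcal{A}_{\text{cx}}\cap L^{1}\left(d\nu\right)$, possibly $+\infty$ outside $clconv\left(\text{supp}\left(\nu\right)\right)$ (see the statement of Theorem \ref{thm_dual_att_cx_bw} and Lemma \ref{lm_cxsupp_inc}), so a normalized maximizing sequence admits no uniform local Lipschitz/equicontinuity bounds on all of $\mathbb{R}^{d}$, and a finite locally uniform limit of the kind you extract need not exist; Lipschitz optimizers are guaranteed only when both measures have compact support. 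The compactness mechanism you describe is exactly the one that works in the \emph{forward} case (paper's Theorem \ref{thm_dual_att_cx}): after double convexification $\bar{\varphi}=Q_{2}\left(Q_{\bar{2}}\left(\varphi\right)\right)$ with $Q_{\bar{2}}\left(\varphi\right)\left(0\right)=0$ one gets the automatic ceiling $\bar{\varphi}\leqslant\left\vert x\right\vert^{2}$ and a Jensen lower bound at the barycenter of $\mu$; no backward analogue of this two-sided control exists, and the paper accordingly does not prove backward attainment directly but imports it from Gozlan--Juillet via the weak optimal transport equivalence. Your claim that part (3) is "the same variational scheme" with roles exchanged inverts the actual asymmetry.

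The second gap is the uniqueness mechanism. Strict convexity along generalized geodesics can at best serve the backward cone ($\mathrm{P}_{2,\leqslant\nu}^{\text{cx}}$ is convex along generalized geodesics); the forward cone $\mathrm{P}_{2,\mu\leqslant}^{\text{cx}}$ is \emph{not} geodesically convex (Example \ref{eg_cx_cone}), and convexity of test functions yields the wrong inequality direction for showing $\mu\leqslant_{\text{cx}}\left(\left(1-t\right)T_{0}+tT_{1}\right)_{\#}\nu$ along interpolations of maps, so the interpolant need not stay in the cone. The paper's Theorem \ref{thm_uniqueness} instead uses plain \emph{linear} interpolation of measures $\bar{\nu}_{s}=\left(1-s\right)\bar{\nu}_{0}+s\bar{\nu}_{1}$ (under which both cones are convex), absolute continuity of the projected marginal to represent optimal plans by maps, and strict convexity of the cost; in particular your part (3) needs the hypothesis $\nu\in P_{2}^{ac}\left(\mathbb{R}^{d}\right)$, which your sketch never introduces, and part (4) needs it as well for Corollary \ref{cor_cx_bfequ}. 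Aside from these two points, your complementary-slackness identification of the backward map as the proximal map $\left(\mathrm{Id}+\tfrac{1}{2}\partial\bar{\varphi}\right)^{-1}$, i.e.\ $\nabla\varphi_{0}^{\ast}$ with $\varphi_{0}$ uniformly convex, and its $1$-Lipschitz (convex contraction) property, is consistent with Theorem \ref{thm_cx_bw_prop}, and the forward characterization matches Theorem \ref{thm_cx_prop}.
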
%

Item (1) is an instance of Theorem \ref{thm_mainI} enriched with desirable
traits for convex order projections (see Theorem \ref{thm_dual_cx}). Dual
attainment in item (2) and (3) are key results. Item (2) is originally given
by Gozlan and Juillet \cite{gozlan2020mixture}. This gives the
\textbf{backward decomposition}: given two probability measures, there is a
transport plan between them given by the gradient map of a convex contraction,
followed by a martingale coupling. This establishes a link with the celebrated
Caffarelli's contraction theorem \cite{caffarelli2000monotonicity} (see also
\cite{kim2012generalization} and \cite{fathi2019proof}) : if $\nu$ is a
log-concave perturbation of the Gaussian measure $\mu$, then the optimal
transport map from $\mu$ to $\nu$ is given by the gradient of a convex
function which is a contraction. In the language of item (2), the optimal map
is a contraction when the projection $\bar{\mu}$ to the backward cone is equal
to $\nu$. Item (3) is novel and it reinforces the link with Caffarelli's
contraction theorem by showing a \textbf{forward decomposition}: given two
probability measures, there is a transport plan between them given by a
martingale coupling followed by an expansion map which is the gradient of a
convex function. Item (3) is a natural companion to item (2), however, it is
not a straightforward result, since the backward and forward convex order cone
have distinct geometric properties, for example, one is geodesically convex in
the Wasserstein space $P_{2}\left(  \mathbb{R}^{d}\right)  $ while the other
is not; see section \ref{sec_bf}. The properties of backward and forward
mapping and their relation in item (4) are remarkable, however it seems to be
unique to the convex order case. In one dimension, these properties are
obtained by \cite{alfonsi2020sampling, backhoff2020weak} via methods specific
to one dimension.

Theorem \ref{thm_main2} immediately prompts one to ask whether similar results
hold for other stochastic orders, whether the projections can be characterized
in some special way, in particular giving a link to Caffarelli's contraction
type results. We are able to obtain a result similar to Theorem
\ref{thm_main2} for subharmonic order. Notice that subharmonic order is
stronger than the convex order, therefore the corresponding cones become much
smaller, for example, in $\mathbb{R}^{d}$ $(d\geqslant2)$, there is no
subharmonic order between discrete measures. This makes us expect weaker
properties for the projection mappings than in the convex order case. These
weaker properties are also natural in view of Theorem \ref{thm_mainI} since
the class $\mathcal{A}$ consists of subharmonic functions that are less
special than convex functions. Indeed, in the following theorem the
characterizations of the projection mappings are given by what we call
Laplacian contraction and expansion, which resembles a linearized version of
the convex contraction and expansion.%

\begin{theorem*}%
\label{thm_main3}Given $\mu,$ $\nu$ supported in a bounded smooth domain$,$
$c\left(  x,y\right)  =\left\vert x-y\right\vert ^{2}$. Denote by
$\mathcal{T}_{2}\left(  \mu,\mathrm{P}_{\leqslant\nu}^{\text{sh}}\right)  $,
$\mathcal{D}_{2}\left(  \mu,\mathrm{P}_{\leqslant\nu}^{\text{sh}}\right)  $
the optimal primal, dual values of the backward subharmonic order projection,
and respectively $\mathcal{T}_{2}\left(  \mathrm{P}_{\mu\leqslant}^{\text{sh}%
},\nu\right)  $, $\mathcal{D}_{2}\left(  \mathrm{P}_{\mu\leqslant}^{\text{sh}%
},\nu\right)  $ of the forward subharmonic order projection. Then under
appropriate conditions we have the following.

\begin{enumerate}
\item The duality for backward sbharmonic order projection$,$%
\[
\mathcal{T}_{2}\left(  \mu,\mathrm{P}_{\leqslant\nu}^{\text{sh}}\right)
=\mathcal{D}_{2}\left(  \mu,\mathrm{P}_{\leqslant\nu}^{\text{sh}}\right)
\text{,}%
\]
and the forward sbharmonic order projection,%
\[
\mathcal{T}_{2}\left(  \mathrm{P}_{\mu\leqslant}^{\text{sh}},\nu\right)
=\mathcal{D}_{2}\left(  \mathrm{P}_{\mu\leqslant}^{\text{sh}},\nu\right)  .
\]

\item The optimal dual value for backward subharmonic order projection is
attained (Theorem \ref{thm_dual_sh_bw}), and the optimal mapping from $\mu$ to
the unique projection is characterized by Laplacian contraction (Definition
\ref{def_Lap_contraction} and Theorem \ref{thm_sh_prop}).

\item The optimal dual value for forward subharmonic order projection is
attained (Theorem \ref{thm_dual_sh_fw}), and the optimal mapping from $\nu$ to
the unique projection is characterized by Laplacian expansion (Definition
\ref{def_Lap_expansion} and Theorem \ref{thm_sh_prop_fw}). Moreover, the
forward projection mapping, say $\nabla\bar{\psi}_{0}^{\ast}$, is a volume
increasing map. In particular, if $\nu$\ is absolutely continuous then the
forward projection $\bar{\nu}=\left(  \nabla\bar{\psi}_{0}^{\ast}\right)
_{\#}\nu$ is also absolutely continuous, and their densities (with the same
notation) satisfy%
\[
\bar{\nu}\left(  \nabla\bar{\psi}_{0}^{\ast}\left(  x\right)  \right)
\leqslant\nu\left(  x\right)  ,\text{ }a.e.\text{ }x.
\]

\end{enumerate}

%

\end{theorem*}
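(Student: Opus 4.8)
The plan is to derive the three items by specializing the general dual framework to the subharmonic class $\mathcal{A}=\{\varphi:\ \varphi\text{ is subharmonic}\}$ and then reading off the geometry that the quadratic cost forces on the dual potentials. Item~(1) should require no new idea beyond Theorem~\ref{thm_mainI}: the real work is verifying that the subharmonic class meets the ``appropriate conditions'' of that theorem. On a bounded smooth domain this is tractable, since the subharmonic functions form a convex cone that is stable under mollification (the mollifications are again subharmonic and decrease to the original by the sub-mean-value property) and is closed under locally uniform limits after upper semicontinuous regularization; combined with the tightness provided by the bounded domain, this yields the compactness and approximation that Theorem~\ref{thm_mainI} needs. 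I would record the two resulting dualities as Theorems~\ref{thm_dual_sh_bw} and~\ref{thm_dual_sh_fw}.

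For the attainment in items~(2) and~(3) I would exploit the quadratic cost. Given a subharmonic $\varphi$ the optimal continuous partner is its $c$-transform (an inf-convolution with $|\cdot|^2$), and expanding $|x-y|^2$ exhibits the relevant potential as a Legendre transform; hence the projection is realized by the gradient of a convex potential in the sense of Brenier, which on the forward side I write as $\nabla\bar\psi_0^{\ast}$. Attainment is then obtained by extracting a limit from a maximizing sequence of admissible dual pairs, normalized to remove additive constants. The point I expect to be the \emph{main obstacle} is precisely this extraction: unlike the rigid convex-order case, the subharmonic cone is much larger and provides far weaker a priori control on the dual variables, so both the compactness of the maximizing sequence and the persistence of subharmonicity in the limit must be argued by hand (this is the subtlety flagged in the introduction). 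Once the optimal pair exists, complementary slackness identifies the support of the projection as the contact set where the constraint $\varphi(x)-v(y)=c(x,y)$ binds, and the strict convexity of the quadratic cost pins down the unique projection.

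It remains to extract the Laplacian characterizations and the volume inequality. The extra structural input is that the optimal dual potential is subharmonic: through the Legendre identification above, this is exactly a \emph{trace} bound on the convex potential $\bar\psi_0$ whose conjugate gives the map, reading (after the standard normalization of the quadratic cost) as $\Delta\bar\psi_0\leqslant d$. This is the linearized analogue of the convex-expansion condition $D^2\bar\psi_0^{\ast}\geqslant I$ of Theorem~\ref{thm_main2}, and I would package it as Definition~\ref{def_Lap_expansion} and Theorem~\ref{thm_sh_prop_fw}, with the backward counterpart giving the Laplacian contraction of Definition~\ref{def_Lap_contraction} and Theorem~\ref{thm_sh_prop}. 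Because $\bar\psi_0$ is convex, its Hessian is positive semidefinite, so the arithmetic--geometric mean inequality applied to the eigenvalues gives
\[
\det D^2\bar\psi_0\leqslant\left(\tfrac1d\,\Delta\bar\psi_0\right)^{d}\leqslant1 .
\]
Since Legendre conjugation inverts the Hessian, $\det D^2\bar\psi_0^{\ast}=1/\det D^2\bar\psi_0\geqslant1$ at corresponding points, so $\nabla\bar\psi_0^{\ast}$ is volume increasing; the Monge--Amp\`ere change of variables $\nu(x)=\bar\nu(\nabla\bar\psi_0^{\ast}(x))\det D^2\bar\psi_0^{\ast}(x)$ then delivers the asserted density inequality $\bar\nu(\nabla\bar\psi_0^{\ast}(x))\leqslant\nu(x)$. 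The only remaining care is regularity-theoretic: Alexandrov second differentiability of the convex potential justifies using the Hessian bounds pointwise and running the change of variables, and the bounded smooth domain keeps the boundary contributions under control.
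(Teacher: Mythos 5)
Your skeleton (specialize the general duality of Theorem \ref{thm_mainI} to $\mathcal{A}_{\text{sh}}$, extract a limit from a maximizing dual sequence, then read off the map by complementary slackness and the Legendre structure of the quadratic cost) is the paper's skeleton, and item (1) indeed needs nothing beyond Theorem \ref{thm_dual_bwg} and Theorem \ref{thm_dual_fwg}. But the attainment step, which you correctly flag as the main obstacle, is exactly where your plan lacks the ideas that make it work, and the two sides fail in different ways. On the forward side, a maximizing sequence of subharmonic $\psi_n$ carries no modulus of continuity at all, and the standard replacement by the double transform $Q_{2}(Q_{\bar{2}}(\psi_n))$ destroys subharmonicity, so ``extracting a limit, normalized'' has no compactness to use. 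The paper's mechanism is the composite transform $Q_{2e}$ (subharmonic envelope of $Q_{2}$) together with the identity $Q_{\bar{2}}(\psi)=Q_{\bar{2}}(Q_{2e}(Q_{\bar{2}}(\psi)))$ of Lemma \ref{lm_Qcecbar}: the substitutes $\bar{\psi}_n=Q_{2e}(Q_{\bar{2}}(\psi_n))$ are subharmonic \emph{and} uniformly Lipschitz by obstacle-problem regularity, so Arzel\`a--Ascoli applies (Theorem \ref{thm_dual_sh_fw}). On the backward side no such trick is available (one would need $Q_{2}=Q_{2}(Q_{\bar{2}e}(Q_{2}(\cdot)))$, which fails); the paper instead assumes $\nu$ absolutely continuous with density bounded away from zero to convert the bound on $\int\psi_n\,d\nu$ into an $L^{1}(dy)$ bound and invoke the $L^{1}$-compactness of subharmonic functions (Lemma \ref{lm_convg_subH}). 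Even then the limit $\psi_0$ lies only in $\mathcal{A}_{\text{sh}}\cap L^{1}(d\nu)$ and need not be admissible: attainment is proved only under the extra submartingale hypothesis $\left(\ref{thm_dual_sh_attL00}\right)$, and the paper records a counterexample (a harmonic Poisson kernel tested against Brownian motion stopped on a parabola) showing this inequality can genuinely fail. Your plan tacitly asserts unconditional backward attainment, which the paper does not have.

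There is also a concrete error in your volume argument: you apply the arithmetic--geometric mean inequality to $D^{2}\bar{\psi}_0$ on the premise ``because $\bar{\psi}_0$ is convex.'' It is not: $\bar{\psi}_0(x)=\frac{1}{2}|x|^{2}-\frac{1}{2}\psi(x)$ with $\psi$ subharmonic only gives the trace bound $\Delta\bar{\psi}_0\leqslant d$, not $D^{2}\bar{\psi}_0\geqslant0$ (only $\bar{\psi}_0^{\ast}$ is convex); this is precisely why the paper defines Laplacian expansion through the Legendre transform and warns in Remark \ref{rmk_Lap} that the Laplacian, unlike the full Hessian, does not dualize. The repair is step 2 of Lemma \ref{lm_vol_exp}: pass to the biconjugate $\bar{\psi}_0^{\ast\ast}$, which is convex and satisfies $\Delta\bar{\psi}_0^{\ast\ast}\leqslant\Delta\bar{\psi}_0\leqslant d$ on the contact set $\{\bar{\psi}_0=\bar{\psi}_0^{\ast\ast}\}$, and use absolute continuity of $\nu$ to discard the points mapped by $\nabla\bar{\psi}_0^{\ast}$ outside the contact set. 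Likewise the pointwise identity $\det D^{2}\bar{\psi}_0^{\ast}=1/\det D^{2}\bar{\psi}_0$ should be avoided (the Hessian may be degenerate); the paper instead runs AM--GM on the convex side to get $\det D^{2}\bar{\psi}_0^{\ast\ast}\leqslant1$ and concludes via the a.e.-inverse relation between $\nabla\bar{\psi}_0^{\ast\ast}$ and $\nabla\bar{\psi}_0^{\ast}$ and the pushforward, which yields both $\det D^{2}\bar{\psi}_0^{\ast}\geqslant1$ a.e.\ and the density inequality $\bar{\nu}(\nabla\bar{\psi}_0^{\ast}(x))\leqslant\nu(x)$. With those repairs your reading of the trace bound as the linearization of convex expansion is correct, and your restraint in not claiming a backward volume statement is also right, since the AM--GM step has no analogue for Laplacian contractions.
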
%

This theorem seems to be the first occasion in the literature where a
connection is made between contraction type properties of optimal mappings and
the subharmonic order (thus with Brownian martingales). The volume increasing
property of the forward projection mapping in item (3) of Theorem
\ref{thm_main3} is remarkable, and it gives a weaker counterpart to the convex
order case. It seems a similar property is not available for the backward
subharmonic order projection. Item (2) and (3) show that optimal couplings
between two probability measures are composed by Laplacian contraction or
expansion and a Brownian martingale transport. They also raise a natural
question: when would the projection $\bar{\mu}$ of $\mu$ onto $\mathrm{P}%
_{\leqslant\nu}^{\text{sh}}$ be equal to $\nu,$ and respectively the
projection $\bar{\nu}$ of $\nu$ onto $\mathrm{P}_{\mu\leqslant}^{\text{sh}}$
be equal to $\mu$? Answering these questions may give Caffarelli's contraction
type results for a large class of measures beyond the known cases.

The dual attainment is usually nontrivial, there is no one-size-fits-all
approach to it. The situation for subharmonic order is subtler than convex
order. Convex functions enjoy many vital properties under $c$-transforms. But
almost all these properties break in the case of subharmonic order, this has
led to a series of difficulties both in the proof of attainment and
characterization of optimal mappings. Moreover, the crucial double
convexification trick is unfortunately unavailable for subharmonic order
projections. We will discuss more in section \ref{sec_att_sh}.

\noindent\textbf{Acknowledgement.} We would like to express our sincere thanks
to (alphabetically ordered) Luigi Ambrosio, Zhen-Qing Chen, Alessio Figalli,
Mathav Murugan and Edwin A. Perkins for the insightful discussions while the
paper was in preparation.

\noindent\textbf{Notation}:

$X:$ polish space. With slight abuse of notation, metric on this space is
written as $\left\Vert \cdot\right\Vert _{X},$ i.e., the distance from a
reference point which we do not explictly specify. If $X$ a (separable) Banach
space, then $\left\Vert \cdot\right\Vert _{X}$ is the norm on $X.$ When no
confusion arises$,$ we write $\left\Vert \cdot\right\Vert $ for simplicity.

$M\left(  X\right)  ,$ $M_{+}\left(  X\right)  :$ (nonnegative) finite Radon
measures on $X$.

$M_{k}\left(  X\right)  ,$ $M_{k,+}\left(  X\right)  :$ (nonnegative) measures
with finite $k$-th moment. $M_{0}\left(  X\right)  =M\left(  X\right)  ,$
$M_{0,+}\left(  X\right)  =M_{+}\left(  X\right)  .$

$P\left(  X\right)  ,$ $P^{ac}\left(  X\right)  :$ probability measures on $X$
(which are absolutely continuous w.r.t. the reference measure of $X$).

$P_{k}\left(  X\right)  ,$ $P_{k}^{ac}\left(  X\right)  :$ probability
measures in $P\left(  X\right)  ,$ $P^{ac}\left(  X\right)  $\ with finite
$k$-th moment. $P_{0}\left(  X\right)  =P\left(  X\right)  ,$ $P_{0}%
^{ac}\left(  X\right)  =P^{ac}\left(  X\right)  .$

$\Pi\left(  \mu,\nu\right)  :$ the set of \textbf{probability} couplings with
marginals $\mu$ and $\nu$.

$\mathcal{A}$ $:$ the defining class of a stochastic order, $\mathcal{A}%
_{\text{cx}}\mathcal{\ }$stands for the defining class of convex order and
$\mathcal{A}_{\text{sh}}$ for subharmonic order.

$\mu\leqslant_{\mathcal{A}}\nu:$ $\mu$ is smaller than $\nu$ in the stochastic
order defined by $\mathcal{A}$.

$\mathrm{P}_{k,\leqslant\nu}^{\mathcal{A}}:$ the \textit{backward cone},%
\[
\mathrm{P}_{k,\leqslant\nu}^{\mathcal{A}}=\left\{  \eta\in P_{k}\left(
Y\right)  :\eta\leqslant_{\mathcal{A}}\nu\right\}  .
\]
the subscripts $k$ are omitted and written $\mathrm{P}_{\leqslant\nu
}^{\mathcal{A}}$ if the underlying space is bounded.

$\mathrm{P}_{k,\mu\leqslant}^{\mathcal{A}}:$ the \textit{forward cone},%
\[
\mathrm{P}_{k,\mu\leqslant}^{\mathcal{A}}=\left\{  \xi\in P_{k}\left(
X\right)  :\mu\leqslant_{\mathcal{A}}\xi\right\}
\]
the subscripts $k$ are omitted and written $\mathrm{P}_{\mu\leqslant
}^{\mathcal{A}}$ if the underlying space is bounded.

$C_{0}^{\infty}\left(  X\right)  :$ the set of smooth functions with compact
support in $X.$

$C_{0}\left(  X\right)  :$ continuous functions which go to zero at
\textit{infinity}.

$C_{b}\left(  X\right)  :$ bounded continuous functions.

$\mathrm{S}_{b}\left(  X\right)  :$ bounded measurable functions.

$C_{b,k}\left(  X\right)  :$ $k\geqslant0,$ continuous functions with growth
no more than $\left\Vert x\right\Vert ^{k}$.

$C_{0,k}\left(  X\right)  :$ $k\geqslant0,$ continuous functions with
asymptotic order $o\left(  \left\Vert x\right\Vert ^{k}\right)  .$

$\mathrm{S}_{b,k}\left(  X\right)  :$ $k\geqslant0,$ measurable functions with
growth no more than $\left\Vert x\right\Vert ^{k}$.

$g^{\ast}:$ the Legendre-Fenchel dual of a function $g$.

$g_{e}:$ the subharmonic envelope of a function $g.$

\section{\label{sec_lin_order}Linear stochastic order}

The stochastic orders we are interested in are those which are characterized
by a class of admissible functions. We call them linear stochastic orders. The
term \textit{linear} is used to emphasize the sort of problems our method can
be applied to.

\begin{definition}
\label{def_lin_order}Let $\mathcal{A}$ be a nonempty class of measurable
functions which form a convex cone:

\begin{itemize}
\item[(1)] if $f\in\mathcal{A}$, then $af\in\mathcal{A}$, $\forall
a\geqslant0.$

\item[(2)] if $f,$ $g\in\mathcal{A}$, then $af+\left(  1-a\right)
g\in\mathcal{A}$, $\forall0\leqslant a\leqslant1.$
\end{itemize}

\noindent A measure $\mu\in M_{+}\left(  X\right)  $ is smaller than $\nu\in
M_{+}\left(  X\right)  $ in the linear stochastic order defined by
$\mathcal{A}$, denoted by $\mu\leqslant_{\mathcal{A}}\nu$, if $\mu$, $\nu$
have equal mass and
\begin{equation}
\int fd\mu\leqslant\int fd\nu\label{def_lin_order_1}%
\end{equation}
for all $f\in\mathcal{A}$ such that both integrals exist in the extended
sense. The class $\mathcal{A}$ is called the defining class of the associated
stochastic order.
\end{definition}

Note by definition $0\in\mathcal{A}$. Hereafter, with slight abuse of
notation, we will use \textit{a defining class} $\mathcal{A}$ to mean both the
function class itself and the stochastic order associated with it. We simply
call $\mathcal{A}$ a (linear) stochastic order. Here are a few examples of
linear stochastic orders.

\begin{example}
On the real line, the usual stochastic order is defined by the set of all
increasing functions. (Increasing) convex order corresponds to the set of all
(increasing) convex functions. These can also be generalized to consider the
so-called $m$-Convex order with $m\geqslant1$ being an integer$,$ defined by
the set of functions whose $m$-th derivative is nonnegative.
\end{example}

\begin{example}
In arbitrary dimension, two commonly encountered stochastic orders are convex
order and subharmonic order, which are respectively defined by the set of all
lower semicontinuous convex functions and the set of all subharmonic
functions. The two notions coincide in one dimension.
\end{example}

\begin{example}
\label{eg_trivial_order}When the defining class $\mathcal{A}$ is the set of
all bounded continuous functions, the associated stochastic order becomes
degenerate. In this special case, we call it a \textbf{trivial order}.
Measures in trivial order are identical.
\end{example}

In these examples, the set of admissible functions for which $\left(
\ref{def_lin_order_1}\right)  $ is equality contains nontrivial elements. For
$m$-Convex order, $\left(  \ref{def_lin_order_1}\right)  $ is equality for all
polynomials with degree no greater than $m-1$. For convex order, $\left(
\ref{def_lin_order_1}\right)  $ is equality for all for linear functions. For
subhamronic order, $\left(  \ref{def_lin_order_1}\right)  $ becomes equality
for all for harmonic functions. These could be useful for performing
normalizations on the admissible class.

The following representation of linear stochastic order is straightforward.

\begin{lemma}
\label{lm_stoch}Let $\mathcal{A}$ be a defining function class as defined in
Definition \ref{def_lin_order}. Assume that $\left\{  -1,1\right\}
\subset\mathcal{A}$. Let $\mu,$ $\nu\in M_{+}\left(  X\right)  $. Then%
\begin{equation}
\sup_{f\in\mathcal{A}}\left\{  \int_{X}fd\mu-\int_{X}fd\nu\right\}  =\left\{
\begin{array}
[c]{ll}%
0, & \mu\leqslant_{\mathcal{A}}\nu,\\
\infty, & \text{otherwise}.
\end{array}
\right.  \label{lm_stoch_1}%
\end{equation}

\end{lemma}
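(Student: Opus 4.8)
The plan is to reduce the two-sided alternative to the single observation that scaling a test function by a large positive constant blows up any strictly positive integral gap, using that $\mathcal{A}$ is a convex cone. First I would record the trivial lower bound: since $0\in\mathcal{A}$ (as noted after Definition \ref{def_lin_order}), the choice $f=0$ contributes $\int_X 0\,d\mu-\int_X 0\,d\nu=0$ to the supremum, so the left-hand side of \eqref{lm_stoch_1} is always $\geqslant 0$.

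Next I would use the hypothesis $\{-1,1\}\subset\mathcal{A}$ to absorb the equal-mass clause of Definition \ref{def_lin_order} into the integral inequalities. Because $\mu,\nu$ are finite measures, the integrals of the constants $\pm 1$ are finite, and the pair of inequalities $\int_X 1\,d\mu\leqslant\int_X 1\,d\nu$ and $\int_X(-1)\,d\mu\leqslant\int_X(-1)\,d\nu$ together force $\mu(X)=\nu(X)$; conversely, if the masses agree these two constant test functions contribute $0$. Hence, under the standing assumption $\{-1,1\}\subset\mathcal{A}$, the relation $\mu\leqslant_{\mathcal{A}}\nu$ is equivalent to the single requirement that $\int_X f\,d\mu\leqslant\int_X f\,d\nu$ for every $f\in\mathcal{A}$ for which both integrals are defined, with the equal-mass condition no longer needing a separate check.

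With this reformulation the two cases are immediate. If $\mu\leqslant_{\mathcal{A}}\nu$, every admissible term $\int_X f\,d\mu-\int_X f\,d\nu$ is $\leqslant 0$, so the supremum is $\leqslant 0$; combined with the lower bound from the first step it equals $0$. If instead $\mu\not\leqslant_{\mathcal{A}}\nu$, the reformulation yields some $f_0\in\mathcal{A}$ with $\int_X f_0\,d\mu-\int_X f_0\,d\nu>0$ (possibly already $+\infty$). Invoking property (1) of Definition \ref{def_lin_order}, we have $af_0\in\mathcal{A}$ for all $a\geqslant 0$, and by linearity of the integral the corresponding term equals $a\left(\int_X f_0\,d\mu-\int_X f_0\,d\nu\right)$, which tends to $+\infty$ as $a\to\infty$. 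Thus the supremum is $\infty$.

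I do not anticipate a genuine obstacle here, as the statement is essentially a restatement of the convex-cone structure; the only point demanding care is the \emph{extended sense} caveat on the integrals. I would restrict the supremum to those $f\in\mathcal{A}$ for which the difference of integrals is a well-defined element of $(-\infty,+\infty]$, note that the constants and any witnessing $f_0$ lie in this class, and observe that in the degenerate subcase where $\int_X f_0\,d\mu=+\infty$ while $\int_X f_0\,d\nu$ is finite the term is already $+\infty$, so no scaling argument is even needed.
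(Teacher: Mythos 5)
Your proof is correct and is essentially the argument the paper intends: the paper gives no proof of Lemma \ref{lm_stoch} at all, introducing it only as ``straightforward,'' and your route (the trivial lower bound from $0\in\mathcal{A}$, the constants $\pm 1$ forcing equal mass, and cone scaling $af_0\in\mathcal{A}$ with $a\to\infty$ to blow up any positive gap) is the canonical one. Your closing care about the extended-sense caveat — restricting the supremum to well-defined differences and noting the degenerate $+\infty$ subcase needs no scaling — is exactly the right point to flag and is handled soundly.
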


\begin{remark}
\label{rm_lm_stoch}The assumption $\left\{  -1,1\right\}  \subset\mathcal{A}$
ensures that%
\begin{equation}
\mu\text{ and }\nu\text{ are of equal mass.} \label{rm_lm_stoch_1}%
\end{equation}
In some situations, the constraint $\left(  \ref{rm_lm_stoch_1}\right)  $ can
be implied from other accompanying constraints. If this is the case, then we a
priori know that the measures have identical mass, thus the assumption
$\left\{  -1,1\right\}  \subset\mathcal{A}$ in Lemma \ref{lm_stoch} can be omitted.
\end{remark}

\subsection{Backward and forward projection}

We investigate two types of Wasserstein projections associated with a given
linear stochastic order $\mathcal{A}$.

\noindent\textbf{Backward projection}

Given $\mu\in P\left(  X\right)  $, $\nu\in P\left(  Y\right)  .$ Define the
backward convex cone%
\begin{equation}
\mathrm{P}_{\leqslant\nu}^{\mathcal{A}}=\left\{  \eta\in P\left(  Y\right)
:\eta\leqslant_{\mathcal{A}}\nu\right\}  \label{bw_cone}%
\end{equation}
consisting of measures in $P\left(  Y\right)  $ which are smaller than $\nu$
in the linear stochastic order $\mathcal{A}$. The backward projection of $\mu$
onto the cone $\mathrm{P}_{\leqslant\nu}^{\mathcal{A}}$ is defined as any
$\bar{\mu}\in\mathrm{P}_{\leqslant\nu}^{\mathcal{A}}$ which attains%
\[
\mathcal{T}_{c}\left(  \mu,\mathrm{P}_{\leqslant\nu}^{\mathcal{A}}\right)
\triangleq\inf_{\bar{\mu}\in\mathrm{P}_{\leqslant\nu}^{\mathcal{A}}%
}\mathcal{T}_{c}\left(  \mu,\bar{\mu}\right)  ,
\]
i.e., the transportation cost between $\mu$ and $\mathrm{P}_{\leqslant\nu
}^{\mathcal{A}}$.

\noindent\textbf{Forward projection}

The forward convex cone, denoted by%
\begin{equation}
\mathrm{P}_{\mu\leqslant}^{\mathcal{A}}=\left\{  \xi\in P\left(  X\right)
:\mu\leqslant_{\mathcal{A}}\xi\right\}  , \label{fw_cone}%
\end{equation}
is the set of measures in $P\left(  X\right)  $ which are greater than $\mu$
in the linear stochastic order $\mathcal{A}$. The forward projection of $\nu$
onto $\mathrm{P}_{\mu\leqslant}^{\mathcal{A}}$\ is any $\bar{\nu}\in
\mathrm{P}_{\mu\leqslant}^{\mathcal{A}}$ which attains%
\[
\mathcal{T}_{c}\left(  \mathrm{P}_{\mu\leqslant}^{\mathcal{A}},\nu\right)
\triangleq\inf_{\bar{\nu}\in\mathrm{P}_{\mu\leqslant}^{\mathcal{A}}%
}\mathcal{T}_{c}\left(  \bar{\nu},\nu\right)  ,
\]
i.e., the transportation cost between $\mathrm{P}_{\mu\leqslant}^{\mathcal{A}%
}$ and $\nu$.

\subsection{\label{subsec_compVSnon}Compact vs non-compact case}

When we mention the projection problems, we use the term \textit{compact case}
to mean the underlying spaces $X$ and $Y$ are compact, not just that the
measures $\mu$, $\nu$ have compact supports. Correspondingly, the term
\textit{non-compact case} (\textit{or general case}) means $X$ and $Y$ are not
necessarily compact.

It is important to note that, the set $\mathrm{P}_{\mu\leqslant}^{\mathcal{A}%
}$ given by $\left(  \ref{fw_cone}\right)  $ is a subset of $P\left(
X\right)  .$ All measures in $\mathrm{P}_{\mu\leqslant}^{\mathcal{A}}$ live in
$X.$\ Therefore, forward projection where $X$ is a proper subset of the
underlying space and forward projection where $X$ equal the underlying space
are different. Take $\mathbb{R}^{d},$ if $X$ is only a proper subset of
$\mathbb{R}^{d},$ then $\mathcal{T}_{c}\left(  \mathrm{P}_{\mu\leqslant
}^{\mathcal{A}},\nu\right)  $ optimizes over all admissible measures sitting
in $X.$ Measures not in $X$ are not admissible to the optimization$.$\ If $X$
equals $\mathbb{R}^{d},$ then $\mathcal{T}_{c}\left(  \mathrm{P}_{\mu
\leqslant}^{\mathcal{A}},\nu\right)  $ optimizes over all admissible measures
without restrictions on where they locate$.$ So, it is preferable to be aware
of this distinction when dealing with forward projection. This distinction for
backward projection does not exist however, see e.g. Lemma \ref{lm_cxsupp_inc}.

\section{Duality the compact case}

The rigorous proof of the duality in the general case, where the underlying
space is not necessarily compact, is delicate and requires additional
preparations. Compared with the general case, the compact case is less
restrictive in the assumptions and requires minimal preparations in the proof.
So we first deal with the compact case in this section. The general case will
be the topic of the next section. Starting with the compact case also helps
the reader grasp the main idea more easily.

\subsection{Backward projection}

The following simple lemma is very useful.

\begin{lemma}
\label{lm_marginal}Let $X,$ $Y$ be Polish spaces and $\mu\in P\left(
X\right)  .$ Then, for any $\xi\in M_{+}\left(  Y\right)  ,$ $\pi\in
M_{+}\left(  X\times Y\right)  ,$%
\[
\sup_{\left(  u,v\right)  }\left\{  \int ud\mu-\int vd\xi-\int\left(  u\left(
x\right)  -v\left(  y\right)  \right)  d\pi\left(  x,y\right)  \right\}
=\left\{
\begin{array}
[c]{ll}%
0, & \xi\in P\left(  Y\right)  ,\pi\in\Pi\left(  \mu,\xi\right)  ,\\
\infty, & \text{otherwise},
\end{array}
\right.
\]
where the supremum runs over $\left(  u,v\right)  \in C_{b}\left(  X\right)
\times C_{b}\left(  Y\right)  .$
\end{lemma}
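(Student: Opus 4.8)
The plan is to reduce the joint supremum over pairs $(u,v)$ to two independent scalar problems, one in $u$ and one in $v$, each governed by whether a finite signed measure vanishes. First I would introduce the marginals $\pi_X$, $\pi_Y$ of $\pi$, which are finite nonnegative measures on $X$ and $Y$ since $\pi$ is finite. Because the integrand is additively separable, $\int (u(x) - v(y))\, d\pi = \int u\, d\pi_X - \int v\, d\pi_Y$, so the objective rewrites as
\[
\int u\, d(\mu - \pi_X) + \int v\, d(\pi_Y - \xi).
\]
The first term depends only on $u \in C_b(X)$ and the second only on $v \in C_b(Y)$, and we optimize over the product $C_b(X) \times C_b(Y)$, so the supremum splits as $\sup_u \int u\, d(\mu - \pi_X) + \sup_v \int v\, d(\pi_Y - \xi)$.

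Next I would establish the dichotomy that, for a finite signed Borel measure $\lambda$ on a Polish space $Z$, the quantity $\sup_{w \in C_b(Z)} \int w\, d\lambda$ equals $0$ if $\lambda = 0$ and $+\infty$ if $\lambda \neq 0$. It is always $\geq 0$ (take $w = 0$), and vanishes when $\lambda = 0$. If $\lambda \neq 0$, then since $C_b(Z)$ separates finite Borel measures on a metric space there is $w_0 \in C_b(Z)$ with $a := \int w_0\, d\lambda \neq 0$; replacing $w_0$ by $\operatorname{sgn}(a)\, w_0$ we arrange $a > 0$, and then $\int (t w_0)\, d\lambda = t a \to +\infty$ as $t \to +\infty$. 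Applying this with $\lambda = \mu - \pi_X$ and with $\lambda = \pi_Y - \xi$ --- each supremum lying in $\{0, +\infty\}$, so that the sum is unambiguous --- I conclude that the total supremum is $0$ exactly when $\pi_X = \mu$ and $\pi_Y = \xi$, and $+\infty$ otherwise.

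Finally I would reconcile the condition $\pi_X = \mu$, $\pi_Y = \xi$ with the stated one: if $\pi_X = \mu$ then $\pi$ has total mass $1$, hence so does $\pi_Y$, and $\pi_Y = \xi$ then forces $\xi \in P(Y)$ while making $\pi$ a genuine probability coupling of $\mu$ and $\xi$; the converse is immediate. The only non-routine ingredient is the separation property that $C_b(Z)$ distinguishes finite Borel measures on a Polish space, so I expect the main (and minor) obstacle to be invoking it cleanly. This is standard for metric spaces and can be argued by approximating indicators of closed sets with bounded Lipschitz functions, together with the Jordan decomposition $\lambda = \lambda_+ - \lambda_-$, reducing everything to the uniqueness of a finite measure determined by its integrals against $C_b(Z)$.
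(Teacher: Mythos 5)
The paper states this lemma without proof (it is labelled ``simple,'' and the analogous statement is attributed elsewhere to \cite[Lemma 1.25]{villani2003topics} in the proof of Lemma \ref{lm_margin_Cbk}), so there is no in-text argument to compare against; your proof is correct and is exactly the standard one. Splitting the objective via the marginals $\pi_X,\pi_Y$ into $\int u\,d(\mu-\pi_X)+\int v\,d(\pi_Y-\xi)$, using that $C_b$ separates finite signed Borel measures on a metric space (Lipschitz approximation of indicators of closed sets plus Jordan decomposition) together with positive scaling to get the $0/\infty$ dichotomy, and then noting that $\pi_X=\mu$ forces $\pi(X\times Y)=1$ and hence $\xi=\pi_Y\in P(Y)$, is a complete and correct derivation of the stated equality.
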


We define the class of bounded measurable functions on $X,$%
\[
\mathrm{S}_{b}\left(  X\right)  =\left\{  u\text{ bounded, measurable}%
\right\}  .
\]

\begin{theorem}
\label{thm_dual_bw}Let $X,$ $Y$ be compact Polish spaces, $\mu\in P\left(
X\right)  $, $\nu\in P\left(  Y\right)  $, $c:X\times Y\mapsto\left[
0,\infty\right]  $ be lower semicontinuous and $\mathcal{A}$ be a defining
function class as defined in Definition \ref{def_lin_order}. Assume that
$\mathcal{A}$ and $\mathcal{A\cap}C_{b}$ define the same stochastic order.

(i) The backward duality holds%
\[
\mathcal{T}_{c}\left(  \mu,\mathrm{P}_{\leqslant\nu}^{\mathcal{A}}\right)
=\sup_{\left(  u,\varphi\right)  \in\mathcal{V}_{c}^{\ast}\cap C_{b}}\left\{
\int_{X}ud\mu-\int_{Y}\varphi d\nu\right\}  ,
\]
where $\mathcal{V}_{c}^{\ast}$ is the set of measurable functions $\left(
u,\varphi\right)  $ such that%
\begin{equation}
\varphi\in\mathcal{A}\text{ and }u\left(  x\right)  -\varphi\left(  y\right)
\leqslant c\left(  x,y\right)  ,\text{ }\forall x,y. \label{eq_Vc*}%
\end{equation}

(ii) The alternative form of backward duality holds%
\[
\mathcal{T}_{c}\left(  \mu,\mathrm{P}_{\leqslant\nu}^{\mathcal{A}}\right)
=\sup_{\varphi\in\mathcal{A\cap}C_{b}}\left\{  \int_{X}Q_{c}\left(
\varphi\right)  d\mu-\int_{Y}\varphi d\nu\right\}  ,
\]
where%
\begin{equation}
Q_{c}\left(  \varphi\right)  \left(  x\right)  =\inf_{y\in Y}\left\{
\varphi\left(  y\right)  +c\left(  x,y\right)  \right\}  . \label{eq_Qc}%
\end{equation}
In both forms (i) and (ii), $\varphi\in\mathcal{A\cap}C_{b}$ can be relaxed to
$\varphi\in\mathcal{A\cap}\mathrm{S}_{b}$.
\end{theorem}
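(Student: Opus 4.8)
The overall strategy is to prove the backward duality through a minimax argument, combining Lemma \ref{lm_marginal} (which encodes the marginal/coupling constraints) with Lemma \ref{lm_stoch} (which encodes the stochastic order constraint $\bar\mu\leqslant_{\mathcal{A}}\nu$). The plan is to first rewrite the primal problem $\mathcal{T}_c(\mu,\mathrm{P}_{\leqslant\nu}^{\mathcal{A}})$ as an unconstrained sup--inf over functions and measures, exchange the order of optimization via a minimax theorem, and then compute the inner optimization explicitly to recover the stated dual.

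First I would set up the primal as an infimum over pairs $(\bar\mu,\pi)$ with $\pi\in\Pi(\mu,\bar\mu)$ and $\bar\mu\in\mathrm{P}_{\leqslant\nu}^{\mathcal{A}}$, of $\int c\,d\pi$. Using Lemma \ref{lm_marginal} I can drop the constraints $\pi\in\Pi(\mu,\bar\mu)$ by adding a penalization term $\sup_{(u,v)}\{\int u\,d\mu-\int v\,d\bar\mu-\int(u(x)-v(y))\,d\pi\}$, which is $0$ when the constraints hold and $+\infty$ otherwise. Similarly, using Lemma \ref{lm_stoch} (with the normalization that $\mathcal{A}$ contains $\{-1,1\}$, or noting that equal mass is forced by the marginal constraints, cf.\ Remark \ref{rm_lm_stoch}) I replace the constraint $\bar\mu\leqslant_{\mathcal{A}}\nu$ by a penalization $\sup_{\varphi\in\mathcal{A}}\{\int\varphi\,d\bar\mu-\int\varphi\,d\nu\}$. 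The quantity $\mathcal{T}_c(\mu,\mathrm{P}_{\leqslant\nu}^{\mathcal{A}})$ then becomes an infimum over $(\bar\mu,\pi)\in M_+(Y)\times M_+(X\times Y)$ of a supremum over the dual variables $(u,v,\varphi)$.

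The heart of the argument is the minimax exchange. After swapping inf and sup, I would carry out the inner minimization over $\bar\mu$ and $\pi$. Collecting the terms linear in $\bar\mu$ gives $\inf_{\bar\mu}\int(\varphi-v)\,d\bar\mu$, which is $0$ if $\varphi\geqslant v$ pointwise and $-\infty$ otherwise; this forces $v\leqslant\varphi$ and, since $v$ then only appears through $\int v\,d\xi$-type terms, the optimal choice collapses $v$ to $\varphi$, identifying the relevant dual function as $\varphi\in\mathcal{A}$. Collecting the terms linear in $\pi$ gives $\inf_\pi\int(c(x,y)-u(x)+v(y))\,d\pi$, which is $0$ precisely when $u(x)-v(y)\leqslant c(x,y)$ and $-\infty$ otherwise; combined with $v=\varphi$ this yields exactly the constraint set $\mathcal{V}_c^\ast$ of \eqref{eq_Vc*}. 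What survives is $\sup\{\int u\,d\mu-\int\varphi\,d\nu\}$ over $(u,\varphi)\in\mathcal{V}_c^\ast$, giving part (i). For part (ii), given $\varphi\in\mathcal{A}\cap C_b$ the optimal $u$ consistent with $u(x)-\varphi(y)\leqslant c(x,y)$ is the largest admissible one, namely $u(x)=Q_c(\varphi)(x)=\inf_y\{\varphi(y)+c(x,y)\}$ from \eqref{eq_Qc}; substituting gives the alternative form.

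The main obstacle I expect is the rigorous justification of the minimax exchange: this requires a Sion-type or Fenchel--Rockafellar duality theorem, so I need compactness of $X$ and $Y$ and appropriate weak-$\ast$ compactness of the relevant sets of measures, lower semicontinuity of $c$, and a suitable topology on the function variables $(u,v,\varphi)$ so that the objective is convex--concave and semicontinuous. The compactness hypothesis on $X,Y$ is exactly what makes $P(Y)$ and bounded subsets of $M_+(X\times Y)$ weak-$\ast$ compact, and the hypothesis that $\mathcal{A}$ and $\mathcal{A}\cap C_b$ define the same order is what lets me restrict the dual functions to continuous ones without changing the value; the final relaxation to $\varphi\in\mathcal{A}\cap\mathrm{S}_b$ follows because the penalization in Lemma \ref{lm_stoch} only ever sees $\int\varphi\,d\bar\mu-\int\varphi\,d\nu$ and bounded measurable functions suffice to detect the order. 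I would verify the needed regularity/growth and lower semicontinuity of $Q_c(\varphi)$ to ensure it is an admissible dual function, and check that no duality gap arises at the boundary where the penalizations take the value $+\infty$.
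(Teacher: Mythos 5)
Your proposal is essentially the paper's own proof: the paper implements exactly this Lagrangian penalization (Lemma \ref{lm_marginal} for the coupling constraints, Lemma \ref{lm_stoch} for the order constraint) and justifies the inf--sup exchange via Fenchel--Rockafellar on $C_{b}\left(  X\times Y\right)  \times C_{b}\left(  Y\right)  $, where the conjugate computations of $\mathcal{G}^{\ast},\mathcal{H}^{\ast},\mathcal{I}^{\ast}$ reproduce your inner minimizations and yield the same intermediate duality over triples $\left(  u,v,\varphi\right)  \in\mathcal{V}_{c}$, followed by the same collapse $v=\varphi$ for (i), the substitution $u=Q_{c}\left(  \varphi\right)  $ for (ii), and the same sandwich argument for the relaxation to $\mathcal{A}\cap\mathrm{S}_{b}$. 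The one refinement worth noting is that a Sion-type exchange would founder on the non-compactness of the penalized measure variables (after dropping the constraints, $\pi$ and $\bar{\mu}$ range over all of $M_{+}$, which is not weak-$\ast$ compact), so the Fenchel--Rockafellar route you list as the alternative --- with the qualification condition verified at the explicit interior point $p_{0}\equiv1$, $q_{0}\equiv1$ --- is precisely how the paper closes the gap you flagged as the main obstacle.
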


\begin{remark}
Here we loosely write $\mathcal{V}_{c}^{\ast}\cap C_{b}$ to mean couples
$\left(  u,\varphi\right)  \in\mathcal{V}_{c}^{\ast}$ with all functions in
the slots belonging to $C_{b}.$ The same writing applies similarly through out
the article unless it is necessary to use full notation.
\end{remark}

\begin{proof}
The proof proceeds through several steps. In step 1 and step 2, we show that
\begin{equation}
\mathcal{T}_{c}\left(  \mu,\mathrm{P}_{\leqslant\nu}^{\mathcal{A}}\right)
=\sup_{\left(  u,v,\varphi\right)  \in\mathcal{V}_{c}\cap\left(  L^{1}%
\times\mathrm{S}_{b}\times\mathrm{S}_{b}\right)  }\left\{  \int_{X}ud\mu
-\int_{Y}\varphi d\nu\right\}  , \label{thm_dual_bw0}%
\end{equation}
where $\mathcal{V}_{c}$ is the collection of triples $\left(  u,v,\varphi
\right)  $ of measurable functions such that%
\begin{equation}
u\left(  x\right)  -v\left(  y\right)  \leqslant c\left(  x,y\right)  ,\text{
}\forall x,y. \label{eq_Vc1}%
\end{equation}
and%
\begin{equation}
\varphi\in\mathcal{A}\text{ and }v\left(  y\right)  \leqslant\varphi\left(
y\right)  ,\text{ }\forall y. \label{eq_Vc2}%
\end{equation}
The duality remains true if $\left(  \ref{eq_Vc1}\right)  $ is replaced with%
\begin{equation}
u\left(  x\right)  -v\left(  y\right)  \leqslant c\left(  x,y\right)  ,\text{
}\mu\text{-}a.e.\text{ }x,\text{ }\forall y. \label{eq_Vc1_1}%
\end{equation}
Upon obtaining this duality, we then conclude the proof of the theorem in step 3.

1. To prove $\left(  \ref{thm_dual_bw0}\right)  ,$ we first show that%
\[
\sup_{\mathcal{V}_{c}\cap C_{b}}\left\{  \int_{X}ud\mu-\int_{Y}\varphi
d\nu\right\}  \leqslant\sup_{\mathcal{V}_{c}\cap\left(  L^{1}\times
\mathrm{S}_{b}\times\mathrm{S}_{b}\right)  }\left\{  \int_{X}ud\mu-\int%
_{Y}\varphi d\nu\right\}  \leqslant\mathcal{T}_{c}\left(  \mu,\mathrm{P}%
_{\leqslant\nu}^{\mathcal{A}}\right)  .
\]
Only the second inequality needs explanation. For $\tilde{\mu}\in
\mathrm{P}_{\leqslant\nu}^{\mathcal{A}}$, $\pi\in\Pi\left(  \mu,\tilde{\mu
}\right)  ,$ $\left(  u,v,\varphi\right)  \in\mathcal{V}_{c}\cap\left(
L^{1}\times\mathrm{S}_{b}\times\mathrm{S}_{b}\right)  ,$ we have $\tilde{\mu
}\in P\left(  Y\right)  $ and $v,$ $\varphi$ are bounded, hence $\int
vd\tilde{\mu},$ $\int\varphi d\tilde{\mu}$ exist and are finite. Then we can
write%
\begin{align*}
\int_{X}ud\mu-\int_{Y}\varphi d\nu &  =\int_{X}ud\mu-\int_{Y}vd\tilde{\mu
}+\int_{Y}vd\tilde{\mu}-\int_{Y}\varphi d\nu\\
&  \leqslant\int_{X}ud\mu-\int_{Y}vd\tilde{\mu}+\int_{Y}\varphi d\tilde{\mu
}-\int_{Y}\varphi d\nu\\
&  \leqslant\int_{X}ud\mu-\int_{Y}vd\tilde{\mu}\\
&  \leqslant\int_{X\times Y}c\left(  x,y\right)  d\pi
\end{align*}
The first inequality uses $\left(  \ref{eq_Vc2}\right)  .$ The second
inequality uses the fact $\varphi\in\left(  \mathcal{A\cap}\mathrm{S}%
_{b}\right)  $ and the stochastis order relationship. The third inequality
uses $\left(  \ref{eq_Vc1}\right)  $. Now taking infimum over $\tilde{\mu}%
\in\mathrm{P}_{\leqslant\nu}^{\mathcal{A}},$ $\pi\in\Pi\left(  \mu,\tilde{\mu
}\right)  $ and supremum over $\left(  u,v,\varphi\right)  \in\mathcal{V}%
_{c}\cap\left(  L^{1}\times\mathrm{S}_{b}\times\mathrm{S}_{b}\right)  ,$ we
obtain%
\[
\sup_{\mathcal{V}_{c}\cap\left(  L^{1}\times\mathrm{S}_{b}\times\mathrm{S}%
_{b}\right)  }\left\{  \int_{X}ud\mu-\int_{Y}\varphi d\nu\right\}
\leqslant\mathcal{T}_{c}\left(  \mu,\mathrm{P}_{\leqslant\nu}^{\mathcal{A}%
}\right)  .
\]
Clearly we get the same result if $\left(  \ref{eq_Vc1}\right)  $ is replaced
with $\left(  \ref{eq_Vc1_1}\right)  ,$ since $\left(  \ref{eq_Vc1_1}\right)
$ implies%
\[
u\left(  x\right)  -v\left(  y\right)  \leqslant c\left(  x,y\right)  ,\text{
}\pi\text{-}a.e.\text{ }\left(  x,y\right)  ,
\]
for any $\tilde{\mu}\in\mathrm{P}_{\leqslant\nu}^{\mathcal{A}},$ $\pi\in
\Pi\left(  \mu,\tilde{\mu}\right)  .$

\textbf{2}. Next we prove the duality%
\begin{equation}
\mathcal{T}_{c}\left(  \mu,\mathrm{P}_{\leqslant\nu}^{\mathcal{A}}\right)
=\sup_{\left(  u,v,\varphi\right)  \in\mathcal{V}_{c}\cap C_{b}}\left\{
\int_{X}ud\mu-\int_{Y}\varphi d\nu\right\}  . \label{thm_dual_bw00}%
\end{equation}
For this, we introduce the functionals%
\[
\mathcal{G}:p\in C_{b}\left(  X\times Y\right)  \mapsto\left\{
\begin{array}
[c]{ll}%
0, & p\left(  x,y\right)  \geqslant-c\left(  x,y\right)  ,\\
\infty, & \text{otherwise}.
\end{array}
\right.
\]%
\[
\mathcal{H}:q\in C_{b}\left(  Y\right)  \mapsto\left\{
\begin{array}
[c]{ll}%
0, & q\left(  y\right)  \geqslant0,\\
\infty, & \text{otherwise}.
\end{array}
\right.
\]%
\begin{align*}
\mathcal{I}  &  :\left(  p,q\right)  \in C_{b}\left(  X\times Y\right)  \times
C_{b}\left(  Y\right) \\
&  \mapsto\left\{
\begin{array}
[c]{ll}%
{\displaystyle\int_{Y}}
\varphi d\nu-%
{\displaystyle\int_{X}}
ud\mu, &
\begin{array}
[c]{l}%
p\left(  x,y\right)  =v\left(  y\right)  -u\left(  x\right)  \text{ for some
}\left(  u,v\right)  \in C_{b},\\
q\left(  y\right)  =\varphi\left(  y\right)  -v\left(  y\right)  \text{ for
some }\varphi\in\mathcal{A\cap}C_{b}\text{,}%
\end{array}
\\
\infty, & \text{otherwise}.
\end{array}
\right.
\end{align*}
Note $\mathcal{I}$ is convex in view of the definition of $\mathcal{A}$.
$\mathcal{I}$ is\ well-defined, indeed, suppose
\[
p\left(  x,y\right)  =v_{1}\left(  y\right)  -u_{1}\left(  x\right)
=v_{2}\left(  y\right)  -u_{2}\left(  x\right)  ,\text{ }\forall x,\text{ }y,
\]%
\[
q\left(  y\right)  =\varphi_{1}\left(  y\right)  -v_{1}\left(  y\right)
=\varphi_{2}\left(  y\right)  -v_{2}\left(  y\right)  ,\text{ }\forall
x,\text{ }y,
\]
then, for some constant $a\in\mathbb{R}$,
\[
u_{1}=u_{2}-a,\text{ }v_{1}=v_{2}-a,\text{ }\varphi_{1}=\varphi_{2}-a.
\]
Hence%
\[
\int_{Y}\varphi_{1}\left(  y\right)  d\nu\left(  y\right)  -\int_{X}%
u_{1}\left(  x\right)  d\mu\left(  x\right)  =\int_{Y}\varphi_{2}\left(
y\right)  d\nu\left(  y\right)  -\int_{X}u_{2}\left(  x\right)  d\mu\left(
x\right)  ,
\]
showing that the definition of $\mathcal{I}$ does not depend on the way
$p\left(  x,y\right)  $ and $q\left(  x\right)  $ are split. Straightforward
calculations\ yield the Legendre transforms of the above functionals,%
\begin{align*}
\mathcal{G}^{\ast}\left(  -\pi\right)   &  =\sup_{p\in C_{b}\left(  X\times
Y\right)  ,\text{ }p\geqslant-c}\left\{  -\int_{X\times Y}p\left(  x,y\right)
d\pi\left(  x,y\right)  \right\} \\
&  =\left\{
\begin{array}
[c]{ll}%
\int_{X\times Y}c\left(  x,y\right)  d\pi\left(  x,y\right)  , & \pi\in
M_{+}\left(  X\times Y\right)  ,\\
\infty, & \text{otherwise}.
\end{array}
\right.
\end{align*}%
\[
\mathcal{H}^{\ast}\left(  -\bar{\mu}\right)  =\sup_{q\in C_{b}\left(
Y\right)  ,\text{ }q\geqslant0}\left\{  -\int_{Y}q\left(  y\right)  d\bar{\mu
}\left(  y\right)  \right\}  =\left\{
\begin{array}
[c]{ll}%
0, & \bar{\mu}\in M_{+}\left(  Y\right)  ,\\
\infty, & \text{otherwise}.
\end{array}
\right.  .
\]%
\begin{align*}
\mathcal{I}^{\ast}\left(  \pi,\bar{\mu}\right)   &  =\sup_{\substack{\left(
u,v\right)  \in C_{b}\\\varphi\in\mathcal{A\cap}C_{b}}}\left\{  \int_{X\times
Y}\left(  v\left(  y\right)  -u\left(  x\right)  \right)  d\pi\left(
x,y\right)  +\int_{X}u\left(  x\right)  d\mu\left(  x\right)  -\int_{Y}%
\varphi\left(  y\right)  d\nu\left(  y\right)  \right. \\
&  \left.  +\int_{Y}\varphi\left(  y\right)  d\bar{\mu}\left(  y\right)
-\int_{Y}v\left(  y\right)  d\bar{\mu}\left(  y\right)  \right\} \\
&  =\sup_{_{\substack{\left(  u,v\right)  \in C_{b}\\\varphi\in\mathcal{A\cap
}C_{b}}}}\left\{  \int_{X\times Y}\left(  v\left(  y\right)  -u\left(
x\right)  \right)  d\pi\left(  x,y\right)  +\int_{X}u\left(  x\right)
d\mu\left(  x\right)  -\int_{Y}v\left(  y\right)  d\bar{\mu}\left(  y\right)
\right. \\
&  \left.  +\int_{Y}\varphi\left(  y\right)  d\bar{\mu}\left(  y\right)
-\int_{Y}\varphi\left(  y\right)  d\nu\left(  y\right)  \right\} \\
&  =\left\{
\begin{array}
[c]{ll}%
0, & \bar{\mu}\in\mathrm{P}_{\leqslant\nu}^{\mathcal{A}},\text{ }\pi\in
\Pi\left(  \mu,\bar{\mu}\right)  ,\\
\infty, & \text{otherwise}.
\end{array}
\right.  \text{.}%
\end{align*}
In the calculation of $\mathcal{G}^{\ast}\left(  -\pi\right)  ,$ the
assumption on the cost function $c$ is used. The last equality in
$\mathcal{I}^{\ast}\left(  \pi,\bar{\mu}\right)  $ uses Lemma
\ref{lm_marginal}, Lemma \ref{lm_stoch} and the assumption that $\mathcal{A}$
and $\mathcal{A\cap}C_{b}$ define the same stochastic order. Let%
\[
\Theta\left(  p,q\right)  =\mathcal{G}\left(  p\right)  +\mathcal{H}\left(
q\right)  ,\text{ }\Xi\left(  p,q\right)  =\mathcal{I}\left(  p,q\right)  .
\]
Since $0\in\mathcal{A\cap}C_{b},$ we can set $\varphi\left(  y\right)
\equiv0,$ $v\left(  y\right)  \equiv-1,$ $u\left(  x\right)  \equiv-2$ and
define%
\[
p_{0}=v\left(  y\right)  -u\left(  x\right)  =1,\text{ }q_{0}=\varphi\left(
y\right)  -v\left(  y\right)  =1.
\]
Then $\left(  p_{0},q_{0}\right)  $ is in the effective domain of $\Theta$ and
$\Xi,$%
\[
\Theta\left(  p_{0},q_{0}\right)  =0<\infty,\text{ }\Xi\left(  p_{0}%
,q_{0}\right)  =2<\infty.
\]
Moreover $\Theta$ is obviously continuous at $\left(  p_{0},q_{0}\right)  .$
We are then in a position to invoke Fenchel-Rockafellar theorem \cite[Theorem
1.9]{villani2003topics},%
\[
\inf_{\left(  p,q\right)  }\left\{  \Theta\left(  p,q\right)  +\Xi\left(
p,q\right)  \right\}  =\sup_{\left(  \pi,\bar{\mu}\right)  }\left\{
-\Theta^{\ast}\left(  -\pi,-\bar{\mu}\right)  -\Xi\left(  \pi,\bar{\mu
}\right)  \right\}  .
\]
Plugging in early calculations, we obtain $\left(  \ref{thm_dual_bw00}\right)
,$ which together with step 1 proves the duality $\left(  \ref{thm_dual_bw0}%
\right)  $.

\textbf{3}. Finally we show that the duality $\left(  \ref{thm_dual_bw0}%
\right)  $ leads to the dualities of the theorem. It is easy to see that, for
any triple $\left(  u,v,\varphi\right)  \in\mathcal{V}_{c}\cap C_{b}$,
$\left(  u,\varphi\right)  $ is an admissible pair in $\mathcal{V}_{c}^{\ast
}\cap C_{b}.$ On the other hand, for any $\left(  u,\varphi\right)
\in\mathcal{V}_{c}^{\ast}\cap C_{b},$ the triple $\left(  u,\varphi
,\varphi\right)  $\ is in $\mathcal{V}_{c}\cap C_{b}$. Hence
\begin{equation}
\sup_{\left(  u,v,\varphi\right)  \in\mathcal{V}_{c}\cap C_{b}}\left\{
\int_{X}ud\mu-\int_{Y}\varphi d\nu\right\}  =\sup_{\left(  u,\varphi\right)
\in\mathcal{V}_{c}^{\ast}\cap C_{b}}\left\{  \int_{X}ud\mu-\int_{Y}\varphi
d\nu\right\}  . \label{thm_dual_bw1}%
\end{equation}
The LHS of $\left(  \ref{thm_dual_bw1}\right)  $\ equals $\mathcal{T}%
_{c}\left(  \mu,\mathrm{P}_{\leqslant\nu}^{\mathcal{A}}\right)  $ by step 2.
Hence the duality in (i) follows. To see the alternative duality form in (ii),
we first note that%
\[
\sup_{\left(  u,v,\varphi\right)  \in\mathcal{V}_{c}\cap C_{b}}\left\{
\int_{X}ud\mu-\int_{Y}\varphi d\nu\right\}  \leqslant\sup_{\varphi
\in\mathcal{A\cap}C_{b}}\left\{  \int_{X}Q_{c}\left(  \varphi\right)
d\mu-\int_{Y}\varphi d\nu\right\}  .
\]
On the other hand, following the idea of step 1, we have that%
\[
\sup_{\varphi\in\mathcal{A\cap}C_{b}}\left\{  \int_{X}Q_{c}\left(
\varphi\right)  d\mu-\int_{Y}\varphi d\nu\right\}  \leqslant\mathcal{T}%
_{c}\left(  \mu,\mathrm{P}_{\leqslant\nu}^{\mathcal{A}}\right)  .
\]
Therefore, in view of step 2, the proof of (ii) is completed.
\end{proof}

\subsection{Forward projection}

Now we derive the duality formula for forward projection.

\begin{theorem}
\label{thm_dual_fw}Let $X,$ $Y$ be compact, $\mu\in P\left(  X\right)  $,
$\nu\in P\left(  Y\right)  $, $c:X\times Y\mapsto\left[  0,\infty\right]  $ be
lower semicontinuous and $\mathcal{A}$ be a defining function class as defined
in Definition \ref{def_lin_order}. Assume that $\mathcal{A}$ and
$\mathcal{A\cap}C_{b}$ define the same stochastic order.

(i) The foward duality holds%
\[
\mathcal{T}_{c}\left(  \mathrm{P}_{\mu\leqslant}^{\mathcal{A}},\nu\right)
=\sup_{\left(  \varphi,v\right)  \in\mathcal{U}_{c}^{\ast}\cap C_{b}}\left\{
\int_{X}\varphi d\mu-\int_{Y}vd\nu\right\}  ,
\]
where $\mathcal{U}_{c}^{\ast}$ is the set of measurable functions $\left(
\varphi,v\right)  $ such that%
\begin{equation}
\varphi\in\mathcal{A}\text{ and }\varphi\left(  x\right)  -v\left(  y\right)
\leqslant c\left(  x,y\right)  ,\text{ }\forall x,y. \label{eq_Uc*}%
\end{equation}

(ii) The alternative form of forward duality holds%
\[
\mathcal{T}_{c}\left(  \mathrm{P}_{\mu\leqslant}^{\mathcal{A}},\nu\right)
=\sup_{\varphi\in\mathcal{A\cap}C_{b}}\left\{  \int_{X}\varphi d\mu-\int%
_{Y}Q_{\bar{c}}\left(  \varphi\right)  d\nu\right\}  ,
\]
where%
\begin{equation}
Q_{\bar{c}}\left(  \varphi\right)  \left(  y\right)  =\sup_{x\in X}\left\{
\varphi\left(  x\right)  -c\left(  x,y\right)  \right\}  . \label{eq_Qcbar}%
\end{equation}
In both forms (i) and (ii), $\varphi\in\mathcal{A\cap}C_{b}$ can be relaxed to
$\varphi\in\mathcal{A\cap}\mathrm{S}_{b}$.
\end{theorem}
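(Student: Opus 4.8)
The plan is to mirror the proof of Theorem~\ref{thm_dual_bw} verbatim, transporting each ingredient from the backward to the forward configuration. The only structural change is bookkeeping: the $\mathcal{A}$-function, which in the backward problem was paired with $\nu$ and lived on $Y$, is now paired with $\mu$ and lives on $X$, so the auxiliary domination inequality migrates from the $Y$-side to the $X$-side. Concretely, I would first prove the intermediate duality
\[
\mathcal{T}_{c}\left(\mathrm{P}_{\mu\leqslant}^{\mathcal{A}},\nu\right)=\sup_{\left(\varphi,u,v\right)}\left\{\int_{X}\varphi\,d\mu-\int_{Y}v\,d\nu\right\},
\]
where the supremum runs over triples with $\varphi\in\mathcal{A}$, $\varphi\left(x\right)\leqslant u\left(x\right)$ for all $x$, and $u\left(x\right)-v\left(y\right)\leqslant c\left(x,y\right)$ for all $x,y$, first with all functions in $C_{b}$ and then relaxed to the appropriate $\mathrm{S}_{b}$/$L^{1}$ classes. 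Passage to (i) and (ii) then follows exactly as in Step~3.

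The easy inequality is the exact analog of Step~1: for any $\bar{\nu}\in\mathrm{P}_{\mu\leqslant}^{\mathcal{A}}$, $\pi\in\Pi\left(\bar{\nu},\nu\right)$ and any admissible triple, the chain
\[
\int_{X}\varphi\,d\mu-\int_{Y}v\,d\nu\leqslant\int_{X}\varphi\,d\bar{\nu}-\int_{Y}v\,d\nu\leqslant\int_{X}u\,d\bar{\nu}-\int_{Y}v\,d\nu=\int\left(u\left(x\right)-v\left(y\right)\right)d\pi\leqslant\int c\,d\pi
\]
holds, using in order the forward relation $\mu\leqslant_{\mathcal{A}}\bar{\nu}$ together with boundedness of $\varphi$ (so $\int\varphi\,d\bar{\nu}$ is finite against the arbitrary intermediate measure), the domination $\varphi\leqslant u$, and the cost constraint via $\pi\in\Pi\left(\bar{\nu},\nu\right)$. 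Taking the infimum over $\bar{\nu},\pi$ and the supremum over triples gives the $\leqslant$ direction, and it persists when $C_{b}$ is enlarged to the measurable classes, precisely as before.

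For the reverse inequality I would run Fenchel--Rockafellar on the mirrored functionals: keep $\mathcal{G}\left(p\right)$ the indicator of $\{p\geqslant-c\}$ on $C_{b}\left(X\times Y\right)$, but now take $\mathcal{H}\left(q\right)$ to be the indicator of $\{q\geqslant0\}$ on $C_{b}\left(X\right)$ (rather than $C_{b}\left(Y\right)$), encoding $\varphi\leqslant u$ on $X$; and set $\mathcal{I}\left(p,q\right)=\int_{Y}v\,d\nu-\int_{X}\varphi\,d\mu$ for $p\left(x,y\right)=v\left(y\right)-u\left(x\right)$ and $q\left(x\right)=u\left(x\right)-\varphi\left(x\right)$ with $\varphi\in\mathcal{A}\cap C_{b}$. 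Well-definedness follows as in the backward case, since two splittings differ by a single additive constant and $\mu,\nu$ have equal mass; convexity follows from $\mathcal{A}$ being a convex cone. The qualification hypothesis is met at, e.g., $\varphi\equiv0$, $u\equiv1$, $v\equiv2$, which gives $p_{0}\equiv1$, $q_{0}\equiv1$ strictly inside the effective domains, so that $\mathcal{G}+\mathcal{H}$ is continuous there and both values are finite.

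The step I expect to require the most care is recomputing $\mathcal{I}^{\ast}$. Pairing with $\left(\pi,\bar{\nu}\right)\in M_{+}\left(X\times Y\right)\times M_{+}\left(X\right)$ and taking the supremum over $u\in C_{b}\left(X\right)$, $v\in C_{b}\left(Y\right)$, $\varphi\in\mathcal{A}\cap C_{b}$ separately, the $u$-term $\int u\,d\left(\bar{\nu}-\pi_{X}\right)$ forces the $X$-marginal $\pi_{X}=\bar{\nu}$, the $v$-term $\int v\,d\left(\pi_{Y}-\nu\right)$ forces $\pi_{Y}=\nu$, and the $\varphi$-term equals $\sup_{\varphi\in\mathcal{A}\cap C_{b}}\{\int\varphi\,d\mu-\int\varphi\,d\bar{\nu}\}$, which by Lemma~\ref{lm_stoch} (now invoked in the direction $\mu\leqslant_{\mathcal{A}}\bar{\nu}$, using Lemma~\ref{lm_marginal} and that $\mathcal{A}$ and $\mathcal{A}\cap C_{b}$ define the same order) is $0$ precisely when $\bar{\nu}\in\mathrm{P}_{\mu\leqslant}^{\mathcal{A}}$ and $\infty$ otherwise. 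Hence $\mathcal{I}^{\ast}\left(\pi,\bar{\nu}\right)=0$ iff $\bar{\nu}\in\mathrm{P}_{\mu\leqslant}^{\mathcal{A}}$ and $\pi\in\Pi\left(\bar{\nu},\nu\right)$; combined with $\mathcal{G}^{\ast}\left(-\pi\right)=\int c\,d\pi$ and $\mathcal{H}^{\ast}\left(-\bar{\nu}\right)=0$, the Fenchel--Rockafellar identity delivers exactly $\mathcal{T}_{c}\left(\mathrm{P}_{\mu\leqslant}^{\mathcal{A}},\nu\right)$ on the dual side. Finally, to reach (i) I note that a triple yields $\left(\varphi,v\right)\in\mathcal{U}_{c}^{\ast}$ through $\varphi\leqslant u$ and $u-v\leqslant c$, while conversely $\left(\varphi,v\right)\in\mathcal{U}_{c}^{\ast}$ yields the triple $\left(\varphi,\varphi,v\right)$; and to reach (ii) I fix $\varphi\in\mathcal{A}\cap C_{b}$ and choose the smallest admissible $v$, namely $v=Q_{\bar{c}}\left(\varphi\right)$, which is legitimate because the constraint $\varphi\left(x\right)-v\left(y\right)\leqslant c\left(x,y\right)$ for all $x$ is equivalent to $v\geqslant Q_{\bar{c}}\left(\varphi\right)$. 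The relaxation from $C_{b}$ to $\mathrm{S}_{b}$ is absorbed exactly as in the backward argument.
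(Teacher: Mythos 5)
Your proposal is correct and takes essentially the same route as the paper: the paper proves Theorem \ref{thm_dual_fw} by repeating the proof of Theorem \ref{thm_dual_bw} with exactly the mirrored functional $\mathcal{I}$ on $C_{b}\left(X\times Y\right)\times C_{b}\left(X\right)$, where $q\left(x\right)=u\left(x\right)-\varphi\left(x\right)$ encodes $\varphi\leqslant u$ on the $X$-side, which is precisely your construction. Your intermediate duality over triples $\left(\varphi,u,v\right)$, the easy-inequality chain using $\mu\leqslant_{\mathcal{A}}\bar{\nu}$, the computation of $\mathcal{I}^{\ast}$ via Lemma \ref{lm_marginal} and Lemma \ref{lm_stoch}, and the passage to (i) and (ii) through $\left(\varphi,u,v\right)\mapsto\left(\varphi,v\right)$, $\left(\varphi,v\right)\mapsto\left(\varphi,\varphi,v\right)$ and $v=Q_{\bar{c}}\left(\varphi\right)$ all match the paper's argument.
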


\begin{proof}
The proof is similar to the backward case, Theorem \ref{thm_dual_bw}. We only
indicate the difference. As before, the proof of the theorem boils down to the
intermediate duality
\[
\mathcal{T}_{c}\left(  \mathrm{P}_{\mu\leqslant}^{\mathcal{A}},\nu\right)
=\sup_{\left(  \varphi,u,v\right)  \in\mathcal{V}_{c}\cap\left(
\mathrm{S}_{b}\times\mathrm{S}_{b}\times L^{1}\right)  }\left\{  \int%
_{X}\varphi d\mu-\int_{Y}vd\nu\right\}  ,
\]
where $\mathcal{U}_{c}$\ is the collection of triples $\left(  \varphi
,u,v\right)  $ of measurable functions such that%
\begin{equation}
u\left(  x\right)  -v\left(  y\right)  \leqslant c\left(  x,y\right)  ,\text{
}\forall x,y, \label{eq_Uc1}%
\end{equation}
and%
\begin{equation}
\varphi\in\mathcal{A}\text{ and }\varphi\left(  x\right)  \leqslant u\left(
x\right)  ,\text{ }\forall x. \label{eq_Uc2}%
\end{equation}
The duality remains true if $\left(  \ref{eq_Uc1}\right)  $ is replaced with%
\[
u\left(  x\right)  -v\left(  y\right)  \leqslant c\left(  x,y\right)  ,\text{
}\forall x,\text{ }\nu\text{-}a.e.\text{ }y.
\]
This is proved in 2 steps.

\textbf{1}. Similar to step 1 in the proof of Theorem \ref{thm_dual_bw}, we
have%
\[
\sup_{\mathcal{U}_{c}\cap C_{b}}\left\{  \int\varphi d\mu-\int vd\nu\right\}
\leqslant\sup_{\mathcal{U}_{c}\cap\left(  \mathrm{S}_{b}\times L^{1}\times
L^{1}\right)  }\left\{  \int\varphi d\mu-\int vd\nu\right\}  \leqslant
\mathcal{T}_{c}\left(  \mathrm{P}_{\mu\leqslant}^{\mathcal{A}},\nu\right)  .
\]

\textbf{2}. Next we have%
\[
\mathcal{T}_{c}\left(  \mathrm{P}_{\mu\leqslant}^{\mathcal{A}},\nu\right)
=\sup_{\left(  \varphi,u,v\right)  \in\mathcal{U}_{c}\cap C_{b}}\left\{
\int_{X}\varphi d\mu-\int_{Y}vd\nu\right\}  .
\]
This is proved by following the same lines as in step 2 of Theorem
\ref{thm_dual_bw}, except that the functional $\mathcal{I}$ is now defined as%
\begin{align*}
\mathcal{I}  &  :\left(  p,q\right)  \in C_{b}\left(  X\times Y\right)  \times
C_{b}\left(  X\right) \\
&  \mapsto\left\{
\begin{array}
[c]{ll}%
{\displaystyle\int_{Y}}
vd\nu-%
{\displaystyle\int_{X}}
\varphi d\mu, &
\begin{array}
[c]{l}%
p\left(  x,y\right)  =v\left(  y\right)  -u\left(  x\right)  \text{ for some
}\left(  u,v\right)  \in C_{b},\\
q\left(  x\right)  =u\left(  x\right)  -\varphi\left(  x\right)  \text{ for
some }\varphi\in\mathcal{A\cap}C_{b}\text{,}%
\end{array}
\\
\infty, & \text{otherwise}.
\end{array}
\right.
\end{align*}

\end{proof}

\section{Duality the general case}

In the previous section, we have proved the duality formulas for backward and
forward projection in the case the underlying spaces $X,$ $Y$ (resp. $X\times
Y$) are compact.

At first sight, one might think that the proof there can be carried out easily
to the general case, e.g. $\mathbb{R}^{d}$. The typical \textit{cutting and
gluing} technique would be the first to appear in our mind. A careful
thinking, however, reveals the difficulty in such an approach. The reason lies
in the special structure of measures in stochastic order, which makes it hard
to localize.\ Another obstacle comes from the function spaces. If we use
$C_{0}$ instead of $C_{b}$ in this case, the function $\mathcal{I}$ in Theorem
\ref{thm_dual_bw} and Theorem \ref{thm_dual_fw} would be useless, since the
decomposition of $p\left(  x,y\right)  $ as the sum of functions of individial
variables is possible only in a trivial way. The admissible set $\mathcal{A}$
adds a further layer of difficulty, it renders both the spaces $C_{0}$ and
$C_{b}$ useless in the general case, since those spaces are not large enough
to accommodate nontrivial test functions in $\mathcal{A}$. Consider for
instance the convex order defined by the set of all convex functions$,$ there
are no non-constant convex functions in $C_{b}\left(  \mathbb{R}^{d}\right)  $.

We introduce appropriate function spaces to address these issues.

\subsection{The space $C_{b,k}$ and its dual $\left(  C_{b,k}\right)  ^{\ast}%
$}

Let $X$ be a locally compact polish space. Let $k\geqslant0$ be an integer$.$
We introduce the function space $C_{b,k}\left(  X\right)  $ defined by%
\[
C_{b,k}\left(  X\right)  =\left\{  u\in C\left(  X\right)  :\frac
{u}{1+\left\Vert x\right\Vert ^{k}}\in C_{b}\left(  X\right)  \right\}  .
\]
with the norm%
\[
\left\Vert u\right\Vert _{b,k}=\sup_{x\in X}\frac{\left\vert u\left(
x\right)  \right\vert }{1+\left\Vert x\right\Vert ^{k}}.
\]
Then%
\[
C_{0,k}\left(  X\right)  =\left\{  u\in C\left(  X\right)  :\frac
{u}{1+\left\Vert x\right\Vert ^{k}}\in C_{0}\left(  X\right)  \right\}
\]
is a closed subspace of $C_{b,k}\left(  X\right)  $. Its topological dual
space is identified with the space of finite Borel measures with $k$-th
moment, i.e.%
\[
\left(  C_{0,k}\left(  X\right)  \right)  ^{\ast}\cong M_{k}\triangleq\left\{
\eta\in M\left(  X\right)  :\left(  1+\left\Vert x\right\Vert ^{k}\right)
\eta\in M\left(  X\right)  \right\}  .
\]
We denote by $M_{k,+}$ the set of nonnegative measures in $M_{k}$.

We also introduce%
\[
\mathrm{S}_{b,k}\left(  X\right)  =\left\{  u\text{ measurable}:\frac
{u}{1+\left\Vert x\right\Vert ^{k}}\in\mathrm{S}_{b}\left(  X\right)
\right\}  .
\]
Clearly when $k=0,$ $C_{b,0}\left(  X\right)  $ (resp. $C_{0,0}\left(
X\right)  ,$ $\mathrm{S}_{b,0}\left(  X\right)  $)\ reduces to the usual space
$C_{b}\left(  X\right)  $ (resp. $C_{0}\left(  X\right)  ,$ $\mathrm{S}%
_{b}\left(  X\right)  $). In addition, if $X$ is bounded, then $C_{0,k}\left(
X\right)  =C_{0}\left(  X\right)  ,$ $C_{b,k}\left(  X\right)  =C_{b}\left(
X\right)  ,$ $\mathrm{S}_{b,k}\left(  X\right)  =\mathrm{S}_{b}\left(
X\right)  ,$ for $k\geqslant0.$

The following crucial lemmas provide decomposition and representation of
continuous linear functionals on $C_{b,k}.$

\begin{lemma}
\label{lm_decmp_Cbk}Let $k\geqslant0$ $\ $be an integer and $X,$ $Y$ be
locally compact, $\sigma$-compact polish spaces$.$ Let $L$ be a nonnegative
continuous functional on $C_{b,k}\left(  X\times Y\right)  $. Then%
\[
L=\pi+R,
\]
where $\pi\in M_{k,+}\left(  X\times Y\right)  $ and $R$ a nonnegative
continuous linear functional supported at infinity, i.e.,%
\begin{equation}
\left\langle R,u\right\rangle =0,\text{ }\forall u\in C_{0,k}\left(  X\times
Y\right)  . \label{lm_decmp_Cbk1}%
\end{equation}

\end{lemma}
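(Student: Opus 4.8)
Let $k\geqslant 0$ be an integer and $X,Y$ be locally compact $\sigma$-compact Polish spaces. A nonnegative continuous functional $L$ on $C_{b,k}(X\times Y)$ decomposes as $L=\pi+R$ where $\pi\in M_{k,+}(X\times Y)$ and $R$ is nonnegative, continuous, and vanishes on $C_{0,k}(X\times Y)$.

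Let me think about how I'd prove this.

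This is a decomposition lemma for the dual of $C_{b,k}$. The key structure: $C_{0,k}$ is a closed subspace of $C_{b,k}$ whose dual is identified with $M_k$ (measures with finite $k$-th moment). So we have a situation analogous to the classical decomposition of the dual of $C_b$ (bounded continuous functions) — where $(C_b)^*$ decomposes into a "regular" part (a measure) and a "singular at infinity" part. The present lemma is the weighted version with weight $1+\|x\|^k$.

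**The natural strategy: reduce to the unweighted case via the isometry.**

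The weight $w(x,y) = 1+\|(x,y)\|^k$ (or a product weight) gives an isometric isomorphism $M: C_{b,k} \to C_b$ by $u \mapsto u/w$, with inverse $g \mapsto gw$. This is by the very definition of $C_{b,k}$ and its norm $\|u\|_{b,k} = \sup |u|/(1+\|x\|^k)$. Under this isometry, $C_{0,k}$ corresponds exactly to $C_0$. So the dual spaces correspond: $(C_{b,k})^* \cong (C_b)^*$ and $(C_{0,k})^* \cong (C_0)^* = M$ (Radon measures). This is the clean way to transport everything.

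**Key steps, in order:**

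1. **Set up the weight and isometry.** Define $w = 1+\|\cdot\|^k$ on $X\times Y$ (using whatever metric/norm the space carries), and the isometry $T_w: C_{b,k}(X\times Y)\to C_b(X\times Y)$, $T_w u = u/w$. Note $w$ is continuous, strictly positive, and bounded below by $1$.

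2. **Transport $L$ to $C_b$.** Define $\tilde L = L \circ T_w^{-1}$, i.e. $\langle \tilde L, g\rangle = \langle L, gw\rangle$ for $g\in C_b$. Since $T_w^{-1}$ is a positive isometric isomorphism (multiplication by the positive function $w$ preserves nonnegativity), $\tilde L$ is a nonnegative continuous linear functional on $C_b(X\times Y)$.

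3. **Apply the classical decomposition on $C_b$.** Here $X\times Y$ is locally compact and $\sigma$-compact Polish, hence normal and nice enough. A nonnegative bounded linear functional on $C_b$ of a locally compact $\sigma$-compact space decomposes as $\tilde L = \sigma + \tilde R$, where $\sigma$ is a nonnegative finite Radon measure (the "tight" / countably additive part) and $\tilde R\geqslant 0$ vanishes on $C_0$. This is a standard Riesz-type / Yosida–Hewitt decomposition of the dual of $C_b$ into its $C_0$-regular part and the part supported at infinity. I would cite or reprove this; the $\sigma$-compactness gives an exhaustion by compacts $K_n$ and a partition-of-unity argument identifying $\sigma$ via restriction to each $C_0$-piece.

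4. **Transport back.** Set $\pi = w\,\sigma$ (i.e. $d\pi = w\, d\sigma$) and $R = \tilde R\circ T_w$ (so $\langle R, u\rangle = \langle \tilde R, u/w\rangle$). Then for $u\in C_{b,k}$, $\langle L,u\rangle = \langle\tilde L, u/w\rangle = \int (u/w)\,d\sigma + \langle\tilde R, u/w\rangle = \int u\, d\pi + \langle R, u\rangle$. Since $\sigma$ is finite and $d\pi = w\,d\sigma = (1+\|\cdot\|^k)\,d\sigma$, the measure $\pi$ has finite $k$-th moment, i.e. $\pi\in M_{k,+}$, by the identification $(C_{0,k})^*\cong M_k$ recalled in the excerpt. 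Finally, $R$ vanishes on $C_{0,k}$ because $u\in C_{0,k}\Rightarrow u/w\in C_0\Rightarrow \langle\tilde R,u/w\rangle = 0$; and $R\geqslant 0$ since $\tilde R\geqslant 0$ and $T_w$ is positive.

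**What I expect to be the main obstacle.** The routine parts are the isometry bookkeeping (steps 1, 2, 4). The genuine content is step 3: the decomposition $(C_b)^* = M \oplus (\text{part at infinity})$ with the correct positivity and countable-additivity of the regular part. On a general locally compact space the dual of $C_b$ is the space of *finitely additive* regular measures, and one must extract the countably additive (Radon) piece — this is exactly where $\sigma$-compactness is essential, via a compact exhaustion and the observation that $\langle \tilde L,\cdot\rangle$ restricted to functions supported in each $K_n$ is represented by a genuine measure, which then glue (by monotone/dominated convergence, using nonnegativity and $\sup$-norm continuity) into a single finite Radon measure $\sigma$. Verifying that $\tilde R = \tilde L - \sigma$ is nonnegative and annihilates all of $C_0$ (not just compactly supported functions) requires a density/approximation argument. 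I'd expect the delicate point to be showing $\langle\tilde R, g\rangle = 0$ for every $g\in C_0$ and that $\sigma$ is genuinely the largest measure below $\tilde L$, so that the remainder is nonnegative. If the paper is willing to cite a standard reference for the $C_b$-dual decomposition, step 3 collapses to a citation and the whole lemma becomes a short transport-via-weight argument.
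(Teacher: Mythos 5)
Your strategy is sound and is, in substance, the paper's own: the paper disposes of this lemma with a one-line reference ("similar to \cite[Lemma 1.24]{villani2003topics}"), i.e.\ the classical decomposition of a nonnegative functional on $C_b$ into a finite Radon measure plus a part supported at infinity, and your weight isometry $T_w u = u/w$ with $w = 1+\Vert\cdot\Vert^k$ is exactly the clean way to transport that unweighted statement to $C_{b,k}$. Steps 1--3 are fine (and your diagnosis that the real content sits in the $C_b$-decomposition, where $\sigma$-compactness enters via a compact exhaustion, is accurate).

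However, step 4 contains a direction error that, as written, breaks the one quantitative claim of the lemma, namely $\pi\in M_{k,+}$. You set $d\pi = w\,d\sigma$, but your own chain of equalities $\langle L,u\rangle = \langle \tilde L,u/w\rangle = \int (u/w)\,d\sigma + \langle\tilde R,u/w\rangle$ forces $d\pi = w^{-1}\,d\sigma$: the regular part of $L$ acts by $u\mapsto \int u\,w^{-1}\,d\sigma$, so with $d\pi = w\,d\sigma$ the identity $\int (u/w)\,d\sigma = \int u\,d\pi$ is simply false. Moreover the justification ``$\sigma$ finite and $d\pi = w\,d\sigma$, hence $\pi$ has finite $k$-th moment'' is invalid: $\sigma$ emerges from the $C_b$-decomposition with no moment control, so $\int w\,d\pi = \int w^2\,d\sigma$ can be infinite, and $\pi = w\sigma$ need not even be a finite measure. (Concretely: let $L$ be integration against some $\eta\in M_{k,+}$ whose $2k$-th moment is infinite; then $\sigma = w\eta$ is finite, but $w\sigma = w^2\eta$ is not.) With the corrected transport $d\pi = w^{-1}\,d\sigma$ everything closes at once: $\pi(X\times Y)\leqslant \sigma(X\times Y)<\infty$ and $\int w\,d\pi = \sigma(X\times Y)<\infty$, so $\pi\in M_{k,+}$ — consistent with the identification $\left(C_{0,k}\right)^{\ast}\cong M_k$, under which finite measures on the $C_0$ side correspond to $w^{-1}\sigma$, not $w\sigma$; and $R = \tilde R\circ T_w$ is nonnegative and annihilates $C_{0,k}$ exactly as you argue. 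So the gap is local and fixable, but the moment verification must be redone with the correct formula.
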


\begin{proof}
The proof is similar to \cite[Lemma 1.24]{villani2003topics}.
\end{proof}

\begin{lemma}
\label{lm_margin_Cbk}Let $X$, $Y$ be locally compact, $\sigma$-compact polish
spaces and $k\geqslant0$. Let $\mu\in M_{+}\left(  X\right)  $, $\nu\in
P_{k}\left(  Y\right)  $ be Borel probabilities. If $L\in\left(
C_{b,k}\left(  X\times Y\right)  \right)  ^{\ast}$ is nonnegative such that,
for all $u\in C_{b,k}\left(  X\right)  ,$ $v\in C_{b,k}\left(  Y\right)  ,$%
\begin{equation}
\left\langle L,u+v\right\rangle =\int_{X}u\left(  x\right)  d\mu+\int%
_{Y}v\left(  y\right)  d\nu, \label{lm_margin_Cbk1}%
\end{equation}
then $\mu\in P_{k}\left(  X\right)  ,$ $L\in P_{k}\left(  X\times Y\right)  $
and $L\in\Pi\left(  \mu,\nu\right)  .$
\end{lemma}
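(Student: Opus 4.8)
The plan is to feed $L$ into the decomposition Lemma~\ref{lm_decmp_Cbk}, writing $L=\pi+R$ with $\pi\in M_{k,+}(X\times Y)$ and $R$ a nonnegative functional supported at infinity, and then to extract from the one-variable hypothesis \eqref{lm_margin_Cbk1} both that $\pi$ is a genuine probability coupling of $\mu$ and $\nu$ and that the part at infinity $R$ is in fact zero, so that $L=\pi\in P_k(X\times Y)$. Everything hinges on a preliminary embedding remark that I would record first: a function $u\in C_{b,k}(X)$ lifts to $(x,y)\mapsto u(x)$, which lies in $C_{b,k}(X\times Y)$ because $\|x\|_X\lesssim\|(x,y)\|$ for the product norm, and lies in $C_{0,k}(X\times Y)$ whenever $u\in C_0(X)$ (if $\|x\|\to\infty$ then $u(x)\to 0$; if $\|x\|$ stays bounded while $\|y\|\to\infty$ then the denominator blows up). This is exactly what lets the single-variable data interact with the two-variable functional.

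Next I would identify the marginals of $\pi$. Testing \eqref{lm_margin_Cbk1} against lifts of $u\in C_0(X)\subset C_{0,k}(X)$ with $v=0$, and using $\langle R,u\rangle=0$ from \eqref{lm_decmp_Cbk1}, gives $\int u\,d\pi_X=\langle\pi,u\rangle=\langle L,u\rangle=\int u\,d\mu$; since $C_0(X)$ determines finite Radon measures, this forces the first marginal $\pi_X=\mu$, and symmetrically $\pi_Y=\nu$. Because $\pi\in M_{k,+}$ has marginals with finite $k$-th moment, we get $\mu=\pi_X\in M_{k,+}(X)$, and the masses satisfy $\mu(X)=\pi_X(X)=\pi(X\times Y)=\pi_Y(Y)=\nu(Y)=1$. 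Hence $\mu\in P_k(X)$, and $\pi$ is a probability measure with finite $k$-th moment whose marginals are $\mu$ and $\nu$, i.e. $\pi\in\Pi(\mu,\nu)$.

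The substance of the lemma—and the step I expect to be the main obstacle—is showing $R=0$ on all of $C_{b,k}(X\times Y)$, not merely on $C_{0,k}$; this is where the finiteness of the $k$-th moment of $\nu$ (and hence of $\mu$) is genuinely used. The idea is to pin down $R$ on the three reference functions $1$, $\|x\|_X^k$ and $\|y\|_Y^k$, each of which sits in $C_{b,k}$. For $g(x)=\|x\|_X^k$ the marginal identity yields $\langle R,g\rangle=\langle L,g\rangle-\langle\pi,g\rangle=\int\|x\|^k\,d\mu-\int\|x\|^k\,d\pi_X=0$, and likewise $\langle R,\|y\|^k\rangle=0$, while $\langle R,1\rangle=\langle L,1\rangle-\pi(X\times Y)=1-1=0$ (using $v\equiv 1$ to evaluate $\langle L,1\rangle$). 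The elementary bound $\|(x,y)\|^k\le C(\|x\|_X^k+\|y\|_Y^k)$ and the nonnegativity of $R$ then give $\langle R,1+\|\cdot\|^k\rangle=0$. Finally, for arbitrary $w\in C_{b,k}(X\times Y)$ the defining inequality $|w|\le\|w\|_{b,k}(1+\|\cdot\|^k)$ and nonnegativity of $R$ force $|\langle R,w\rangle|\le\|w\|_{b,k}\langle R,1+\|\cdot\|^k\rangle=0$, whence $R=0$ and $L=\pi$. The only points needing care are the two embedding claims above and the finiteness of $\langle\pi,\|x\|^k\rangle$, both of which become routine once $\pi_X=\mu$ and $\pi_Y=\nu$ are established.
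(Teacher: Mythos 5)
For $k\geqslant 1$ (with an unbounded factor) your argument is sound and is essentially the paper's own proof: the same decomposition $L=\pi+R$ from Lemma \ref{lm_decmp_Cbk}, the same three test functions $1$, $\left\Vert x\right\Vert _{X}^{k}$, $\left\Vert y\right\Vert _{Y}^{k}$, and the same nonnegativity sandwich $\left\vert \left\langle R,w\right\rangle \right\vert \leqslant\left\Vert w\right\Vert _{b,k}\left\langle R,1+\left\Vert \cdot\right\Vert ^{k}\right\rangle$. Your one deviation is a pleasant reordering: you pin down $\pi_{X}=\mu$ and $\pi_{Y}=\nu$ as measures first, by testing against lifts of $C_{0}$ functions, and then read off $\mu\in M_{k,+}$ and the mass count from $\pi\in M_{k,+}$; the paper instead tests against all of $C_{b}\left(  X\right)$ and truncations $a_{n}\left(  x\right)  \left\Vert x\right\Vert _{X}^{k}$ with monotone convergence to extract the moment identities directly, deferring the marginal bookkeeping to the end (its step 3, "similar to Lemma \ref{lm_marginal}"). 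Your evaluation $\left\langle L,1\right\rangle =1$ via $v\equiv1$ in \eqref{lm_margin_Cbk1} is also fine and substitutes for the paper's use of $1\in C_{0,k}\left(  X\times Y\right)$. (Minor notational slip: the containment you need is $C_{0}\left(  X\right)  \hookrightarrow C_{0,k}\left(  X\times Y\right)$ via the lift, not $C_{0}\left(  X\right)  \subset C_{0,k}\left(  X\right)$; your preliminary remark does state the right thing.)

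There is, however, a genuine gap at $k=0$, and the lemma is stated for $k\geqslant0$. Your preliminary embedding remark fails there: for $k=0$ one has $C_{0,0}\left(  X\times Y\right)  =C_{0}\left(  X\times Y\right)$, and the lift $\left(  x,y\right)  \mapsto u\left(  x\right)$ of $u\in C_{0}\left(  X\right)$ does not vanish at infinity of the product when $Y$ is noncompact; your justification "the denominator blows up" is precisely where $k\geqslant1$ enters silently, since for $k=0$ the denominator is the constant $2$. Consequently $\left\langle R,u\right\rangle =0$ is unjustified, the identification $\pi_{X}=\mu$ collapses (nonnegativity of $R$ only yields $\pi_{X}\leqslant\mu$), and mass of $L$ can a priori leak to infinity: nothing forces $\pi\left(  X\times Y\right)  =1$, which is exactly what your computation $\left\langle R,1\right\rangle =\left\langle L,1\right\rangle -\pi\left(  X\times Y\right)  =0$ relies on. This case cannot be repaired with moment weights; it requires the tightness of $\mu$ and $\nu$ on Polish spaces, and the paper disposes of it (together with the case where both spaces are bounded, in which escape to infinity likewise does not inflate $1+\left\Vert \cdot\right\Vert ^{k}$) by citing \cite[Lemma 1.25]{villani2003topics}. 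So either supply the $k=0$/bounded case by that classical tightness argument, or restrict your embedding remark to $k\geqslant1$ with an unbounded factor and route the remaining cases through the citation, as the paper does.
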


Note the assumption is that $\mu$ is a nonnegative measure. It is part of the
conclusion that $\mu$ is a probability of $k$-th moment.

\begin{proof}
\textbf{1}. When $k=0$ or both $X$ and $Y$ are bounded (thus $C_{b,k}$ reduces
to $C_{b}$), this is \cite[Lemma 1.25]{villani2003topics}.

\textbf{2}. Now consider $k\geqslant1$ and either $X$ or $Y$ is unbounded, say
$X$ is unbounded. To simplify notation, we assume w.l.g. that the origin $0$
is in $X$ and $Y$, so that we can use $0$ as the reference point for the
metrics on $X$ and $Y$. In vew of Lemma \ref{lm_decmp_Cbk}, we can write%
\[
L=\pi+R,
\]
where $\pi\in M_{k,+}\left(  X\times Y\right)  $ and $R$ a nonnegative
continuous linear functional supported at infinity in the sense of $\left(
\ref{lm_decmp_Cbk1}\right)  .$\ To complete the proof, it suffices to show
that $R=0$. Let $A_{n}$ be an increasing compact sets of $X$ such that%
\[
A_{n}\subset\text{int}\left(  A_{n+1}\right)  ,\text{ and }%
{\textstyle\bigcup\nolimits_{n}}
A_{n}=X.
\]
By Urysohn's Lemma, there are continuous functions $a_{n}\left(  x\right)  \in
C_{0}\left(  X\right)  $ satisfying%
\[
0\leqslant a_{n}\left(  x\right)  \leqslant1,\text{ }a_{n}\left(  x\right)
\text{ is }1\text{ on }A_{n},\text{ }0\text{ on }A_{n+1}^{c}.
\]
Clearly $\left\{  a_{n}\left(  x\right)  \right\}  $ is an increasing sequence
of functions and $a_{n}\left(  x\right)  \rightarrow1.$ Since $k\geqslant1$
and one of the underlying spaces is unbounded$,$ $C_{b}\left(  X\right)
\subset C_{0,k}\left(  X\times Y\right)  $. Therefore $R$ vanishes on
$C_{b}\left(  X\right)  $ by $\left(  \ref{lm_decmp_Cbk1}\right)  $. Taking
$u\in C_{b}\left(  X\right)  ,$ $v=0$ as test functions, we have that%
\[
\int_{X\times Y}u\left(  x\right)  d\pi=\left\langle L,u\right\rangle
=\int_{X}u\left(  x\right)  d\mu,\text{ }\forall u\in C_{b}\left(  X\right)
.
\]
If we substitute $u$ with%
\[
u_{n}\left(  x\right)  =a_{n}\left(  x\right)  \left\Vert x\right\Vert
_{X}^{k}\in C_{b}\left(  X\right)  ,\text{ }n\geqslant1,
\]
we get%
\[
\int_{X\times Y}u_{n}\left(  x\right)  d\pi=\int_{X}u_{n}\left(  x\right)
d\mu,\text{ }\forall n\geqslant1.
\]
It follows, by monotone convergence theorem, that%
\begin{equation}
\int_{X\times Y}\left\Vert x\right\Vert _{X}^{k}d\pi=\int_{X}\left\Vert
x\right\Vert _{X}^{k}d\mu. \label{lm_margin_Cbk2}%
\end{equation}
Since $\pi\in M_{k,+}\left(  X\times Y\right)  $, this indicates that $\mu$
has $k$-th moment, i.e., $\mu\in M_{k}\left(  X\right)  $. Analogously we have
$C_{b}\left(  Y\right)  \subset C_{0,k}\left(  X\times Y\right)  $ and%
\begin{equation}
\int_{X\times Y}\left\Vert y\right\Vert _{Y}^{k}d\pi=\int_{Y}\left\Vert
y\right\Vert _{Y}^{k}d\nu. \label{lm_margin_Cbk3}%
\end{equation}
Now taking $u=\left\Vert x\right\Vert _{X}^{k},$ $v=\left\Vert y\right\Vert
_{Y}^{k}$ as test functions in $\left(  \ref{lm_margin_Cbk1}\right)  $ we get%
\[
\int_{X\times Y}\left(  \left\Vert x\right\Vert _{X}^{k}+\left\Vert
y\right\Vert _{Y}^{k}\right)  d\pi+\left\langle R,\left\Vert x\right\Vert
_{X}^{k}+\left\Vert y\right\Vert _{Y}^{k}\right\rangle =\int_{X}\left\Vert
x\right\Vert _{X}^{k}d\mu+\int_{Y}\left\Vert y\right\Vert _{Y}^{k}d\nu.
\]
This together with $\left(  \ref{lm_margin_Cbk2}\right)  $ and $\left(
\ref{lm_margin_Cbk3}\right)  $ implies%
\[
\left\langle R,\left\Vert x\right\Vert _{X}^{k}+\left\Vert y\right\Vert
_{Y}^{k}\right\rangle =0.
\]
From these we conclude that $R=0.$ Indeed, for any $u\in C_{b,k}\left(
X\times Y\right)  ,$ there exist constants $a>0,$ $b>0$ such that%
\[
-a\left(  \left\Vert x\right\Vert _{X}^{k}+\left\Vert y\right\Vert _{Y}%
^{k}\right)  -b\leqslant u\left(  x,y\right)  \leqslant a\left(  \left\Vert
x\right\Vert _{X}^{k}+\left\Vert y\right\Vert _{Y}^{k}\right)  +b.
\]
Note $1\in C_{0,k}\left(  X\times Y\right)  $, therefore, when $R$ acts on the
first and last term of the above inequalities, they all vanish. Since $R$ is
nonnegative, we see that%
\[
\left\langle R,u-\left[  -a\left(  \left\Vert x\right\Vert _{X}^{k}+\left\Vert
y\right\Vert _{Y}^{k}\right)  -b\right]  \right\rangle \geqslant0\text{ which
yields }\left\langle R,u\right\rangle \geqslant0,
\]
and%
\[
\left\langle R,\left[  a\left(  \left\Vert x\right\Vert _{X}^{k}+\left\Vert
y\right\Vert _{Y}^{k}\right)  +b\right]  -u\right\rangle \geqslant0\text{
which yields }\left\langle R,u\right\rangle \leqslant0.
\]
Hence%
\[
\left\langle R,u\right\rangle =0,\text{ }\forall u\in C_{b,k}\left(  X\times
Y\right)  \text{.}%
\]

\textbf{3}. Now we know $L\in M_{k,+}\left(  X\times Y\right)  $, the
remaining proof is similar to Lemma \ref{lm_marginal}.
\end{proof}

\subsection{General duality for Wasserstein projections\label{sec_dual_g}}

There is one caveat before we prove the general duality. The stochastic cones
we defined earlier in $\left(  \ref{bw_cone}\right)  $ and $\left(
\ref{fw_cone}\right)  $ do not explicitly require moments of measures, this
will not cause problems in the compact case, since moments of the
probabilities exist automatically. However, we have to make this requirement
precise in the general case. So we retain similar notation of the cones, but
make it more precise for the general duality that the cones are contained in
the space of probability measures with $k$-th moment, i.e. for $\mu\in
P_{k}\left(  X\right)  ,$%
\[
\mathrm{P}_{k,\leqslant\nu}^{\mathcal{A}}=\left\{  \eta\in P_{k}\left(
Y\right)  :\eta\leqslant_{\mathcal{A}}\nu\right\}
\]
and for $\nu\in P_{k}\left(  Y\right)  ,$%
\[
\mathrm{P}_{k,\mu\leqslant}^{\mathcal{A}}=\left\{  \xi\in P_{k}\left(
X\right)  :\mu\leqslant_{\mathcal{A}}\xi\right\}  .
\]
Whenever the underlying spaces in question are bounded, the moments are not
relevent, the subscripts $k$ will be omitted so that they are consistent with
the notations introduced earlier.

\begin{theorem}
[\textbf{Backward duality}]\label{thm_dual_bwg}Let $X,$ $Y$ be locally compact
Polish spaces, $j\geqslant k\geqslant0$ be integers, $\mu\in P_{j}\left(
X\right)  ,$ $\nu\in P_{j}\left(  Y\right)  $ and $\mathcal{A}$ be a defining
function class as defined in Definition \ref{def_lin_order}. Assume that

(a1) $\mathcal{A}$ and $\mathcal{A\cap}C_{b,k}$ define the same stochastic order,

(a2) the cost $c\left(  x,y\right)  $ is nonnegative, lower semicontinuous and
there exist $\alpha>0$ and $\beta\in\mathbb{R}$ such that
\[
c\left(  x,y\right)  +\frac{1}{\alpha}\left\Vert x\right\Vert _{X}^{k}%
-\alpha\left\Vert y\right\Vert _{Y}^{k}\geqslant\beta,\text{ }\forall x,y.
\]

\noindent Then we have the following.

(i) Let $\mathcal{V}_{c}^{\ast}$ be defined in $\left(  \ref{eq_Vc*}\right)
,$ then
\[
\mathcal{T}_{c}\left(  \mu,\mathrm{P}_{k,\leqslant\nu}^{\mathcal{A}}\right)
=\sup_{\left(  u,\varphi\right)  \in\mathcal{V}_{c}^{\ast}\cap C_{b,k}%
}\left\{  \int_{X}ud\mu-\int_{Y}\varphi d\nu\right\}  .
\]

(ii) Let $Q_{c}\left(  \cdot\right)  $ be defined in $\left(  \ref{eq_Qc}%
\right)  ,$ then%
\[
\mathcal{T}_{c}\left(  \mu,\mathrm{P}_{k,\leqslant\nu}^{\mathcal{A}}\right)
=\sup_{\varphi\in\mathcal{A\cap}C_{b,k}}\left\{  \int_{X}Q_{c}\left(
\varphi\right)  d\mu-\int_{Y}\varphi d\nu\right\}  .
\]
In both (i) and (ii), $\varphi\in\mathcal{A\cap}C_{b,k}$ can be relaxed to
$\varphi\in\mathcal{A\cap}\mathrm{S}_{b,k}$.
\end{theorem}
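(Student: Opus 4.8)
The plan is to run the Fenchel--Rockafellar argument of Theorem \ref{thm_dual_bw} verbatim in structure, but over the weighted space $C_{b,k}$ in place of $C_b$, so that the cone of test functions in $\mathcal{A}$ (which contains no nonconstant bounded elements) fits inside the domain. The hypothesis $j\geqslant k$ guarantees that every $u\in C_{b,k}$ is $\mu$- and $\nu$-integrable, since $\mu,\nu\in P_j$, so all pairings below are finite; the coercivity assumption (a2) is exactly what will control the behaviour at infinity. I keep the three functionals $\mathcal{G},\mathcal{H},\mathcal{I}$ from the compact proof, now defined on $C_{b,k}(X\times Y)\times C_{b,k}(Y)$, with the same splitting $p(x,y)=v(y)-u(x)$, $q(y)=\varphi(y)-v(y)$ and $\varphi\in\mathcal{A}\cap C_{b,k}$.

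First, for the easy inequality (weak duality), I would reproduce step 1 of Theorem \ref{thm_dual_bw}: for $\tilde\mu\in\mathrm{P}_{k,\leqslant\nu}^{\mathcal{A}}$, $\pi\in\Pi(\mu,\tilde\mu)$ and $(u,\varphi)\in\mathcal{V}_c^{\ast}\cap C_{b,k}$, the chain $\int u\,d\mu-\int\varphi\,d\nu\leqslant\int u\,d\mu-\int\varphi\,d\tilde\mu\leqslant\int(u(x)-\varphi(y))\,d\pi\leqslant\int c\,d\pi$ holds, using the stochastic-order inequality for $\varphi\in\mathcal{A}$ and the constraint $u-\varphi\leqslant c$. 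The only new point relative to the compact case is integrability, furnished by $j\geqslant k$ together with the $k$-growth of $\varphi$; taking the infimum over $(\tilde\mu,\pi)$ and the supremum over $(u,\varphi)$ yields $\sup_{\mathcal{V}_c^{\ast}\cap C_{b,k}}\leqslant\mathcal{T}_c(\mu,\mathrm{P}_{k,\leqslant\nu}^{\mathcal{A}})$.

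The substance is the reverse inequality. As before, the constant pair $(p_0,q_0)=(1,1)\in C_{b,k}$ lies in the effective domain with $\mathcal{G}+\mathcal{H}$ continuous there, so Fenchel--Rockafellar \cite[Theorem 1.9]{villani2003topics} applies and reduces the claim to computing the Legendre transforms on the dual $(C_{b,k})^{\ast}$. This is where the two preparatory lemmas enter. A nonnegative dual variable $L$ is not a priori a measure; by Lemma \ref{lm_decmp_Cbk} it splits as $L=\pi+R$ with $\pi\in M_{k,+}$ and $R$ supported at infinity. The finiteness of $\mathcal{I}^{\ast}$ forces the marginal identities of Lemma \ref{lm_margin_Cbk}, upgrading the $X$-marginal to a genuine element of $P_k(X)$ equal to $\mu$ and identifying the relevant functional with an honest coupling in $\Pi(\mu,\bar\mu)$ of finite $k$-th moment, while Lemma \ref{lm_stoch} and (a1) encode $\bar\mu\leqslant_{\mathcal{A}}\nu$. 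I expect the main obstacle to be precisely the vanishing of the remainder $R$, which is where coercivity (a2) is indispensable. The bound $-c(x,y)\leqslant\frac1\alpha\|x\|_X^k-\alpha\|y\|_Y^k-\beta$ places an admissible competitor for $\mathcal{G}^{\ast}$ inside $C_{b,k}$ and, combined with the sandwiching argument of Lemma \ref{lm_margin_Cbk} by $\pm a(\|x\|^k+\|y\|^k)+b$, shows that any mass escaping to infinity would make $\int c\,d(\pi+R)$ infinite, hence $R=0$ on optimizers. After this the transforms reduce to $\mathcal{G}^{\ast}(-\pi)=\int c\,d\pi$ for $\pi\in M_{k,+}$, $\mathcal{H}^{\ast}(-\bar\mu)=0$ for $\bar\mu\in M_{k,+}$, and $\mathcal{I}^{\ast}(\pi,\bar\mu)=0$ iff $\bar\mu\in\mathrm{P}_{k,\leqslant\nu}^{\mathcal{A}}$ and $\pi\in\Pi(\mu,\bar\mu)$, exactly as in the compact case, and Fenchel--Rockafellar delivers $\mathcal{T}_c(\mu,\mathrm{P}_{k,\leqslant\nu}^{\mathcal{A}})=\sup_{(u,v,\varphi)\in\mathcal{V}_c\cap C_{b,k}}\{\int u\,d\mu-\int\varphi\,d\nu\}$.

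Finally I would deduce the two stated forms as in step 3 of Theorem \ref{thm_dual_bw}: the triple $(u,v,\varphi)$ and the pair $(u,\varphi)$ correspond through $(u,\varphi,\varphi)$, giving (i); taking $u=Q_c(\varphi)$, the largest $u$ with $u-\varphi\leqslant c$, gives the infimal-convolution form (ii); and the relaxation of $\varphi\in\mathcal{A}\cap C_{b,k}$ to $\varphi\in\mathcal{A}\cap\mathrm{S}_{b,k}$ follows from the same sandwiching, since a bounded-growth measurable $\varphi$ cannot enlarge the supremum beyond the bound $\mathcal{T}_c$ already established in the first step. The genuinely new content relative to the compact Theorem \ref{thm_dual_bw} is concentrated in the control of the at-infinity remainder via (a2) and Lemmas \ref{lm_decmp_Cbk}--\ref{lm_margin_Cbk}; everything else is a transcription of the compact proof into the weighted setting.
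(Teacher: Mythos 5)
Your high-level architecture coincides with the paper's (weighted spaces, Fenchel--Rockafellar, Lemma \ref{lm_decmp_Cbk} and Lemma \ref{lm_margin_Cbk} to tame dual variables, the weak-duality step, and the step-3 reductions giving (i), (ii) and the relaxation to $\mathrm{S}_{b,k}$), but two of your central claims fail, and they sit exactly at the non-compact difficulty. First, the qualification point: the constant pair $(p_0,q_0)=(1,1)$ does \emph{not} give a continuity point of $\mathcal{G}+\mathcal{H}$ on unbounded spaces. In the $C_{b,k}$ norm, a $\delta$-ball around $q_0\equiv 1$ contains functions behaving like $1-\delta(1+\|y\|^k)$, which are negative far out, so $\mathcal{H}$ is $+\infty$ arbitrarily close to $q_0$; likewise $\mathcal{G}$ fails to be continuous at $p_0\equiv 1$, since (a2) provides no lower bound on $c(x,y)$ growing in $x$ for fixed $y$, so perturbations $p\approx 1-\delta\|x\|^k$ violate $p\geqslant -c$. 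This is precisely why the paper (a) relaxes the constraints to $p\geqslant -c-\epsilon_1$, $q\geqslant -\epsilon_2$ and cancels $\epsilon_1+\epsilon_2$ at the end, and (b) anchors at the \emph{growing} function $p_0=\frac{1}{a}\|x\|_X^k+b$, tuning $a,b,\delta$ against the constants $\alpha,\beta$ of (a2) so that the entire $\delta$-neighbourhood stays admissible. In other words, assumption (a2) is consumed in verifying the Fenchel--Rockafellar hypothesis — not where you spend it.

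Second, you attribute the vanishing of the at-infinity remainder $R$ to coercivity, arguing that escaping mass "would make $\int c\,d(\pi+R)$ infinite, hence $R=0$ on optimizers." That mechanism does not work: $c$ is merely lower semicontinuous and not in $C_{b,k}$, so $\langle R,c\rangle$ is not even defined; a nonnegative functional supported at infinity can satisfy $\langle R,1\rangle =0$ yet still charge functions of $k$-growth; and finiteness of the conjugates does not exclude such $L$ from the infimum. In the paper, $R=0$ is a consequence of the marginal identities alone: Lemma \ref{lm_margin_Cbk} tests with $a_n(x)\|x\|_X^k$, uses monotone convergence and the finite $k$-th moments of $\mu$ and $\nu$ (this is where $j\geqslant k$ enters) to obtain $\langle R,\|x\|_X^k+\|y\|_Y^k\rangle =0$, and then sandwiches an arbitrary $u\in C_{b,k}$ between $\pm a(\|x\|_X^k+\|y\|_Y^k)+b$ — no cost function appears. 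Relatedly, your domain $C_{b,k}(X\times Y)\times C_{b,k}(Y)$ has the wrong second factor: the paper deliberately takes $C_0(Y)$ there so that $\mathcal{H}^{\ast}$ forces $\bar{\mu}\in M_{+}(Y)$, an honest measure to which Lemma \ref{lm_stoch} and Lemma \ref{lm_margin_Cbk} apply; with your choice, $\bar{\mu}$ is only a nonnegative functional on $C_{b,k}(Y)$, its own at-infinity part is not controlled by the order constraint for a general class $\mathcal{A}$, and the identification of $\mathcal{I}^{\ast}$ collapses. Your weak-duality step and the final reductions are correct and match the paper, but the three repairs above (the $\epsilon$-perturbation with a growing anchor tied to (a2), the moment-based killing of $R$, and the asymmetric domain) are not cosmetic — they are the new content of the non-compact theorem.
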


\begin{proof}
Since the a probability measure on a Polish space has $\sigma$-compact
support, we may assume in the following that $X,$ $Y$ are $\sigma$-compact. We
would like to follow the steps as in Theorem \ref{thm_dual_bw}. The conclusion
of the theorem follows easily, once we can prove%
\begin{align*}
\mathcal{T}_{c}\left(  \mu,\mathrm{P}_{k,\leqslant\nu}^{\mathcal{A}}\right)
&  =\sup_{\left(  u,v,\varphi\right)  \in\mathcal{V}_{c}\cap C_{b,k}}\left\{
\int_{X}ud\mu-\int_{Y}\varphi d\nu\right\} \\
&  =\sup_{\left(  u,v,\varphi\right)  \in\mathcal{V}_{c}\cap\left(
L^{1}\times\mathrm{S}_{b,k}\times\mathrm{S}_{b,k}\right)  }\left\{  \int%
_{X}ud\mu-\int_{Y}\varphi d\nu\right\}
\end{align*}
where $\mathcal{V}_{c}$ is defined through $\left(  \ref{eq_Vc1}\right)
\left(  \ref{eq_Vc2}\right)  $. In contrast with Theorem \ref{thm_dual_bw},
the correct domain for the functionals $\Theta$ and $\Xi$ is $C_{b,k}\left(
X\times Y\right)  \times C_{0}\left(  Y\right)  .$ However, if we take a look
at the functionals $\mathcal{G}\left(  \cdot\right)  ,$ $\mathcal{H}\left(
\cdot\right)  $ defined in Theorem \ref{thm_dual_bw}, we would soon realize
that they are never continuous in $C_{0}\left(  Y\right)  $ with the supremum
norm induced by $C_{b}\left(  Y\right)  $. To fix this problem, we employ a
usual perturbation trick to circumvent this difficulty. Given $\epsilon_{1},$
$\epsilon_{2}>0,$\ we define%
\[
\mathcal{G}_{\epsilon_{1}}:p\in C_{b,k}\left(  X\times Y\right)
\mapsto\left\{
\begin{array}
[c]{ll}%
0, & p\left(  x,y\right)  \geqslant-c\left(  x,y\right)  -\epsilon_{1},\\
\infty, & \text{otherwise}.
\end{array}
\right.
\]%
\[
\mathcal{H}_{\epsilon_{2}}:q\in C_{0}\left(  Y\right)  \mapsto\left\{
\begin{array}
[c]{ll}%
0, & q\left(  y\right)  \geqslant-\epsilon_{2},\\
\infty, & \text{otherwise}.
\end{array}
\right.
\]%
\begin{align*}
\mathcal{I}  &  :\left(  p,q\right)  \in C_{b,k}\left(  X\times Y\right)
\times C_{0}\left(  Y\right) \\
&  \mapsto\left\{
\begin{array}
[c]{ll}%
{\displaystyle\int_{Y}}
\varphi d\nu-%
{\displaystyle\int_{X}}
ud\mu, &
\begin{array}
[c]{l}%
p\left(  x,y\right)  =v\left(  y\right)  -u\left(  x\right)  \text{ for some
}\left(  u,v\right)  \in C_{b,k},\\
q\left(  y\right)  =\varphi\left(  y\right)  -v\left(  y\right)  \text{ for
some }\varphi\in\mathcal{A\cap}C_{b,k},
\end{array}
\\
\infty, & \text{otherwise}.
\end{array}
\right.
\end{align*}
Note $\mathcal{I}$ is convex, well-defined and nontrivial. We have the
following,%
\[
\mathcal{G}_{\epsilon_{1}}^{\ast}\left(  -L\right)  =\sup_{p\in C_{b,k}\left(
X\times Y\right)  ,\text{ }p\geqslant-c-\epsilon_{1}}\left\{  -\left\langle
L,p\right\rangle \right\}  =\sup_{p\in C_{b,k}\left(  X\times Y\right)
,\text{ }p\geqslant c}\left\{  -\left\langle L,p\right\rangle \right\}
+\epsilon_{1},
\]
which is $\infty$ unless $L\in\left(  C_{b,k}\left(  X\times Y\right)
\right)  ^{\ast}$ is nonnegative$,$%
\[
\mathcal{H}^{\ast}\left(  -\bar{\mu}\right)  =\sup_{q\in C_{0}\left(
Y\right)  ,\text{ }q\geqslant-\epsilon_{2}}\left\{  -\int_{Y}q\left(
y\right)  d\bar{\mu}\right\}  =\left\{
\begin{array}
[c]{ll}%
\epsilon_{2}, & \bar{\mu}\in M_{+}\left(  Y\right)  ,\\
\infty, & \text{otherwise}.
\end{array}
\right.  .
\]%
\begin{align*}
&  \mathcal{I}^{\ast}\left(  L,\bar{\mu}\right) \\
&  =\sup_{_{\substack{\left(  u,v\right)  \in C_{b,k}\\\varphi\in
\mathcal{A\cap}C_{b,k}}}}\left\{  \left\langle L,v\left(  y\right)  -u\left(
x\right)  \right\rangle +\int_{X}u\left(  x\right)  d\mu\left(  x\right)
-\int_{Y}\varphi\left(  y\right)  d\nu\left(  y\right)  +\int_{Y}\left(
\varphi\left(  y\right)  -v\left(  y\right)  \right)  d\bar{\mu}\right\} \\
&  =\sup_{_{\substack{\left(  u,v\right)  \in C_{b,k}\\\varphi\in
\mathcal{A\cap}C_{b,k}}}}\left\{  \left\langle L,v\left(  y\right)  -u\left(
x\right)  \right\rangle -\int_{Y}v\left(  y\right)  d\bar{\mu}+\int%
_{X}u\left(  x\right)  d\mu\left(  x\right)  +\int_{Y}\varphi\left(  y\right)
d\bar{\mu}-\int_{Y}\varphi\left(  y\right)  d\nu\left(  y\right)  \right\}  .
\end{align*}
By virtue of Lemma \ref{lm_margin_Cbk}, $\mathcal{I}^{\ast}\left(  L,\bar{\mu
}\right)  =0$ if and only if $\bar{\mu}\leqslant_{\mathcal{A}}\nu$ and
$L\in\Pi\left(  \mu,\bar{\mu}\right)  .$ When this happens, we have $\bar{\mu
}\in P_{k}\left(  Y\right)  $ and $L\in P_{k}\left(  X\times Y\right)  $.
Therefore%
\[
\mathcal{I}^{\ast}\left(  L,\bar{\mu}\right)  =\left\{
\begin{array}
[c]{ll}%
0, & \bar{\mu}\in\mathrm{P}_{\leqslant\nu}^{\mathcal{A}},\text{ }L\in
\Pi\left(  \mu,\bar{\mu}\right)  ,\\
\infty, & \text{otherwise}.
\end{array}
\right.
\]
Let%
\[
\Theta_{\epsilon}\left(  p,q\right)  =\mathcal{G}_{\epsilon_{1}}\left(
p\right)  +\mathcal{H}_{\epsilon_{2}}\left(  q\right)  ,\text{ }\Xi\left(
p,q\right)  =\mathcal{I}\left(  p,q\right)  .
\]
To complete the proof, it remains to show that there exists $\left(
p_{0},q_{0}\right)  \in C_{b,k}\left(  X\times Y\right)  \times C_{0}\left(
Y\right)  $ such that%
\[
\Xi\left(  p_{0},q_{0}\right)  <\infty,\text{ }\Theta_{\epsilon}\left(
p_{0},q_{0}\right)  <\infty\text{ and }\Theta_{\epsilon}\text{ is continuous
at }\left(  p_{0},q_{0}\right)  .
\]
Define%
\[
\varphi_{0}\left(  y\right)  =0,\text{ }v_{0}\left(  y\right)  =0,\text{
}u_{0}\left(  x\right)  =-\frac{1}{a}\left\Vert x\right\Vert _{X}^{k}-b,
\]
where $a,$ $b$ are constants to be determined later. Clearly $\varphi_{0}%
\in\mathcal{A\cap}C_{b,k},$ $u_{0},$ $v_{0}\in C_{b,k}.$ Set%
\[
p_{0}\left(  x,y\right)  =v_{0}\left(  y\right)  -u_{0}\left(  x\right)
,\text{ }q_{0}\left(  y\right)  =\varphi_{0}\left(  y\right)  -v_{0}\left(
y\right)  ,\text{ }\forall x,y.
\]
Then%
\[
p_{0}=\frac{1}{a}\left\Vert x\right\Vert _{X}^{k}+b\in C_{b,k}\left(  X\times
Y\right)  ,\text{ }q_{0}\equiv0\in C_{0}\left(  Y\right)  .
\]
A $\delta$-neighbourhood $U_{\delta}\left(  p_{0},q_{0}\right)  $\ of $\left(
p_{0},q_{0}\right)  $ in $C_{b,k}\left(  X\times Y\right)  \times C_{0}\left(
Y\right)  $ is given by all functions $\left(  p,q\right)  \in C_{b,k}\left(
X\times Y\right)  \times C_{0}\left(  Y\right)  $ satisfying
\begin{align*}
&  \left\Vert p-p_{0}\right\Vert _{b,k}+\left\Vert q-q_{0}\right\Vert _{b}\\
&  =\sup_{\left(  x,y\right)  \in X\times Y}\frac{\left\vert p\left(
x,y\right)  -p_{0}\left(  x,y\right)  \right\vert }{1+\left(  \left\Vert
x\right\Vert _{X}+\left\Vert y\right\Vert _{Y}\right)  ^{k}}+\sup_{y\in
Y}\left\vert q\left(  y\right)  -q_{0}\left(  y\right)  \right\vert <\delta.
\end{align*}
It follows that, for any $\left(  p,q\right)  \in U_{\delta}\left(
p_{0},q_{0}\right)  $,%
\begin{align*}
p\left(  x,y\right)   &  \geqslant p_{0}-\delta-\delta2^{k-1}\left(
\left\Vert x\right\Vert _{X}^{k}+\left\Vert y\right\Vert _{Y}^{k}\right) \\
&  =\left(  \frac{1}{a}-\delta2^{k-1}\right)  \left\Vert x\right\Vert _{X}%
^{k}-\delta2^{k-1}\left\Vert y\right\Vert _{Y}^{k}+b-\delta,\text{ }\forall
x,y,
\end{align*}
and%
\[
q\left(  x,y\right)  \geqslant q_{0}-\delta=-\delta,\text{ }\forall x,y.
\]
Now choose $\epsilon_{1}>0,$ $\epsilon_{2}>0,$ $b>0$ large$,$ $a>0,$
$\delta>0$ small such that%
\[
-\delta>-\epsilon_{2},\text{ }\frac{1}{a}-\delta2^{k-1}\geqslant\frac
{1}{\alpha},\text{ }\alpha\geqslant\delta2^{k-1}\text{ and }b-\delta
>-\beta-\epsilon_{1}.
\]
With these constants and the assumption on the cost function $c$, we see that%
\[
p\geqslant-c-\epsilon_{1},\text{ }q\geqslant-\epsilon_{2},\text{ }%
\forall\left(  p,q\right)  \in U_{\delta}\left(  p_{0},q_{0}\right)  .
\]
Hence%
\[
\Theta_{\epsilon}\left(  p,q\right)  =0,\text{ }\forall\left(  p,q\right)  \in
U_{\delta}\left(  p_{0},q_{0}\right)  .
\]
Therefore $\Theta_{\epsilon}\left(  p_{0},q_{0}\right)  =0<\infty$ and
$\Theta_{\epsilon}$ is continuous at $\left(  p_{0},q_{0}\right)  .$ Moreover,
since $\mu,$ $\nu$ are probabilities with finite $k$-th moments,
\[
\Xi\left(  p_{0},q_{0}\right)  =\mathcal{I}\left(  p_{0},q_{0}\right)
<\infty.
\]
Now we can invoke Fenchel-Rockafellar theorem to get the conclusion of the
theorem. In particular, if the total transport cost $\mathcal{T}_{c}\left(
\mu,\mathrm{P}_{k,\leqslant\nu}^{\mathcal{A}}\right)  $ is finite, then we
obtain%
\[
\inf_{\substack{L\in P_{k}\left(  X\times Y\right)  ,\bar{\mu}\in P_{k}\left(
Y\right)  \\L\in\Pi\left(  \mu,\bar{\mu}\right)  ,\bar{\mu}\leqslant
_{\mathcal{A}}\nu}}\left\langle L,c+\epsilon_{1}\right\rangle +\epsilon
_{2}=\sup_{\substack{\left(  u,v\right)  \in C_{b,k},\varphi\in\mathcal{A\cap
}C_{b,k}\\u\left(  x\right)  -v\left(  y\right)  \leqslant c\left(
x,y\right)  +\epsilon_{1}\\\varphi\left(  y\right)  -v\left(  y\right)
\geqslant-\epsilon_{2}}}\left\{  \int_{X}ud\mu-\int_{Y}\varphi d\nu\right\}
.
\]
The RHS equals%
\[
\sup_{\substack{\left(  u,v\right)  \in C_{b,k},\varphi\in\mathcal{A\cap
}C_{b,k}\\u\left(  x\right)  -v\left(  y\right)  \leqslant c\left(
x,y\right)  +\epsilon_{1}+\epsilon_{2}\\\varphi\left(  y\right)  -v\left(
y\right)  \geqslant0}}\left\{  \int_{X}ud\mu-\int_{Y}\varphi d\nu\right\}
=\sup_{\left(  u,v,\varphi\right)  \in\mathcal{V}_{c}\cap C_{b,k}}\left\{
\int_{X}ud\mu-\int_{Y}\varphi d\nu\right\}  +\epsilon_{1}+\epsilon_{2}.
\]
Therefore the duality is proved upon cancelling $\epsilon_{1}+\epsilon_{2}$
from both sides.
\end{proof}

The duality for forward projection can be proved as Theorem \ref{thm_dual_bwg}%
. We leave its proof to the reader.

\begin{theorem}
[\textbf{Forward duality}]\label{thm_dual_fwg}Let $X,$ $Y$ be locally compact
Polish spaces, $j\geqslant k\geqslant0$ be integers, $\mu\in P_{j}\left(
X\right)  ,$ $\nu\in P_{j}\left(  Y\right)  $ and $\mathcal{A}$ be a defining
function class as defined in Definition \ref{def_lin_order}. Assume that

(a1) $\mathcal{A}$ and $\mathcal{A\cap}C_{b,k}$ define the same stochastic order,

(a2) the cost $c\left(  x,y\right)  $ is nonnegative, lower semicontinuous and
there exist $\alpha>0$ and $\beta\in\mathbb{R}$ such that
\[
c\left(  x,y\right)  +\frac{1}{\alpha}\left\Vert y\right\Vert _{X}^{k}%
-\alpha\left\Vert x\right\Vert _{Y}^{k}\geqslant\beta,\text{ }\forall x,y.
\]

\noindent Then we have the following.

(i) Let $\mathcal{U}_{c}^{\ast}$ be defined in $\left(  \ref{eq_Uc*}\right)
,$ then
\[
\mathcal{T}_{c}\left(  \mathrm{P}_{k,\mu\leqslant}^{\mathcal{A}},\nu\right)
=\sup_{\left(  \varphi,v\right)  \in\mathcal{U}_{c}^{\ast}\cap C_{b,k}%
}\left\{  \int_{X}\varphi d\mu-\int_{Y}vd\nu\right\}  .
\]

(ii) Let $Q_{\bar{c}}\left(  \cdot\right)  $ be defined in $\left(
\ref{eq_Qcbar}\right)  ,$ then%
\[
\mathcal{T}_{c}\left(  \mathrm{P}_{k,\mu\leqslant}^{\mathcal{A}},\nu\right)
=\sup_{\varphi\in\mathcal{A\cap}C_{b,k}}\left\{  \int_{X}\varphi d\mu-\int%
_{Y}Q_{\bar{c}}\left(  \varphi\right)  d\nu\right\}  .
\]
In both (i) and (ii), $\varphi\in\mathcal{A\cap}C_{b,k}$ can be relaxed
$\varphi\in\mathcal{A\cap}\mathrm{S}_{b,k}$.
\end{theorem}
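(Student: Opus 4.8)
The plan is to run the proof of the backward duality (Theorem~\ref{thm_dual_bwg}) essentially verbatim, interchanging the roles of the two marginals. Exactly as in the compact forward case (Theorem~\ref{thm_dual_fw}), the statement reduces to the intermediate duality
\[
\mathcal{T}_{c}\left(\mathrm{P}_{k,\mu\leqslant}^{\mathcal{A}},\nu\right)=\sup_{\left(\varphi,u,v\right)\in\mathcal{U}_{c}\cap C_{b,k}}\left\{\int_{X}\varphi\,d\mu-\int_{Y}v\,d\nu\right\}=\sup_{\left(\varphi,u,v\right)\in\mathcal{U}_{c}\cap\left(\mathrm{S}_{b,k}\times\mathrm{S}_{b,k}\times L^{1}\right)}\left\{\int_{X}\varphi\,d\mu-\int_{Y}v\,d\nu\right\},
\]
where $\mathcal{U}_{c}$ is the triple constraint $\left(\ref{eq_Uc1}\right)$--$\left(\ref{eq_Uc2}\right)$. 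Granting this, parts (i) and (ii) follow as in step~3 of Theorem~\ref{thm_dual_bw}: any $\left(\varphi,u,v\right)\in\mathcal{U}_{c}$ gives a pair $\left(\varphi,v\right)\in\mathcal{U}_{c}^{\ast}$ of $\left(\ref{eq_Uc*}\right)$ via $\varphi-v\leqslant u-v\leqslant c$, while conversely $\left(\varphi,\varphi,v\right)\in\mathcal{U}_{c}$ for any $\left(\varphi,v\right)\in\mathcal{U}_{c}^{\ast}$, and the form (ii) is obtained by taking the optimal $v=Q_{\bar{c}}\left(\varphi\right)$ from $\left(\ref{eq_Qcbar}\right)$. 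The easy inequality $\sup\leqslant\mathcal{T}_{c}$ is the direct estimate of step~1 there, so only the reverse inequality needs the Fenchel--Rockafellar machinery.

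For that, I set up the three functionals of Theorem~\ref{thm_dual_bwg} on the weighted spaces, the only structural change being that the order constraint $\varphi\in\mathcal{A}$ now acts on a function on $X$; consequently the correct Fenchel--Rockafellar pairing is over $C_{b,k}\left(X\times Y\right)\times C_{0}\left(X\right)$, with $C_{0}\left(X\right)$ replacing the $C_{0}\left(Y\right)$ of the backward proof. For $\epsilon_{1},\epsilon_{2}>0$ put $\mathcal{G}_{\epsilon_{1}}\left(p\right)=0$ iff $p\geqslant-c-\epsilon_{1}$, $\mathcal{H}_{\epsilon_{2}}\left(q\right)=0$ iff $q\geqslant-\epsilon_{2}$ on $C_{0}\left(X\right)$, and
\[
\mathcal{I}\left(p,q\right)=\int_{Y}v\,d\nu-\int_{X}\varphi\,d\mu
\]
whenever $p\left(x,y\right)=v\left(y\right)-u\left(x\right)$ with $\left(u,v\right)\in C_{b,k}$ and $q\left(x\right)=u\left(x\right)-\varphi\left(x\right)$ with $\varphi\in\mathcal{A}\cap C_{b,k}$ (and $+\infty$ otherwise), which is the functional of Theorem~\ref{thm_dual_fw} lifted to $C_{b,k}$. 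A routine Legendre computation gives $\mathcal{G}_{\epsilon_{1}}^{\ast}\left(-L\right)=\left\langle L,c\right\rangle+\epsilon_{1}$ for nonnegative $L\in\left(C_{b,k}\right)^{\ast}$ and $\mathcal{H}_{\epsilon_{2}}^{\ast}\left(-\bar{\nu}\right)=\epsilon_{2}$ for $\bar{\nu}\in M_{+}\left(X\right)$; collecting the terms of $\mathcal{I}^{\ast}\left(L,\bar{\nu}\right)$ that are linear in $u$, $v$, $\varphi$ forces the $Y$-marginal of $L$ to be $\nu$, the $X$-marginal to be $\bar{\nu}$, and (by Lemma~\ref{lm_stoch} and assumption (a1), applied to the supremum over $\varphi\in\mathcal{A}\cap C_{b,k}$) the relation $\mu\leqslant_{\mathcal{A}}\bar{\nu}$. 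Here Lemma~\ref{lm_margin_Cbk} is invoked in its stated orientation, the distinguished marginal being the known probability $\nu\in P_{k}\left(Y\right)$, which upgrades $\bar{\nu}$ to $P_{k}\left(X\right)$ and $L$ to $P_{k}\left(X\times Y\right)$; thus $\mathcal{I}^{\ast}\left(L,\bar{\nu}\right)=0$ exactly when $\bar{\nu}\in\mathrm{P}_{k,\mu\leqslant}^{\mathcal{A}}$ and $L\in\Pi\left(\bar{\nu},\nu\right)$.

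The main obstacle, as in the backward case, is the constraint qualification: I must exhibit a point $\left(p_{0},q_{0}\right)$ at which $\Theta_{\epsilon}=\mathcal{G}_{\epsilon_{1}}+\mathcal{H}_{\epsilon_{2}}$ is finite and continuous while $\mathcal{I}$ is finite. This is where the forward coercivity hypothesis (a2), now written with $\left\Vert y\right\Vert^{k}$, is used, and where the asymmetry with the backward proof appears: the growth must be carried by the $Y$-variable. I would take $\varphi_{0}=0$, $u_{0}=0$, and $v_{0}\left(y\right)=\tfrac{1}{a}\left\Vert y\right\Vert_{Y}^{k}+b$, so that $q_{0}=u_{0}-\varphi_{0}\equiv0\in C_{0}\left(X\right)$ and $p_{0}=v_{0}-u_{0}\in C_{b,k}\left(X\times Y\right)$. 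On a $\delta$-ball about $\left(p_{0},q_{0}\right)$ in the $\left\Vert\cdot\right\Vert_{b,k}\times\left\Vert\cdot\right\Vert_{b}$ norm every nearby $p$ satisfies $p\left(x,y\right)\geqslant\left(\tfrac{1}{a}-\delta2^{k-1}\right)\left\Vert y\right\Vert_{Y}^{k}-\delta2^{k-1}\left\Vert x\right\Vert_{X}^{k}+b-\delta$, so choosing $\tfrac{1}{a}-\delta2^{k-1}\geqslant\tfrac{1}{\alpha}$, $\alpha\geqslant\delta2^{k-1}$, $b-\delta>-\beta-\epsilon_{1}$ and $\delta<\epsilon_{2}$ makes $p\geqslant-c-\epsilon_{1}$ and $q\geqslant-\epsilon_{2}$ throughout the ball by (a2); hence $\Theta_{\epsilon}\equiv0$ there and is continuous, while $\mathcal{I}\left(p_{0},q_{0}\right)<\infty$ because $\mu,\nu$ have finite $k$-th moment. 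Fenchel--Rockafellar then yields an identity whose left side is $\mathcal{T}_{c}\left(\mathrm{P}_{k,\mu\leqslant}^{\mathcal{A}},\nu\right)+\epsilon_{1}+\epsilon_{2}$ and whose right side is the supremum of $\int_{X}\varphi\,d\mu-\int_{Y}v\,d\nu$ over the relaxed constraints $u-v\leqslant c+\epsilon_{1}$, $u-\varphi\geqslant-\epsilon_{2}$. The final bookkeeping is the delicate point: since $\mathcal{A}$ need not contain negative constants, the slacks must be absorbed into $u$ and $v$ rather than $\varphi$, namely $u\mapsto u+\epsilon_{2}$ and $v\mapsto v+\epsilon_{1}+\epsilon_{2}$, which converts the relaxed constraints into the $\mathcal{U}_{c}$ constraints, leaves $\varphi\in\mathcal{A}$ intact, and shifts the objective by exactly $-\left(\epsilon_{1}+\epsilon_{2}\right)$ as $\mu,\nu$ are probabilities. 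Cancelling $\epsilon_{1}+\epsilon_{2}$ from both sides gives the intermediate duality and finishes the proof.
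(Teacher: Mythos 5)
Your proposal is correct and is exactly the argument the paper intends: for Theorem \ref{thm_dual_fwg} the authors simply write that it ``can be proved as Theorem \ref{thm_dual_bwg}'' (with the functional $\mathcal{I}$ modified as in the compact case, Theorem \ref{thm_dual_fw}), and you have carried out precisely that mirror-image proof, getting all the genuinely asymmetric points right --- the pairing over $C_{b,k}\left(X\times Y\right)\times C_{0}\left(X\right)$, the slack point $v_{0}\left(y\right)=\tfrac{1}{a}\left\Vert y\right\Vert_{Y}^{k}+b$ matched to the forward form of (a2), Lemma \ref{lm_margin_Cbk} applied in its stated orientation with $\nu\in P_{k}\left(Y\right)$ as the known marginal, and the absorption of $\epsilon_{1},\epsilon_{2}$ into $u,v$ rather than $\varphi$. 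No gaps; this matches the paper's approach in every essential respect.
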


The assumption (a2) on the cost function of the duality theorems (Theorem
\ref{thm_dual_bwg} and Theorem \ref{thm_dual_fwg}) might not be the most
general one, but it already includes important examples encountered in
applications. If the underlying spaces are bounded, then (a2) is automatic. In
the general case, the following example shows that all power functions satisfy
this assumption.

\begin{example}
Let $k\geqslant1$ be an integer$,$ $h\left(  s\right)  :\left[  0,\infty
\right)  \mapsto\left[  0,\infty\right)  $ be a continuous function such that%
\[
m_{h}\triangleq\inf_{s\geqslant a}\frac{h\left(  s\right)  }{s^{k}}>0\text{
for some }a>0.
\]
Then the cost function $c\left(  x,y\right)  =h\left(  \left\vert
x-y\right\vert \right)  $ on $\mathbb{R}^{d}\times\mathbb{R}^{d}$\ satisfies
assumption (a2) of Theorem \ref{thm_dual_bwg} (resp. Theorem
\ref{thm_dual_fwg}). In particular, the quadratic cost $c\left(  x,y\right)
=\left\vert x-y\right\vert ^{2}$ satisfies assumption (a2) with $k=1$ or $2.$
\end{example}

\begin{proof}
We show that assumption (a2) of Theorem \ref{thm_dual_bwg} is satisfied by the
cost function $c\left(  x,y\right)  =h\left(  \left\vert x-y\right\vert
\right)  $, the proof for Theorem \ref{thm_dual_fwg} is similar. Let
$0<\epsilon<1.$\ Consider%
\[
f\left(  x,y\right)  =h\left(  \left\vert x-y\right\vert \right)  +\frac
{1}{\epsilon}\left\vert x\right\vert ^{k}-\epsilon\left\vert y\right\vert
^{k}.
\]
With a change of variable $z=x-y,$ we can rewrite the function as%
\[
f\left(  z,y\right)  =h\left(  \left\vert z\right\vert \right)  +\frac
{1}{\epsilon}\left\vert z+y\right\vert ^{k}-\epsilon\left\vert y\right\vert
^{k}.
\]
Since we only need a lower bound of $f$ and it is bounded from below when
$y=0$ or $z=0$, we may assume $y\neq0$ and $z\neq0$ in the following$.$ Using
triangle inequality we have%
\[
f\left(  z,y\right)  \geqslant h\left(  \left\vert z\right\vert \right)
+\frac{1}{\epsilon}\left\vert \left\vert z\right\vert -\left\vert y\right\vert
\right\vert ^{k}-\epsilon\left\vert y\right\vert ^{k}=\left\vert y\right\vert
^{k}\left[  \frac{h\left(  \left\vert z\right\vert \right)  }{\left\vert
z\right\vert ^{k}}\frac{\left\vert z\right\vert ^{k}}{\left\vert y\right\vert
^{k}}+\frac{1}{\epsilon}\left\vert \frac{\left\vert z\right\vert }{\left\vert
y\right\vert }-1\right\vert ^{k}-\epsilon\right]  .
\]
We distinguish two scenarios:%
\[
\text{(s1) }\left\vert \frac{\left\vert z\right\vert }{\left\vert y\right\vert
}-1\right\vert >\frac{1}{2}\text{; (s2) }\left\vert \frac{\left\vert
z\right\vert }{\left\vert y\right\vert }-1\right\vert \leqslant\frac{1}%
{2}\text{ or }\frac{1}{2}\leqslant\frac{\left\vert z\right\vert }{\left\vert
y\right\vert }\leqslant\frac{3}{2}.
\]
In scenario (s1), we have%
\[
f\left(  z,y\right)  \geqslant\left\vert y\right\vert ^{k}\left[  \frac
{1}{2^{k}\epsilon}-\epsilon\right]  .
\]
In scenario (s2), if $\left\vert z\right\vert \geqslant a,$ then%
\[
f\left(  z,y\right)  \geqslant\left\vert y\right\vert ^{k}\left[  \frac{m_{h}%
}{2^{k}}-\epsilon\right]  .
\]
If $\left\vert z\right\vert <a,$ then $\left\vert y\right\vert \leqslant
2\left\vert z\right\vert <2a$, thus%
\[
f\left(  z,y\right)  \geqslant-\epsilon\left\vert y\right\vert ^{k}%
\geqslant-\epsilon\left(  2a\right)  ^{k}.
\]
Therefore we can choose $\epsilon>0$ small such that%
\[
\frac{m_{h}}{2^{k}}-\epsilon>0\text{ and }\frac{1}{2^{k}\epsilon}-\epsilon>0,
\]
then in all cases the function $f$ is bounded from below by $-\epsilon\left(
2a\right)  ^{k}$.
\end{proof}

We make a few remarks concerning backward projection Theorem
\ref{thm_dual_bwg} and forward projection Theorem \ref{thm_dual_fwg}.

\begin{remark}
\label{rmk_finiteness}Theorem \ref{thm_dual_bwg} and Theorem
\ref{thm_dual_fwg} do not assume the transportation costs $\mathcal{T}%
_{c}\left(  \mu,\mathrm{P}_{k,\leqslant\nu}^{\mathcal{A}}\right)  ,$
$\mathcal{T}_{c}\left(  \mathrm{P}_{k,\mu\leqslant}^{\mathcal{A}},\nu\right)
$ are finite, therefore, the optimal projections and optimal couplings are
generally nonnegative linear functionals on $C_{b}$ or $C_{b,k}$, as can be
seen from the proof. However, once the transportation costs are finite, then
they become true probabilties in $P_{k}$. Under mild conditions of the cost
function, the transportation costs are finite. Consider for example
$k\geqslant1,$ $\mu\in P_{k}\left(  \mathbb{R}^{d}\right)  ,$ $\nu\in
P_{k}\left(  \mathbb{R}^{d}\right)  $ and there is $A>0$ such that%
\[
0\leqslant c\left(  x,y\right)  \leqslant A\left(  \left\vert x\right\vert
^{k}+\left\vert y\right\vert ^{k}\right)  ,
\]
Since $\nu\in\mathrm{P}_{k,\leqslant\nu}^{\mathcal{A}}$,%
\begin{align*}
0  &  \leqslant\mathcal{T}_{c}\left(  \mu,\mathrm{P}_{k,\leqslant\nu
}^{\mathcal{A}}\right)  \leqslant\mathcal{T}_{c}\left(  \mu,\nu\right)
=\inf_{\pi\in\Pi\left(  \mu,\nu\right)  }\int_{\mathbb{R}^{d}\times
\mathbb{R}^{d}}c\left(  x,y\right)  d\pi\left(  x,y\right) \\
&  \leqslant A\int_{\mathbb{R}^{d}\times\mathbb{R}^{d}}\left(  \left\vert
x\right\vert ^{k}+\left\vert y\right\vert ^{k}\right)  d\pi\left(  x,y\right)
\text{ }\left(  \forall\pi\in\Pi\left(  \mu,\nu\right)  \right) \\
&  =A\left(  \int_{\mathbb{R}^{d}}\left\vert x\right\vert ^{k}d\mu
+\int_{\mathbb{R}^{d}}\left\vert y\right\vert ^{k}d\nu\right)  <\infty.
\end{align*}
The same estimate holds for forward projection $\mathcal{T}_{c}\left(
\mathrm{P}_{k,\mu\leqslant}^{\mathcal{A}},\nu\right)  $.
\end{remark}

\begin{remark}
\label{rmk_existence}It is part of Fenchel-Rockafellar theorem that, once the
optimal transportation costs are finite, then the optimal values
$\mathcal{T}_{c}\left(  \mu,\mathrm{P}_{\leqslant\nu}^{\mathcal{A}}\right)  ,$
$\mathcal{T}_{c}\left(  \mathrm{P}_{\mu\leqslant}^{\mathcal{A}},\nu\right)  ,$
$\mathcal{T}_{c}\left(  \mu,\mathrm{P}_{k,\leqslant\nu}^{\mathcal{A}}\right)
,$ $\mathcal{T}_{c}\left(  \mathrm{P}_{k,\mu\leqslant}^{\mathcal{A}}%
,\nu\right)  $ etc. are attained.
\end{remark}

\begin{remark}
\label{rmk_relax_phi}In the duality theorems, the requirement $\varphi
\in\mathcal{A\cap}\mathrm{S}_{b,k}$ cannot be further relaxed to $\varphi
\in\mathcal{A\cap}L^{1}$, because at some point of the proof, we will need to
integrate $\varphi$ w.r.t. to the free marginal (i.e. the projection). If
$\varphi$ is only known to be integrable w.r.t. the fixed marginal, then there
is not enough information to ensure the integration of $\varphi$ w.r.t. to the
free marginal exists. However, for backward projection, it is possible to
further relax $\mathcal{A\cap}\mathrm{S}_{b,k}$ to:%
\[
\left\{  \varphi:\varphi\in\mathcal{A\cap}L^{1}\left(  X,d\nu\right)  \text{
and }\varphi\geqslant f_{\varphi}\text{ for some }f_{\varphi}\in
\mathrm{S}_{b,k}\right\}  .
\]
When this relaxed condition holds, then, for any $\tilde{\mu}\in
\mathrm{P}_{k,\leqslant\nu}^{\mathcal{A}},$ $\tilde{\mu}\leqslant
_{\mathcal{A}}\nu,$ we have%
\[
\int f_{\varphi}d\tilde{\mu}\leqslant\int\varphi d\tilde{\mu}\leqslant
\int\varphi d\nu<\infty,
\]
which implies $\int\varphi d\tilde{\mu}$ is finite. Similarly, for forward
projection, it is possible to further relax $\mathcal{A\cap}\mathrm{S}_{b,k}$
to:%
\[
\left\{  \varphi:\varphi\in\mathcal{A\cap}L^{1}\left(  Y,d\mu\right)  \text{
and }\varphi\leqslant g_{\varphi}\text{ for some }g_{\varphi}\in
\mathrm{S}_{b,k}\right\}  .
\]

\end{remark}

\begin{remark}
\label{rmk_trivial}If $\mathcal{A}$ defines a trivial order (see Example
\ref{eg_trivial_order}) and $k=0$, then the dualities of Wasserstein
projections reduce to the classical Kantorovich duality.
\end{remark}

\begin{remark}
\label{rmk_adm_equv}If the defining classes $\mathcal{A}_{1},$ $\mathcal{A}%
_{2}$, $\mathcal{A}_{1}\mathcal{\cap}\mathrm{S}_{b,k}$ $\mathcal{A}%
_{2}\mathcal{\cap}\mathrm{S}_{b,k}$ define the same stochastic order$,$ then
they give equal optimal dual value. Take backward projection for exmaple,%
\[
\sup_{\varphi\in\mathcal{A}_{1}\mathcal{\cap}\mathrm{S}_{b,k}}\left\{
\int_{X}Q_{c}\left(  \varphi\right)  d\mu-\int_{Y}\varphi d\nu\right\}
=\sup_{\varphi\in\mathcal{A}_{2}\mathcal{\cap}\mathrm{S}_{b,k}}\left\{
\int_{X}Q_{c}\left(  \varphi\right)  d\mu-\int_{Y}\varphi d\nu\right\}  .
\]
This follows from the proof of the duality formula and the fact that these
defining classes produce the same stochastic order cones, i.e. $\mathrm{P}%
_{k,\leqslant\nu}^{\mathcal{A}_{1}}=\mathrm{P}_{k,\leqslant\nu}^{\mathcal{A}%
_{2}}$. The same is true for forward projections. It is worth noting that it
is not immediately obvious that running the supremum over different sets
should result in equal optimal dual value.
\end{remark}

The following theorem gives the relationship between optimal primal solutions
and optimal dual solutions of Wasserstein projections.

\begin{theorem}
\label{thm_aproperty}Let $X,$ $Y$ be locally compact Polish spaces,
$k\geqslant0$ be an integer, $\mu\in P_{k}\left(  X\right)  ,$ $\nu\in
P_{k}\left(  Y\right)  $ and $\mathcal{A}$ be a defining function class as
defined in Definition \ref{def_lin_order}.

(i) Assume that the conditions of Theorem \ref{thm_dual_bwg} are satisfied.
Let $\bar{\mu}$ be the optimizer for $\mathcal{T}_{c}\left(  \mu
,\mathrm{P}_{k,\leqslant\nu}^{\mathcal{A}}\right)  $ and suppose $\varphi
\in\mathcal{A\cap}L^{1}\left(  d\nu\right)  $ is an optimal dual solution for
backward projection of Theorem \ref{thm_dual_bwg} which is bounded from below
by some function in $\mathrm{S}_{b,k}$, then%
\[
\int\varphi d\bar{\mu}=\int\varphi d\nu.
\]
In particular $\varphi$ is an optimal potential for $\mathcal{T}_{c}\left(
\mu,\bar{\mu}\right)  .$

(ii) Assume that the conditions of Theorem \ref{thm_dual_fwg} are satisfied.
Let $\bar{\nu}$ be the optimizer for $\mathcal{T}_{c}\left(  \mathrm{P}%
_{k,\mu\leqslant}^{\mathcal{A}},\nu\right)  $ and suppose $\varphi
\in\mathcal{A\cap}L^{1}\left(  d\mu\right)  $ is an optimal dual solution for
forward projection of Theorem \ref{thm_dual_fwg} which is bounded from above
by some function in $\mathrm{S}_{b,k}$, then%
\[
\int\varphi d\mu=\int\varphi d\bar{\nu}.
\]
In particular $\varphi$ is an optimal potential for $\mathcal{T}_{c}\left(
\bar{\nu},\nu\right)  .$
\end{theorem}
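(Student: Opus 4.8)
The plan is to sandwich $\int\varphi\,d\bar\mu$ between $\int\varphi\,d\nu$ from below and from above, forcing equality. I would prove part (i) in full; part (ii) is entirely symmetric, interchanging the fixed and free marginals and replacing $Q_c$ by $Q_{\bar c}$, so at the end I would only indicate the changes.

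First I would pair the optimal dual solution $\varphi$ with the companion potential $u=Q_c\left(\varphi\right)$ from $\left(\ref{eq_Qc}\right)$, so that $u\left(x\right)-\varphi\left(y\right)\leqslant c\left(x,y\right)$ for all $x,y$. By the alternative form of the backward duality (Theorem \ref{thm_dual_bwg}(ii)) and the fact that $\bar\mu$ attains the infimum, $\int_X u\,d\mu-\int_Y\varphi\,d\nu=\mathcal{T}_c\left(\mu,\mathrm{P}_{k,\leqslant\nu}^{\mathcal{A}}\right)=\mathcal{T}_c\left(\mu,\bar\mu\right)$; since $\varphi\in L^1\left(d\nu\right)$ and $\mathcal{T}_c\left(\mu,\bar\mu\right)$ is finite, this also records that $\int_X u\,d\mu$ is finite.

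Next I would take an optimal coupling $\pi\in\Pi\left(\mu,\bar\mu\right)$ for $\mathcal{T}_c\left(\mu,\bar\mu\right)$. Integrating the pointwise bound $u-\varphi\leqslant c$ against $\pi$ and splitting the left side into its marginals gives $\int_X u\,d\mu-\int_Y\varphi\,d\bar\mu\leqslant\int c\,d\pi=\mathcal{T}_c\left(\mu,\bar\mu\right)=\int_X u\,d\mu-\int_Y\varphi\,d\nu$, whence $\int_Y\varphi\,d\nu\leqslant\int_Y\varphi\,d\bar\mu$. For the reverse inequality I would invoke only the stochastic order: since $\bar\mu\in\mathrm{P}_{k,\leqslant\nu}^{\mathcal{A}}$ we have $\bar\mu\leqslant_{\mathcal{A}}\nu$, and $\varphi\in\mathcal{A}$, so Definition \ref{def_lin_order} yields $\int_Y\varphi\,d\bar\mu\leqslant\int_Y\varphi\,d\nu$. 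The two inequalities give $\int\varphi\,d\bar\mu=\int\varphi\,d\nu$. Feeding this back into the dual value, the admissible pair $\left(u,\varphi\right)$ satisfies $\int_X u\,d\mu-\int_Y\varphi\,d\bar\mu=\mathcal{T}_c\left(\mu,\bar\mu\right)$; since weak Kantorovich duality bounds any such pair by $\mathcal{T}_c\left(\mu,\bar\mu\right)$, attaining equality means $\varphi$ (with companion $u$) is an optimal potential for $\mathcal{T}_c\left(\mu,\bar\mu\right)$.

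The one place demanding care, and the main obstacle, is integrability bookkeeping rather than any new idea: both the marginal splitting $\int_{X\times Y}\left(u-\varphi\right)d\pi=\int u\,d\mu-\int\varphi\,d\bar\mu$ and the order comparison require $\int\varphi\,d\bar\mu$ to be well defined and finite. This is exactly what the hypothesis that $\varphi$ is bounded below by some $f_\varphi\in\mathrm{S}_{b,k}$ supplies: as in Remark \ref{rmk_relax_phi}, the relation $\bar\mu\leqslant_{\mathcal{A}}\nu$ gives $\int f_\varphi\,d\bar\mu\leqslant\int\varphi\,d\bar\mu\leqslant\int\varphi\,d\nu<\infty$, so $\int\varphi\,d\bar\mu$ is finite and all the steps above are legitimate. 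For part (ii) I would run the identical computation with the companion $v=Q_{\bar c}\left(\varphi\right)$ from $\left(\ref{eq_Qcbar}\right)$, using $\varphi$ bounded above by some $g_\varphi\in\mathrm{S}_{b,k}$ and the order $\mu\leqslant_{\mathcal{A}}\bar\nu$ to control $\int\varphi\,d\bar\nu$, thereby obtaining $\int\varphi\,d\mu=\int\varphi\,d\bar\nu$ and the optimality of $\varphi$ for $\mathcal{T}_c\left(\bar\nu,\nu\right)$.
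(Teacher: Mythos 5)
Your proposal is correct and follows essentially the same route as the paper: the paper compresses your two one-sided bounds into a single chain
$\mathcal{T}_{c}\left(\mu,\bar{\mu}\right)=\int Q_{c}\left(\varphi\right)d\mu-\int\varphi\,d\bar{\mu}+\int\varphi\,d\bar{\mu}-\int\varphi\,d\nu\leqslant\mathcal{T}_{c}\left(\mu,\bar{\mu}\right)$, using exactly your three ingredients — finiteness of $\int\varphi\,d\bar{\mu}$ via Remark \ref{rmk_relax_phi}, weak duality for the admissible pair $\left(Q_{c}\left(\varphi\right),\varphi\right)$, and the order relation $\bar{\mu}\leqslant_{\mathcal{A}}\nu$. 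Your explicit integration of $u-\varphi\leqslant c$ against an optimal coupling $\pi\in\Pi\left(\mu,\bar{\mu}\right)$ merely unpacks the paper's one-line appeal to admissibility, so no substantive difference remains.
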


\begin{proof}
We only prove (i), the proof of (ii) is similar. In view of Remark
\ref{rmk_relax_phi}, the integral $\int\varphi d\bar{\mu}$ is finite. Then
using the optimality of $\bar{\mu}$ and $\varphi$, we obtain
\begin{align*}
\mathcal{T}_{c}\left(  \mu,\bar{\mu}\right)   &  =\mathcal{T}_{c}\left(
\mu,\mathrm{P}_{k,\leqslant\nu}^{\mathcal{A}}\right)  =\int Q_{c}\left(
\varphi\right)  d\mu-\int\varphi d\nu\\
&  =\int Q_{c}\left(  \varphi\right)  d\mu-\int\varphi d\bar{\mu}+\int\varphi
d\bar{\mu}-\int\varphi d\nu\\
&  \leqslant\mathcal{T}_{c}\left(  \mu,\bar{\mu}\right)  +\left(  \int\varphi
d\bar{\mu}-\int\varphi d\nu\right)  \leqslant\mathcal{T}_{c}\left(  \mu
,\bar{\mu}\right)  .
\end{align*}
The first inequality is due to the fact that $\left(  Q_{c}\left(
\varphi\right)  ,\varphi\right)  $ is an admissible pair for the
transportation $\mathcal{T}_{c}\left(  \mu,\bar{\mu}\right)  .$ The second
inequality follows from the fact that $\bar{\mu}\leqslant_{\mathcal{A}}\nu.$
\end{proof}

\subsection{\label{sec_unique}Uniqueness}

Now we turn to the uniqueness of the projections. If the cones $\mathrm{P}%
_{k,\mu\leqslant}^{\mathcal{A}}$, $\mathrm{P}_{\leqslant\nu}^{\mathcal{A}}%
$\ are convex along Wasserstein geodesics, then the uniqueness would be
immediate. However, except in a few special situations, these cones are
generally not convex along Wasserstein geodesics. That being said, the
uniqueness of the projection can still be obtained using the convexity of
these cones under linear interpolation. The case of convex order projection is
proved in \cite{alfonsi2020sampling}. The proof follows the classical strict
convexity argument.

\begin{theorem}
[Uniqueness]\label{thm_uniqueness}Let $k\geqslant0$, $X,$ $Y$ be convex
subsets of $\mathbb{R}^{d},$ $\mu\in P_{k}\left(  X\right)  ,$ $\nu\in
P_{k}\left(  Y\right)  $ and $\mathcal{A}$ be a defining function class as
defined in Definition \ref{def_lin_order}. The cost function $c\left(
x,y\right)  =h\left(  x-y\right)  $ for some strictly convex function
$h:\mathbb{R}^{d}\mapsto\left[  0,\infty\right)  .$ If $\mu\in P_{k}%
^{ac}\left(  X\right)  $ and the transport cost of the backward projection is
finite, then the projection exists and is unique. Similarly, if $\nu\in
P_{k}^{ac}\left(  Y\right)  ,$ then forward projection exists and is unique.
\end{theorem}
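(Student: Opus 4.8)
The plan is to derive existence from the attainment already in hand and to obtain uniqueness by the classical strict-convexity argument, the crucial structural input being that the cones, although not geodesically convex, are convex under \emph{linear} (affine) interpolation of measures. This last fact is immediate from Definition \ref{def_lin_order}: if $\eta_0,\eta_1\leqslant_{\mathcal{A}}\nu$ and $t\in[0,1]$, then for every admissible $f\in\mathcal{A}$ one has $\int f\,d\bigl((1-t)\eta_0+t\eta_1\bigr)=(1-t)\int f\,d\eta_0+t\int f\,d\eta_1\leqslant\int f\,d\nu$, while $(1-t)\eta_0+t\eta_1$ is again a probability measure of finite $k$-th moment; hence it lies in $\mathrm{P}_{k,\leqslant\nu}^{\mathcal{A}}$, and the same computation shows $\mathrm{P}_{k,\mu\leqslant}^{\mathcal{A}}$ is convex under linear interpolation. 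Existence is not the novel part: since $c\geqslant 0$ is lower semicontinuous and, by hypothesis, the backward projection cost is finite, the attainment asserted in Remark \ref{rmk_existence}, combined with Remark \ref{rmk_finiteness} which guarantees the optimizer is a genuine element of $P_k(Y)$, produces a projection $\bar\mu\in\mathrm{P}_{k,\leqslant\nu}^{\mathcal{A}}$. The forward case is identical under the corresponding finiteness assumption.

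For uniqueness in the backward case, suppose $\bar\mu_0,\bar\mu_1\in\mathrm{P}_{k,\leqslant\nu}^{\mathcal{A}}$ both attain $\mathcal{T}_c(\mu,\mathrm{P}_{k,\leqslant\nu}^{\mathcal{A}})$. Fix $t\in(0,1)$ and set $\bar\mu_t=(1-t)\bar\mu_0+t\bar\mu_1$, which lies in the cone by the previous paragraph. Choose optimal couplings $\pi_0\in\Pi(\mu,\bar\mu_0)$ and $\pi_1\in\Pi(\mu,\bar\mu_1)$ (they exist since the costs are finite) and form $\pi_t=(1-t)\pi_0+t\pi_1$, whose first marginal is $\mu$ and second marginal is $\bar\mu_t$, so $\pi_t\in\Pi(\mu,\bar\mu_t)$. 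Then
\begin{align*}
\mathcal{T}_c(\mu,\mathrm{P}_{k,\leqslant\nu}^{\mathcal{A}}) &\leqslant \mathcal{T}_c(\mu,\bar\mu_t)\leqslant\int c\,d\pi_t=(1-t)\int c\,d\pi_0+t\int c\,d\pi_1\\
&=(1-t)\mathcal{T}_c(\mu,\bar\mu_0)+t\mathcal{T}_c(\mu,\bar\mu_1)=\mathcal{T}_c(\mu,\mathrm{P}_{k,\leqslant\nu}^{\mathcal{A}}),
\end{align*}
so every inequality is an equality and $\pi_t$ is an optimal coupling for $\mathcal{T}_c(\mu,\bar\mu_t)$.

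Next I would invoke the structural theorem for optimal transport with absolutely continuous source $\mu\in P_k^{ac}(X)$ and cost $c(x,y)=h(x-y)$ with $h$ strictly convex (the Gangbo--McCann theorem): every optimal plan is concentrated on the graph of a map, i.e.\ its disintegration $(\pi_t)_x$ against $\mu$ is a Dirac mass for $\mu$-a.e.\ $x$. Writing $(\pi_t)_x=(1-t)(\pi_0)_x+t(\pi_1)_x$ and recalling that a Dirac mass is an extreme point of the set of probability measures, for $t\in(0,1)$ both summands must equal that same Dirac; hence $(\pi_0)_x=(\pi_1)_x$ for $\mu$-a.e.\ $x$, giving $\pi_0=\pi_1$ and in particular $\bar\mu_0=\bar\mu_1$. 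The forward case is the same argument read through the transpose: with $\bar\nu_0,\bar\nu_1$ optimal, $\bar\nu_t=(1-t)\bar\nu_0+t\bar\nu_1\in\mathrm{P}_{k,\mu\leqslant}^{\mathcal{A}}$ and $\pi_t=(1-t)\pi_0+t\pi_1\in\Pi(\bar\nu_t,\nu)$ is optimal for $\mathcal{T}_c(\bar\nu_t,\nu)$; since now $\nu\in P_k^{ac}(Y)$ is the fixed \emph{second} marginal and $z\mapsto h(-z)$ is also strictly convex, disintegrating $\pi_t$ against $\nu$ forces $(\pi_t)^y$ to be a Dirac for $\nu$-a.e.\ $y$, whence $\pi_0=\pi_1$ and $\bar\nu_0=\bar\nu_1$.

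The main obstacle is precisely this graph/map step: one must be certain that \emph{mere} strict convexity of $h$, without smoothness or superlinearity, still forces every finite-cost optimal plan out of an absolutely continuous marginal to be deterministic. I would verify that the Gangbo--McCann inversion of the subdifferential applies, strict convexity making $\partial h$ single-valued and injective where defined, and absolute continuity of the fixed marginal annihilating the negligible non-differentiability set of the Kantorovich potential, so that the conditional measures are genuinely Diracs. Once this is secured, the remainder is the routine linear-interpolation and extreme-point bookkeeping carried out above.
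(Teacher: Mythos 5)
Your proposal is correct and follows essentially the same route as the paper's proof of Theorem \ref{thm_uniqueness}: existence from the Fenchel--Rockafellar attainment noted in Remark \ref{rmk_existence} once the cost is finite, then uniqueness via convexity of the cone under linear interpolation, optimality of the mixed plan $\pi_t=(1-t)\pi_0+t\pi_1$, and the Gangbo--McCann-type fact that an optimal plan with strictly convex cost $h(x-y)$ and absolutely continuous fixed marginal is induced by a map (an input the paper likewise invokes without proof). The only cosmetic difference is the last step: the paper applies strict convexity of $h$ a second time to the interpolated map $T_s=(1-s)T_0+sT_1$ to reach a contradiction, whereas you use extremality of Dirac masses among the disintegrations, which in fact needs the graph property only for the mixed plan $\pi_t$.
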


\begin{proof}
We consider forward projection and $X=Y=\mathbb{R}^{d}$, other cases are
proved similarly. Assume that $\nu\in P_{k}^{ac}\left(  \mathbb{R}^{d}\right)
$ and the transport cost of the forward projection\ is finite. In view of
Remark \ref{rmk_existence}, the forward projection exists. Let$\ \bar{\nu}%
_{0}$, $\bar{\nu}_{1}$\ be forward projections of $\nu$ onto $\mathrm{P}%
_{k,\mu\leqslant}^{\mathcal{A}}$, i.e.%
\[
\mathcal{T}_{c}\left(  \bar{\nu}_{0},\nu\right)  =\mathcal{T}_{c}\left(
\bar{\nu}_{1},\nu\right)  =\mathcal{T}_{c}\left(  \mathrm{P}_{k,\mu\leqslant
}^{\mathcal{A}},\nu\right)  .
\]
Denote by $\pi_{i}$ $\left(  i=0,1\right)  $ be the optimal coupling between
$\bar{\nu}_{i}$ and $\nu.$ Then, for $s\in\left(  0,1\right)  $ fixed$,$
\begin{equation}
\pi_{s}\triangleq\left(  1-s\right)  \pi_{0}+s\pi_{1}, \label{thm_uniqueness1}%
\end{equation}
is a coupling between $\bar{\nu}_{s}\triangleq\left(  1-s\right)  \bar{\nu
}_{0}+s\bar{\nu}_{1}$ and $\nu.$ Since $\mathrm{P}_{k,\mu\leqslant
}^{\mathcal{A}}$ is convex under linear interpolation, $\bar{\nu}_{s}%
\in\mathrm{P}_{k,\mu\leqslant}^{\mathcal{A}}.$ Hence%
\begin{align*}
\mathcal{T}_{c}\left(  \mathrm{P}_{k,\mu\leqslant}^{\mathcal{A}},\nu\right)
&  \leqslant\mathcal{T}_{c}\left(  \bar{\nu}_{s},\nu\right)  \leqslant
\int_{\mathbb{R}^{d}\times\mathbb{R}^{d}}c\left(  x,y\right)  d\pi_{s}\\
&  =\left(  1-s\right)  \int_{\mathbb{R}^{d}\times\mathbb{R}^{d}}c\left(
x,y\right)  d\pi_{0}+s\int_{\mathbb{R}^{d}\times\mathbb{R}^{d}}c\left(
x,y\right)  d\pi_{1}\\
&  =\left(  1-s\right)  \mathcal{T}_{c}\left(  \bar{\nu}_{0},\nu\right)
+s\mathcal{T}_{c}\left(  \bar{\nu}_{1},\nu\right)  =\mathcal{T}_{c}\left(
\mathrm{P}_{k,\mu\leqslant}^{\mathcal{A}},\nu\right)  .
\end{align*}
It follows that%
\[
\mathcal{T}_{c}\left(  \mathrm{P}_{k,\mu\leqslant}^{\mathcal{A}},\nu\right)
=\mathcal{T}_{c}\left(  \bar{\nu}_{s},\nu\right)  =\int_{\mathbb{R}^{d}%
\times\mathbb{R}^{d}}c\left(  x,y\right)  d\pi_{s},
\]
i.e. $\bar{\nu}_{s}$ is a forward projection of $\nu$ onto $\mathrm{P}%
_{k,\mu\leqslant}^{\mathcal{A}}$ and $\pi_{s}$ is the optimal coupling between
$\bar{\nu}_{s}$ and $\nu$. Since $\nu$ is absolutely continuous w.r.t. the
Lebesgue measure$,$ for each $i=0,$ $s,$ $1,$ there exists a $\nu$-unqiue
optimal mapping $T_{i}$ from $\nu$ to $\bar{\nu}_{i}$, i.e.,%
\[
d\pi_{i}\left(  x,y\right)  =\delta_{T_{i}\left(  y\right)  }\left(  x\right)
d\nu\left(  y\right)  ,\text{ }i=0,\text{ }s,\text{ }1.
\]
In view of $\left(  \ref{thm_uniqueness1}\right)  $,%
\[
T_{s}\left(  y\right)  =\left(  1-s\right)  T_{0}\left(  y\right)
+sT_{1}\left(  y\right)  ,\text{ }\nu\text{-}a.e.y.
\]
We claim that $T_{0}\left(  y\right)  =T_{s}\left(  y\right)  =T_{1}\left(
y\right)  $ for $\nu$-$a.e.$ $y.$ Otherwise we would have, due to the strict
convexity of the cost, that%
\begin{align*}
\mathcal{T}_{c}\left(  \mathrm{P}_{k,\mu\leqslant}^{\mathcal{A}},\nu\right)
&  =\mathcal{T}_{c}\left(  \bar{\nu}_{s},\nu\right)  =\int_{\mathbb{R}^{d}%
}c\left(  T_{s}\left(  y\right)  ,y\right)  d\nu=\int_{\mathbb{R}^{d}}c\left(
\left(  1-s\right)  T_{0}\left(  y\right)  +sT_{1}\left(  y\right)  ,y\right)
d\nu\\
&  <\left(  1-s\right)  \int_{\mathbb{R}^{d}}c\left(  T_{0}\left(  y\right)
,y\right)  d\nu+s\int_{\mathbb{R}^{d}}c\left(  T_{1}\left(  y\right)
,y\right)  d\nu\\
&  =\left(  1-s\right)  \mathcal{T}_{c}\left(  \bar{\nu}_{0},\nu\right)
+s\mathcal{T}_{c}\left(  \bar{\nu}_{1},\nu\right)  =\mathcal{T}_{c}\left(
\mathrm{P}_{k,\mu\leqslant}^{\mathcal{A}},\nu\right)  ,
\end{align*}
which is a contradiction. Therefore%
\[
\delta_{T_{0}\left(  y\right)  }\left(  x\right)  d\nu\left(  y\right)
=\delta_{T_{s}\left(  y\right)  }\left(  x\right)  d\nu\left(  y\right)
=\delta_{T_{1}\left(  y\right)  }\left(  x\right)  d\nu\left(  y\right)  .
\]
It follows $\pi_{0},$ $\pi_{s},$ $\pi_{1}$ are equal, thus have equal first
marginals, i.e. $\bar{\nu}_{0}=\bar{\nu}_{s}=\bar{\nu}_{1}.$
\end{proof}

\section{\label{sec_cx_proj}Convex Order Projections}

This section is devoted to convex order projections and their duality theorems.

\begin{definition}
[\textbf{Convex order}]\label{def:cx_order}Given two probabilities $\mu$,
$\nu\in P_{1}\left(  \mathbb{R}^{d}\right)  ,$ we call $\mu$ smaller than
$\nu$ in convex order, denoted by $\mu\leqslant_{\text{cx}}\nu,$ if the
inequality%
\begin{equation}
\int\varphi d\mu\leqslant\int\varphi d\nu\label{inq_cx_phi}%
\end{equation}
holds for all real-valued convex function $\varphi$ such that both integrals
of $\varphi$ w.r.t. $\mu$, $\nu$ exist in the extended sense.
\end{definition}

The following lemma gives several equivalent definitions of convex order.

\begin{lemma}
\label{lm_cx_bdbelow}Let $\mu$, $\nu\in P_{1}\left(  \mathbb{R}^{d}\right)  $.
Regarding the inequality $\left(  \ref{inq_cx_phi}\right)  $\ the following
statements are equivalent.

(i) $\left(  \ref{inq_cx_phi}\right)  $ holds for any lower semicontinuous
proper convex function $\varphi$\ such that both integrals exist in the
extended sense.

(ii) $\left(  \ref{inq_cx_phi}\right)  $ holds for any lower semicontinuous
proper convex function $\varphi$ which is bounded from below.

(iii) $\left(  \ref{inq_cx_phi}\right)  $ holds for any convex function
$\varphi$\ which is Lipschitz continuous.
\end{lemma}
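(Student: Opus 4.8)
The plan is to establish the three statements cyclically, $(i)\Rightarrow(ii)\Rightarrow(iii)\Rightarrow(i)$. The one structural fact I would lean on throughout is that a proper lower semicontinuous convex function on $\mathbb{R}^{d}$ admits an affine minorant, $\varphi(x)\geqslant\langle a,x\rangle+b$ for some $a\in\mathbb{R}^{d}$, $b\in\mathbb{R}$, and that such affine functions are integrable against any measure in $P_{1}(\mathbb{R}^{d})$. This integrability guarantees that the integrals in the statement always exist in $(-\infty,+\infty]$ and lets me pass to limits without encountering an $\infty-\infty$ ambiguity. With this in hand, $(i)\Rightarrow(ii)$ is immediate: a lower semicontinuous proper convex function that is bounded below is one for which both integrals exist in the extended sense, so it is covered by the hypothesis of $(i)$.

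For $(ii)\Rightarrow(iii)$ I would truncate from below. A convex $L$-Lipschitz function satisfies $\varphi(0)-L|x|\leqslant\varphi(x)\leqslant\varphi(0)+L|x|$, so it is integrable against any $P_{1}$ measure. Setting $\varphi_{M}=\max(\varphi,-M)$ produces a convex, continuous function bounded below by $-M$, to which $(ii)$ applies, giving $\int\varphi_{M}\,d\mu\leqslant\int\varphi_{M}\,d\nu$. As $M\to\infty$ we have $\varphi_{M}\downarrow\varphi$, with the whole family dominated by the integrable function $|\varphi(0)|+L|x|+1$, so dominated convergence transfers the inequality to $\varphi$.

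The substantive step is $(iii)\Rightarrow(i)$, which I would handle by inf-convolution regularization. Given a proper lower semicontinuous convex $\varphi$ with affine minorant as above, set
\[
\varphi_{n}(x)=\inf_{y\in\mathbb{R}^{d}}\bigl\{\varphi(y)+n|x-y|\bigr\}.
\]
For $n\geqslant|a|$ the minorant keeps this infimum finite and above $\langle a,\cdot\rangle+b$; a short estimate shows $\varphi_{n}$ is $n$-Lipschitz, and it is convex as an infimal convolution of convex functions. Crucially $\varphi_{n}\uparrow\varphi$ pointwise as $n\to\infty$, where lower semicontinuity of $\varphi$ is used to identify the limit. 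Applying $(iii)$ gives $\int\varphi_{n}\,d\mu\leqslant\int\varphi_{n}\,d\nu$ for each such $n$; subtracting the integrable affine minorant makes $\varphi_{n}-\langle a,\cdot\rangle-b$ a nonnegative increasing sequence, so monotone convergence yields $\int\varphi_{n}\,d\mu\uparrow\int\varphi\,d\mu$ and likewise for $\nu$, both in the extended sense. Letting $n\to\infty$ closes the cycle.

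The main obstacle is precisely this last step: one must approximate an arbitrary lower semicontinuous proper convex function monotonically by Lipschitz convex functions and then interchange the limit with integrals that may equal $+\infty$. The inf-convolution furnishes exactly such an increasing Lipschitz approximation, while the existence of the affine minorant, together with the finite first moments of $\mu$ and $\nu$, is what licenses the monotone convergence passage to the limit. The two routine convex-analysis verifications are the $n$-Lipschitz bound and the pointwise monotone convergence $\varphi_{n}\uparrow\varphi$.
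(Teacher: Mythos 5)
Your proof is correct, and it reorganizes the argument rather than reproducing it. The paper proves the two nontrivial implications (ii)$\Rightarrow$(i) and (iii)$\Rightarrow$(ii): for the first it truncates from below by $\max\{\varphi,K\}$, $K\leqslant 0$, and runs a three-way case analysis on the extended-valued integrals (the cases $\int\varphi^{+}d\mu=\infty$, $\int\varphi^{+}d\mu<\infty=\int\varphi^{+}d\nu$, and both finite) before sending $K\to-\infty$ with monotone convergence; for the second it merely asserts the existence of an increasing sequence of $n$-Lipschitz convex functions converging pointwise to $\varphi$ and applies monotone convergence. Your cyclic scheme differs in two substantive ways. First, your (iii)$\Rightarrow$(i) makes the paper's unproved approximation explicit --- the inf-convolution $\varphi_{n}(x)=\inf_{y}\{\varphi(y)+n|x-y|\}$ is precisely the increasing $n$-Lipschitz family the paper invokes --- and, by subtracting the affine minorant $\ell$ so that $\varphi_{n}-\ell\geqslant 0$ increases, it lets monotone convergence absorb the possibly infinite integrals uniformly, eliminating the paper's case analysis altogether (legitimate because, as the paper itself notes just after the lemma, for $\mu\in P_{1}$ the integral of any proper convex function automatically exists in $(-\infty,\infty]$). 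Second, you supply an explicit (ii)$\Rightarrow$(iii) via the downward truncation $\max(\varphi,-M)$ and dominated convergence, a step the paper obtains only implicitly through the chain (ii)$\Rightarrow$(i)$\Rightarrow$(iii); this step is not vacuous, since a Lipschitz convex function (e.g.\ a linear one) need not be bounded below, so (iii) is not a formal restriction of (ii). What each arrangement buys: the paper's isolates the equivalence of (i) and (ii), which it reuses when introducing the class $\mathcal{A}_{\text{cx,lb}}$, while yours is more self-contained and cleaner on the extended-value bookkeeping, at the cost of the small verifications you correctly flag as routine --- the $n$-Lipschitz bound, and the fact that $\varphi_{n}\uparrow\varphi$ even at points where $\varphi=+\infty$, which requires the lower semicontinuity together with the affine minorant.
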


\begin{proof}
\textbf{1}. First prove (i) and (ii) are equivalent. It suffices to show (ii)
implies (i). Let $\varphi$ be any lower semicontinuous proper convex function
such that both integrals w.r.t. $\mu$, $\nu$ exist in the extended sense.
Then, for $\forall K\leqslant0,$ the function $\max\left\{  \varphi,K\right\}
$ is convex and bounded from below. Hence%
\begin{align*}
\int\varphi^{+}d\mu+\int_{\left\{  \varphi<0\right\}  }\max\left\{
\varphi,K\right\}  d\mu &  =\int\max\left\{  \varphi,K\right\}  d\mu\\
&  \leqslant\int\max\left\{  \varphi,K\right\}  d\nu\\
&  =\int\varphi^{+}d\nu+\int_{\left\{  \varphi<0\right\}  }\max\left\{
\varphi,K\right\}  d\nu.
\end{align*}
Note $\int_{\left\{  \varphi<0\right\}  }\max\left\{  \varphi,K\right\}  d\mu
$, $\int_{\left\{  \varphi<0\right\}  }\max\left\{  \varphi,K\right\}  d\nu$
are finite. If $\int\varphi^{+}d\mu=\infty$, then from the above inequality we
infer that $\int\varphi^{+}d\nu=\infty$. Since both integrals of $\varphi$
w.r.t. $\mu$, $\nu$ exist in the extended sense, it follows that $\int\varphi
d\mu=\int\varphi d\nu=\infty.$ In this case $\left(  \ref{inq_cx_phi}\right)
$ holds trivially with both sides equal to $\infty$. In the other extreme case
where $\int\varphi^{+}d\mu<\infty$ and $\int\varphi^{+}d\nu=\infty,$ $\left(
\ref{inq_cx_phi}\right)  $ holds trivially too. It remains to consider the
case where $\int\varphi^{+}d\mu<\infty$, $\int\varphi^{+}d\nu<\infty$. In this
case, we can let $K\rightarrow-\infty$ and use monotone convergence theorem to
obtain%
\[
\int_{\left\{  \varphi<0\right\}  }\max\left\{  \varphi,K\right\}
d\mu\rightarrow\int_{\left\{  \varphi<0\right\}  }\varphi d\mu,\text{ }%
\int_{\left\{  \varphi<0\right\}  }\max\left\{  \varphi,K\right\}
d\nu\rightarrow\int_{\left\{  \varphi<0\right\}  }\varphi d\nu.
\]
It follows that%
\[
\int\varphi d\mu\leqslant\int\varphi d\nu.
\]
Therefore, (i) holds in all cases.

\textbf{2}. For the equivalence between (ii) and (iii), it suffices to show
(iii) implies (ii). Consider any lower semicontinuous proper convex function
$\varphi$ on $\mathbb{R}^{d}$. Since $\varphi$ is lower semicontinuous, there
is a sequence of $n$-Lipschitz convex functions $\varphi_{n}$\ which increase
to $\varphi$ in a pointwise manner as $n\rightarrow\infty$. Note $\varphi
_{n}\in L^{1}\left(  d\mu\right)  \cap$ $L^{1}\left(  d\nu\right)  ,$ $\forall
n.$ Then using monotone convergence theorem, we obtain that $\varphi$
satisfies $\left(  \ref{inq_cx_phi}\right)  .$ Thus (ii) is proved.
\end{proof}

Define%
\[
\mathcal{A}_{\text{cx}}=\left\{  \varphi:\varphi\text{ proper convex, lower
semiconitnuous}\right\}  .
\]
Note that for any $\mu\in P_{1}\left(  \mathbb{R}^{d}\right)  $ and any proper
convex function $\varphi,\,$the integral $\int\varphi d\mu\ $always exists in
the extended sense. This results from the fact that $\varphi$ is supported by
a linear function, thus $\int\min\left\{  \varphi,0\right\}  d\mu>-\infty$.
This together with Lemma \ref{lm_cx_bdbelow} indicates that the three defining
classes $\mathcal{A}_{\text{cx}},$ $\mathcal{A}_{\text{cx}}\cap\mathrm{S}%
_{b,1}$, $\mathcal{A}_{\text{cx}}\cap\mathrm{S}_{b,2}$\ produce the same
convex order relation. Instead of $\mathcal{A}_{\text{cx}},$ we can also use
the following class to get the same convex order,%
\[
\mathcal{A}_{\text{cx,lb}}=\left\{  \varphi\in\mathcal{A}_{\text{cx}}%
:\varphi\text{ bounded from below}\right\}  .
\]
Note we have
\begin{equation}
\mathcal{A}_{\text{cx}}\cap C_{b,1}=\mathcal{A}_{\text{cx}}\cap\mathrm{S}%
_{b,1}=\left\{  \varphi:\varphi\text{ convex, Lipschitz}\right\}  .
\label{Acx_lip}%
\end{equation}

Let $k\geqslant1.$ The backward and forward convex order cones are denoted by%
\[
\mathrm{P}_{k,\leqslant\nu}^{\text{cx}}=\left\{  \eta\in P_{k}\left(
\mathbb{R}^{d}\right)  :\eta\leqslant_{\text{cx}}\nu\right\}  .
\]
and%
\[
\mathrm{P}_{k,\mu\leqslant}^{\text{cx}}=\left\{  \xi\in P_{k}\left(
\mathbb{R}^{d}\right)  :\mu\leqslant_{\text{cx}}\xi\right\}  .
\]
In view of Lemma \ref{lm_cx_bdbelow}, the defining classes $\mathcal{A}%
_{\text{cx}},$ $\mathcal{A}_{\text{cx}}\cap\mathrm{S}_{b,1},$ $\mathcal{A}%
_{\text{cx,lb}}$ etc. give the same convex order cones.

\subsection{The duality theorems}

Now we state the dualities for backward and forward convex order projections.

\begin{theorem}
\label{thm_dual_cx}Let $\mu\in P_{2}\left(  \mathbb{R}^{d}\right)  ,$ $\nu\in
P_{2}\left(  \mathbb{R}^{d}\right)  .$

(i) For $k=1,$ $2,$ the duality for backward convex order projection holds%
\begin{align*}
\mathcal{T}_{2}\left(  \mu,\mathrm{P}_{1,\leqslant\nu}^{\text{cx}}\right)   &
=\mathcal{T}_{2}\left(  \mu,\mathrm{P}_{2,\leqslant\nu}^{\text{cx}}\right)
=\sup_{\varphi\in\mathcal{A}_{\text{cx}}\mathcal{\cap}C_{b,k}}\left\{
\int_{\mathbb{R}^{d}}Q_{2}\left(  \varphi\right)  d\mu-\int_{\mathbb{R}^{d}%
}\varphi d\nu\right\} \\
&  =\sup_{\varphi\in\mathcal{A}_{\text{cx}}\cap L^{1}\left(  d\nu\right)
}\left\{  \int_{\mathbb{R}^{d}}Q_{2}\left(  \varphi\right)  d\mu
-\int_{\mathbb{R}^{d}}\varphi d\nu\right\}  ,
\end{align*}
where%
\begin{equation}
Q_{2}\left(  \varphi\right)  \left(  x\right)  =\inf_{y\in\mathbb{R}^{d}%
}\left\{  \varphi\left(  y\right)  +\left\vert x-y\right\vert ^{2}\right\}
\label{eq_Qc2}%
\end{equation}

(ii) For $k=1,$ $2,$ the duality for forward convex order projection holds%
\[
\mathcal{T}_{2}\left(  \mathrm{P}_{1,\mu\leqslant}^{\text{cx}},\nu\right)
=\mathcal{T}_{2}\left(  \mathrm{P}_{2,\mu\leqslant}^{\text{cx}},\nu\right)
=\sup_{\varphi\in\mathcal{A}_{\text{cx}}\mathcal{\cap}C_{b,k}}\left\{
\int_{\mathbb{R}^{d}}\varphi d\mu-\int_{\mathbb{R}^{d}}Q_{\bar{2}}\left(
\varphi\right)  d\nu\right\}
\]
where%
\begin{equation}
Q_{\bar{2}}\left(  \varphi\right)  \left(  y\right)  =\sup_{x\in\mathbb{R}%
^{d}}\left\{  \varphi\left(  x\right)  -\left\vert x-y\right\vert
^{2}\right\}  . \label{eq_Qcbar2}%
\end{equation}

\noindent Note in both (i) and (ii), the optimal values are equal regardless
of $k$, moreover $\varphi\in\mathcal{A}_{\text{cx}}\mathcal{\cap}C_{b,k}$ can
be relaxed to $\varphi\in\mathcal{A\cap}\mathrm{S}_{b,k}$.
\end{theorem}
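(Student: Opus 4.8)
The plan is to obtain Theorem \ref{thm_dual_cx} by specializing the general duality theorems to the quadratic cost $c(x,y)=|x-y|^{2}$, and then upgrading them in two ways: removing the dependence on the moment index $k\in\{1,2\}$, and enlarging the admissible class of potentials to $\mathcal{A}_{\text{cx}}\cap L^{1}(d\nu)$.

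First I would check the hypotheses of Theorem \ref{thm_dual_bwg} (backward) and Theorem \ref{thm_dual_fwg} (forward) with $j=2$ and $k\in\{1,2\}$, which is legitimate since $\mu,\nu\in P_{2}$. Assumption (a2) for the quadratic cost is precisely the content of the Example following Theorem \ref{thm_dual_fwg}, valid for $k=1$ and $k=2$. Assumption (a1), that $\mathcal{A}_{\text{cx}}$ and $\mathcal{A}_{\text{cx}}\cap C_{b,k}$ generate the same convex order, follows from Lemma \ref{lm_cx_bdbelow} together with \eqref{Acx_lip} and the fact that $\int\varphi\,d\mu$ always exists in the extended sense for proper convex $\varphi$ and $\mu\in P_{1}$. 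This yields, for each $k\in\{1,2\}$, the duality $\mathcal{T}_{2}(\mu,\mathrm{P}_{k,\leqslant\nu}^{\text{cx}})=\sup_{\varphi\in\mathcal{A}_{\text{cx}}\cap C_{b,k}}\{\int Q_{2}(\varphi)\,d\mu-\int\varphi\,d\nu\}$ and its forward analogue.

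The heart of the argument is the independence of $k$. In the backward case I would show the cones coincide: if $\eta\in P_{1}$ and $\eta\leqslant_{\text{cx}}\nu$ with $\nu\in P_{2}$, then testing the convex order against the admissible convex function $x\mapsto|x|^{2}$ (Lemma \ref{lm_cx_bdbelow}(ii)) gives $\int|x|^{2}\,d\eta\leqslant\int|x|^{2}\,d\nu<\infty$, so $\eta\in P_{2}$; hence $\mathrm{P}_{1,\leqslant\nu}^{\text{cx}}=\mathrm{P}_{2,\leqslant\nu}^{\text{cx}}$ and the primal values for $k=1,2$ coincide. The forward cones do differ (only $\mathrm{P}_{2,\mu\leqslant}^{\text{cx}}\subseteq\mathrm{P}_{1,\mu\leqslant}^{\text{cx}}$ since $P_{2}\subseteq P_{1}$), so there I would argue at the level of the infimum: any competitor $\bar\nu\in\mathrm{P}_{1,\mu\leqslant}^{\text{cx}}$ with $\mathcal{T}_{2}(\bar\nu,\nu)<\infty$ satisfies $\int|x|^{2}\,d\bar\nu\leqslant 2\mathcal{T}_{2}(\bar\nu,\nu)+2\int|y|^{2}\,d\nu<\infty$ along an optimal coupling, hence $\bar\nu\in P_{2}$ and $\bar\nu\in\mathrm{P}_{2,\mu\leqslant}^{\text{cx}}$; since the infimum is finite (bounded by $\mathcal{T}_{2}(\mu,\nu)$ because $\mu\in\mathrm{P}_{k,\mu\leqslant}^{\text{cx}}$), only finite-cost competitors matter and the two infima agree. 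In either case, equality of primal values forces the two dual suprema over $\mathcal{A}_{\text{cx}}\cap C_{b,1}\subseteq\mathcal{A}_{\text{cx}}\cap C_{b,2}$ to coincide.

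Finally, for the third equality in (i) I would pass to $\mathcal{A}_{\text{cx}}\cap L^{1}(d\nu)$ using Remark \ref{rmk_relax_phi}. Every proper lower semicontinuous convex function is minorized by an affine function, which belongs to $\mathrm{S}_{b,1}\subseteq\mathrm{S}_{b,k}$; thus the relaxed class of Remark \ref{rmk_relax_phi} is exactly $\mathcal{A}_{\text{cx}}\cap L^{1}(d\nu)$, and that remark gives that the dual supremum over it still equals the primal value. Combined with the inclusions $\mathcal{A}_{\text{cx}}\cap C_{b,1}\subseteq\mathcal{A}_{\text{cx}}\cap C_{b,2}\subseteq\mathcal{A}_{\text{cx}}\cap L^{1}(d\nu)$ (the last because a convex function of quadratic growth is $\nu$-integrable for $\nu\in P_{2}$), all three suprema are squeezed to the common primal value; the further relaxation to $\mathrm{S}_{b,k}$ is inherited verbatim from the general theorems. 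I expect the forward moment-independence to be the main obstacle, since there the cones are genuinely distinct and one must use finiteness of the quadratic transport cost, rather than a cone identity, to confine competitors to $P_{2}$.
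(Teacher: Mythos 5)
Your proposal is correct, and on the backward side (cone identity $\mathrm{P}_{1,\leqslant\nu}^{\text{cx}}=\mathrm{P}_{2,\leqslant\nu}^{\text{cx}}$ via testing with $\left\vert y\right\vert ^{2}$, and the $L^{1}\left(  d\nu\right)  $ relaxation via Remark \ref{rmk_relax_phi} plus affine minorants of convex functions) it matches the paper's proof exactly. Where you genuinely diverge is the forward $k$-independence, which is indeed the crux. The paper proves $\mathcal{T}_{2}\left(  \mathrm{P}_{1,\mu\leqslant}^{\text{cx}},\nu\right)  =\mathcal{T}_{2}\left(  \mathrm{P}_{2,\mu\leqslant}^{\text{cx}},\nu\right)  $ by a compactness argument: it takes a minimizing sequence $\bar{\nu}_{n}$, bounds its second moments uniformly by the $\mathcal{W}_{2}$-triangle inequality, upgrades tightness to $\mathcal{W}_{1}$-convergence to preserve the convex order in the limit (Lemma \ref{lm_cx_bdbelow}), controls the limit's second moment by layer-cake plus Fatou, and passes optimal couplings to the limit by truncation and monotone convergence, thereby exhibiting a minimizer $\bar{\nu}\in\mathrm{P}_{2,\mu\leqslant}^{\text{cx}}$ attaining the $P_{1}$-infimum. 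You instead observe that \emph{every} finite-cost competitor $\bar{\nu}\in\mathrm{P}_{1,\mu\leqslant}^{\text{cx}}$ already lies in $P_{2}$, via $\int\left\vert x\right\vert ^{2}d\bar{\nu}\leqslant2\int\left\vert x-y\right\vert ^{2}d\pi+2\int\left\vert y\right\vert ^{2}d\nu$ along a (near-)optimal coupling, so the two infima run over effectively the same set once finiteness of the value (witnessed by the competitor $\mu$ itself) is noted. This is a legitimate shortcut --- in fact the paper's own uniform moment bound on $\bar{\nu}_{n}$ already implies $\bar{\nu}_{n}\in\mathrm{P}_{2,\mu\leqslant}^{\text{cx}}$, from which equality of values follows immediately --- and it is strictly more elementary, requiring no tightness, no stability of convex order under weak limits, and no limit extraction. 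What the paper's longer route buys, and yours does not, is a direct proof of primal attainment (existence of the forward projection), which the paper explicitly flags as a by-product; since the theorem as stated only asserts equality of values, and existence is anyway covered by Remark \ref{rmk_existence}, your argument fully suffices. One minor point of hygiene: phrase the moment bound with a coupling of cost at most $\mathcal{T}_{2}\left(  \bar{\nu},\nu\right)  +\varepsilon$ rather than invoking an optimal coupling, so you do not even need attainment of $\mathcal{T}_{2}\left(  \bar{\nu},\nu\right)  $ at that stage.
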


\begin{proof}
The dualities for individual $k$ are obtained by invoking Theorem
\ref{thm_dual_bwg} and Theorem \ref{thm_dual_fwg}. To see that the optimal
values are equal regardless of $k=1,2,$ we proceed as below.

\textbf{1}. We prove that%
\[
\mathrm{P}_{1,\leqslant\nu}^{\text{cx}}=\mathrm{P}_{2,\leqslant\nu}%
^{\text{cx}}\text{,}%
\]
whence%
\[
\mathcal{T}_{2}\left(  \mu,\mathrm{P}_{1,\leqslant\nu}^{\text{cx}}\right)
=\mathcal{T}_{2}\left(  \mu,\mathrm{P}_{2,\leqslant\nu}^{\text{cx}}\right)  .
\]
Since $\mathrm{P}_{2,\leqslant\nu}^{\text{cx}}\subset\mathrm{P}_{1,\leqslant
\nu}^{\text{cx}},$ so it suffices to prove the opposite includsion. For any
$\eta\in P_{1}\left(  \mathbb{R}^{d}\right)  $ and $\eta\leqslant_{\text{cx}%
}\nu$, we have%
\[
\int_{\mathbb{R}^{d}}\varphi d\eta\leqslant\int_{\mathbb{R}^{d}}\varphi
d\nu\text{ for all }\varphi\in\mathcal{A}_{\text{cx}}\cap\mathrm{S}_{b,2}.
\]
Since $\nu\in P_{2}\left(  \mathbb{R}^{d}\right)  ,$ we can take as $\varphi$
the convex function $\left\vert y\right\vert ^{2}$ to obtain%
\[
\int_{\mathbb{R}^{d}}\left\vert y\right\vert ^{2}d\eta\leqslant\int%
_{\mathbb{R}^{d}}\left\vert y\right\vert ^{2}d\nu<\infty,
\]
which shows that $\eta\in P_{2}\left(  \mathbb{R}^{d}\right)  .$ Therefore
$\mathrm{P}_{1,\leqslant\nu}^{\text{cx}}\subset\mathrm{P}_{2,\leqslant\nu
}^{\text{cx}}.$

\textbf{2}. That $\varphi\in\mathcal{A}_{\text{cx}}\mathcal{\cap}C_{b,k}$ in
the backward duality can be relaxed to $\varphi\in\mathcal{A}_{\text{cx}}\cap
L^{1}\left(  d\nu\right)  $ follows from Remark \ref{rmk_relax_phi} and the
fact that real-valued convex functions are supported by linear functions. This
completes the proof of item (i).

\textbf{3}. We prove that%
\begin{equation}
\mathcal{T}_{2}\left(  \mathrm{P}_{1,\mu\leqslant}^{\text{cx}},\nu\right)
=\mathcal{T}_{2}\left(  \mathrm{P}_{2,\mu\leqslant}^{\text{cx}},\nu\right)  .
\label{thm_dual_cx0}%
\end{equation}
The idea of step 1 does not apply here. In general we only have $\mathrm{P}%
_{2,\mu\leqslant}^{\text{cx}}\subset\mathrm{P}_{1,\mu\leqslant}^{\text{cx}},$
whence%
\[
\mathcal{T}_{2}\left(  \mathrm{P}_{1,\mu\leqslant}^{\text{cx}},\nu\right)
\leqslant\mathcal{T}_{2}\left(  \mathrm{P}_{2,\mu\leqslant}^{\text{cx}}%
,\nu\right)  .
\]
To show the reverse inequality, we prove that there exists $\bar{\nu}%
\in\mathrm{P}_{2,\mu\leqslant}^{\text{cx}}$ such that%
\begin{equation}
\mathcal{T}_{2}\left(  \bar{\nu},\nu\right)  =\mathcal{T}_{2}\left(
\mathrm{P}_{1,\mu\leqslant}^{\text{cx}},\nu\right)  , \label{thm_dual_cx1}%
\end{equation}
which indicates%
\[
\mathcal{T}_{2}\left(  \mathrm{P}_{2,\mu\leqslant}^{\text{cx}},\nu\right)
\leqslant\mathcal{T}_{2}\left(  \bar{\nu},\nu\right)  =\mathcal{T}_{2}\left(
\mathrm{P}_{1,\mu\leqslant}^{\text{cx}},\nu\right)  .
\]
Therefore, once $\left(  \ref{thm_dual_cx1}\right)  $ is established, the
proof of $\left(  \ref{thm_dual_cx0}\right)  $\ would be completed. Note this
actually yields a direct proof of the existence for$\mathcal{\ T}_{2}\left(
\mathrm{P}_{1,\mu\leqslant}^{\text{cx}},\nu\right)  $, $\mathcal{T}_{2}\left(
\mathrm{P}_{2,\mu\leqslant}^{\text{cx}},\nu\right)  .$ To prove $\left(
\ref{thm_dual_cx1}\right)  ,$ first note, in view of Remark
\ref{rmk_finiteness}, $\mathcal{T}_{2}\left(  \mathrm{P}_{1,\mu\leqslant
}^{\text{cx}},\nu\right)  $ is finite. Let $\bar{\nu}_{n}\in\mathrm{P}%
_{1,\mu\leqslant}^{\text{cx}}$ be a minimizing sequence for $\mathcal{T}%
_{2}\left(  \mathrm{P}_{1,\mu\leqslant}^{\text{cx}},\nu\right)  $. Then the
sequence $\mathcal{T}_{2}\left(  \bar{\nu}_{n},\nu\right)  $ is bounded by
some constant $M>0.$ By $\mathcal{W}_{2}$-triangle inequality,
\begin{align*}
\int_{\mathbb{R}^{d}}\left\vert x\right\vert ^{2}d\bar{\nu}_{n}  &
=\mathcal{T}_{2}\left(  \bar{\nu}_{n},\delta_{0}\right)  \leqslant\left(
\mathcal{W}_{2}\left(  \bar{\nu}_{n},\nu\right)  +\mathcal{W}_{2}\left(
\nu,\delta_{0}\right)  \right)  ^{2}\\
&  \leqslant2\left(  \mathcal{T}_{2}\left(  \bar{\nu}_{n},\nu\right)
+\mathcal{T}_{2}\left(  \nu,\delta_{0}\right)  \right)  \leqslant2\left(
M+\int_{\mathbb{R}^{d}}\left\vert x\right\vert ^{2}d\nu\right)  <\infty,\text{
}\forall n.
\end{align*}
Here $\delta_{0}$ is the Dirac measure concentrated at the origin. By virtue
of Markov inequality the sequence $\bar{\nu}_{n}$ is tight (see e.g.
\cite{billingsley2013convergence}). Therefore there exists a subsequence
$\bar{\nu}_{n_{i}}$ converging weakly to some probability $\bar{\nu}.$
Meanwhile%
\[
\sup_{i}\int_{\left\vert x\right\vert \geqslant R}\left\vert x\right\vert
d\bar{\nu}_{n_{i}}\leqslant\sup_{i}\left(  \frac{1}{R}\int_{\left\vert
x\right\vert \geqslant R}\left\vert x\right\vert ^{2}d\bar{\nu}_{n_{i}%
}\right)  \leqslant\frac{1}{R}\sup_{n}\int_{\mathbb{R}^{d}}\left\vert
x\right\vert ^{2}d\bar{\nu}_{n}\rightarrow0\text{ as }R\rightarrow\infty.
\]
By \cite[Theorem 7.12 (ii)]{villani2003topics},%
\[
\mathcal{W}_{1}\left(  \bar{\nu}_{n_{i}},\bar{\nu}\right)  \rightarrow0\text{
as }i\rightarrow\infty\text{.}%
\]
Hence for any $g\in\mathrm{S}_{b,1}$,%
\[
\int_{\mathbb{R}^{d}}gd\bar{\nu}_{n}\rightarrow\int_{\mathbb{R}^{d}}gd\bar
{\nu}\text{ as }n\rightarrow\infty\text{.}%
\]
Using Lemma \ref{lm_cx_bdbelow} and $\mu\leqslant\bar{\nu}_{n}$, we infer that
$\mu\leqslant_{\text{cx}}\bar{\nu}.$ Moreover $\bar{\nu}$ has second-order
moment. Indeed, by virtue of Layer cake theorem \cite[p26, 1.13]%
{lieb2001analysis},%
\[
\int_{\mathbb{R}^{d}}\left\vert x\right\vert ^{2}d\bar{\nu}_{n_{i}}=2\int%
_{0}^{\infty}t\bar{\nu}_{n_{i}}\left(  \left\vert x\right\vert >t\right)  dt.
\]
Since $\bar{\nu}_{n_{i}}$ converges weakly to $\bar{\nu},$
\[
\lim_{i\rightarrow\infty}\bar{\nu}_{n_{i}}\left(  \left\vert x\right\vert
>t\right)  =\bar{\nu}\left(  \left\vert x\right\vert >t\right)  ,\text{ for
}a.e.\text{-}t.
\]
Now using Fatou lemma%
\begin{align*}
\int_{\mathbb{R}^{d}}\left\vert x\right\vert ^{2}d\bar{\nu}  &  =2\int%
_{0}^{\infty}t\bar{\nu}\left(  \left\vert x\right\vert >t\right)
dt\leqslant\liminf_{i\rightarrow\infty}2\int_{0}^{\infty}t\bar{\nu}_{n_{i}%
}\left(  \left\vert x\right\vert >t\right)  dt\\
&  =\liminf_{i\rightarrow\infty}\int_{\mathbb{R}^{d}}\left\vert x\right\vert
^{2}d\bar{\nu}_{n_{i}}\leqslant\sup_{n}\int_{\mathbb{R}^{d}}\left\vert
x\right\vert ^{2}d\bar{\nu}_{n}<\infty.
\end{align*}
Hence $\bar{\nu}\in P_{2}\left(  \mathbb{R}^{d}\right)  .$ Therefore $\bar
{\nu}\in\mathrm{P}_{2,\mu\leqslant}^{\text{cx}}\subset\mathrm{P}%
_{1,\mu\leqslant}^{\text{cx}}.$ Finally, we show that $\bar{\nu}$ is optimal
for $\mathcal{T}_{2}\left(  \mathrm{P}_{1,\mu\leqslant}^{\text{cx}}%
,\nu\right)  .$ Let $\bar{\pi}_{n}\in\Pi\left(  \bar{\nu}_{n},\nu\right)  $ be
the optimal couplings for $\mathcal{T}_{2}\left(  \bar{\nu}_{n},\nu\right)  $.
Since $\bar{\nu}_{n_{i}}\in\mathrm{P}_{1,\mu\leqslant}^{\text{cx}}$ converges
weakly to $\bar{\nu}$, we may assume that $\bar{\pi}_{n_{i}}$ converges weakly
to some $\bar{\pi}\in\Pi\left(  \bar{\nu},\nu\right)  .$\ For any $R>0,$%
\begin{align*}
\int_{\mathbb{R}^{d}\times\mathbb{R}^{d}}\left(  \left\vert x-y\right\vert
^{2}\wedge R\right)  d\bar{\pi}\left(  x,y\right)   &  =\lim_{i\rightarrow
\infty}\int_{\mathbb{R}^{d}\times\mathbb{R}^{d}}\left(  \left\vert
x-y\right\vert ^{2}\wedge R\right)  d\bar{\pi}_{n_{i}}\left(  x,y\right) \\
&  \leqslant\lim_{i\rightarrow\infty}\int_{\mathbb{R}^{d}\times\mathbb{R}^{d}%
}\left\vert x-y\right\vert ^{2}d\bar{\pi}_{n_{i}}\left(  x,y\right)
=\mathcal{T}_{2}\left(  \mathrm{P}_{1,\mu\leqslant}^{\text{cx}},\nu\right)  .
\end{align*}
Sending $R\rightarrow\infty$ and using monotone convergence theorem,%
\[
\int_{\mathbb{R}^{d}\times\mathbb{R}^{d}}\left\vert x-y\right\vert ^{2}%
d\bar{\pi}\left(  x,y\right)  \leqslant\mathcal{T}_{2}\left(  \mathrm{P}%
_{1,\mu\leqslant}^{\text{cx}},\nu\right)  .
\]
Since $\bar{\nu}\in\mathrm{P}_{1,\mu\leqslant}^{\text{cx}},$ this shows
$\bar{\pi}$ is an optimal coupling which achieves $\mathcal{T}_{2}\left(
\mathrm{P}_{1,\mu\leqslant}^{\text{cx}},\nu\right)  $, thus $\left(
\ref{thm_dual_cx1}\right)  $ is satisfied.
\end{proof}

\begin{remark}
From the proof of Theorem \ref{thm_dual_cx}, any projection onto
$\mathrm{P}_{1,\leqslant\nu}^{\text{cx}}$ is also a projection onto
$\mathrm{P}_{2,\leqslant\nu}^{\text{cx}},$ and vice versa.\ The same is true
for $\mathrm{P}_{1,\mu\leqslant}^{\text{cx}}$ and $\mathrm{P}_{2,\mu\leqslant
}^{\text{cx}}.$
\end{remark}

\section{\label{sec_att_cx}Dual attainment for convex order projections}

In this section we will prove the attainment of the optimal dual value
associated with backward convex order projection and forward convex order
projection. Characterization of these optimal dual potentials will be given in
the following section. These are tackled by combining the treatments in the
classical optimal transportation with duality formulas proved in the earlier sections.

\subsection{Backward projection}

\begin{theorem}
\label{thm_dual_att_cx_bw}Let $\mu\in P_{2}\left(  \mathbb{R}^{d}\right)  ,$
$\nu\in P_{2}\left(  \mathbb{R}^{d}\right)  $. Then, for each $k=1$ or $2,$
there is a convex function $\varphi_{0}\in\mathcal{A}_{\text{cx}}\cap
L^{1}\left(  d\nu\right)  $ with values in $\mathbb{R}\cup\left\{
\infty\right\}  $ which achieves the optimal dual value
\[
\mathcal{D}_{2}\left(  \mu,\mathrm{P}_{k,\leqslant\nu}^{\text{cx}}\right)
\triangleq\sup_{\varphi\in\mathcal{A}_{\text{cx}}\cap\mathrm{S}_{b,k}}\left\{
\int Q_{2}\left(  \varphi\right)  d\mu-\int\varphi d\nu\right\}  ,
\]
where $Q_{2}\left(  \cdot\right)  $ is defined in $\left(  \ref{eq_Qc2}%
\right)  .$ If both $\mu$ and $\nu$ have compact supports, then there exists a
Lipschitz convex optimal dual solution. Moreover%
\[
\mathcal{D}_{2}\left(  \mu,\mathrm{P}_{1,\leqslant\nu}^{\text{cx}}\right)
=\mathcal{D}_{2}\left(  \mu,\mathrm{P}_{2,\leqslant\nu}^{\text{cx}}\right)  .
\]

\end{theorem}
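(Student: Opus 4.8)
The plan is to prove attainment by the direct method, after recasting the (relaxed) dual functional in a form suited to the compactness properties of convex functions. Throughout I use that, by Theorem \ref{thm_dual_cx}, the supremum over $\mathcal{A}_{\text{cx}}\cap\mathrm{S}_{b,k}$ equals the supremum over $\mathcal{A}_{\text{cx}}\cap L^{1}(d\nu)$ and equals the finite primal value $\mathcal{T}_{2}(\mu,\mathrm{P}_{k,\leqslant\nu}^{\text{cx}})$ (finiteness from Remark \ref{rmk_finiteness}). Since that primal value is independent of $k=1,2$, the identity $\mathcal{D}_{2}(\mu,\mathrm{P}_{1,\leqslant\nu}^{\text{cx}})=\mathcal{D}_{2}(\mu,\mathrm{P}_{2,\leqslant\nu}^{\text{cx}})$ is immediate and one optimizer $\varphi_{0}$ will serve both $k$; so it suffices to produce a single maximizer $\varphi_{0}\in\mathcal{A}_{\text{cx}}\cap L^{1}(d\nu)$.

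First I would record the algebraic reduction that drives everything. Writing $\psi=\varphi+|\cdot|^{2}$, convexity of $\varphi$ is equivalent to $\psi$ being $2$-strongly convex (that is, $\psi-|\cdot|^{2}$ convex), and completing the square gives $Q_{2}(\varphi)(x)=|x|^{2}-\psi^{\ast}(2x)$. Hence, up to the additive constant $\int|x|^{2}d\mu+\int|y|^{2}d\nu$ (finite since $\mu,\nu\in P_{2}$), maximizing $J(\varphi)=\int Q_{2}(\varphi)\,d\mu-\int\varphi\,d\nu$ is the same as minimizing $I(\psi)=\int\psi^{\ast}(2x)\,d\mu+\int\psi\,d\nu$ over $2$-strongly convex lower semicontinuous $\psi$. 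I would also note that $J$ is invariant under adding a constant to $\varphi$, so a maximizing sequence $\varphi_{n}$ may be normalized by $\varphi_{n}(0)=0$ (choosing the base point in the interior of the domain if needed); then $\psi_{n}(0)=0$ forces $\psi_{n}^{\ast}\geqslant0$, each term of $I(\psi_{n})$ is bounded below, and, since $I(\psi_{n})$ converges, both $\int\psi_{n}^{\ast}(2x)\,d\mu$ and $\int\psi_{n}\,d\nu$ stay bounded.

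The core step, and the main obstacle, is compactness of the normalized maximizing sequence. Because $\psi_{n}$ is $2$-strongly convex with $\psi_{n}(0)=0$, for $g_{n}\in\partial\psi_{n}(0)$ one has $\psi_{n}(y)\geqslant\langle g_{n},y\rangle+|y|^{2}$, and dually $\psi_{n}^{\ast}\geqslant0$ with minimum value $0$ attained at $g_{n}$. I would use the two bounded integrals together with $\mu,\nu\in P_{2}$ to show the subgradients $g_{n}$ stay bounded: if $|g_{n}|\to\infty$, then the minimum of $\psi_{n}^{\ast}$ recedes to infinity, which — given the fixed finite second moments and the quadratic lower bound on $\psi_{n}$ — forces either $\int\psi_{n}^{\ast}(2x)\,d\mu$ or $\int\psi_{n}\,d\nu$ to diverge, contradicting boundedness of $I(\psi_{n})$. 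With $g_{n}$ bounded and $\psi_{n}(0)=0$, the convex functions $\psi_{n}$ are locally uniformly bounded, hence locally equi-Lipschitz, so Arzel\`a--Ascoli yields a subsequence converging locally uniformly to a $2$-strongly convex $\psi_{0}$; set $\varphi_{0}=\psi_{0}-|\cdot|^{2}\in\mathcal{A}_{\text{cx}}$. I expect this compactness to be the hard part precisely because the $2$-strong-convexity constraint prevents one from simply invoking the classical attainment of Brenier--Kantorovich potentials — dropping that constraint would replace the projection value by the full transport cost $\mathcal{T}_{2}(\mu,\nu)$, so the constraint is genuinely binding and must be carried through.

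Finally I would pass to the limit. Local uniform convergence $\psi_{n}\to\psi_{0}$ gives pointwise convergence of the conjugates, hence $Q_{2}(\varphi_{n})\to Q_{2}(\varphi_{0})$ and $\varphi_{n}\to\varphi_{0}$ pointwise. For the $\mu$-term, the uniform upper bound $Q_{2}(\varphi_{n})(x)=|x|^{2}-\psi_{n}^{\ast}(2x)\leqslant|x|^{2}\in L^{1}(d\mu)$ and reverse Fatou give $\int Q_{2}(\varphi_{0})\,d\mu\geqslant\limsup_{n}\int Q_{2}(\varphi_{n})\,d\mu$; for the $\nu$-term, the uniform affine lower bound $\varphi_{n}\geqslant\langle g_{n},\cdot\rangle\geqslant-C|\cdot|\in L^{1}(d\nu)$ (bounded $g_{n}$, $\nu\in P_{1}$) and Fatou give $\int\varphi_{0}\,d\nu\leqslant\liminf_{n}\int\varphi_{n}\,d\nu$. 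Together these yield $J(\varphi_{0})\geqslant\limsup_{n}J(\varphi_{n})=\mathcal{D}_{2}$, so $\varphi_{0}$ is optimal, and the same two bounds show $\varphi_{0}\in L^{1}(d\nu)$ (with value possibly $+\infty$ off its domain). For the compact-support case, when $\mu$ and $\nu$ are supported in a ball $B_{R}$ the bound on $g_{n}$ and the local Lipschitz bound become uniform on a fixed neighbourhood of $B_{R}$; since only the values of $\varphi_{0}$ near $B_{R}$ enter $J$, one may then replace $\varphi_{0}$ by its Lipschitz regularization (infimal convolution with a large multiple of $|\cdot|$, which preserves convexity and the relevant values) to obtain a globally Lipschitz convex optimizer.
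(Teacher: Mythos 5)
Your reduction ($\psi=\varphi+|\cdot|^{2}$, $Q_{2}(\varphi)(x)=|x|^{2}-\psi^{\ast}(2x)$), the normalization, and the final Fatou scaffolding are all sound, and your plan is in fact closer to the paper's proof of the \emph{forward} attainment (Theorem \ref{thm_dual_att_cx}) than to the paper's proof of this theorem, which does not argue directly at all: it quotes \cite[Proposition 1.1]{gozlan2020mixture} and then transfers the optimizer across the classes and across $k=1,2$ via Theorem \ref{thm_dual_cx}. The genuine gap in your argument is the compactness step. From $\psi_{n}(0)=0$ and a bounded subgradient $g_{n}\in\partial\psi_{n}(0)$ you conclude that the $\psi_{n}$ are locally uniformly bounded, hence locally equi-Lipschitz; but value and subgradient at a single point give only a uniform affine \emph{lower} bound, never an upper bound. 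For instance $\psi_{n}(y)=|y|^{2}+n\left(|y|-\tfrac12\right)^{+}$ is $2$-strongly convex with $\psi_{n}(0)=0$ and $0\in\partial\psi_{n}(0)$, yet $\psi_{n}(e_{1})\to\infty$. Such blow-up is not pathological along maximizing sequences: if $\text{supp}(\nu)\subset B_{1/4}$, raising $\varphi$ outside $B_{1/2}$ leaves $\int\varphi\,d\nu$ unchanged while it can only increase $Q_{2}(\varphi)$, so genuine maximizing sequences do exactly this. Indeed this is why the theorem itself allows $\varphi_{0}$ to take values in $\mathbb{R}\cup\{\infty\}$ — the optimizer is typically $+\infty$ outside the closed convex hull of $\text{supp}(\nu)$ — so no subsequence can converge locally uniformly to a \emph{finite} convex $\psi_{0}$, and your Arzel\`a--Ascoli extraction fails. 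The asymmetry with the forward case is structural: there the normalized potentials satisfy the two-sided control $\bar{\varphi}_{n}\leqslant|x|^{2}$ plus a uniform quadratic lower bound, which is precisely what is missing backward.

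The natural repair is to run compactness on the conjugates: since $\psi_{n}$ is $2$-strongly convex, $\psi_{n}^{\ast}$ is differentiable with $\nabla\psi_{n}^{\ast}$ being $\tfrac12$-Lipschitz, and $\psi_{n}^{\ast}\geqslant0$ with $\psi_{n}^{\ast}(g_{n})=0$; so once $g_{n}$ is bounded (your quadratic-versus-linear race is plausible but only sketched, and needs care since $\partial\psi_{n}(0)$ may be a large set) one gets $0\leqslant\psi_{n}^{\ast}(z)\leqslant\tfrac14|z-g_{n}|^{2}$, hence local equi-boundedness and equi-Lipschitzness of $\psi_{n}^{\ast}$, a locally uniform limit $\chi_{0}$, and one sets $\psi_{0}=\chi_{0}^{\ast}$ (possibly $+\infty$-valued). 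Dominated convergence handles $\int\psi_{n}^{\ast}(2x)\,d\mu$, and the $\nu$-side Fatou inequality follows from $\liminf_{n}\psi_{n}(y)\geqslant\sup_{z}\{z\cdot y-\chi_{0}(z)\}=\psi_{0}(y)$ together with the uniform bound $\psi_{n}(y)\geqslant|y|^{2}-C|y|\in L^{1}(d\nu)$. Separately, your compact-support step has an unacknowledged wrong-way inequality: replacing $\varphi_{0}$ by the infimal convolution $\inf_{y}\{\varphi_{0}(y)+K|\cdot-y|\}$ decreases not only $\int\varphi_{0}\,d\nu$ but also $Q_{2}(\varphi_{0})$, so optimality of the regularized potential does not follow from monotonicity alone; one must verify that $Q_{2}$ is unchanged $\mu$-a.e., which is where \cite{gozlan2020mixture} do real work and why the paper simply cites them.
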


\begin{proof}
Using the equivalence between backward convex order projection and weak
optimal transport, it is proved in \cite[Proposition 1.1]{gozlan2020mixture}
that%
\[
\mathcal{T}_{2}\left(  \mu,\mathrm{P}_{1,\leqslant\nu}^{\text{cx}}\right)
=\mathcal{D}_{2,\text{lb}}\left(  \mu,\mathrm{P}_{1,\leqslant\nu}^{\text{cx}%
}\right)  \triangleq\sup_{\varphi\in\mathcal{A}_{\text{cx,lb}}\cap
\mathrm{S}_{b,1}}\left\{  \int_{\mathbb{R}^{d}}Q_{2}\left(  \varphi\right)
d\mu-\int_{\mathbb{R}^{d}}\varphi d\nu\right\}  .
\]
The optimal dual value $\mathcal{D}_{2,\text{lb}}\left(  \mu,\mathrm{P}%
_{1,\leqslant\nu}^{\text{cx}}\right)  $ is attained at some $\varphi_{0}%
\in\mathcal{A}_{\text{cx}}\cap L^{1}\left(  d\nu\right)  $ with values in
$\mathbb{R}\cup\left\{  \infty\right\}  $. Note in general, $\varphi_{0}$
might not be a member of $\mathcal{A}_{\text{cx}}\cap\mathrm{S}_{b,1}$. It is
also proved that, if both $\mu$ and $\nu$ have compact supports, then
$\varphi_{0}$ can be chosen to be Lipschitz convex. In view of Theorem
\ref{thm_dual_cx},%
\[
\mathcal{T}_{2}\left(  \mu,\mathrm{P}_{1,\leqslant\nu}^{\text{cx}}\right)
=\mathcal{D}_{2}\left(  \mu,\mathrm{P}_{1,\leqslant\nu}^{\text{cx}}\right)
=\mathcal{D}_{2}\left(  \mu,\mathrm{P}_{2,\leqslant\nu}^{\text{cx}}\right)  ,
\]
hence the same function $\varphi_{0}$ which attains $\mathcal{D}_{2,\text{lb}%
}\left(  \mu,\mathrm{P}_{1,\leqslant\nu}^{\text{cx}}\right)  $ also achieves
$\mathcal{D}_{2}\left(  \mu,\mathrm{P}_{1,\leqslant\nu}^{\text{cx}}\right)  $
and $\mathcal{D}_{2}\left(  \mu,\mathrm{P}_{2,\leqslant\nu}^{\text{cx}%
}\right)  .$ Therefore $\varphi_{0}$ is the desired function we look for.
\end{proof}

\subsection{Forward projection}

The next result is about the attainment of forward convex order projection.
The proof combines a preliminary trick with the strategy used in the proof of
\cite[Theorem 1.2]{gozlan2020mixture}.

\begin{theorem}
\label{thm_dual_att_cx}Let $\mu\in P_{2}\left(  \mathbb{R}^{d}\right)  ,$
$\nu\in P_{2}\left(  \mathbb{R}^{d}\right)  $. Then, for each $k=1$ or $2,$
there exists a convex function $\bar{\varphi}_{0}\in\mathcal{A}_{\text{cx}%
}\cap C_{b,2}$ which satisfies $\bar{\varphi}_{0}\leqslant\left\vert
x\right\vert ^{2}$ and achieves the optimal dual value
\[
\mathcal{D}_{2}\left(  \mathrm{P}_{k,\mu\leqslant}^{\text{cx}},\nu\right)
\triangleq\sup_{\varphi\in\mathcal{A}_{\text{cx}}\cap\mathrm{S}_{b,k}}\left\{
\int\varphi d\mu-\int Q_{\bar{2}}\left(  \varphi\right)  d\nu\right\}  .
\]
where $Q_{\bar{2}}\left(  \cdot\right)  $ is defined in $\left(
\ref{eq_Qcbar}\right)  .$ Moreover%
\[
\mathcal{D}_{2}\left(  \mathrm{P}_{1,\mu\leqslant}^{\text{cx}},\nu\right)
=\mathcal{D}_{2}\left(  \mathrm{P}_{2,\mu\leqslant}^{\text{cx}},\nu\right)  .
\]

\end{theorem}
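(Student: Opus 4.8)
The plan is to establish dual attainment for $k=2$ by a compactness argument on convex potentials in the spirit of \cite[Theorem 1.2]{gozlan2020mixture}, and then to obtain the case $k=1$ and the equality $\mathcal{D}_{2}(\mathrm{P}_{1,\mu\leqslant}^{\text{cx}},\nu)=\mathcal{D}_{2}(\mathrm{P}_{2,\mu\leqslant}^{\text{cx}},\nu)$ for free. Indeed, Theorem \ref{thm_dual_cx} already gives $\mathcal{T}_{2}(\mathrm{P}_{1,\mu\leqslant}^{\text{cx}},\nu)=\mathcal{T}_{2}(\mathrm{P}_{2,\mu\leqslant}^{\text{cx}},\nu)$ together with the two dualities, and this common value is finite by Remark \ref{rmk_finiteness}; hence a single convex maximizer in $C_{b,2}$ realizes the supremum over both $\mathrm{S}_{b,1}$ and $\mathrm{S}_{b,2}$, even though it need not itself lie in $\mathrm{S}_{b,1}$. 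I will use two elementary facts about $J(\varphi):=\int\varphi\,d\mu-\int Q_{\bar{2}}(\varphi)\,d\nu$: it is invariant under adding a constant to $\varphi$, since $Q_{\bar{2}}(\varphi+t)=Q_{\bar{2}}(\varphi)+t$; and $\varphi\mapsto Q_{\bar{2}}(\varphi)$ is monotone with $Q_{\bar{2}}(\varphi)\geqslant\varphi$, while the double transform satisfies $Q_{2}(Q_{\bar{2}}(\varphi))\geqslant\varphi$ and $Q_{\bar{2}}(Q_{2}(Q_{\bar{2}}(\varphi)))=Q_{\bar{2}}(\varphi)$.

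The preliminary trick is to confine a maximizing sequence to potentials that are simultaneously convex and $c$-concave for the cost $|x-y|^{2}$, which is exactly the regularity an optimal potential must have: by Theorem \ref{thm_aproperty}(ii) an optimal $\varphi$ equals an optimal Kantorovich potential for $\mathcal{T}_{2}(\bar\nu,\nu)$ with $\int\varphi\,d\mu=\int\varphi\,d\bar\nu$, where $\bar\nu$ is the primal projection, which exists by Remark \ref{rmk_existence}. A $c$-concave function for the quadratic cost has the form $Q_{2}(w)(x)=\inf_{y}\{w(y)+|x-y|^{2}\}=|x|^{2}-(\text{a convex function of }x)$, so it is automatically semiconcave and, after fixing the additive constant and the supporting affine term compatible with the common barycenter of $\mu$ and $\bar\nu$, can be arranged to satisfy the quadratic upper bound $\varphi\leqslant|x|^{2}$. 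To reduce to this class I would apply the improvement $\varphi\mapsto Q_{2}(Q_{\bar{2}}(\varphi))$, which does not decrease $J$ (it leaves the subtracted term unchanged and raises the first), and then restore convexity; the tension between $c$-concavity (semiconcavity, $D^{2}\varphi\leqslant 2I$) and the required convexity $\varphi\in\mathcal{A}_{\text{cx}}$ is what distinguishes the forward case from the backward one, where the corresponding operation is compatible with convexity after a shift by $|x|^{2}$.

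With a maximizing sequence $\varphi_{n}$ normalized as above, I would run the compactness step. Convex functions bounded above by $|x|^{2}$ and pinned by the additive normalization are locally uniformly bounded, hence locally equi-Lipschitz by convexity; near-optimality of $J(\varphi_{n})\to\mathcal{D}_{2}(\mathrm{P}_{2,\mu\leqslant}^{\text{cx}},\nu)$ and the finite second moments of $\mu,\nu$ prevent the supporting slopes from escaping. Arzelà–Ascoli then extracts a subsequence converging locally uniformly to a convex $\bar\varphi_{0}$ with $\bar\varphi_{0}\leqslant|x|^{2}$, so $\bar\varphi_{0}\in\mathcal{A}_{\text{cx}}\cap C_{b,2}$. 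To pass to the limit I would show $\limsup_{n}\int\varphi_{n}\,d\mu\leqslant\int\bar\varphi_{0}\,d\mu$ by Fatou, using the envelope $|x|^{2}\in L^{1}(d\mu)$ from above, and $\liminf_{n}\int Q_{\bar{2}}(\varphi_{n})\,d\nu\geqslant\int Q_{\bar{2}}(\bar\varphi_{0})\,d\nu$ from the lower semicontinuity of $Q_{\bar{2}}$ under local uniform convergence, the quadratic-coefficient control confining the inner supremum to a fixed compact set. Together these give $J(\bar\varphi_{0})\geqslant\mathcal{D}_{2}(\mathrm{P}_{2,\mu\leqslant}^{\text{cx}},\nu)$, so $\bar\varphi_{0}$ is the desired maximizer, and the case $k=1$ and the stated equality of dual values follow from Theorem \ref{thm_dual_cx}.

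The hard part will be the preliminary trick rather than the compactness. The improvement operator $Q_{2}\circ Q_{\bar{2}}$ natural to the quadratic cost produces semiconcave functions and thus leaves the admissible cone $\mathcal{A}_{\text{cx}}$, so one cannot simply double-transform as in ordinary optimal transport; reconciling convexity with the quadratic upper bound $\bar\varphi_{0}\leqslant|x|^{2}$ is delicate. Equally delicate is controlling the non-local term $\int Q_{\bar{2}}(\varphi)\,d\nu$ on the noncompact $\mathbb{R}^{d}$ with only second moments at hand: one must rule out escape of the slopes of $\varphi_{n}$ to infinity, equivalently escape of mass of the primal projections to infinity, and this tightness estimate — underwritten by the finiteness in Remark \ref{rmk_finiteness} and the primal existence in Remark \ref{rmk_existence} — is what legitimizes extracting a limit that remains a genuine probability-measure potential rather than a functional supported at infinity.
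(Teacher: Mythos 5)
Your overall architecture is the paper's: reduce to doubly transformed, normalized potentials, extract a pointwise limit of a convex maximizing sequence, pass to the limit with Fatou on both terms, and obtain the $k=1$ case and the equality $\mathcal{D}_{2}\left(\mathrm{P}_{1,\mu\leqslant}^{\text{cx}},\nu\right)=\mathcal{D}_{2}\left(\mathrm{P}_{2,\mu\leqslant}^{\text{cx}},\nu\right)$ from Theorem \ref{thm_dual_cx}. But the step you single out as the hard part --- ``restoring convexity'' after applying $Q_{2}\circ Q_{\bar{2}}$ --- rests on a misconception, and as written it is a genuine gap: you never define the restoration operation, and any naive convexification (e.g.\ taking a convex envelope) would destroy both the identity $Q_{\bar{2}}\left(\bar{\varphi}\right)=Q_{\bar{2}}\left(\varphi\right)$ and the upper bound $\bar{\varphi}\leqslant\left\vert x\right\vert^{2}$ on which your compactness step depends. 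The fact you are missing is that for quadratic cost \emph{both} transforms preserve convexity: $Q_{2}$ is an inf-convolution with the convex kernel $\left\vert x-y\right\vert^{2}$, and $Q_{\bar{c}}\left(g\right)=-Q_{c}\left(-g\right)$ with $Q_{c}$ preserving concavity (equivalently, by Lemma \ref{lm_Q2Q2bar}(ii) together with Lemma \ref{lm_HJ_cx_duality}(i), $Q_{\bar{2}}\left(\varphi\right)$ is convex whenever $\varphi$ is). Hence $\bar{\varphi}=Q_{2}\left(Q_{\bar{2}}\left(\varphi\right)\right)$ lies in $\mathcal{A}_{\text{cx}}$ automatically, and there is no tension between convexity and the semiconcavity $D^{2}\leqslant 2\,Id$ coming from $c$-concavity --- a function can be both, and these doubly transformed potentials are. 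This is precisely why the double convexification trick \emph{works} in the convex order case; it is only in the subharmonic case that it fails and the paper must resort to subharmonic envelopes (Lemma \ref{lm_Qcecbar}). Once you insert this observation your reduction step closes, and the remainder of your outline coincides with the paper's proof.

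Two smaller repairs. Your ``supporting affine term'' renormalization is not legitimate: $J$ is invariant only under additive constants (adding an affine function shifts the inner supremum in $Q_{\bar{2}}$ and changes $J$), and it is unnecessary --- normalizing $Q_{\bar{2}}\left(\bar{\varphi}_{n}\right)\left(0\right)=0$ and assuming $\int y\,d\nu=0$ already yields $\bar{\varphi}_{n}\left(x\right)=Q_{2}\left(Q_{\bar{2}}\left(\bar{\varphi}_{n}\right)\right)\left(x\right)\leqslant Q_{\bar{2}}\left(\bar{\varphi}_{n}\right)\left(0\right)+\left\vert x\right\vert^{2}=\left\vert x\right\vert^{2}$. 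Likewise, your claim that near-optimality ``prevents the supporting slopes from escaping'' needs the paper's concrete two-Jensen argument: convexity of $\left\vert x\right\vert^{2}-\bar{\varphi}_{n}$ gives a lower bound on $\bar{\varphi}_{n}$ at $\bar{x}=\int x\,d\mu$ in terms of the bounded dual values plus $\int Q_{\bar{2}}\left(\bar{\varphi}_{n}\right)d\nu\geqslant Q_{\bar{2}}\left(\bar{\varphi}_{n}\right)\left(0\right)=0$ (Jensen for the convex $Q_{\bar{2}}\left(\bar{\varphi}_{n}\right)$), and convexity then propagates this to the quadratic pointwise minorant $\bar{\varphi}_{n}\left(x\right)\geqslant 2\bar{\varphi}_{n}\left(\bar{x}\right)-\left\vert 2\bar{x}-x\right\vert^{2}$, which also supplies the common $\nu$-integrable quadratic lower bound on $Q_{\bar{2}}\left(\bar{\varphi}_{n}\right)$ that your Fatou step requires; your appeal to lower semicontinuity of $Q_{\bar{2}}$ with the inner supremum confined to a compact set can be replaced by the elementary inequality $Q_{\bar{2}}\left(\bar{\varphi}_{n}\right)\left(y\right)\geqslant\bar{\varphi}_{n}\left(x\right)-\left\vert x-y\right\vert^{2}$ followed by a liminf in $n$ and a supremum in $x$. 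Finally, note that your invocation of Theorem \ref{thm_aproperty}(ii) would be circular as a proof step, since it presupposes the optimal dual solution whose existence is the goal; it is acceptable only as motivation for the choice of function class.
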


\begin{proof}
That the two optimal dual values are equal is a result of Theorem
\ref{thm_dual_cx}. The proofs of attainment for $k=1$ and $2$ are similar, but
note that the function which achieves $\mathcal{D}_{2}\left(  \mathrm{P}%
_{1,\mu\leqslant}^{\text{cx}},\nu\right)  $ might not be in $\mathrm{S}_{b,1}%
$. Here we only give proof for $k=2.$

\textbf{1}. We first show that the optimization can be restricted to
$\mathcal{A}_{\text{cx}}^{0}\cap\mathrm{S}_{b,2},$ where%
\[
\mathcal{A}_{\text{cx}}^{0}=\left\{  \bar{\varphi}\in\mathcal{A}_{\text{cx}%
}:\bar{\varphi}=Q_{2}\left(  Q_{\bar{2}}\left(  \bar{\varphi}\right)  \right)
,\text{ }Q_{\bar{2}}\left(  \bar{\varphi}\right)  \left(  0\right)
=0\right\}  .
\]
Indeed, first recall Remark \ref{rmk_finiteness}, the optimal dual value is
finite under the conditions of the theorem.\ If $\varphi\in\mathcal{A}%
_{\text{cx}}\cap\mathrm{S}_{b,2}$ is such that $Q_{\bar{2}}\left(
\varphi\right)  $ is identically $\infty$, then the dual value is $-\infty,$
since the integral $\int\varphi d\mu$ is finite. So we can avoid these
functions in the supremum in $\mathcal{D}_{2}\left(  \mathrm{P}_{2,\mu
\leqslant}^{\text{cx}},\nu\right)  $. For any $\varphi\in\mathcal{A}%
_{\text{cx}}\cap\mathrm{S}_{b,2}$ such that $Q_{\bar{2}}\left(  \varphi
\right)  $ is not identically $\infty,$ there is $y_{0}\in\mathbb{R}^{d}$
(depending on $\varphi$) for which%
\[
Q_{\bar{2}}\left(  \varphi\right)  \left(  y_{0}\right)  <\infty.
\]
Let%
\[
\bar{\varphi}\left(  x\right)  \triangleq\inf_{y\in\mathbb{R}^{d}}\left\{
Q_{\bar{2}}\left(  \varphi\right)  \left(  y\right)  +\left\vert
x-y\right\vert ^{2}\right\}  =Q_{2}\left(  Q_{\bar{2}}\left(  \varphi\right)
\right)  \left(  x\right)  .
\]
Then%
\[
\bar{\varphi}\left(  x\right)  \leqslant Q_{\bar{2}}\left(  \varphi\right)
\left(  y_{0}\right)  +\left\vert x-y_{0}\right\vert ^{2}<\infty,\text{
}\forall x.
\]
Since $\bar{\varphi}$ is convex and $\varphi\leqslant\bar{\varphi}$, we see
that $\bar{\varphi}\in\mathcal{A}_{\text{cx}}\cap C_{b,2}.$ By $\left(
\ref{eq_Qccbar12}\right)  ,$ we have%
\[
Q_{\bar{2}}\left(  \varphi\right)  \left(  y\right)  =Q_{\bar{2}}\left(
Q_{2}\left(  Q_{\bar{2}}\left(  \varphi\right)  \right)  \right)  \left(
y\right)  =Q_{\bar{2}}\left(  \bar{\varphi}\right)  \left(  y\right)  ,
\]
whence%
\[
\bar{\varphi}\left(  x\right)  =Q_{2}\left(  Q_{\bar{2}}\left(  \varphi
\right)  \right)  \left(  x\right)  =Q_{2}\left(  Q_{\bar{2}}\left(
\bar{\varphi}\right)  \right)  \left(  x\right)  .
\]
So%
\[
\int\varphi d\mu-\int Q_{\bar{2}}\left(  \varphi\right)  d\nu\leqslant\int%
\bar{\varphi}d\mu-\int Q_{\bar{2}}\left(  \bar{\varphi}\right)  d\nu
\leqslant\mathcal{D}_{2}\left(  \mathrm{P}_{2,\mu\leqslant}^{\text{cx}}%
,\nu\right)  .
\]
The last inequality is due to $\bar{\varphi}\in\mathcal{A}_{\text{cx}}%
\cap\mathrm{S}_{b,2}$, so it is admissible to the supremum in $\mathcal{D}%
_{2}\left(  \mathrm{P}_{2,\mu\leqslant}^{\text{cx}},\nu\right)  $. Moreover
adding a constant to $\bar{\varphi}$ will not change the difference of the
integrals, we may assume that $Q_{\bar{2}}\left(  \bar{\varphi}\right)
\left(  0\right)  =0.$ Therefore we have proved%
\[
\mathcal{D}_{2}\left(  \mathrm{P}_{2,\mu\leqslant}^{\text{cx}},\nu\right)
=\sup_{\bar{\varphi}\in\mathcal{A}_{\text{cx}}^{0}\cap\mathrm{S}_{b,2}%
}\left\{  \int\bar{\varphi}d\mu-\int Q_{\bar{2}}\left(  \bar{\varphi}\right)
d\nu\right\}  .
\]

\textbf{2}. Let $\bar{\varphi}_{n}\in\mathcal{A}_{\text{cx}}^{0}\cap
\mathrm{S}_{b,2}$\ be a maximization sequence for $\mathcal{D}_{2}\left(
\mathrm{P}_{2,\mu\leqslant}^{\text{cx}},\nu\right)  $. Write%
\[
\varrho_{n}=Q_{\bar{2}}\left(  \bar{\varphi}_{n}\right)  .
\]
Then $\bar{\varphi}_{n}=Q_{2}\left(  \varrho_{n}\right)  $ with $\varrho
_{n}\left(  0\right)  =0.$ We assume, without loss of generality, that $\int
yd\nu=0.$ First we have an upper bound (uniform in $n$) on $\bar{\varphi}%
_{n},$%
\begin{equation}
\bar{\varphi}_{n}\left(  x\right)  =Q_{2}\left(  \varrho_{n}\right)  \left(
x\right)  =\inf_{y}\left\{  \varrho_{n}\left(  y\right)  +\left\vert
x-y\right\vert ^{2}\right\}  \leqslant\left\vert x\right\vert ^{2}.
\label{thm_dual_att_cx1}%
\end{equation}
Next we will obtain a pointwise lower bound (uniform in $n$) on $\bar{\varphi
}_{n}$. Since%
\[
\left\vert x\right\vert ^{2}-\bar{\varphi}_{n}\left(  x\right)  =2\sup
_{y}\left\{  x\cdot y-\frac{1}{2}\left(  \left\vert y\right\vert ^{2}%
-\varrho_{n}\left(  y\right)  \right)  \right\}
\]
is convex, we get by Jensen inequality%
\[
\left\vert \bar{x}\right\vert ^{2}-\bar{\varphi}_{n}\left(  \bar{x}\right)
\leqslant\int\left(  \left\vert x\right\vert ^{2}-\bar{\varphi}_{n}\left(
x\right)  \right)  d\mu,
\]
where%
\[
\bar{x}=\int xd\mu.
\]
It follows that%
\begin{align*}
\bar{\varphi}_{n}\left(  \bar{x}\right)   &  \geqslant\left\vert \bar
{x}\right\vert ^{2}-\int\left(  \left\vert x\right\vert ^{2}-\bar{\varphi}%
_{n}\left(  x\right)  \right)  d\mu\\
&  =\left\vert \bar{x}\right\vert ^{2}+\int\bar{\varphi}_{n}\left(  x\right)
d\mu-\int\varrho_{n}\left(  y\right)  d\nu-\int\left\vert x\right\vert
^{2}d\mu+\int\varrho_{n}\left(  y\right)  d\nu.
\end{align*}
Note%
\[
\int\bar{\varphi}_{n}\left(  x\right)  d\mu-\int\varrho_{n}\left(  y\right)
d\nu\text{ is bounded.}%
\]
Since $\varrho_{n}$ is convex,%
\[
\int\varrho_{n}\left(  y\right)  d\nu\geqslant\varrho_{n}\left(  \int
yd\nu\right)  =\varrho_{n}\left(  0\right)  =0.
\]
Therefore $\bar{\varphi}_{n}\left(  \bar{x}\right)  $ is bounded from below.
Using the convexity of $\bar{\varphi}_{n}$ and $\left(  \ref{thm_dual_att_cx1}%
\right)  ,$ we get%
\[
\bar{\varphi}_{n}\left(  \bar{x}\right)  \leqslant\frac{1}{2}\bar{\varphi}%
_{n}\left(  2\bar{x}-x\right)  +\frac{1}{2}\bar{\varphi}_{n}\left(  x\right)
\leqslant\frac{1}{2}\left\vert 2\bar{x}-x\right\vert ^{2}+\frac{1}{2}%
\bar{\varphi}_{n}\left(  x\right)  .
\]
It follows that%
\begin{equation}
\bar{\varphi}_{n}\left(  x\right)  \geqslant\frac{1}{2}\bar{\varphi}%
_{n}\left(  \bar{x}\right)  -\frac{1}{2}\left\vert 2\bar{x}-x\right\vert ^{2}.
\label{thm_dual_att_cx2}%
\end{equation}
In view of $\left(  \ref{thm_dual_att_cx1}\right)  $ and the fact that
$\bar{\varphi}_{n}\left(  \bar{x}\right)  $ is bounded from below, we have
$\bar{\varphi}_{n}\left(  x\right)  $ is bounded for each $x$. Therefore we
may assume, up to a subsequence, that%
\[
\bar{\varphi}_{n}\left(  x\right)  \rightarrow\bar{\varphi}_{0}\left(
x\right)  ,\text{ }\forall x,
\]
where $\bar{\varphi}_{0}\left(  x\right)  $ is a convex function. Now we show
$\bar{\varphi}_{0}$ is the optimal dual solution we look for. Clearly
$\bar{\varphi}_{0}\in\mathcal{A}_{\text{cx}}\cap C_{b,2}.$ Note%
\begin{equation}
\varrho_{n}\left(  y\right)  \geqslant\bar{\varphi}_{n}\left(  x\right)
-\left\vert x-y\right\vert ^{2},\text{ }\forall x,y. \label{thm_dual_att_cx3}%
\end{equation}
Hence%
\[
\liminf_{n\rightarrow\infty}\varrho_{n}\left(  y\right)  \geqslant\bar
{\varphi}_{0}\left(  x\right)  -\left\vert x-y\right\vert ^{2},\text{ }\forall
x,y.
\]
So%
\[
\liminf_{n\rightarrow\infty}\varrho_{n}\left(  y\right)  \geqslant\sup
_{x}\left\{  \bar{\varphi}_{0}\left(  x\right)  -\left\vert x-y\right\vert
^{2}\right\}  =Q_{\bar{2}}\left(  \bar{\varphi}_{0}\right)  \left(  y\right)
.
\]
Note $\bar{\varphi}_{n}$ is bounded from above by a $\mu$-integrable function
(ref. $\left(  \ref{thm_dual_att_cx1}\right)  $) and $\varrho_{n}$ is bounded
from below by a common $\nu$-integrable quadratic function (ref. $\left(
\ref{thm_dual_att_cx2}\right)  $ and $\left(  \ref{thm_dual_att_cx3}\right)
$)$.$ Therefore, using Fatou lemma,%
\begin{align*}
\mathcal{D}_{2}\left(  \mathrm{P}_{2,\mu\leqslant}^{\text{cx}},\nu\right)   &
=\lim_{n\rightarrow\infty}\left\{  \int\bar{\varphi}_{n}\left(  x\right)
d\mu-\int\varrho_{n}\left(  y\right)  d\nu\right\} \\
&  \leqslant\limsup_{n\rightarrow\infty}\int\bar{\varphi}_{n}\left(  x\right)
d\mu-\liminf_{n\rightarrow\infty}\int\varrho_{n}\left(  y\right)  d\nu\\
&  \leqslant\int\bar{\varphi}_{0}\left(  x\right)  d\mu-\int Q_{\bar{2}%
}\left(  \bar{\varphi}_{0}\right)  \left(  y\right)  d\nu\leqslant
\mathcal{D}_{2}\left(  \mathrm{P}_{2,\mu\leqslant}^{\text{cx}},\nu\right)  .
\end{align*}
So $\bar{\varphi}_{0}$ is the optimal solution we look for.
\end{proof}

Unlike the backward case in Theorem \ref{thm_dual_att_cx_bw}, requiring $\mu$
and $\nu$ to have compact supports does not ease the way to find a more
regular optimal dual solution. This results from the major difference between
backward and forward projection, which we will discuss in Section \ref{sec_bf}.

\section{\label{sec_char_cx}Characterization of convex order projections}

In this section, we present results on the characterization of optimal
mappings for backward and forward convex order projections. The optimal
mappings possess special properties which we call convex contraction and
convex expansion. These are consequences of the duality and attainment
obtained in previous sections. Recall that the Legendre transform of a
function $\phi$ is denoted by $\phi^{\ast}.$

\begin{definition}
[\textbf{Convex contraction}]\label{def_cx_contr}Let $\varphi$ be a proper
lower semicontinuous convex function. We call $\varphi$ a convex contraction
if $\varphi=\phi^{\ast}$ for some proper function $\phi$ such that $D^{2}%
\phi\geqslant Id$ in the sense of distribution.
\end{definition}

\begin{definition}
[\textbf{Convex expansion}]\label{def_cx_exp}Let $\varphi$ be a proper lower
semicontinuous convex function. We call $\varphi$ a convex expansion if
$\varphi=\phi^{\ast}$ for some proper function $\phi$ such that $D^{2}%
\phi\leqslant Id$ in the sense of distribution.
\end{definition}

\begin{remark}
\label{rmk_ctrexp}That $\varphi$ is a convex contraction is equivalent to
$D^{2}\varphi\leqslant Id$. Indeed, in view of Lemma \ref{lm_HJ_cx_duality},
if $\varphi$ is a convex contraction then $D^{2}\varphi\leqslant Id.$
Conversely, if $\varphi$ is a proper lower semicontinuous convex function such
that $D^{2}\varphi\leqslant Id,$ then using again Lemma \ref{lm_HJ_cx_duality}%
, $D^{2}\varphi^{\ast}\geqslant Id.$ Now $\varphi=\left(  \varphi^{\ast
}\right)  ^{\ast},$ so $\varphi$ is a convex contraction. Similarly, that
$\varphi$ is a convex expansion is equivalent to $D^{2}\varphi\geqslant Id.$
The definition of convex contraction and expansion through Legendre transforms
is natural and consistent with the subharmonic order setting. See Definition
\ref{def_Lap_contraction}, Definition \ref{def_Lap_expansion} and Remark
\ref{rmk_Lap}.
\end{remark}

\subsection{Backward projection}

The following characterization of optimal mapping for backward convex order
projection is proved in\ \cite[Theorem 1.2, Theorem 2.1]{gozlan2020mixture},
we include them and restated in our terms for readers' convenience.

\begin{theorem}
\label{thm_cx_bw_prop}Suppose that $\mu\in P_{2}\left(  \mathbb{R}^{d}\right)
,$ $\nu\in P_{2}\left(  \mathbb{R}^{d}\right)  $ and $\varphi\in
\mathcal{A}_{\text{cx}}\cap L^{1}\left(  d\nu\right)  $ is the optimizer of
the dual $\mathcal{D}_{2}\left(  \mu,\mathrm{P}_{1,\leqslant\nu}^{\text{cx}%
}\right)  $ obtained in Theorem \ref{thm_dual_att_cx_bw}$.$ Let%
\[
\varphi_{0}\left(  y\right)  \triangleq\frac{1}{2}\left\vert y\right\vert
^{2}+\varphi\left(  y\right)  .
\]
Then

(i) $\varphi_{0}^{\ast}\in C^{1}\left(  \mathbb{R}^{d}\right)  $ is a convex contraction.

(ii) $\left(  \nabla\varphi_{0}^{\ast}\right)  _{\#}\mu$ is the unique
projection of $\mu$ onto $\mathrm{P}_{1,\leqslant\nu}^{\text{cx}}.$
\end{theorem}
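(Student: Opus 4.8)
The plan is to read both statements off the dual attainment of Theorem~\ref{thm_dual_att_cx_bw} together with the link between optimal dual potentials and optimal plans recorded in Theorem~\ref{thm_aproperty}. I begin with (i). Since $\varphi\in\mathcal{A}_{\text{cx}}$ is proper, lower semicontinuous and convex, the shifted function $\varphi_0=\frac12|\cdot|^2+\varphi$ is proper, lower semicontinuous and $1$-strongly convex, meaning $D^2\varphi_0=Id+D^2\varphi\geqslant Id$ in the distributional sense. Taking $\phi=\varphi_0$ in Definition~\ref{def_cx_contr} immediately exhibits $\varphi_0^{\ast}=\phi^{\ast}$ as a convex contraction, and by Lemma~\ref{lm_HJ_cx_duality} this is the same as $D^2\varphi_0^{\ast}\leqslant Id$ (cf.\ Remark~\ref{rmk_ctrexp}). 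Moreover the $1$-strong convexity of $\varphi_0$ makes $\varphi_0^{\ast}$ finite everywhere and continuously differentiable with a globally $1$-Lipschitz gradient, so $\varphi_0^{\ast}\in C^{1}(\mathbb{R}^d)$. This settles (i).

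For (ii) I would fix an arbitrary projection $\bar{\mu}$ of $\mu$ onto $\mathrm{P}_{1,\leqslant\nu}^{\text{cx}}$ and an optimal plan $\pi\in\Pi(\mu,\bar{\mu})$ for $\mathcal{T}_2(\mu,\bar{\mu})$. By Theorem~\ref{thm_aproperty}(i) the optimizer $\varphi$ satisfies $\int\varphi\,d\bar{\mu}=\int\varphi\,d\nu$ and is therefore an optimal potential for $\mathcal{T}_2(\mu,\bar{\mu})$; hence $(Q_2(\varphi),\varphi)$ realizes Kantorovich duality for this transport and the constraint $Q_2(\varphi)(x)-\varphi(y)\leqslant c(x,y)$ is saturated $\pi$-almost everywhere. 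Expanding the quadratic cost, the saturation is equivalent to saying that an affine function with slope proportional to $x$ touches $\varphi_0$ from below at $y$, i.e.\ $x\in\partial\varphi_0(y)$ for $\pi$-a.e.\ $(x,y)$. Since $\varphi_0^{\ast}\in C^{1}$, this relation is in turn equivalent to $y=\nabla\varphi_0^{\ast}(x)$, which is single-valued for \emph{every} $x$; consequently $\pi$ is concentrated on the graph of $\nabla\varphi_0^{\ast}$, so $\pi=(\text{id}\times\nabla\varphi_0^{\ast})_{\#}\mu$ and its second marginal is $\bar{\mu}=(\nabla\varphi_0^{\ast})_{\#}\mu$. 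I would stress that this forces the optimal plan to be deterministic even without assuming $\mu$ absolutely continuous, precisely because the contraction $\nabla\varphi_0^{\ast}$ is defined everywhere. Uniqueness then comes for free: the displayed identity shows that \emph{every} projection equals $(\nabla\varphi_0^{\ast})_{\#}\mu$ for the single fixed potential $\varphi$, so the projection is unique (alternatively one may appeal to Theorem~\ref{thm_uniqueness} or to geodesic convexity of the backward cone discussed in Section~\ref{sec_bf}).

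The main obstacle is not the convex-analytic bookkeeping but controlling the fact that $\varphi$ is only known to lie in $\mathcal{A}_{\text{cx}}\cap L^{1}(d\nu)$ and may take the value $+\infty$. I would need to verify that $Q_2(\varphi)$ is finite $\mu$-a.e.\ (so the saturation statement is meaningful), that $\varphi_0$ is proper with $\partial\varphi_0(y)\neq\varnothing$ on the set carrying $\pi$, and that the integrability hypothesis of Theorem~\ref{thm_aproperty} holds --- the last point being automatic since a real-valued convex function is minorized by an affine, hence $\mathrm{S}_{b,1}$, function, so $\int\varphi\,d\bar\mu$ is finite. A second delicate point is the \emph{everywhere} (not merely a.e.) differentiability of $\varphi_0^{\ast}$, which is exactly what lets the graph argument run for an arbitrary $\mu$ and is where the $1$-strong convexity of $\varphi_0$ is essential.
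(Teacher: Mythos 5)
Your overall architecture is sound, and it is in fact exactly the template the paper itself uses for the \emph{forward} analogue (Theorem \ref{thm_cx_prop}): complementary slackness via Theorem \ref{thm_aproperty}, rewriting the saturated constraint through a Legendre transform, and reading off a graph structure from everywhere-differentiability of the conjugate. For comparison, note that the paper gives no proof of Theorem \ref{thm_cx_bw_prop} at all --- it is imported from \cite[Theorem 1.2, Theorem 2.1]{gozlan2020mixture} --- so your attempt is best measured against the in-paper proof of the forward case. Part (i), the verification of the integrability hypotheses of Theorem \ref{thm_aproperty}, and the observation that uniqueness comes for free because \emph{every} projection is forced onto the same graph (so no absolute continuity of $\mu$ is needed) are all correct; only your parenthetical fallback to Theorem \ref{thm_uniqueness} is unavailable, since its backward half assumes $\mu\in P_k^{ac}$.

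There is, however, a genuine quantitative gap at the pivotal step of (ii). With the paper's normalization $c(x,y)=|x-y|^2$ and $Q_{2}(\varphi)(x)=\inf_{y}\{\varphi(y)+|x-y|^{2}\}$, saturation at $(x,y)$ says that $y$ minimizes $y'\mapsto\varphi(y')+|x-y'|^{2}$, whose first-order condition is $2(x-y)\in\partial\varphi(y)$, i.e.
\[
x\in\partial\Bigl(\tfrac12\left\vert \cdot\right\vert ^{2}+\tfrac12
\varphi\Bigr)\left(  y\right)  ,
\]
and \emph{not} $x\in\partial\varphi_{0}(y)$ with $\varphi_{0}=\tfrac12|\cdot|^{2}+\varphi$ as you assert; the phrase ``slope proportional to $x$'' hides exactly this factor. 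Concretely, for $d=1$ and $\varphi(y)=y^{2}$ the minimizer is $y=x/2=\nabla g_{0}^{\ast}(x)$ with $g_{0}=\tfrac12|\cdot|^{2}+\tfrac12\varphi$, whereas $\nabla\varphi_{0}^{\ast}(x)=x/3$. The $\tfrac12$ normalization is recorded in the paper's own Lemma \ref{lm_Q2Q2bar}(i), where $g_{0}(y)=\tfrac12|y|^{2}+\tfrac12 g(y)$, it appears in the forward Theorem \ref{thm_cx_prop} ($\bar{\varphi}_{0}=\tfrac12|x|^{2}-\tfrac12\varphi$), and it is how Theorem \ref{thm_cx_bw_prop} is actually invoked in Corollary \ref{cor_cx_bfequ}, where $\varphi_{0}(y)=\tfrac12|y|^{2}+\tfrac12 Q_{\bar{2}}(\varphi^{f})(y)$. (The theorem statement as printed, lacking the $\tfrac12$, seems to carry over the $\tfrac12|x-y|^{2}$ normalization of \cite{gozlan2020mixture} and is internally inconsistent with the paper's $Q_{2}$.) So your argument proves the correct result only after replacing $\varphi_{0}$ by $\tfrac12|\cdot|^{2}+\tfrac12\varphi$ throughout --- part (i) survives either normalization, since both potentials are $1$-strongly convex --- but as written the identification of the optimal plan with the graph of $\nabla\varphi_{0}^{\ast}$ does not follow from the saturation you derived, and the tension with Lemma \ref{lm_Q2Q2bar} should have been flagged rather than absorbed into ``proportional''.
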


\begin{theorem}
\label{thm_cx_bw_char}Let $\mu\in P_{2}\left(  \mathbb{R}^{d}\right)  ,$
$\nu\in P_{2}\left(  \mathbb{R}^{d}\right)  .$ Then the following are equivalent.

(i) The projection of $\mu\,$onto$\ \mathrm{P}_{1,\leqslant\nu}^{\text{cx}}$
coincides with $\nu$.

(ii) There exists a convex contraction $\phi$ such that $\left(  \nabla
\phi\right)  _{\#}\mu=\nu.$
\end{theorem}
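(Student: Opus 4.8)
The plan is to prove the two implications separately; the forward one is essentially a repackaging of the attainment and characterization already available, while the reverse one is the substantive part. For (i)$\Rightarrow$(ii), assume the projection of $\mu$ onto $\mathrm{P}_{1,\leqslant\nu}^{\text{cx}}$ equals $\nu$. By Theorem \ref{thm_dual_att_cx_bw} the optimal dual value is attained by some $\varphi\in\mathcal{A}_{\text{cx}}\cap L^{1}(d\nu)$, and Theorem \ref{thm_cx_bw_prop} then produces a convex contraction $\varphi_{0}^{\ast}$ such that $(\nabla\varphi_{0}^{\ast})_{\#}\mu$ is the unique projection. Since by assumption this projection is $\nu$, setting $\phi:=\varphi_{0}^{\ast}$ gives a convex contraction with $(\nabla\phi)_{\#}\mu=\nu$, which is exactly (ii).

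\textbf{The reverse direction: building the potential.} Given a convex contraction $\phi$ with $(\nabla\phi)_{\#}\mu=\nu$, the idea is to exhibit $\nu$ as the minimizer of $\mathcal{T}_{2}(\mu,\cdot)$ over the cone by producing the correct Kantorovich potential. Set $\varphi(y):=2\phi^{\ast}(y)-|y|^{2}$. The first key point is that, by Remark \ref{rmk_ctrexp}, $\phi$ being a convex contraction means $D^{2}\phi\leqslant Id$, equivalently $D^{2}\phi^{\ast}\geqslant Id$, so $D^{2}\varphi=2D^{2}\phi^{\ast}-2Id\geqslant0$ and $\varphi$ is convex; in fact convexity of $\varphi$ is \emph{equivalent} to the contraction property, which is precisely why this hypothesis is the one we need. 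Second, since $\phi$ is convex its subdifferential graph is cyclically monotone, so the coupling $(\mathrm{id},\nabla\phi)_{\#}\mu$ is optimal and $\mathcal{T}_{2}(\mu,\nu)=\int|x-\nabla\phi(x)|^{2}d\mu$. One then verifies the $c$-transform identity $Q_{2}(\varphi)(x)=|x|^{2}-2\phi(x)$, with the infimum in (\ref{eq_Qc2}) attained at $y=\nabla\phi(x)$; combined with the Fenchel--Young equality $\phi^{\ast}(\nabla\phi(x))=x\cdot\nabla\phi(x)-\phi(x)$ this gives $Q_{2}(\varphi)(x)-\varphi(\nabla\phi(x))=|x-\nabla\phi(x)|^{2}$ for $\mu$-a.e. $x$, whence $\int Q_{2}(\varphi)d\mu-\int\varphi d\nu=\mathcal{T}_{2}(\mu,\nu)$, i.e. $\varphi$ is an optimal potential for $\mathcal{T}_{2}(\mu,\nu)$.

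\textbf{The reverse direction: concluding inequality.} With $\varphi$ in hand the conclusion follows from a single chain of inequalities. For any $\eta\in\mathrm{P}_{1,\leqslant\nu}^{\text{cx}}$ and any $\pi\in\Pi(\mu,\eta)$, the pointwise bound $Q_{2}(\varphi)(x)-\varphi(y)\leqslant|x-y|^{2}$ together with the convex-order inequality $\int\varphi d\eta\leqslant\int\varphi d\nu$ (valid because $\varphi$ is convex and $\eta\leqslant_{\text{cx}}\nu$, via Lemma \ref{lm_cx_bdbelow}) yield
\[
\int|x-y|^{2}d\pi\geqslant\int Q_{2}(\varphi)d\mu-\int\varphi d\eta\geqslant\int Q_{2}(\varphi)d\mu-\int\varphi d\nu=\mathcal{T}_{2}(\mu,\nu).
\]
Taking the infimum over $\pi$ and then over $\eta$ gives $\mathcal{T}_{2}(\mu,\mathrm{P}_{1,\leqslant\nu}^{\text{cx}})\geqslant\mathcal{T}_{2}(\mu,\nu)$; since $\nu\in\mathrm{P}_{1,\leqslant\nu}^{\text{cx}}$ the reverse inequality is trivial, so $\nu$ attains the projection cost and, by the uniqueness asserted in Theorem \ref{thm_cx_bw_prop}, $\nu$ is the projection, giving (i).

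\textbf{Main obstacle.} The crux of the reverse direction is the middle step: establishing that $\varphi=2\phi^{\ast}-|\cdot|^{2}$ is genuinely an optimal Kantorovich potential. This demands the $c$-transform computation above together with the integrability of $\varphi$ against $\nu$ and of $Q_{2}(\varphi)$ against $\mu$, for which the contraction bound $0\leqslant D^{2}\phi\leqslant Id$ — forcing at most quadratic growth of $\phi$ and controlling $\phi^{\ast}$ through $\int\phi^{\ast}d\nu=\int(x\cdot\nabla\phi(x)-\phi(x))d\mu$ — and the second moment assumptions $\mu,\nu\in P_{2}$ are exactly what is needed. One should also note that $\nabla\phi$ is defined $\mu$-a.e., which is already implicit in the hypothesis $(\nabla\phi)_{\#}\mu=\nu$. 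The conceptual point worth emphasizing is that convexity of the potential $\varphi$ is equivalent to $\phi$ being a contraction, so the convex-order step in the displayed chain is available \emph{precisely} under the hypothesis of (ii).
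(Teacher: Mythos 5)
Your proof is correct, but it is worth pointing out that the paper does not actually prove Theorem \ref{thm_cx_bw_char} at all: the statement is imported from Gozlan--Juillet \cite[Theorem 1.2, Theorem 2.1]{gozlan2020mixture}, whose argument passes through the identification of the backward convex order projection with a weak optimal transport problem. What you have written is instead a self-contained argument inside this paper's own duality framework, and it is the exact mirror of the paper's proof of the \emph{forward} analogue, Theorem \ref{thm_cx_exp}: there, given a convex expansion $\phi$ with $(\nabla\phi)_{\#}\nu=\mu$, the paper sets $\varphi=\left\vert x\right\vert^{2}-2\phi^{\ast}$, checks convexity via Lemma \ref{lm_HJ_cx_duality}, uses the Fenchel--Young equality along the graph of $\nabla\phi$ to compute the transport cost, and shows the resulting potential saturates the dual. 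Your $\varphi(y)=2\phi^{\ast}(y)-\left\vert y\right\vert^{2}$, the identity $Q_{2}(\varphi)=\left\vert x\right\vert^{2}-2\phi$ (via Lemma \ref{lm_Q2Q2bar}(i) and $\phi=\phi^{\ast\ast}$), and the concluding chain over $\eta\in\mathrm{P}_{1,\leqslant\nu}^{\text{cx}}$ reproduce this in the backward setting; indeed your final chain is even a little cleaner than the paper's forward version, since it bounds $\mathcal{T}_{2}(\mu,\mathrm{P}_{1,\leqslant\nu}^{\text{cx}})$ directly by dual feasibility plus the convex order inequality of Lemma \ref{lm_cx_bdbelow}, without routing through the identification of $\mathcal{D}_{2}$ with the unconstrained Kantorovich dual. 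What your route buys is independence from the weak-transport machinery of \cite{gozlan2020mixture}; what the cited route buys is additional structural information about the optimizer that the projection viewpoint alone does not give. You also correctly exploit that, unlike the forward case (where Theorem \ref{thm_cx_exp} needs $\nu\in P_{2}^{ac}$), no absolute continuity of $\mu$ is required here, because the uniqueness in Theorem \ref{thm_cx_bw_prop} holds for general $\mu,\nu\in P_{2}$.

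Two harmless loose ends you may wish to tighten. First, invoking cyclical monotonicity to get optimality of $(\mathrm{id},\nabla\phi)_{\#}\mu$ is unnecessary: taking $\eta=\nu$ in your own chain already forces $\int\left\vert x-\nabla\phi(x)\right\vert^{2}d\mu=\mathcal{T}_{2}(\mu,\nu)$. Second, $\phi^{\ast}$ may take the value $+\infty$ (e.g.\ $\phi$ constant, so that $\nu$ is a Dirac mass), hence $\varphi$ is in general extended-valued rather than real-valued; this causes no difficulty, since $\varphi\in\mathcal{A}_{\text{cx}}$, Lemma \ref{lm_cx_bdbelow}(i) covers lower semicontinuous proper convex functions with integrals existing in the extended sense, and your pushforward identity $\int\phi^{\ast}d\nu=\int\left(x\cdot\nabla\phi(x)-\phi(x)\right)d\mu$ shows $\int\varphi\,d\nu$ is finite, but the convex-order step should be phrased for extended-valued $\varphi$ rather than implicitly treating it as finite everywhere.
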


Note in view of the step 1 in the proof of Theorem \ref{thm_dual_cx},
$\mathrm{P}_{1,\leqslant\nu}^{\text{cx}}=\mathrm{P}_{2,\leqslant\nu
}^{\text{cx}}.$ So we can also replace $\mathrm{P}_{1,\leqslant\nu}%
^{\text{cx}}$ with $\mathrm{P}_{2,\leqslant\nu}^{\text{cx}}$ in the above theorems.

\subsection{Forward projection}

A key feature of our duality formulation of the Wasserstein projection in
stochastic order is that the property of the optimal dual solution is
inherited from the defining class of the stochastic order (e.g. the convexity
in the convex order case). The optimal dual solution, if exists, not only
gives rise to the optimal mapping to the projection (Theorem
\ref{thm_aproperty}), but also produces special properties of the optimal
mapping. Moreover, our duality formula allows us to handle the backward and
forward case in a unified manner. The backward convex order projection in the
previous theorem yields a special optimal mapping: a convex contraction. The
next theorem will give us the exact opposite of the backward case: a convex
expansion. Note that we even have a very precise relation between the optimal
mappings of the backward and forward convex order projections: they are
actually inverse to each other, which is somewhat surprising (see Section
\ref{sec_bf}).

\begin{theorem}
\label{thm_cx_prop}Suppose that $\mu\in P_{2}\left(  \mathbb{R}^{d}\right)  ,$
$\nu\in P_{2}^{ac}\left(  \mathbb{R}^{d}\right)  $ and $\varphi\in
\mathcal{A}_{\text{cx}}\cap C_{b,2}$ is the optimizer of the dual
$\mathcal{D}_{2}\left(  \mathrm{P}_{2,\mu\leqslant}^{\text{cx}},\nu\right)  $
obtained in Theorem \ref{thm_dual_att_cx}$.$ Let%
\[
\bar{\varphi}_{0}\left(  x\right)  =\frac{1}{2}\left\vert x\right\vert
^{2}-\frac{1}{2}\varphi\left(  x\right)  .
\]
Then

(i) $\bar{\varphi}_{0}^{\ast}$ is a convex expansion. In addition,
$\bar{\varphi}_{0}^{\ast}$ is uniformly convex.

(ii) $\left(  \nabla\bar{\varphi}_{0}^{\ast}\right)  _{\#}\nu$ is the unique
projection of $\nu$ onto $\mathrm{P}_{2,\mu\leqslant}^{\text{cx}}.$
\end{theorem}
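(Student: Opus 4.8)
The plan is to first establish the two Hessian bounds on $\bar{\varphi}_0$ that underlie (i), and then feed the optimizer $\varphi$ into Theorem \ref{thm_aproperty}(ii) to identify the projection map in (ii) by reading off the maximizer in the $c$-transform.

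For (i), I would begin from the finer description of the optimizer $\varphi$ supplied by Theorem \ref{thm_dual_att_cx}: it is obtained as a pointwise limit of functions of the form $Q_2(\varrho_n)$. Since for any $\varrho$ one has
\[
|x|^2-Q_2(\varrho)(x)=\sup_{y}\left\{2\langle x,y\rangle-\varrho(y)-|y|^2\right\},
\]
the right-hand side is a supremum of affine functions of $x$, hence convex; passing to the limit shows that $|x|^2-\varphi$ is convex, so $\bar{\varphi}_0=\tfrac12\left(|x|^2-\varphi\right)$ is convex, i.e.\ $D^2\bar{\varphi}_0\geqslant 0$. On the other hand $\varphi\in\mathcal{A}_{\text{cx}}$ is itself convex, so $D^2\varphi\geqslant 0$ and therefore $D^2\bar{\varphi}_0=Id-\tfrac12 D^2\varphi\leqslant Id$ in the distributional sense. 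Applying the Legendre–Hessian duality of Lemma \ref{lm_HJ_cx_duality} to the convex function $\bar{\varphi}_0$ with $D^2\bar{\varphi}_0\leqslant Id$ gives $D^2\bar{\varphi}_0^{\ast}\geqslant Id$, which by Definition \ref{def_cx_exp} and Remark \ref{rmk_ctrexp} means $\bar{\varphi}_0^{\ast}$ is a convex expansion; the bound $D^2\bar{\varphi}_0^{\ast}\geqslant Id$ is precisely uniform convexity with modulus one.

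For (ii), let $\bar{\nu}$ be the forward projection of $\nu$ onto $\mathrm{P}_{2,\mu\leqslant}^{\text{cx}}$, which exists and is unique by Theorem \ref{thm_uniqueness} since $\nu\in P_2^{ac}(\mathbb{R}^d)$ and the cost is strictly convex. Because $\varphi\leqslant |x|^2\in\mathrm{S}_{b,2}$, Theorem \ref{thm_aproperty}(ii) applies and yields $\int\varphi\,d\mu=\int\varphi\,d\bar{\nu}$ together with the fact that $\varphi$ is an optimal Kantorovich potential for $\mathcal{T}_2(\bar{\nu},\nu)$, its conjugate on the $\nu$-side being $Q_{\bar{2}}(\varphi)$. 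Completing the square gives
\[
Q_{\bar{2}}(\varphi)(y)=\sup_x\left\{2\langle x,y\rangle-2\bar{\varphi}_0(x)\right\}-|y|^2=2\bar{\varphi}_0^{\ast}(y)-|y|^2,
\]
and for fixed $y$ the supremum defining $Q_{\bar{2}}(\varphi)(y)$ is attained exactly at $x\in\partial\bar{\varphi}_0^{\ast}(y)$. As $\nu$ is absolutely continuous, $\bar{\varphi}_0^{\ast}$ is differentiable $\nu$-a.e., so this maximizer is unique and equals $\nabla\bar{\varphi}_0^{\ast}(y)$. By the characterization of optimal plans via complementary slackness (Brenier–McCann), the unique optimal coupling between $\bar{\nu}$ and $\nu$ is concentrated on the graph of $y\mapsto\nabla\bar{\varphi}_0^{\ast}(y)$, whence $\bar{\nu}=\left(\nabla\bar{\varphi}_0^{\ast}\right)_{\#}\nu$, the desired unique projection.

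The routine parts are the algebra of $Q_2$ and $Q_{\bar{2}}$ and checking finiteness of the integrals, which is guaranteed by $\varphi\in C_{b,2}$, $\varphi\leqslant|x|^2$, and $\mu,\nu\in P_2$. The main obstacle I anticipate is part (i): one must interpret the inequalities $0\leqslant D^2\bar{\varphi}_0\leqslant Id$ purely in the convex-analytic/distributional sense, since $\varphi$ is only a convex function in $C_{b,2}$ rather than $C^2$, and correctly invoke Lemma \ref{lm_HJ_cx_duality} at that regularity. The secondary delicate point in (ii) is justifying that the optimizer really inherits the inf-convolution structure ``$|x|^2-\varphi$ convex'' in the limit from Theorem \ref{thm_dual_att_cx}, rather than merely being convex; without this, $\bar{\varphi}_0$ need not be convex and $\nabla\bar{\varphi}_0^{\ast}$ would not be the well-defined transport map we need.
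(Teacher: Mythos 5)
Your proposal is correct, and your part (ii) follows the paper's own route step by step: uniqueness from Theorem \ref{thm_uniqueness}, saturation of the dual via Theorem \ref{thm_aproperty}(ii) using the bound $\varphi\leqslant\left\vert x\right\vert^{2}\in\mathrm{S}_{b,2}$, the completing-the-square identity $Q_{\bar{2}}\left(\varphi\right)\left(y\right)=2\bar{\varphi}_{0}^{\ast}\left(y\right)-\left\vert y\right\vert^{2}$ of Lemma \ref{lm_Q2Q2bar}, the equality case (complementary slackness, i.e.\ the nonnegative integrand $\left\vert x-y\right\vert^{2}-\varphi\left(x\right)+Q_{\bar{2}}\left(\varphi\right)\left(y\right)$ integrating to zero against the optimal plan) to place the plan on the subdifferential graph, and $\nu$-a.e.\ differentiability of $\bar{\varphi}_{0}^{\ast}$ from absolute continuity; note the paper derives finiteness of $\bar{\varphi}_{0}^{\ast}$ $\nu$-a.e.\ from $Q_{\bar{2}}\left(\varphi\right)\in L^{1}\left(d\nu\right)$, a point your ``routine finiteness'' remark should be read as covering (the naive bound $\varphi\leqslant\left\vert x\right\vert^{2}$ alone does not give an upper bound on $Q_{\bar{2}}\left(\varphi\right)$; one uses finiteness of the dual value instead). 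Where you deviate is (i): the paper simply observes that $Q_{\bar{2}}$ preserves convexity, so $Q_{\bar{2}}\left(\varphi\right)$ convex plus Lemma \ref{lm_Q2Q2bar} gives $D^{2}\bar{\varphi}_{0}^{\ast}\geqslant Id$ at once, while you re-enter the proof of Theorem \ref{thm_dual_att_cx}, use that $\left\vert x\right\vert^{2}-Q_{2}\left(\varrho_{n}\right)$ is a supremum of affine functions, pass to the pointwise limit to get convexity of $\bar{\varphi}_{0}$, and then invoke Lemma \ref{lm_HJ_cx_duality}. This is valid, but it is more machinery than needed, and your closing worry is misplaced on two counts: first, Lemma \ref{lm_HJ_cx_duality}(i) already yields that $\bar{\varphi}_{0}^{\ast}\left(y\right)-\frac{1}{2}\left\vert y\right\vert^{2}$ is convex from convexity of $\varphi$ alone (take $g=\bar{\varphi}_{0}$, so that $\frac{1}{2}\left\vert x\right\vert^{2}-g=\frac{1}{2}\varphi$; no convexity of $g$ is required there); second, the graph identification in (ii) never uses convexity of $\bar{\varphi}_{0}$, since $\bar{\varphi}_{0}^{\ast}$ is convex as the conjugate of an arbitrary proper function, the Fenchel--Young inequality $x\cdot y\leqslant\bar{\varphi}_{0}\left(x\right)+\bar{\varphi}_{0}^{\ast}\left(y\right)$ holds with no convexity assumption, and only the implication ``attainment of the supremum at $x$ implies $x\in\partial\bar{\varphi}_{0}^{\ast}\left(y\right)$'' is needed, not its converse (which is the only direction where your ``exactly'' would require the convexity you labored to establish).
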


\begin{proof}
By Theorem \ref{thm_uniqueness}, the forward projection is unique. Let
$\bar{\nu}$ be the unique forward convex order projection and $\pi\in
\Pi\left(  \bar{\nu},\nu\right)  $ such that%
\[
\mathcal{T}_{2}\left(  \mathrm{P}_{2,\mu\leqslant}^{\text{cx}},\nu\right)
=\mathcal{T}_{2}\left(  \bar{\nu},\nu\right)  =\int\left\vert x-y\right\vert
^{2}d\pi\left(  x,y\right)  .
\]
By Theorem \ref{thm_dual_cx}, the optimality of $\varphi$ and Theorem
\ref{thm_aproperty},%
\begin{align*}
\int\left\vert x-y\right\vert ^{2}d\pi\left(  x,y\right)   &  =\int%
\varphi\left(  x\right)  d\mu-\int Q_{\bar{2}}\left(  \varphi\right)  \left(
y\right)  d\nu\\
&  =\int\varphi\left(  x\right)  d\bar{\nu}-\int Q_{\bar{2}}\left(
\varphi\right)  \left(  y\right)  d\nu\\
&  =\int\left(  \varphi\left(  x\right)  -Q_{\bar{2}}\left(  \varphi\right)
\left(  y\right)  \right)  d\pi\left(  x,y\right)  .
\end{align*}
Hence%
\[
\int\left[  \left\vert x-y\right\vert ^{2}-\left(  \varphi\left(  x\right)
-Q_{\bar{2}}\left(  \varphi\right)  \left(  y\right)  \right)  \right]
d\pi\left(  x,y\right)  =0.
\]
Since the integrand is nonnegative, we have%
\begin{equation}
\left\vert x-y\right\vert ^{2}=\varphi\left(  x\right)  -Q_{\bar{2}}\left(
\varphi\right)  \left(  y\right)  ,\text{ }\pi\text{-}a.e.\text{ }\left(
x,y\right)  . \label{thm_cx_prop0}%
\end{equation}
Using Lemma \ref{lm_Q2Q2bar},%
\begin{equation}
Q_{\bar{2}}\left(  \varphi\right)  \left(  y\right)  =2\bar{\varphi}_{0}%
^{\ast}\left(  y\right)  -\left\vert y\right\vert ^{2},\text{ where }%
\bar{\varphi}_{0}\left(  x\right)  =\frac{1}{2}\left\vert x\right\vert
^{2}-\frac{1}{2}\varphi\left(  x\right)  . \label{thm_cx_prop1}%
\end{equation}
Since $Q_{\bar{2}}\left(  \varphi\right)  $ is convex for convex $\varphi$, we
get from $\left(  \ref{thm_cx_prop1}\right)  $ that $\bar{\varphi}_{0}^{\ast}$
is uniformly convex and $D^{2}\bar{\varphi}_{0}^{\ast}\geqslant Id$. Thus
$\bar{\varphi}_{0}^{\ast}$ is a convex expansion (Remark \ref{rmk_ctrexp}).
Finally, combining $\left(  \ref{thm_cx_prop0}\right)  $ and $\left(
\ref{thm_cx_prop1}\right)  ,$ we have%
\[
\left\vert x-y\right\vert ^{2}=\varphi\left(  x\right)  -\left(  2\bar
{\varphi}_{0}^{\ast}\left(  y\right)  -\left\vert y\right\vert ^{2}\right)
,\text{ }\pi\text{-}a.e.\text{ }\left(  x,y\right)  ,
\]
which can be rewrite as%
\[
x\cdot y=\bar{\varphi}_{0}\left(  x\right)  +\bar{\varphi}_{0}^{\ast}\left(
y\right)  ,\text{ }\pi\text{-}a.e.\text{ }\left(  x,y\right)  .
\]
Since $Q_{\bar{2}}\left(  \varphi\right)  \in L^{1}\left(  Y,d\nu\right)  ,$
it is finite for $\nu$-$a.e.$ $y.$ Hence $\bar{\varphi}_{0}^{\ast}$ is finite
for $\nu$-$a.e.$ $y.$ This together with the absolute continuity of $\nu$
implies that $\bar{\varphi}_{0}^{\ast}$ is $\nu$-$a.e.$ differentiable and%
\[
x=\nabla\bar{\varphi}_{0}^{\ast}\left(  y\right)  ,\text{ }\nu\text{-}%
a.e.\text{ }y.
\]
Therefore%
\[
x=\nabla\bar{\varphi}_{0}^{\ast}\left(  y\right)  ,\text{ }\pi\text{-}%
a.e.\text{ }\left(  x,y\right)  .
\]
This shows that $\bar{\nu}=\left(  \nabla\bar{\varphi}_{0}^{\ast}\right)
_{\#}\nu$ is the unique projection of $\nu$ onto $\mathrm{P}_{2,\mu\leqslant
}^{\text{cx}}.$
\end{proof}

In contrast to Theorem \ref{thm_cx_prop}, we need to assume the absolute
continuity of $\nu$ to ensure the uniqueness of the forward projection in
Theorem \ref{thm_cx_prop}, this results from the difference in the geometric
properties of backward and forward convex order cones, see Section
\ref{sec_unique} and\ Section \ref{sec_bf}.

It turns out the properties of the potential given in Theorem
\ref{thm_cx_prop} are also optimal. The follow complements the corresponding
result of backward convex order projection given in Theorem
\ref{thm_cx_bw_char}.

\begin{theorem}
\label{thm_cx_exp}Let $\mu\in P_{2}\left(  \mathbb{R}^{d}\right)  ,$ $\nu\in
P_{2}^{ac}\left(  \mathbb{R}^{d}\right)  .$ Then the following are equivalent.

(i) The projection of $\nu\,$onto$\ \mathrm{P}_{2,\mu\leqslant}^{\text{cx}}$
coincides with $\mu$.

(ii) There exists a convex expansion $\phi$ such that $\left(  \nabla
\phi\right)  _{\#}\nu=\mu.$
\end{theorem}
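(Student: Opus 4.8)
The plan is to prove the two implications separately, leaning on the attainment and characterization already obtained for the forward convex order projection together with the forward duality. The implication (i)$\Rightarrow$(ii) is essentially a read-off from Theorem \ref{thm_cx_prop}, while (ii)$\Rightarrow$(i) is the substantive direction and will be handled by producing an explicit admissible dual competitor.

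For (i)$\Rightarrow$(ii): since $\nu\in P_2^{ac}(\mathbb{R}^d)$, Theorem \ref{thm_cx_prop} applies, giving an optimizer $\varphi\in\mathcal{A}_{\text{cx}}\cap C_{b,2}$ of $\mathcal{D}_2(\mathrm{P}_{2,\mu\leqslant}^{\text{cx}},\nu)$ such that the unique forward projection of $\nu$ is $(\nabla\bar{\varphi}_0^{\ast})_{\#}\nu$, where $\bar{\varphi}_0(x)=\tfrac12|x|^2-\tfrac12\varphi(x)$ and $\bar{\varphi}_0^{\ast}$ is a convex expansion. If this projection coincides with $\mu$, then $(\nabla\bar{\varphi}_0^{\ast})_{\#}\nu=\mu$, and taking $\phi=\bar{\varphi}_0^{\ast}$ produces a convex expansion with $(\nabla\phi)_{\#}\nu=\mu$, which is (ii).

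For (ii)$\Rightarrow$(i): given a convex expansion $\phi$ with $(\nabla\phi)_{\#}\nu=\mu$, I would first note $\mu\in\mathrm{P}_{2,\mu\leqslant}^{\text{cx}}$ (trivially $\mu\leqslant_{\text{cx}}\mu$), which gives the cheap bound $\mathcal{T}_2(\mathrm{P}_{2,\mu\leqslant}^{\text{cx}},\nu)\leqslant\mathcal{T}_2(\mu,\nu)$. For the reverse inequality I would exhibit a dual competitor achieving $\mathcal{T}_2(\mu,\nu)$. Set $\varphi(x)=|x|^2-2\phi^{\ast}(x)$. Since $\phi$ is a convex expansion, $D^2\phi\geqslant Id$, hence $D^2\phi^{\ast}\leqslant Id$ (Remark \ref{rmk_ctrexp}); thus $\phi^{\ast}$ is finite with at most quadratic growth and $D^2\varphi=2Id-2D^2\phi^{\ast}\geqslant0$, so $\varphi\in\mathcal{A}_{\text{cx}}\cap C_{b,2}$ is admissible in the forward dual of Theorem \ref{thm_dual_cx}. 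A direct computation gives $Q_{\bar{2}}(\varphi)(y)=2\phi(y)-|y|^2$ (Lemma \ref{lm_Q2Q2bar}). Substituting $x=\nabla\phi(y)$, using $(\nabla\phi)_{\#}\nu=\mu$ and the equality case of the Legendre inequality $\phi^{\ast}(\nabla\phi(y))=y\cdot\nabla\phi(y)-\phi(y)$ (valid $\nu$-a.e.\ as $\nu$ is absolutely continuous), the $\phi$-terms cancel and one is left with
\[
\int\varphi\,d\mu-\int Q_{\bar{2}}(\varphi)\,d\nu=\int|\nabla\phi(y)-y|^2\,d\nu=\mathcal{T}_2(\mu,\nu),
\]
the final equality because $\nabla\phi$ is the Brenier map from $\nu$ to $\mu$. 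Hence $\mathcal{T}_2(\mathrm{P}_{2,\mu\leqslant}^{\text{cx}},\nu)\geqslant\mathcal{T}_2(\mu,\nu)$, so $\mu$ is a minimizer; since $\nu\in P_2^{ac}(\mathbb{R}^d)$ the forward projection is unique (Theorem \ref{thm_uniqueness}), and it must therefore equal $\mu$, giving (i).

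The main obstacle is the bookkeeping in the (ii)$\Rightarrow$(i) computation: one must verify that $\varphi=|x|^2-2\phi^{\ast}$ genuinely lies in $\mathcal{A}_{\text{cx}}\cap C_{b,2}$ (finiteness and quadratic growth of $\phi^{\ast}$, which rely on $\phi$ being strongly convex and hence $\nabla\phi$ surjective), that $\phi\in L^1(d\nu)$ and $\phi^{\ast}\in L^1(d\mu)$ so all integrals are finite, and that the exact cancellation of the $\phi$-terms holds — this uses equality in Young's inequality $\nu$-a.e., which in turn rests on the absolute continuity of $\nu$ and the $\nu$-a.e.\ differentiability of $\phi$. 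Apart from these regularity and integrability checks, the argument is a short duality computation that essentially runs the equality chain in the proof of Theorem \ref{thm_cx_prop} in reverse.
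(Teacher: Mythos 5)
Your proof is correct and follows essentially the same route as the paper's: (i)$\Rightarrow$(ii) is read off from Theorem \ref{thm_cx_prop}, and (ii)$\Rightarrow$(i) uses the same dual competitor $\varphi\left(  x\right)  =\left\vert x\right\vert ^{2}-2\phi^{\ast}\left(  x\right)  $, with admissibility from Lemma \ref{lm_HJ_cx_duality}, the Young-equality cancellation valid $\nu$-a.e.\ by absolute continuity, and the forward duality of Theorem \ref{thm_dual_cx} closing the chain. The only cosmetic difference is the final step --- the paper concludes that $\varphi$ is a dual optimizer and invokes Theorem \ref{thm_cx_prop}, while you use weak duality plus the trivial bound $\mathcal{T}_{2}\left(  \mathrm{P}_{2,\mu\leqslant}^{\text{cx}},\nu\right)  \leqslant\mathcal{T}_{2}\left(  \mu,\nu\right)  $ and Theorem \ref{thm_uniqueness}; note that your appeal to Brenier's theorem is superfluous, since $\int\left\vert \nabla\phi\left(  y\right)  -y\right\vert ^{2}d\nu\geqslant\mathcal{T}_{2}\left(  \mu,\nu\right)  $ already holds because $\nabla\phi$ induces a coupling of $\nu$ and $\mu$, which is the only direction your argument needs.
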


\begin{proof}
\textbf{1}. That (i) implies (ii) follows from Theorem \ref{thm_cx_prop} and
Lemma \ref{lm_HJ_cx_duality}.

\textbf{2}. Suppose that there exists a convex expansion $\phi$ such that
$\left(  \nabla\phi\right)  _{\#}\nu=\mu.$ Let%
\[
\varphi\left(  x\right)  =\left\vert x\right\vert ^{2}-2\phi^{\ast}\left(
x\right)  .
\]
Since $\phi$ is a convex expansion, hence $\varphi$ is convex. In addition, by
Lemma \ref{lm_HJ_cx_duality} $\phi^{\ast}\in C^{1},\ \nabla\phi^{\ast}$ is
$1$-Lipschitz, thus $\phi^{\ast}$\ has at most quadratic growth. Therefore
$\varphi\in\mathcal{A}_{\text{cx}}\cap C_{b,2}$. Since $\phi$ is convex,%
\[
\nabla\phi\left(  y\right)  \cdot y=\phi^{\ast}\left(  \nabla\phi\left(
y\right)  \right)  +\phi\left(  y\right)  ,\text{ }a.e.\text{ }y.
\]
Noting $\nu$ is absolutely continuous w.r.t. the Lebesgue measure, we get%
\[
\nabla\phi\left(  y\right)  \cdot y=\phi^{\ast}\left(  \nabla\phi\left(
y\right)  \right)  +\phi\left(  y\right)  ,\text{ }\nu\text{-}a.e.\text{ }y.
\]
Therefore%
\begin{align*}
\mathcal{T}_{2}\left(  \mu,\nu\right)   &  \leqslant\int\left\vert \nabla
\phi\left(  y\right)  -y\right\vert ^{2}d\nu=\int\left\vert \nabla\phi\left(
y\right)  \right\vert ^{2}d\nu-2\int\left(  \nabla\phi\left(  y\right)  \cdot
y\right)  d\nu+\int\left\vert y\right\vert ^{2}d\nu\\
&  =\int\left\vert x\right\vert ^{2}d\mu-2\int\left(  \phi\left(  y\right)
+\phi^{\ast}\left(  \nabla\phi\left(  y\right)  \right)  \right)  d\nu
+\int\left\vert y\right\vert ^{2}d\nu\\
&  =\int\left(  \left\vert x\right\vert ^{2}-2\phi^{\ast}\left(  x\right)
\right)  d\mu-\int\left(  2\phi\left(  y\right)  -\left\vert y\right\vert
^{2}\right)  d\nu.
\end{align*}
Using the definition of $\varphi$ and applying Lemma \ref{lm_HJ_cx_duality} to
the term, we continue writing%
\begin{align*}
\mathcal{T}_{2}\left(  \mu,\nu\right)   &  \leqslant\int\varphi\left(
x\right)  d\mu-\int Q_{\bar{2}}\left(  \varphi\right)  \left(  y\right)
d\nu\\
&  \leqslant\sup_{g\in\mathcal{A}_{\text{cx}}\cap\mathrm{S}_{b,2}}\left\{
\int g\left(  x\right)  d\mu-\int Q_{\bar{2}}\left(  g\right)  \left(
y\right)  d\nu\right\}  \text{ (}=\mathcal{D}_{2}\left(  \mathrm{P}%
_{2,\mu\leqslant}^{\text{cx}},\nu\right)  \text{)}\\
&  \leqslant\sup_{g\in L^{1}\left(  d\mu\right)  }\left\{  \int g\left(
x\right)  d\mu-\int Q_{\bar{2}}\left(  g\right)  \left(  y\right)
d\nu\right\}  =\mathcal{T}_{2}\left(  \mu,\nu\right)  .
\end{align*}
This shows that $\varphi$ is an optimizer of the dual, i.e.,%
\[
\mathcal{D}_{2}\left(  \mathrm{P}_{2,\mu\leqslant}^{\text{cx}},\nu\right)
=\int\varphi\left(  x\right)  d\mu-\int Q_{\bar{2}}\left(  \varphi\right)
\left(  y\right)  d\nu.
\]
Hence by Theorem \ref{thm_cx_prop},\ the image $\left(  \nabla\phi\right)
_{\#}\nu,$ which is $\mu$ by assumption, is the projection of $\nu$ onto
$\mathrm{P}_{2,\mu\leqslant}^{\text{cx}}.$ Therefore we have proved that the
projection of $\nu$ onto $\mathrm{P}_{2,\mu\leqslant}^{\text{cx}}$ is $\mu.$
\end{proof}

\section{\label{sec_bf}Backward projection versus forward projection}

In appearance, the backward and forward duality formulas correspond to the two
forms of classical Kantorovich duality where one can go from one form to the
other through a $c$-transform. People might thus be tempted to think that the
backward duality and forward duality for Wasserstein projections can be
bridged by a $c$-transform as in the classical case, hence are equivalent in
some sense. This, however, is not true in general, simply because the defining
class $\mathcal{A}\ $might not be invariant under the $c$-transforms $Q_{c}$
and $Q_{\bar{c}}$. That being said, the case of convex order is an exception,
the backward and forward convex order projections are indeed equivalent in an
appropriate sense. This result is somewhat surprising, since even in the
convex order case, although the backward cone is geodesically convex, the
forward cone is not, see Example \ref{eg_cx_cone}.

\subsection{Supports of measures in convex order}

For any two measures in convex order, their supports, informally speaking, are
increasing with convex order. This implies that for backward projection
problem $\left(  \ref{img_backproj}\right)  $, the support of the backward
projection $\bar{\mu}$ is implicitly known to be contained in the support of
$\nu.$ Forward projection does not enjoy this property. This is made precise
in the following simple result, for which we provide a proof for completeness.

\begin{lemma}
\label{lm_cxsupp_inc}Let $\mu$, $\nu\in P\left(  \mathbb{R}^{d}\right)  $ and
$\mu\leqslant_{\text{cx}}\nu.$ Then
\[
clconv\left(  \text{supp}\left(  \mu\right)  \right)  \subset clconv\left(
\text{supp}\left(  \nu\right)  \right)  ,
\]
where $clconv\left(  \cdot\right)  $ denotes the closure of the convex hull of
a given set.
\end{lemma}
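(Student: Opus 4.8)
The plan is to exploit the dual description of the closed convex hull: for any set $S\subset\mathbb{R}^d$, the set $clconv(S)$ equals the intersection of all closed half-spaces containing $S$, which follows from the Hahn--Banach separation theorem. Moreover, for a closed half-space $H$ one has $\text{supp}(\nu)\subset H$ if and only if $\nu(H)=1$, since the complement of $H$ is open and the support is the complement of the largest open $\nu$-null set. So I would reduce the inclusion to a single implication: whenever a closed half-space $H$ satisfies $\nu(H)=1$, it must also satisfy $\mu(H)=1$. Intersecting over all such $H$ then places $\text{supp}(\mu)$, hence $clconv(\text{supp}(\mu))$, inside $clconv(\text{supp}(\nu))$.

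To carry out that implication, fix a closed half-space $H=\{x:\langle a,x\rangle\leqslant b\}$ with $\nu(H)=1$ and take the test function
\[
\varphi(x)=\left(\langle a,x\rangle-b\right)^{+}=\max\{\langle a,x\rangle-b,\,0\}.
\]
This $\varphi$ is convex and Lipschitz, hence admissible for convex order by Lemma \ref{lm_cx_bdbelow}(iii); since $\mu,\nu\in P_1(\mathbb{R}^d)$ both integrals are finite. The point is that $\varphi$ vanishes exactly on $H$, so $\nu(H)=1$ forces $\int\varphi\,d\nu=0$, while $\varphi\geqslant0$ forces $\int\varphi\,d\mu\geqslant0$. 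The convex order relation $\mu\leqslant_{\text{cx}}\nu$ then squeezes
\[
0\leqslant\int\varphi\,d\mu\leqslant\int\varphi\,d\nu=0,
\]
so $\int\varphi\,d\mu=0$. Since $\varphi\geqslant0$, this yields $\varphi=0$ $\mu$-almost everywhere, i.e. $\mu(\{x:\langle a,x\rangle>b\})=0$, which is precisely $\mu(H)=1$.

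I do not anticipate a serious obstacle; the argument is a direct separation-plus-test-function computation. The only points needing care are bookkeeping: checking that the affine positive part $\varphi$ is a legitimate convex test function (covered by the Lipschitz case of Lemma \ref{lm_cx_bdbelow}) and confirming its finiteness against $P_1$ marginals, and correctly translating ``the support lies in a closed half-space'' into ``the half-space carries full mass,'' which uses only the openness of the complementary half-space. Once the half-space reduction is set up, the chain of inequalities closes the proof immediately.
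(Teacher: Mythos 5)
Your proof is correct and is essentially the paper's argument in dual form: both reduce convex order to the hinge functions $\max\left\{  \ell,0\right\}  $ with $\ell$ affine, produced by Hahn--Banach separation --- the paper argues by contradiction, strictly separating a closed ball of positive $\mu$-mass from $clconv\left(  \text{supp}\left(  \nu\right)  \right)  $ and testing against $\max\left(  l,0\right)  $, while you run the identical computation directly over all closed half-spaces containing $\text{supp}\left(  \nu\right)  $. One bookkeeping remark: the lemma assumes only $\mu,\nu\in P\left(  \mathbb{R}^{d}\right)  $, not $P_{1}$, but your argument is unaffected, since $\varphi\geqslant0$ guarantees both integrals exist in the extended sense and $\int\varphi\,d\nu=0$ is finite, so the defining inequality of the order (Definition \ref{def_lin_order}) applies without any moment hypothesis and there is no need to invoke Lemma \ref{lm_cx_bdbelow}(iii).
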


\begin{proof}
Suppose to the contrary that the conclusion is not true. Then there exists a
nonempty open ball $B\left(  x_{0}\right)  $ such that $\mu\left(  B\left(
x_{0}\right)  \right)  >0$ and $\bar{B}\left(  x_{0}\right)  \cap
clconv\left(  \text{supp}\left(  \nu\right)  \right)  =\emptyset.$ Then there
is a linear function $l\left(  x\right)  $ which strictly separates
$clconv\left(  \text{supp}\left(  \nu\right)  \right)  $ and $\bar{B}\left(
x_{0}\right)  $:%
\[
l\left(  x\right)  >0,\text{ }x\in\bar{B}\left(  x_{0}\right)  \text{;
}l\left(  x\right)  <0,\text{ }x\in clconv\left(  \text{supp}\left(
\nu\right)  \right)  .
\]
Consider the convex function%
\[
\varphi\left(  x\right)  =\max\left(  l\left(  x\right)  ,0\right)  ,\text{
}x\in\mathbb{R}^{d}.
\]
It is easy to see that%
\[
\int\varphi\left(  x\right)  d\mu>0=\int\varphi\left(  x\right)  d\nu.
\]
This contradicts the assumption that $\mu\leqslant_{\text{cx}}\nu.$
\end{proof}

\subsection{Convexity of the cones}

Given $\mu,$ $\nu\in P_{2}\left(  \mathbb{R}^{d}\right)  .$ We discuss the
geodesic convexity of the backward cone $\mathrm{P}_{2,\leqslant\nu
}^{\text{cx}}$\ and forward cone $\mathrm{P}_{2,\mu\leqslant}^{\text{cx}}$. In
view of \cite[Proposition 9.3.2]{ambrosio2008gradient}, $\mathrm{P}%
_{2,\leqslant\nu}^{\text{cx}}$ is convex along generalized geodesics, hence
also convex along geodesics. The forward cone $\mathrm{P}_{2,\mu\leqslant
}^{\text{cx}}$, on the other hand, is generally not geodesically convex as
illustrated by the example below which is inpired by
\cite{bruckerhoff2021instability}.

\begin{figure}[h]
\includegraphics[width=0.6\linewidth]{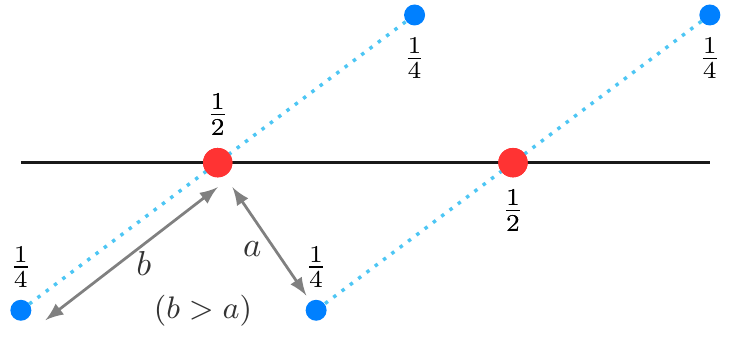}\caption{Probabilities
(solid red and blue) in convex order}%
\label{fig:eg1}%
\end{figure}

\begin{figure}[h]
\includegraphics[width=0.6\linewidth]{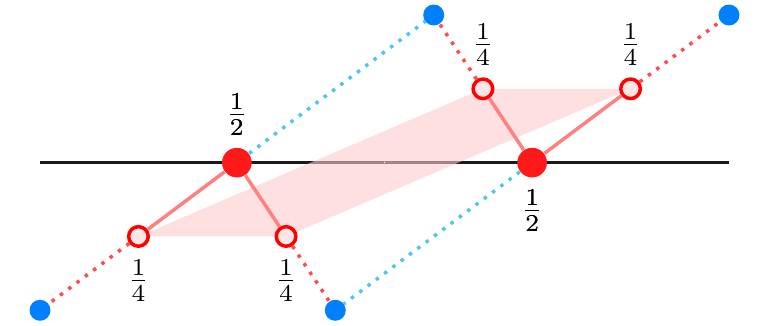}\caption{The
optimal transport from the probability on red balls to the one on blue balls.
Cirles indicate displacement interpolation.}%
\label{fig:eg2}%
\end{figure}

\begin{example}
\label{eg_cx_cone}Let%
\[
\mu=\text{probability supported on solid red balls in Figure \ref{fig:eg1},}%
\]
and%
\[
\nu=\text{probability supported on solid blue balls in Figure \ref{fig:eg1}.}%
\]
Consider the cone $\mathrm{P}_{2,\mu\leqslant}^{\text{cx}}.$\ Clearly $\nu
\in\mathrm{P}_{2,\mu\leqslant}^{\text{cx}}.$ In Figure \ref{fig:eg1}, the
indicated distance $b$ is larger than $a.$ When the Wasserstein distance
between $\mu$ and $\nu$ is computed, it is easy to see that the optimal path
to transport $\mu$ to $\nu$ is along the dotted red lines as shown in Figure
\ref{fig:eg2}, where an intermmediate displacement interpolation, say $\left[
\mu,\nu\right]  _{s}$ with $s\in\left(  0,1\right)  $, is given and supported
on the red circles in Figure \ref{fig:eg2}. The closure of the convex hull of
supp$\left(  \left[  \mu,\nu\right]  _{s}\right)  $ is indicated by shaded
area, which does not include $clconv\left(  \text{supp}\left(  \mu\right)
\right)  $. In view of Lemma \ref{lm_cxsupp_inc}, the relation $\mu
\leqslant_{\text{cx}}\left[  \mu,\nu\right]  _{s}$ does not hold, in other
words, $\left[  \mu,\nu\right]  _{s}$ is not in the cone $\mathrm{P}%
_{2,\mu\leqslant}^{\text{cx}}.$ Therefore $\mathrm{P}_{2,\mu\leqslant
}^{\text{cx}}$ is not geodesically convex.
\end{example}

\subsection{Relation between backward and forward solution}

In the following we show that the backward and forward convex order projection
costs are equal and the optimal backward and forward mappings are inverse to
each other. Given the fact that the backward and forward convex order cones
possess distinct geometric properties as we have already seen, the equality
and inverse relations are surprising.

\begin{theorem}
\label{thm_cx_bfequ}Let $\mu\in P_{2}\left(  \mathbb{R}^{d}\right)  ,$ $\nu\in
P_{2}\left(  \mathbb{R}^{d}\right)  $. Then%
\[
\mathcal{T}_{2}\left(  \mathrm{P}_{2,\mu\leqslant}^{\text{cx}},\nu\right)
=\mathcal{D}_{2}\left(  \mathrm{P}_{2,\mu\leqslant}^{\text{cx}},\nu\right)
=\mathcal{D}_{2}\left(  \mu,\mathrm{P}_{2,\leqslant\nu}^{\text{cx}}\right)
=\mathcal{T}_{2}\left(  \mu,\mathrm{P}_{2,\leqslant\nu}^{\text{cx}}\right)  .
\]
Moreover the following are true.

(i) Let $\varphi^{f}\in\mathcal{A}_{\text{cx}}\cap C_{b,2}$ be the optimal
dual solution for forward projection obtained in Theorem \ref{thm_dual_att_cx}%
, then%
\[
Q_{\bar{2}}\left(  \varphi^{f}\right)  \in\mathcal{A}_{\text{cx}}\cap
L^{1}\left(  d\nu\right)  \text{ is an optimal dual solution for backward
projection}.
\]

(ii) Let $\varphi^{b}\in\mathcal{A}_{\text{cx}}\cap L^{1}\left(  d\nu\right)
$ be the optimal dual solution for backward projection obtained in Theorem
\ref{thm_dual_att_cx_bw}, then%
\[
Q_{2}\left(  \varphi^{b}\right)  \in\mathcal{A}_{\text{cx}}\cap\mathrm{S}%
_{b,2}\text{ is an optimal dual solution for forward projection}.
\]

\end{theorem}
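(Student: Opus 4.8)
The two outer equalities are exactly the backward and forward dualities of Theorem \ref{thm_dual_cx}, so the entire statement reduces to the middle identity $\mathcal{D}_2(\mathrm{P}_{2,\mu\leqslant}^{\text{cx}},\nu)=\mathcal{D}_2(\mu,\mathrm{P}_{2,\leqslant\nu}^{\text{cx}})$ together with the transfer of optimizers. The plan is to exploit the fact that, for the quadratic cost, \emph{both} $c$-transforms $Q_2$ and $Q_{\bar{2}}$ preserve convexity. By Lemma \ref{lm_Q2Q2bar} one has $Q_{\bar{2}}(\varphi)=2\bar\varphi_0^{\ast}-|y|^2$ with $\bar\varphi_0=\tfrac12|x|^2-\tfrac12\varphi$, and Lemma \ref{lm_HJ_cx_duality} gives $D^2\bar\varphi_0^{\ast}\geqslant Id$ whenever $\varphi$ is convex, so $Q_{\bar{2}}(\varphi)$ is convex; dually, writing $Q_2(\psi)(x)=|x|^2-h^{\ast}(2x)$ with $h=\psi+|\cdot|^2$ and using that $h$ is $2$-uniformly convex for convex $\psi$, one finds $D^2Q_2(\psi)\geqslant 0$, so $Q_2(\psi)$ is convex. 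Combined with the triple identities $Q_2Q_{\bar{2}}Q_2=Q_2$ and $Q_{\bar{2}}Q_2Q_{\bar{2}}=Q_{\bar{2}}$ (cf. $\left(\ref{eq_Qccbar12}\right)$), these facts set up a bijection between the admissible backward and forward dual potentials.

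First I would prove $\mathcal{D}_2(\mathrm{P}_{2,\mu\leqslant}^{\text{cx}},\nu)\leqslant\mathcal{D}_2(\mu,\mathrm{P}_{2,\leqslant\nu}^{\text{cx}})$. By the reduction in step 1 of the proof of Theorem \ref{thm_dual_att_cx}, the forward supremum may be taken over $\varphi\in\mathcal{A}_{\text{cx}}^{0}\cap\mathrm{S}_{b,2}$, for which $\varphi=Q_2(Q_{\bar{2}}(\varphi))$. Setting $\psi=Q_{\bar{2}}(\varphi)$, which is convex by the above and admissible for the backward problem, the forward objective becomes $\int\varphi\,d\mu-\int Q_{\bar{2}}(\varphi)\,d\nu=\int Q_2(\psi)\,d\mu-\int\psi\,d\nu$, i.e. the backward objective at $\psi$, which is bounded by $\mathcal{D}_2(\mu,\mathrm{P}_{2,\leqslant\nu}^{\text{cx}})$. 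For the reverse inequality I would run the argument symmetrically: in the backward supremum any admissible $\psi$ may be replaced by $Q_{\bar{2}}(Q_2(\psi))\leqslant\psi$ without decreasing the objective, since $Q_2(Q_{\bar{2}}(Q_2(\psi)))=Q_2(\psi)$ leaves the first term unchanged while $-\int(\cdot)\,d\nu$ can only increase; thus one may assume $\psi=Q_{\bar{2}}(Q_2(\psi))$. Putting $\varphi=Q_2(\psi)$ (convex by the above) and using $Q_{\bar{2}}(\varphi)=\psi$, the backward objective equals $\int\varphi\,d\mu-\int Q_{\bar{2}}(\varphi)\,d\nu\leqslant\mathcal{D}_2(\mathrm{P}_{2,\mu\leqslant}^{\text{cx}},\nu)$. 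This gives the middle equality.

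Finally I would read off the moreover part from attainment. Let $\varphi^{f}\in\mathcal{A}_{\text{cx}}\cap C_{b,2}$ be the forward optimizer of Theorem \ref{thm_dual_att_cx}; replacing it if necessary by $Q_2(Q_{\bar{2}}(\varphi^{f}))$, which stays optimal by the triple identity (the $\nu$-term is unchanged and the $\mu$-term does not decrease), I may assume $\varphi^{f}\in\mathcal{A}_{\text{cx}}^{0}$. Then $\psi=Q_{\bar{2}}(\varphi^{f})$ is convex and its backward objective equals the forward optimum, hence equals $\mathcal{D}_2(\mu,\mathrm{P}_{2,\leqslant\nu}^{\text{cx}})$, proving (i); symmetrically, for the backward optimizer $\varphi^{b}$ of Theorem \ref{thm_dual_att_cx_bw} the potential $Q_2(\varphi^{b})$ realizes the forward optimum, proving (ii). The membership claims $Q_{\bar{2}}(\varphi^{f})\in\mathcal{A}_{\text{cx}}\cap L^{1}(d\nu)$ and $Q_2(\varphi^{b})\in\mathcal{A}_{\text{cx}}\cap\mathrm{S}_{b,2}$ follow from the quadratic growth and affine lower bounds supplied by the attainment theorems together with $\mu,\nu\in P_2(\mathbb{R}^d)$. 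The main obstacle I anticipate is precisely this bookkeeping of function classes: the whole correspondence rests on $Q_2$ and $Q_{\bar{2}}$ carrying convex functions to convex functions with controlled growth, a feature special to the quadratic cost, and one must check at each step that the transformed potential remains integrable against the relevant free marginal, as flagged in Remark \ref{rmk_relax_phi}.
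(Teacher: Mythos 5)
Your proposal is correct and takes essentially the same route as the paper: both reduce everything to the middle equality $\mathcal{D}_{2}\left(\mathrm{P}_{2,\mu\leqslant}^{\text{cx}},\nu\right)=\mathcal{D}_{2}\left(\mu,\mathrm{P}_{2,\leqslant\nu}^{\text{cx}}\right)$ and transfer potentials between the two suprema via $Q_{2}$ and $Q_{\bar{2}}$, using convexity preservation for the quadratic cost, the inequalities $Q_{\bar{2}}\left(Q_{2}\left(\varphi\right)\right)\leqslant\varphi\leqslant Q_{2}\left(Q_{\bar{2}}\left(\varphi\right)\right)$, and the same integrability bookkeeping against the free marginal, then obtain (i) and (ii) by saturating these inequalities at the optimizers. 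Your normalization to fixed points of the double transform (restricting to $\mathcal{A}_{\text{cx}}^{0}$ via the triple identities $\left(\ref{eq_Qccbar12}\right)$) is only a cosmetic repackaging of the paper's direct one-sided substitutions $\left(\ref{thm_cx_bfequ4}\right)$ and $\left(\ref{thm_cx_bfequ5}\right)$.
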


\begin{proof}
It suffices to verify that%
\begin{equation}
\mathcal{D}_{2}\left(  \mu,\mathrm{P}_{2,\leqslant\nu}^{\text{cx}}\right)
=\mathcal{D}_{2}\left(  \mathrm{P}_{2,\mu\leqslant}^{\text{cx}},\nu\right)  .
\label{thm_cx_bfequ0}%
\end{equation}
Recall the dualities in Theorem \ref{thm_dual_cx},%
\begin{equation}
\mathcal{D}_{2}\left(  \mu,\mathrm{P}_{2,\leqslant\nu}^{\text{cx}}\right)
=\sup_{\varphi\in\mathcal{A}_{\text{cx}}\cap L^{1}\left(  d\nu\right)
}\left\{  \int Q_{2}\left(  \varphi\right)  d\mu-\int\varphi d\nu\right\}  ,
\label{thm_cx_bfequ1}%
\end{equation}
and%
\begin{equation}
\mathcal{D}_{2}\left(  \mathrm{P}_{2,\mu\leqslant}^{\text{cx}},\nu\right)
=\sup_{\varphi\in\mathcal{A}_{\text{cx}}\cap\mathrm{S}_{b,2}}\left\{
\int\varphi d\mu-\int Q_{\bar{2}}\left(  \varphi\right)  d\nu\right\}  .
\label{thm_cx_bfequ2}%
\end{equation}
Since for $\varphi\in\mathcal{A}_{\text{cx}}\cap\mathrm{S}_{b,2},$ $Q_{\bar
{2}}\left(  \varphi\right)  >-\infty$ is convex, thus bounded from below by
its supporting plane. Hence $\int Q_{\bar{2}}\left(  \varphi\right)  d\nu$ is
bounded from below. Since $\int Q_{\bar{2}}\left(  \varphi\right)  d\nu
=\infty$ would not contribute to the supremum in $\left(  \ref{thm_cx_bfequ2}%
\right)  $, we can restrict to those $\varphi\in\mathcal{A}_{\text{cx}}%
\cap\mathrm{S}_{b,2}$ such that $\int Q_{\bar{2}}\left(  \varphi\right)  d\nu$
is finite, i.e. $Q_{\bar{2}}\left(  \varphi\right)  \in L^{1}\left(
d\nu\right)  .$ Therefore we can write%
\begin{equation}
\mathcal{D}_{2}\left(  \mathrm{P}_{2,\mu\leqslant}^{\text{cx}},\nu\right)
=\sup_{\substack{\varphi\in\mathcal{A}_{\text{cx}}\cap\mathrm{S}%
_{b,2}\\Q_{\bar{2}}\left(  \varphi\right)  \in L^{1}\left(  d\nu\right)
}}\left\{  \int\varphi d\mu-\int Q_{\bar{2}}\left(  \varphi\right)
d\nu\right\}  . \label{thm_cx_bfequ3}%
\end{equation}
Then, for any $\varphi$ admissible to the supremum of $\left(
\ref{thm_cx_bfequ3}\right)  $, it is legitimate to substitute $\varphi$ with
$Q_{\bar{2}}\left(  \varphi\right)  $ in the supremum of $\left(
\ref{thm_cx_bfequ1}\right)  $ and write%
\begin{equation}
\mathcal{D}_{2}\left(  \mu,\mathrm{P}_{2,\leqslant\nu}^{\text{cx}}\right)
\geqslant\int Q_{2}\left(  Q_{\bar{2}}\left(  \varphi\right)  \right)
d\mu-\int Q_{\bar{2}}\left(  \varphi\right)  d\nu\geqslant\int\varphi
d\mu-\int Q_{\bar{2}}\left(  \varphi\right)  d\nu. \label{thm_cx_bfequ4}%
\end{equation}
Since $\varphi$ runs over all functions admissible to the supremum of $\left(
\ref{thm_cx_bfequ3}\right)  ,$ we get%
\[
\mathcal{D}_{2}\left(  \mu,\mathrm{P}_{2,\leqslant\nu}^{\text{cx}}\right)
\geqslant\mathcal{D}_{2}\left(  \mathrm{P}_{2,\mu\leqslant}^{\text{cx}}%
,\nu\right)  .
\]
On the other hand, for any $\varphi\in\mathcal{A}_{\text{cx}}\cap L^{1}\left(
d\nu\right)  ,$ $Q_{2}\left(  \varphi\right)  $ is convex and $\varphi$ is
$\nu$-$a.e.$ finite, thus $Q_{2}\left(  \varphi\right)  \in\mathcal{A}%
_{\text{cx}}\cap\mathrm{S}_{b,2}.$ So it is legitimate to substitute $\varphi$
with $Q_{2}\left(  \varphi\right)  $ in the supremum of $\left(
\ref{thm_cx_bfequ2}\right)  $ and write%
\begin{equation}
\int Q_{2}\left(  \varphi\right)  d\mu-\int\varphi d\nu\leqslant\int
Q_{2}\left(  \varphi\right)  d\mu-\int Q_{\bar{2}}\left(  Q_{2}\left(
\varphi\right)  \right)  d\nu\leqslant\mathcal{D}_{2}\left(  \mathrm{P}%
_{2,\mu\leqslant}^{\text{cx}},\nu\right)  . \label{thm_cx_bfequ5}%
\end{equation}
Since this holds for any $\varphi\in\mathcal{A}_{\text{cx}}\cap L^{1}\left(
d\nu\right)  ,$ we have%
\[
\mathcal{D}_{2}\left(  \mu,\mathrm{P}_{2,\leqslant\nu}^{\text{cx}}\right)
\leqslant\mathcal{D}_{2}\left(  \mathrm{P}_{2,\mu\leqslant}^{\text{cx}}%
,\nu\right)  .
\]
Thus $\left(  \ref{thm_cx_bfequ0}\right)  $ is proved. To see (i) and (ii), we
note that, by the optimality of $\varphi^{f},$%
\[
\mathcal{D}_{2}\left(  \mathrm{P}_{2,\mu\leqslant}^{\text{cx}},\nu\right)
=\int\varphi^{f}d\mu-\int Q_{\bar{2}}\left(  \varphi^{f}\right)  d\nu.
\]
Since the dual value is finite and $\varphi^{f}\in\mathcal{A}_{\text{cx}}\cap
C_{b,2},$ $Q_{\bar{2}}\left(  \varphi^{f}\right)  \in\mathcal{A}_{\text{cx}%
}\cap L^{1}\left(  d\nu\right)  .$ So we can insert $\varphi^{f}$ into
$\left(  \ref{thm_cx_bfequ4}\right)  $\ to saturate the inequalities.
Therefore $Q_{\bar{2}}\left(  \varphi^{f}\right)  $ is an optimal solution for
$\mathcal{D}_{2}\left(  \mu,\mathrm{P}_{2,\leqslant\nu}^{\text{cx}}\right)  $.
Similarly $\varphi^{b}\in\mathcal{A}_{\text{cx}}\cap L^{1}\left(  d\nu\right)
$ implies $Q_{2}\left(  \varphi^{b}\right)  \in\mathcal{A}_{\text{cx}}%
\cap\mathrm{S}_{b,2}.$ Therefore we can substitute $\varphi^{b}$ into $\left(
\ref{thm_cx_bfequ5}\right)  $\ to see that $Q_{2}\left(  \varphi^{b}\right)  $
is optimal for $\mathcal{D}_{2}\left(  \mathrm{P}_{2,\mu\leqslant}^{\text{cx}%
},\nu\right)  .$
\end{proof}

\begin{remark}
Either following the same argument as above, or using the above result
together with Theorem \ref{thm_dual_cx}, we have$\ \mathcal{D}_{2}\left(
\mathrm{P}_{1,\mu\leqslant}^{\text{cx}},\nu\right)  =\mathcal{D}_{2}\left(
\mu,\mathrm{P}_{1,\leqslant\nu}^{\text{cx}}\right)  .$ Although Theorem
\ref{thm_cx_bfequ} is proved for quadratic cost $c=\left\vert x-y\right\vert
^{2}$, it is easy to generalize to other convex cost functions of the form
$c=h\left(  x-y\right)  $.
\end{remark}

The equality $\mathcal{T}_{k}\left(  \mu,\mathrm{P}_{k,\leqslant\nu
}^{\text{cx}}\right)  =\mathcal{T}_{k}\left(  \mathrm{P}_{k,\mu\leqslant
}^{\text{cx}},\nu\right)  $\ with $k>1$\ is proved in
\cite{alfonsi2020sampling} using a different method based on the primal
formulations of the projection problems. In one dimension, it is shown there
that the optimal mappings for backward and forward convex order projections
are inverse to each other. The following result confirms these properties on
the optimal mappings in general dimensions and sheds more lights on the deep
connection between backward and forward convex order projection.

\begin{corollary}
\label{cor_cx_bfequ}Let $\mu\in P_{2}\left(  \mathbb{R}^{d}\right)  ,$ $\nu\in
P_{2}^{ac}\left(  \mathbb{R}^{d}\right)  $, $\bar{\mu}$ the optimizer of
$\mathcal{T}_{2}\left(  \mu,\mathrm{P}_{2,\leqslant\nu}^{\text{cx}}\right)  $
and $\bar{\nu}$ the optimizer of $\mathcal{T}_{2}\left(  \mathrm{P}%
_{2,\mu\leqslant}^{\text{cx}},\nu\right)  $. There exists a real-valued convex
function $\Phi\in C^{1}$ such that%
\[
\left(  \nabla\Phi\right)  _{\#}\mu=\bar{\mu},\text{ }\left(  \nabla\Phi
^{\ast}\right)  _{\#}\nu=\bar{\nu}.
\]
Moreover $\bar{\nu}\in P_{2}^{ac}\left(  \mathbb{R}^{d}\right)  $.
\end{corollary}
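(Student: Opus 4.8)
The plan is to build the single potential $\Phi$ out of the \emph{forward} optimal dual solution and then recognize its Legendre dual as the backward map, using the $c$-transform bridge of Theorem \ref{thm_cx_bfequ}. Let $\varphi^{f}\in\mathcal{A}_{\text{cx}}\cap C_{b,2}$ be the forward optimizer from Theorem \ref{thm_dual_att_cx} and set $\Phi:=\bar{\varphi}_{0}$, where $\bar{\varphi}_{0}(x)=\frac{1}{2}|x|^{2}-\frac{1}{2}\varphi^{f}(x)$ as in Theorem \ref{thm_cx_prop}. By Theorem \ref{thm_cx_prop}(i) the conjugate $\Phi^{\ast}=\bar{\varphi}_{0}^{\ast}$ is a uniformly convex expansion; hence (via Lemma \ref{lm_HJ_cx_duality} and Remark \ref{rmk_ctrexp}) $\Phi=(\Phi^{\ast})^{\ast}$ is finite everywhere, convex, and of class $C^{1,1}\subset C^{1}$, with $\nabla\Phi$ being $1$-Lipschitz, so $\Phi$ is of the required type. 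The forward half is then immediate from Theorem \ref{thm_cx_prop}(ii): $(\nabla\Phi^{\ast})_{\#}\nu=(\nabla\bar{\varphi}_{0}^{\ast})_{\#}\nu=\bar{\nu}$.

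For the backward half I would mirror the argument in the proof of Theorem \ref{thm_cx_prop}. By Theorem \ref{thm_cx_bfequ}(i), $\varphi^{b}:=Q_{\bar{2}}(\varphi^{f})$ is an optimal backward dual solution lying in $\mathcal{A}_{\text{cx}}\cap L^{1}(d\nu)$, and being convex it is bounded below by a supporting affine function, so Theorem \ref{thm_aproperty}(i) applies and makes $\varphi^{b}$ an optimal Kantorovich potential for $\mathcal{T}_{2}(\mu,\bar{\mu})$. Writing the optimal coupling $\pi\in\Pi(\mu,\bar{\mu})$, the saturated inequality reads $Q_{2}(\varphi^{b})(x)-\varphi^{b}(y)=|x-y|^{2}$ for $\pi$-a.e.\ $(x,y)$, which I would recast as $x\cdot y=\Phi(x)+\Phi^{\ast}(y)$. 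The key identity here is $Q_{\bar{2}}(\varphi^{f})=2\bar{\varphi}_{0}^{\ast}-|\cdot|^{2}$ from equation \eqref{thm_cx_prop1}: it gives $\frac{1}{2}(|\cdot|^{2}+\varphi^{b})=\bar{\varphi}_{0}^{\ast}=\Phi^{\ast}$, and correspondingly $\frac{1}{2}(|\cdot|^{2}-Q_{2}(\varphi^{b}))=\Phi$. Equality in the Young inequality then forces $y=\nabla\Phi(x)$ for $\pi$-a.e.\ $(x,y)$, whence $(\nabla\Phi)_{\#}\mu=\bar{\mu}$; in particular $\nabla\Phi$ and $\nabla\Phi^{\ast}$ are mutually inverse.

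It remains to show $\bar{\nu}\in P_{2}^{ac}(\mathbb{R}^{d})$. Finiteness of the second moment is automatic since $\bar{\nu}\in\mathrm{P}_{2,\mu\leqslant}^{\text{cx}}\subset P_{2}(\mathbb{R}^{d})$. For absolute continuity I would exploit that the inverse of the forward map is $\nabla\Phi$, which is $1$-Lipschitz. For any Lebesgue-null set $A$ one has $(\nabla\Phi^{\ast})^{-1}(A)\subseteq\nabla\Phi(A)$ (if $\nabla\Phi^{\ast}(y)\in A$ then $y=\nabla\Phi(\nabla\Phi^{\ast}(y))\in\nabla\Phi(A)$, using $\Phi\in C^{1}$), and $\nabla\Phi(A)$ is null because $1$-Lipschitz maps send null sets to null sets; since $\nu$ is absolutely continuous this yields $\bar{\nu}(A)=\nu\big((\nabla\Phi^{\ast})^{-1}(A)\big)=0$, so $\bar{\nu}$ is absolutely continuous.

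I expect the main obstacle to be the backward identification step, i.e.\ matching the map coming from the backward problem (naturally written through $\varphi^{b}$) with $\nabla\Phi$ coming from the forward potential. This is precisely where the structural asymmetry between the two cones discussed in Section \ref{sec_bf} could cause trouble, and it is overcome only by the $c$-transform relation of Theorem \ref{thm_cx_bfequ} together with the quadratic-shift identity \eqref{thm_cx_prop1}. One must also take care that the backward Kantorovich identity is applied to the \emph{specific} optimizer $Q_{\bar{2}}(\varphi^{f})$; this is legitimate because the backward projection $\bar{\mu}$ is unique (Theorem \ref{thm_cx_bw_prop}), so every optimal potential produces the same map.
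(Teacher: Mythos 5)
Your proposal is correct in substance and follows the same architecture as the paper's own proof: take the forward optimizer $\varphi^{f}$ from Theorem \ref{thm_dual_att_cx}, pass to the backward optimizer $Q_{\bar{2}}\left(  \varphi^{f}\right)  $ via Theorem \ref{thm_cx_bfequ}, use the quadratic-shift identity $\left(  \ref{thm_cx_prop1}\right)  $ to recognize that both optimal maps come from a single conjugate pair, and deduce absolute continuity of $\bar{\nu}$ from the uniform convexity of $\Phi^{\ast}$. Two points of comparison. First, for the backward half you re-derive the map from scratch (Theorem \ref{thm_aproperty}, saturation of the Kantorovich inequality, Fenchel--Young equality), whereas the paper quotes Theorem \ref{thm_cx_bw_prop} with $\varphi=Q_{\bar{2}}\left(  \varphi^{f}\right)  $ and computes $\varphi_{0}\left(  y\right)  =\frac{1}{2}\left\vert y\right\vert ^{2}+\frac{1}{2}Q_{\bar{2}}\left(  \varphi^{f}\right)  \left(  y\right)  =\bar{\varphi}_{0}^{\ast}\left(  y\right)  $ by a direct supremum calculation. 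Your variant is slightly more self-contained, since Theorem \ref{thm_cx_bw_prop} is stated for the particular optimizer produced in Theorem \ref{thm_dual_att_cx_bw}, and your appeal to uniqueness of $\bar{\mu}$ is exactly what legitimizes applying the identity to the specific optimizer $Q_{\bar{2}}\left(  \varphi^{f}\right)  $; both routes are valid. Second, a genuine (though easily repaired) imprecision: you set $\Phi:=\bar{\varphi}_{0}$ and then assert $\Phi=\left(  \Phi^{\ast}\right)  ^{\ast}$. This biconjugation identity requires $\bar{\varphi}_{0}$ to be convex and lower semicontinuous, which Theorem \ref{thm_cx_prop}(i) together with Lemma \ref{lm_HJ_cx_duality} does not give you --- those results concern $\bar{\varphi}_{0}^{\ast}$ and $\bar{\varphi}_{0}^{\ast\ast}$, not $\bar{\varphi}_{0}$ itself. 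The paper sidesteps this by defining $\Phi:=\bar{\varphi}_{0}^{\ast\ast}$, which costs nothing downstream because $\Phi^{\ast}=\bar{\varphi}_{0}^{\ast\ast\ast}=\bar{\varphi}_{0}^{\ast}$ automatically; indeed your own key identity $\frac{1}{2}\left(  \left\vert \cdot\right\vert ^{2}-Q_{2}\left(  \varphi^{b}\right)  \right)  =\Phi$ actually produces the biconjugate $\bar{\varphi}_{0}^{\ast\ast}$ when unwound through Lemma \ref{lm_Q2Q2bar}(i). (One can verify $\bar{\varphi}_{0}$ is in fact convex here, since in the proof of Theorem \ref{thm_dual_att_cx} each $\left\vert x\right\vert ^{2}-\bar{\varphi}_{n}\left(  x\right)  $ is convex and this passes to the pointwise limit $\varphi^{f}$, but that fact is internal to a proof and you did not invoke it.) With $\Phi$ taken to be the double conjugate, every remaining step of your argument is sound --- including the explicit null-set argument $\left(  \nabla\Phi^{\ast}\right)  ^{-1}\left(  A\right)  \subseteq\nabla\Phi\left(  A\right)  $ with $\nabla\Phi$ $1$-Lipschitz for the absolute continuity of $\bar{\nu}$, which is a welcome expansion of a claim the paper merely asserts.
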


\begin{proof}
Let $\varphi^{f}$ be the optimal dual solution of the forward convex order
projection obtained in Theorem \ref{thm_dual_att_cx}. Recall that $\varphi
^{f}$ is convex continuous and $\varphi^{f}\leqslant\left\vert x\right\vert
^{2}.$ By Theorem \ref{thm_cx_bfequ},\ $Q_{\bar{2}}\left(  \varphi^{f}\right)
$ is an optimal dual solution for backward convex order projection$.$ By
virtue of Theorem \ref{thm_cx_prop}, the unique projection of $\nu$ onto
$\mathrm{P}_{2,\mu\leqslant}^{\text{cx}},\nu$ is given by $\left(  \nabla
\bar{\varphi}_{0}^{\ast}\right)  _{\#}\nu,$ where%
\begin{equation}
\bar{\varphi}_{0}\left(  x\right)  \triangleq\frac{1}{2}\left\vert
x\right\vert ^{2}-\frac{1}{2}\varphi^{f}\left(  x\right)  .
\label{cor_cx_bfequ1}%
\end{equation}
In particular, $D^{2}\bar{\varphi}_{0}^{\ast}\geqslant Id.$ Now according to
Theorem \ref{thm_cx_bw_prop}$,$ the unique projection of $\mu$ onto
$\mathrm{P}_{2,\leqslant\nu}^{\text{cx}}$ is given by $\left(  \nabla
\varphi_{0}^{\ast}\right)  _{\#}\mu,$ where%
\begin{equation}
\varphi_{0}\left(  y\right)  \triangleq\frac{1}{2}\left\vert y\right\vert
^{2}+\frac{1}{2}Q_{\bar{2}}\left(  \varphi^{f}\right)  \left(  y\right)  .
\label{cor_cx_bfequ2}%
\end{equation}
Substituting $\left(  \ref{cor_cx_bfequ1}\right)  $ into $\left(
\ref{cor_cx_bfequ2}\right)  $, we obtain%
\begin{align*}
\varphi_{0}\left(  y\right)   &  =\frac{1}{2}\left\vert y\right\vert
^{2}+\frac{1}{2}Q_{\bar{2}}\left(  \left[  \left\vert x\right\vert ^{2}%
-2\bar{\varphi}_{0}\left(  x\right)  \right]  \right)  \left(  y\right) \\
&  =\frac{1}{2}\left\vert y\right\vert ^{2}+\frac{1}{2}\sup_{x\in
\mathbb{R}^{d}}\left\{  \left\vert x\right\vert ^{2}-2\bar{\varphi}_{0}\left(
x\right)  -\left\vert x-y\right\vert ^{2}\right\} \\
&  =\bar{\varphi}_{0}^{\ast}\left(  y\right)  .
\end{align*}
Let $\Phi=\bar{\varphi}_{0}^{\ast\ast}.$ Then $\Phi$ is the desired
real-valued convex function. Furthermore, $\Phi^{\ast}=\bar{\varphi}_{0}%
^{\ast}$ is uniformly convex, so the forward projection $\bar{\nu}$ under the
mapping $\nabla\Phi^{\ast}$ is absolutely continuous and $\Phi=\Phi^{\ast\ast
}\in C^{1}.$
\end{proof}

\begin{remark}
Convexity is crucial in the proof of Theorem \ref{thm_cx_bfequ} and Corollary
\ref{cor_cx_bfequ}. We do not expect other stochastic orders (e.g. the
subharmonic order) to have the above equality and the inverse relation between
backward and forward projection.
\end{remark}

\section{Subharmonic order projections}

In this section we consider another important instantce of stochastic order,
i.e. subharmonic order.

\begin{definition}
\label{def_subH}An upper semicontinuous function $\psi$ with values in
$\left[  -\infty,\infty\right)  $\ is subharmonic on an open set $X$, if the
sub-mean value inequality%
\[
\psi\left(  x_{0}\right)  \leqslant\frac{1}{\omega_{d}\left(  r\right)  }%
\int_{\partial B_{r}\left(  x_{0}\right)  }\psi d\sigma_{r}%
\]
holds for any ball $B_{r}\left(  x_{0}\right)  $ contained in $X$, where
$\omega_{d}\left(  r\right)  $ is the surface area of the ball $B_{r}\left(
x_{0}\right)  $.
\end{definition}

If a function $\psi$ satisfies the above sub-mean value properties, but not
necessarily upper semicontinuous, then it is called \textit{almost
subharmonic}. This terminology is justified by the fact that every almost
subharmonic function equals an (upper semicontinuous) subharmonic
function\ almost everywhere \cite{szpilrajn1933remarques}.

\begin{definition}
[\textbf{Subharmonic order}]Let $X\subset\mathbb{R}^{d}$ be a convex bounded
open set and $\mu,$ $\nu\in P\left(  X\right)  ,$ we call $\mu$ smaller than
$\nu$ in subharmonic order, denoted by $\mu\leqslant_{\text{sh}}\nu,$ if the
inequality
\begin{equation}
\int_{X}\psi d\mu\leqslant\int_{X}\psi d\nu, \label{inq_SHorder}%
\end{equation}
holds for all subharmonic $\psi\in C_{b}\left(  X\right)  .$
\end{definition}

Subharmonic order is sufficient to induce a Brownian transport between $\mu$
and $\nu$ (see e.g. \cite[Proposition 3.4]{ghoussoub2020optimal}). By
performing convolutions with smooth radial kernels, each subharmonic function
can be approximated in any compact subsets of $X$ by decreasing sequence of
smooth subharmonic functions. Similar to Lemma \ref{lm_cx_bdbelow}, one can
also restrict the integrands of $\left(  \ref{inq_SHorder}\right)  $\ to those
which are bounded from below.

Define%
\[
\mathcal{A}_{\text{sh}}=\left\{  \psi:\psi\text{ subharmonic, bounded from
below}\right\}  .
\]
For notational simplicity, the domain of definition for functions in
$\mathcal{A}_{\text{sh}}$ is not explicitly specified and will be clear from
the context.

Let $\mu\in P\left(  X\right)  ,$ $\nu\in P\left(  Y\right)  .$ Define the
backward and forward subharmonic order cone,%
\[
\mathrm{P}_{\leqslant\nu}^{\text{sh}}=\left\{  \eta\in P\left(  Y\right)
:\eta\leqslant_{\text{sh}}\nu\right\}  .
\]
and%
\[
\mathrm{P}_{\mu\leqslant}^{\text{sh}}=\left\{  \xi\in P\left(  X\right)
:\mu\leqslant_{\text{sh}}\xi\right\}  .
\]

\subsection{The duality theorems}

\begin{theorem}
\label{thm_dual_sh}Let $X,$ $Y\subset\mathbb{R}^{d}$ be bounded convex open
subsets$,$ $\mu\in P\left(  X\right)  ,$ $\nu\in P\left(  Y\right)  .$ Then

(i) for backward subharmonic order projection it holds that%
\[
\mathcal{T}_{2}\left(  \mu,\mathrm{P}_{\leqslant\nu}^{\text{sh}}\right)
=\sup_{\psi\in\mathcal{A}_{\text{sh}}\cap C_{b}\left(  Y\right)  }\left\{
\int_{X}Q_{2}\left(  \psi\right)  d\mu-\int_{Y}\psi d\nu\right\}  .
\]

(ii) for forward subharmonic order projection it holds that%
\[
\mathcal{T}_{2}\left(  \mathrm{P}_{\mu\leqslant}^{\text{sh}},\nu\right)
=\sup_{\psi\in\mathcal{A}_{\text{sh}}\cap C_{b}\left(  X\right)  }\left\{
\int_{X}\psi d\mu-\int_{Y}Q_{\bar{2}}\left(  \psi\right)  d\nu\right\}  .
\]
Here $Q_{2}\left(  \cdot\right)  ,$ $Q_{\bar{2}}\left(  \cdot\right)  $ are
defined in Theorem \ref{thm_dual_cx}. In both (i) and (ii), $\psi
\in\mathcal{A}_{\text{sh}}\mathcal{\cap}C_{b}$ can be relaxed to $\psi
\in\mathcal{A\cap}\mathrm{S}_{b}$.
\end{theorem}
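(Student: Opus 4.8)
The plan is to obtain both dualities as direct specializations of the general duality theorems already proved: Theorem \ref{thm_dual_bwg} for the backward statement (i) and Theorem \ref{thm_dual_fwg} for the forward statement (ii), applied with the defining class $\mathcal{A}=\mathcal{A}_{\text{sh}}$, the quadratic cost $c(x,y)=\left\vert x-y\right\vert^{2}$, and $k=0$. Because $X$ and $Y$ are bounded, every probability measure on them has all moments, the spaces $C_{b,0}$ and $\mathrm{S}_{b,0}$ coincide with $C_b$ and $\mathrm{S}_b$, and the growth hypothesis (a2) holds trivially for $k=0$ (take $\alpha=1$, $\beta=0$, using $c\geqslant 0$); the cost is continuous, hence lower semicontinuous and nonnegative. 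Bounded open subsets of $\mathbb{R}^{d}$ are locally compact, $\sigma$-compact Polish spaces, so the ambient hypotheses are met. Once the remaining hypothesis (a1) is verified, parts (ii) of the two general theorems yield exactly the asserted formulas, with $Q_c=Q_2$ and $Q_{\bar c}=Q_{\bar 2}$ as in $\left(\ref{eq_Qc2}\right)$ and $\left(\ref{eq_Qcbar2}\right)$, while the stated relaxation from $\psi\in\mathcal{A}_{\text{sh}}\cap C_b$ to $\psi\in\mathcal{A}_{\text{sh}}\cap\mathrm{S}_b$ is the one already built into those theorems.

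Thus the entire proof reduces to checking hypothesis (a1): that $\mathcal{A}_{\text{sh}}$ and $\mathcal{A}_{\text{sh}}\cap C_b$ define the same subharmonic order (which, in particular, identifies the cone $\mathrm{P}_{\leqslant\nu}^{\text{sh}}$ defined through the continuous test functions with the cone $\mathrm{P}_{\leqslant\nu}^{\mathcal{A}_{\text{sh}}}$ to which the general theorem refers). Since $\mathcal{A}_{\text{sh}}\cap C_b\subset\mathcal{A}_{\text{sh}}$, the order defined by $\mathcal{A}_{\text{sh}}$ is a priori the stronger one, so only one implication needs argument: if $\int\psi\,d\mu\leqslant\int\psi\,d\nu$ holds for all bounded continuous subharmonic $\psi$, then the same inequality holds for every subharmonic $\psi$ that is merely bounded from below. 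This is the subharmonic counterpart of Lemma \ref{lm_cx_bdbelow}, and it is where the real work lies.

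To prove this implication I would use a mollification argument, exploiting the convexity of the domain to avoid boundary trouble. Fix an interior point $x_0$, and for $r\in(0,1)$ set $\psi_r(x)=\psi\left(x_0+r(x-x_0)\right)$; since the domain is bounded and convex, the affine contraction maps a neighbourhood of the closure into the domain, so $\psi_r$ is subharmonic on a neighbourhood of the closure, bounded below by the same constant, and (being upper semicontinuous on a compact set) also bounded above. Convolving with a nonnegative smooth radial kernel $\rho_\epsilon$ produces $\psi_r^{\epsilon}=\psi_r*\rho_\epsilon$, which is smooth, subharmonic, and continuous on the closure, hence lies in $\mathcal{A}_{\text{sh}}\cap C_b$, and $\psi_r^{\epsilon}\downarrow\psi_r$ as $\epsilon\downarrow 0$. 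Applying the hypothesis to each $\psi_r^{\epsilon}$ and letting $\epsilon\downarrow 0$ (dominated convergence applies because the $\psi_r^{\epsilon}$ are squeezed between the common lower bound and a fixed continuous majorant on the closure) gives $\int\psi_r\,d\mu\leqslant\int\psi_r\,d\nu$ for each $r<1$. It then remains to let $r\uparrow 1$.

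The main obstacle is precisely this final passage $r\uparrow 1$, together with the first reduction (a1) as a whole. A subharmonic function need not be bounded above near $\partial X$ and need not be integrable, so one must split cases according to whether $\int\psi\,d\nu$ is finite: when it equals $+\infty$ the desired inequality is automatic since $\psi\geqslant -M$, and when it is finite one must control $\int\psi_r\,d\mu$ and $\int\psi_r\,d\nu$ in the limit, where no uniform upper bound is available and $\psi_r\to\psi$ fails to be monotone in $r$ in general. Here upper semicontinuity of $\psi$, the sub-mean value property, and the convexity of $X$ must be combined carefully (e.g. via a truncation and a Fatou/monotone-convergence argument adapted to the two successive limits) to recover $\int\psi\,d\mu\leqslant\int\psi\,d\nu$. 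Everything else is a mechanical transcription of the already-established general dualities.
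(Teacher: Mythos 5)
Your reduction to the general duality theorems is exactly the paper's strategy, and your verification of the ambient hypotheses (bounded convex open sets are locally compact $\sigma$-compact Polish spaces, $C_{b,0}=C_b$ and $\mathrm{S}_{b,0}=\mathrm{S}_b$ on bounded domains, (a2) trivially satisfied for $k=0$ with $\alpha=1$, $\beta=0$ since $c\geqslant0$) is correct. The gap is in where you locate the remaining work. You apply Theorems \ref{thm_dual_bwg} and \ref{thm_dual_fwg} with $\mathcal{A}=\mathcal{A}_{\text{sh}}$ and therefore must prove (a1), namely that $\mathcal{A}_{\text{sh}}$ and $\mathcal{A}_{\text{sh}}\cap C_b$ define the same order; you sketch a dilation-plus-mollification argument for this and then concede that the passage $r\uparrow1$ is unresolved. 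That concession is warranted: for a merely upper semicontinuous $\psi$, the dilates $\psi_r(x)=\psi\left(x_0+r(x-x_0)\right)$ are neither monotone in $r$ nor pointwise convergent to $\psi$ in general, and with no upper bound or integrability near $\partial X$ there is no dominated-convergence route, so as written your proof of (a1) does not close. But this entire step is unnecessary for the statement at hand: the paper \emph{defines} the subharmonic order (and hence the cones $\mathrm{P}_{\leqslant\nu}^{\text{sh}}$, $\mathrm{P}_{\mu\leqslant}^{\text{sh}}$) by testing against subharmonic $\psi\in C_b$, so the defining class of the order is already $\mathcal{A}_{\text{sh}}\cap C_b$ (bounded-below being automatic for bounded functions). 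The paper's proof simply applies the general theorems with $\mathcal{A}=\mathcal{A}_{\text{sh}}\cap C_b$, for which hypothesis (a1) is vacuous since the class is contained in $C_b$, and the asserted formulas drop out immediately. With that choice of defining class your argument is complete and coincides with the paper's.

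Your approximation concern is not entirely idle, though: it is relevant to the closing sentence about relaxing $\psi\in\mathcal{A}_{\text{sh}}\cap C_b$ to $\psi\in\mathcal{A}_{\text{sh}}\cap\mathrm{S}_b$, where one must check that measures ordered by the $C_b$ test class also satisfy the inequality against bounded, possibly discontinuous, subharmonic functions. For \emph{bounded} $\psi$ this is far easier than the general (a1) you attempted: convolution with a radial kernel yields a sequence of continuous subharmonic functions decreasing to $\psi$ (the monotonicity in the mollification parameter coming from the sub-mean value property), and boundedness makes the limit interchange harmless — this is the mollification remark the paper records right after the definition of subharmonic order. In short: restrict your hard step to the bounded case, where it succeeds, and take the defining class to be $\mathcal{A}_{\text{sh}}\cap C_b$ from the outset; the unbounded-above case you struggled with is never needed.
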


\begin{proof}
The proof is completed by applying Theorem \ref{thm_dual_bwg} and Theorem
\ref{thm_dual_fwg} to the defining class $\mathcal{A}_{\text{sh}}\cap C_{b}.$
\end{proof}

\section{\label{sec_att_sh}Dual attainment for subharmonic order projections}

Obtaining a solution in the required subharmonic function class with
appropriate regularity is challenging. One difficulty lies in the definition
of subharmonic function. To determine whether or not a function is
subharmonic, one usually needs to test sub-mean value property over spheres or
balls. This implicitly uses Lebesgue measure as a reference. In the dualities
of subharmonic order projections, subharmonic functions have to interact with
arbitrary probability measures. To be able to traverse from an arbitrary
probability measure to the Lebesgue measure, assumptions such as absolute
continuity, bounds away from zero are usually unavoidable.

Another property missing from subharmonic functions involves the transforms
$Q_{2}\left(  \cdot\right)  ,$ $Q_{\bar{2}}\left(  \cdot\right)  .$ These
transforms preserve convexity, which plays a key role in the convex order
case. However, $Q_{2}\left(  \cdot\right)  ,$ $Q_{\bar{2}}\left(
\cdot\right)  $ not always preserve subharmonicity for general subharmonic
functions. Subharmonicity is only preserved by $Q_{\bar{2}}\left(
\cdot\right)  $ in subdomain, this is proved by \cite{petrunin2003harmonic} in
Alexandrov space, which includes Euclidean space as a special case
\cite[Example 4.2.1]{burago2001course}. Therefore, the double convexification
trick, which we use in the attainment for convex order projections, no longer
work in the subharmonic order case.

Despite these difficulties, we are able to obtain optimal dual solutions for
subharmonic order projections and a weak parallel of the convex contraction
and expansion, which we call Laplacian contraction and Laplacian expansion,
see Definition \ref{def_Lap_contraction} and Definition
\ref{def_Lap_expansion}.

We will first prove attainment of the duality. The method used here seems to
have the weakest assumptions on the measures.

\subsection{Backward projection}

\begin{theorem}
\label{thm_dual_sh_bw}Let $X,$ $Y$ be bounded convex open subsets of
$\mathbb{R}^{d}$, $\mu\in P\left(  X\right)  $ and $\nu\in P^{ac}\left(
Y\right)  $. Assume that the density of $\nu$ is bounded away from zero in
$Y$.\ Then there exists $\psi_{0}\in\mathcal{A}_{\text{sh}}\cap L^{1}\left(
Y,d\nu\right)  $ which is bounded from below such that
\begin{align*}
\mathcal{D}_{2}\left(  \mu,\mathrm{P}_{\leqslant\nu}^{\text{sh}}\right)   &
\triangleq\sup_{\psi\in\mathcal{A}_{\text{sh}}\cap C_{b}\left(  Y\right)
}\left\{  \int_{X}Q_{2}\left(  \psi\right)  d\mu-\int_{Y}\psi d\nu\right\} \\
&  \leqslant\int_{X}Q_{2}\left(  \psi_{0}\right)  \left(  x\right)  d\mu
-\int_{Y}\psi_{0}\left(  y\right)  d\nu.
\end{align*}
Denote by $\bar{\mu}$ the backward subharmonic order projection of $\mu$ onto
$\mathrm{P}_{\leqslant\nu}^{\text{sh}}.$ If in addition%
\begin{equation}
\int_{Y}\psi_{0}\left(  y\right)  d\bar{\mu}\leqslant\int_{Y}\psi_{0}\left(
y\right)  d\nu, \label{thm_dual_sh_attL00}%
\end{equation}
then $\psi_{0}$ achieves the optimal dual value.
\end{theorem}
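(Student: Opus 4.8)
The plan is to realize $\psi_0$ as the limit of a normalized maximizing sequence, using the hypothesis that $\nu$ has density bounded away from zero to trade $\nu$-integral bounds for genuine $L^1$ bounds against Lebesgue measure, which is what makes the compactness theory of subharmonic functions applicable. Write $J(\psi)=\int_X Q_2(\psi)\,d\mu-\int_Y\psi\,d\nu$, and note $J(\psi+c)=J(\psi)$ since $Q_2(\psi+c)=Q_2(\psi)+c$. First I would take a maximizing sequence $\psi_n\in\mathcal{A}_{\text{sh}}\cap C_b(Y)$ with $J(\psi_n)\to\mathcal{D}_2$ and normalize by subtracting $\inf_Y\psi_n$, so that $\psi_n\geqslant0$ and $\inf_Y\psi_n=0$. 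Because $Y$ is bounded, $0\leqslant Q_2(\psi_n)\leqslant C$ with $C=(\text{diam}(X\cup Y))^2$, so $\int_X Q_2(\psi_n)\,d\mu\leqslant C$; as $J(\psi_n)$ is bounded, $\int_Y\psi_n\,d\nu=\int_X Q_2(\psi_n)\,d\mu-J(\psi_n)\leqslant C'$ uniformly. Here the hypothesis enters decisively: if $d\nu\geqslant\epsilon\,dx$ on $Y$, then $\int_Y\psi_n\,dx\leqslant\tfrac1\epsilon\int_Y\psi_n\,d\nu\leqslant C'/\epsilon$, a uniform $L^1(Y,dx)$ bound.

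Next I would extract a limit. Since $\psi_n\geqslant0$, the solid sub-mean value inequality gives, on every compact $K\subset Y$ with distance $\rho$ to $\partial Y$, the uniform bound $\psi_n(x)\leqslant\frac{1}{|B_\rho(x)|}\int_{B_\rho(x)}\psi_n\leqslant\frac{C'}{\epsilon|B_\rho|}$, so $\{\psi_n\}$ is locally uniformly bounded above and bounded in $L^1_{loc}$. By the compactness theorem for subharmonic functions a subsequence converges in $L^1_{loc}(Y)$ to some $u$; let $\psi_0$ be its subharmonic representative. As $u\geqslant0$ a.e., the solid averages $\frac{1}{|B_r(x)|}\int_{B_r(x)}u$ are nonnegative and converge to $\psi_0(x)$, whence $\psi_0\geqslant0$, i.e. $\psi_0\in\mathcal{A}_{\text{sh}}$. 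Standard subharmonic theory supplies the two facts I need: $(\limsup_n\psi_n)^*=\psi_0$, so $\limsup_n\psi_n(y)\leqslant\psi_0(y)$ for every $y$; and, along a further subsequence, $\psi_n\to\psi_0$ $dx$-a.e., hence $\nu$-a.e.

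Then I would pass $J$ to the limit. For the free potential, $\psi_n\geqslant0$ and Fatou give $\int_Y\psi_0\,d\nu\leqslant\liminf_n\int_Y\psi_n\,d\nu\leqslant C'$, so $\psi_0\in L^1(Y,d\nu)$. For the $Q_2$ term, fixing $x$ and $y$ in $Q_2(\psi_n)(x)\leqslant\psi_n(y)+|x-y|^2$, applying $\limsup_n$ and then $\inf_y$ yields $\limsup_n Q_2(\psi_n)(x)\leqslant Q_2(\psi_0)(x)$ pointwise; since $0\leqslant Q_2(\psi_n)\leqslant C$, reverse Fatou gives $\limsup_n\int_X Q_2(\psi_n)\,d\mu\leqslant\int_X Q_2(\psi_0)\,d\mu$. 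Writing $a_n=\int Q_2(\psi_n)\,d\mu$, $b_n=\int\psi_n\,d\nu$ and using $\limsup(a_n-b_n)\leqslant\limsup a_n-\liminf b_n$, I obtain $\mathcal{D}_2=\lim_n J(\psi_n)\leqslant\int_X Q_2(\psi_0)\,d\mu-\int_Y\psi_0\,d\nu$, the asserted inequality. Crucially this never uses subharmonicity of $Q_2(\psi_0)$, so it sidesteps the failure of $Q_2$ to preserve subharmonicity flagged in Section \ref{sec_att_sh}.

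For the final assertion, I would test $(Q_2(\psi_0),\psi_0)$ against the genuine transport $\mathcal{T}_2(\mu,\bar\mu)$: from $Q_2(\psi_0)(x)-\psi_0(y)\leqslant|x-y|^2$, and since the extra hypothesis $\int\psi_0\,d\bar\mu\leqslant\int\psi_0\,d\nu<\infty$ guarantees $\psi_0\in L^1(d\bar\mu)$, integrating against the optimal coupling gives $\int Q_2(\psi_0)\,d\mu-\int\psi_0\,d\bar\mu\leqslant\mathcal{T}_2(\mu,\bar\mu)$. The hypothesis then upgrades this to $J(\psi_0)\leqslant\int Q_2(\psi_0)\,d\mu-\int\psi_0\,d\bar\mu\leqslant\mathcal{T}_2(\mu,\bar\mu)=\mathcal{T}_2(\mu,\mathrm{P}_{\leqslant\nu}^{\text{sh}})=\mathcal{D}_2$, the last two equalities by optimality of $\bar\mu$ and the duality of Theorem \ref{thm_dual_sh}; combined with $J(\psi_0)\geqslant\mathcal{D}_2$ this forces $J(\psi_0)=\mathcal{D}_2$, so $\psi_0$ attains the optimal dual value. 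I expect the main obstacle to be the limit extraction itself: securing the uniform $L^1(Y,dx)$ bound, which is exactly why the density of $\nu$ must be bounded below (subharmonicity being a Lebesgue-measure notion), and then justifying the everywhere pointwise bound $\limsup_n\psi_n\leqslant\psi_0$ from subharmonic compactness, on which both limit passages rest.
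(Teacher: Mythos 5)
Your proposal is correct, and it reaches the attainment statement by a genuinely different route in the key limit-extraction step. The paper does not work with $\psi_n$ directly: it first rewrites the dual using $Q_2=Q_2(Q_{\bar 2}(Q_2(\cdot)))$ and introduces the regularized sequence $\bar\psi_n=Q_{\bar 2}(Q_2(\psi_n))$, which on bounded domains is uniformly Lipschitz together with $Q_2(\bar\psi_n)$; after normalizing $\min_Y\bar\psi_n=0$, Arzel\`a--Ascoli gives \emph{uniform} convergence $\bar\psi_n\to\bar\psi_0$ and $Q_2(\bar\psi_n)\to Q_2(\bar\psi_0)$, the comparison $0\leqslant\bar\psi_n\leqslant\psi_n$ feeds the density hypothesis (exactly as in your step) so that Lemma \ref{lm_convg_subH} yields $\psi_n\to\psi_0$ a.e., and the chain closes via $0\leqslant\bar\psi_0\leqslant\psi_0$ and monotonicity of $Q_2$. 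You dispense with the auxiliary Lipschitz sequence altogether: normalizing $\inf_Y\psi_n=0$, bounding $Q_2(\psi_n)$ by the squared diameter, and passing to the limit in the $Q_2$ term through the one-sided Hartogs-type bound $\limsup_n\psi_n(y)\leqslant\psi_0(y)$ plus reverse Fatou. This is legitimate because only an upper bound on $\limsup_n J(\psi_n)$ is needed, and the Hartogs bound is the single fact you should make explicit rather than just cite: for $y\in Y$ and $0<r<d(y,\partial Y)$, the solid sub-mean value inequality gives $\psi_n(y)\leqslant\oint_{B_r(y)}\psi_n\,dx\to\oint_{B_r(y)}\psi_0\,dx$ by the $L^1_{loc}$ convergence furnished by Lemma \ref{lm_convg_subH}, and letting $r\downarrow 0$ the means decrease to $\psi_0(y)$ by upper semicontinuity. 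Both proofs rest on the same two pillars --- the density lower bound converting the uniform $\nu$-bound into an $L^1(Y,dx)$ bound so that subharmonic compactness applies, and the identical final verification of attainment via \eqref{thm_dual_sh_attL00} by testing $(Q_2(\psi_0),\psi_0)$ against the optimal coupling for $\mathcal{T}_2(\mu,\bar\mu)$. What each approach buys: yours is shorter, avoids Arzel\`a--Ascoli, and needs only one-sided pointwise control; the paper's transform trick additionally produces a Lipschitz comparison function and is the template that becomes essential in the forward case (Theorem \ref{thm_dual_sh_fw}), where the subharmonic-envelope composition $Q_{2e}$ replaces convexification and uniform convergence is genuinely used.
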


\begin{proof}
Since $\mathcal{T}_{2}\left(  \mu,\mathrm{P}_{\leqslant\nu}^{\text{sh}%
}\right)  $ is finite (ref. Remark \ref{rmk_finiteness}),\ the optimal dual
$\mathcal{D}_{2}\left(  \mu,\mathrm{P}_{\leqslant\nu}^{\text{sh}}\right)  $ is
finite. We can also write%
\begin{equation}
\mathcal{D}_{2}\left(  \mu,\mathrm{P}_{\leqslant\nu}^{\text{sh}}\right)
=\sup_{\psi\in\mathcal{A}_{\text{sh}}\cap C_{b}\left(  Y\right)  }\left\{
\int_{X}Q_{2}\left(  Q_{\bar{2}}\left(  Q_{2}\left(  \psi\right)  \right)
\right)  \left(  x\right)  d\mu-\int_{Y}\psi\left(  y\right)  d\nu\right\}  .
\label{thm_dual_sh_attL0}%
\end{equation}
Let $\psi_{n}\in\mathcal{A}_{\text{sh}}\cap C_{b}\left(  Y\right)  $ be a
maximizing sequence and denote $\bar{\psi}_{n}=Q_{\bar{2}}\left(  Q_{2}\left(
\psi_{n}\right)  \right)  .$ Since the measures sit in a compact set
containing both $X$ and $Y$, the cost function is Lipschitz continuous there.
Thus $\bar{\psi}_{n}$ and $Q_{2}\left(  \bar{\psi}_{n}\right)  $ are Lipschitz
continuous with a common Lipschitz constant inherited from the cost function.
When necessary, we can regard these functions as defined on $\bar{Y}$ and
$\bar{X}$ by extending them to the boundaries. By adding constant(s), we may
assume without loss of generality that%
\[
\min_{y\in Y}\bar{\psi}_{n}\left(  y\right)  =0,\text{ }\forall n.
\]
Now it is readily seen that $\bar{\psi}_{n}$ is uniformly bounded in $Y$. It
follows that $Q_{2}\left(  \bar{\psi}_{n}\right)  $ is also uniformly bounded
in $X$. Therefore the sequence $\bar{\psi}_{n},$ and so $Q_{2}\left(
\bar{\psi}_{n}\right)  $, are uniformly bounded and equicontinuous. Hence, by
Arzela-Ascoli theorem, we may suppose that it holds uniformly that%
\[
\bar{\psi}_{n}\rightarrow\bar{\psi}_{0}\text{ and }Q_{2}\left(  \bar{\psi}%
_{n}\right)  \rightarrow Q_{2}\left(  \bar{\psi}_{0}\right)  ,\text{ as
}n\rightarrow\infty.
\]
where $\bar{\psi}_{0}$ is Lipschitz continuous on $Y.$ Hence%
\[
\int_{X}Q_{2}\left(  \bar{\psi}_{n}\right)  \left(  x\right)  d\mu
\rightarrow\int_{X}Q_{2}\left(  \bar{\psi}_{0}\right)  \left(  x\right)
d\mu,\text{ as }n\rightarrow\infty.
\]
It follows that $\int Q_{2}\left(  \bar{\psi}_{n}\right)  d\mu$ is bounded, so
we obtain from $\left(  \ref{thm_dual_sh_attL0}\right)  \ $and the boundedness
of the optimal dual value that $\int\psi_{n}d\nu$ is bounded. Note%
\begin{equation}
0\leqslant\bar{\psi}_{n}\left(  y\right)  \leqslant\psi_{n}\left(  y\right)
,\text{ }\forall y. \label{thm_dual_sh_attL1}%
\end{equation}
By assumption, there exists $a_{0}>0$ such that%
\[
a_{0}\int_{Y}\psi_{n}\left(  y\right)  dy\leqslant\int_{Y}\psi_{n}\left(
y\right)  d\nu\text{,}%
\]
which implies $\int\psi_{n}dy=\int\left\vert \psi_{n}\right\vert dy$ is
bounded. Then by Lemma \ref{lm_convg_subH}, there exists a subharmonic
function $\psi_{0}\in L^{1}\left(  Y,dy\right)  $ such that, up to a
subsequence,%
\begin{equation}
\psi_{n}\left(  y\right)  \rightarrow\psi_{0}\left(  y\right)  \text{,
}a.e.\text{ }y. \label{thm_dual_sh_attL2}%
\end{equation}
It follows that%
\[
\psi_{n}\left(  y\right)  \rightarrow\psi_{0}\left(  y\right)  \text{, }%
\nu\text{-}a.e.\text{ }y.
\]
By Fatou lemma%
\[
\liminf_{n\rightarrow\infty}\int_{Y}\psi_{n}\left(  y\right)  d\nu
\geqslant\int_{Y}\psi_{0}\left(  y\right)  d\nu.
\]
Therefore $\psi_{0}\in L^{1}\left(  Y,d\nu\right)  $. Using $\left(
\ref{thm_dual_sh_attL1}\right)  \left(  \ref{thm_dual_sh_attL2}\right)  ,$%
\[
0\leqslant\bar{\psi}_{0}\left(  y\right)  \leqslant\psi_{0}\left(  y\right)
\text{, }a.e.\text{ }y.
\]
Since $\bar{\psi}_{0}$ is continuous and $\psi_{0}$ is upper semicontinuous,
we obtain,%
\[
0\leqslant\bar{\psi}_{0}\left(  y\right)  \leqslant\psi_{0}\left(  y\right)
,\text{ }\forall y.
\]
Thus the subharmonic function $\psi_{0}$ is bounded from below and we obtain%
\begin{align*}
\mathcal{D}_{2}\left(  \mu,\mathrm{P}_{\leqslant\nu}^{\text{sh}}\right)   &
=\lim_{n\rightarrow\infty}\left\{  \int_{X}Q_{2}\left(  \bar{\psi}_{n}\right)
\left(  x\right)  d\mu-\int_{Y}\psi_{n}\left(  y\right)  d\nu\right\} \\
&  \leqslant\limsup_{n\rightarrow\infty}\int_{X}Q_{2}\left(  \bar{\psi}%
_{n}\right)  \left(  x\right)  d\mu-\liminf_{n\rightarrow\infty}\int_{Y}%
\psi_{n}\left(  y\right)  d\nu\\
&  =\int_{X}Q_{2}\left(  \bar{\psi}_{0}\right)  \left(  x\right)  d\mu
-\int_{Y}\psi_{0}\left(  y\right)  d\nu\\
&  \leqslant\int_{X}Q_{2}\left(  \psi_{0}\right)  \left(  x\right)  d\mu
-\int_{Y}\psi_{0}\left(  y\right)  d\nu.
\end{align*}
If $\psi_{0}$ satisfies $\left(  \ref{thm_dual_sh_attL00}\right)  $, then the
above inequality continues as%
\[
\int_{X}Q_{2}\left(  \psi_{0}\right)  \left(  x\right)  d\mu-\int_{Y}\psi
_{0}\left(  y\right)  d\nu\leqslant\mathcal{D}_{2}\left(  \mu,\mathrm{P}%
_{\leqslant\nu}^{\text{sh}}\right)  .
\]
Therefore the optimal dual value is attained at $\psi_{0}.$
\end{proof}

\begin{remark}
The assumption $\left(  \ref{thm_dual_sh_attL00}\right)  $ is added because a
priori the subharmonic function $\psi_{0}\in\mathcal{A}_{\text{sh}}\cap
L^{1}\left(  Y,d\nu\right)  $ may not satisfy the submartingale inequality,
unless further informations are available. In our context, $\nu$ is generated
by a stopped Brownian motion which has initial distribution $\bar{\mu}$ and
stays in $Y,$ hence one might think that the assumption $\left(
\ref{thm_dual_sh_attL00}\right)  $ is redundant. However, there are examples
showing the submartingale inequality might fail even though the Brownian
motion is stopped before it exits $Y$. Here is an example due to Zhen-Qing
Chen. Let $Y$ be the 2-dimensional unit ball. Up to a conformal transform, we
can reduce $Y$ to the upper half space $H=\left\{  z=\left(  x,y\right)
\in\mathbb{R}^{2}:y>0\right\}  $ and consider the Poisson kernel for $H,$
\[
\psi\left(  z\right)  =\psi\left(  x,y\right)  =\frac{1}{\pi}\frac{y}%
{x^{2}+y^{2}}.
\]
Denote by $\tau_{a}$ the hitting time of the parabolic curve $\Gamma:y=ax^{2}$
($a>0$) by the Brownian motion $W_{z_{0}}\left(  t\right)  $ emanating from
$z_{0}=\left(  0,1\right)  \in\mathbb{R}^{2}$. Since the origin $\left(
0,0\right)  $ is polar for $W_{z_{0}}\left(  t\right)  $, it is almost surely
never reached. Therefore $\tau_{a}$ is strictly less than the exit time
$\tau_{H}$ of $H$. Note $\psi\left(  z\right)  \leqslant\frac{a}{\pi}$ on
$\Gamma.$ Hence%
\[
E\left(  \psi\left(  W_{z_{0}}\left(  \tau_{a}\right)  \right)  \right)
\leqslant\frac{a}{\pi}<\frac{1}{\pi}=E\left(  \psi\left(  W_{z_{0}}\left(
0\right)  \right)  \right)  \text{ for }a\text{ small,}%
\]
i.e. the submartingale inequality fails for the harmonic function $\psi.$ One
can also modify this example to make a similar counterexample where the
distribution of the stopped Brownian motion has a density with a positive
lower bound.
\end{remark}

\subsection{Forward projection}

This section deals with the dual attainment of the forward projection. In
contrast to Theorem \ref{thm_dual_sh_bw},\ we will get a Lipschitz continuous
optimal dual solution for the forward projection.

Recall that, for convex cost $c$, the $c$-transforms $Q_{c}\left(
\cdot\right)  $ and $Q_{\bar{c}}\left(  \cdot\right)  $ defined by $\left(
\ref{eq_Qc}\right)  \left(  \ref{eq_Qcbar}\right)  $ preserves convexity. This
has played a crucial role in our proof of the dual attainment in the convex
order case. However, analogous preservation property does not hold for
subharmonic functions. To address this issue we introduce a composite of the
$c$-transforms with the operation of taking subharmonic envelopes.

Given $g\in C\left(  X\right)  $ on a connected open set $X$ of $\mathbb{R}%
^{d},$ its subharmonic envelope is defined as%
\begin{equation}
g_{e}\triangleq\sup\left\{  \psi:\psi\text{ subharmonic, }\psi\leqslant
g\right\}  .\label{SHenv}%
\end{equation}
It is well-known that $g_{e}\in C\left(  X\right)  $. If $X$ is convex and
$g\in C\left(  \bar{X}\right)  ,$ then $g_{e}\in C\left(  \bar{X}\right)  $.

\begin{definition}
\label{def_Qce}For a function $g$, we write $Q_{ce}\left(  g\right)  $ as the
subharmonic envelope of $Q_{c}\left(  g\right)  .$
\end{definition}

\begin{lemma}
\label{lm_Qcecbar}Let $a\in\mathbb{R},$ the cost $c\left(  x,y\right)  $ is
bounded from below and admit a modulus of continuity$.$ Assume that all
functions as a result of the $c$-transforms are defined on bounded
domains.\ Then for any subharmonic function $\psi$ on a bounded open set
$X\subset\mathbb{R}^{d}$ with values in $\left[  a,\infty\right)  ,$%
\[
Q_{\bar{c}}\left(  \psi\right)  =Q_{\bar{c}}\left(  Q_{ce}\left(  Q_{\bar{c}%
}\left(  \psi\right)  \right)  \right)  ,\text{ }Q_{ce}\left(  \psi\right)
=Q_{ce}Q_{\bar{c}}\left(  Q_{ce}\left(  \psi\right)  \right)  .
\]

\end{lemma}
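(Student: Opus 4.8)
The plan is to recognize the two identities as the triangle identities of an order-theoretic adjunction (Galois connection), built in two stages. The elementary first stage is the adjunction between the two $c$-transforms: for any function $f$ on $X$ and any $g$ on $Y$,
\[
Q_{\bar{c}}(f)\leqslant g\quad\Longleftrightarrow\quad f\leqslant Q_{c}(g),
\]
which is immediate from the definitions $(\ref{eq_Qcbar})$ and $(\ref{eq_Qc})$, since each side says exactly that $f(x)-c(x,y)\leqslant g(y)$ for all $x,y$. Both $Q_{c}$ and $Q_{\bar{c}}$ are monotone, and these two facts are essentially all I will use from the first stage.

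The second ingredient is the order behaviour of the subharmonic envelope $g\mapsto g_{e}$ of $(\ref{SHenv})$: it is monotone, satisfies $g_{e}\leqslant g$, is itself subharmonic (a standard fact; here moreover continuous by the statement following $(\ref{SHenv})$), and, by its very definition as a pointwise supremum of subharmonic minorants, any subharmonic $f\leqslant g$ already obeys $f\leqslant g_{e}$. Combining this with the first adjunction, I would upgrade to an adjunction between $Q_{\bar{c}}$ and $Q_{ce}=(\cdot)_{e}\circ Q_{c}$, valid on the subclass of subharmonic functions: for $f$ subharmonic and $g$ on the opposite factor,
\[
Q_{\bar{c}}(f)\leqslant g\quad\Longleftrightarrow\quad f\leqslant Q_{ce}(g).
\]
For $(\Rightarrow)$, $Q_{\bar{c}}(f)\leqslant g$ gives $f\leqslant Q_{c}(g)$ by the first adjunction, and then $f\leqslant(Q_{c}(g))_{e}=Q_{ce}(g)$ because $f$ is a subharmonic minorant of $Q_{c}(g)$; this is exactly where subharmonicity of $f$ enters. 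For $(\Leftarrow)$, $f\leqslant Q_{ce}(g)\leqslant Q_{c}(g)$ gives $Q_{\bar{c}}(f)\leqslant g$ again by the first adjunction.

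From this upgraded adjunction the two claimed identities are precisely its triangle identities. Its unit reads $f\leqslant Q_{ce}Q_{\bar{c}}(f)$ for subharmonic $f$ (take $g=Q_{\bar{c}}(f)$), and its counit reads $Q_{\bar{c}}Q_{ce}(g)\leqslant g$ for all $g$ (take $f=Q_{ce}(g)$, which is subharmonic by construction). For the first identity I apply the unit to the subharmonic $\psi$ and then the monotone $Q_{\bar{c}}$ to obtain $Q_{\bar{c}}(\psi)\leqslant Q_{\bar{c}}Q_{ce}Q_{\bar{c}}(\psi)$, while the counit with $g=Q_{\bar{c}}(\psi)$ yields the reverse $Q_{\bar{c}}Q_{ce}Q_{\bar{c}}(\psi)\leqslant Q_{\bar{c}}(\psi)$. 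For the second identity I use that $Q_{ce}(\psi)$ is subharmonic: the counit with $g=\psi$ reads $Q_{\bar{c}}Q_{ce}(\psi)\leqslant\psi$, and applying the monotone $Q_{ce}$ gives $Q_{ce}Q_{\bar{c}}Q_{ce}(\psi)\leqslant Q_{ce}(\psi)$, while the unit applied to the subharmonic function $Q_{ce}(\psi)$ gives the reverse $Q_{ce}(\psi)\leqslant Q_{ce}Q_{\bar{c}}Q_{ce}(\psi)$.

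The only non-formal step is the implication ``$f$ subharmonic and $f\leqslant g$ $\Rightarrow$ $f\leqslant g_{e}$,'' which is immediate from $g_{e}$ being the pointwise supremum of subharmonic minorants; everything else is monotonicity and the adjunction equivalence. Accordingly I expect the main obstacle to be not the algebra but the bookkeeping of function classes: the boundedness of the domains, the lower bound $\psi\geqslant a$, and the lower bound and modulus of continuity of $c$ must be invoked to guarantee that every function arising along the way---$Q_{\bar{c}}(\psi)$, $Q_{c}Q_{\bar{c}}(\psi)$, and their envelopes---is finite, continuous, and bounded below, so that $Q_{c}$, $Q_{\bar{c}}$, and $(\cdot)_{e}$ all genuinely act on them and the monotonicity and envelope facts apply (and so that in each identity the $c$-transforms compose across the correct factors). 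Once this is verified, the structural identities follow automatically.
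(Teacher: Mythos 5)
Your proposal is correct and is essentially the paper's own argument in different clothing: your unit $f\leqslant Q_{ce}Q_{\bar{c}}(f)$ (for subharmonic $f$) and counit $Q_{\bar{c}}Q_{ce}(g)\leqslant g$ (for arbitrary $g$) are precisely the paper's inequalities (ii) and (i), derived from the same basic relations $Q_{\bar{c}}(Q_{c}(\psi))\leqslant\psi$, $Q_{c}(Q_{\bar{c}}(\psi))\geqslant\psi$ together with monotonicity and the envelope's defining property, and your four concluding applications (including the key observations that the counit needs no subharmonicity while the unit is applied to the subharmonic $Q_{ce}(\psi)$) match the paper's steps one for one. The Galois-connection packaging is a pleasant clarification but not a different route, and you correctly flag the same bookkeeping (boundedness below, continuity via the modulus of the cost) that the paper also dispatches at the outset.
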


\begin{proof}
Assume without loss of generality that the cost $c$ is nonnegative. For any
function $g\geqslant a$ on $X$, it is easy to see that $Q_{c}\left(  g\right)
\geqslant a$ and $Q_{\bar{c}}\left(  g\right)  $ is bounded from below, so it
makes sense to consider subharmonic envelopes of these functions as a result
of $Q_{c}\left(  \cdot\right)  $ and $Q_{\bar{c}}\left(  \cdot\right)  $
transforms. Recalling $\left(  \ref{ineq_Qccbar1}\right)  \left(
\ref{ineq_Qccbar2}\right)  ,$ we have
\[
Q_{\bar{c}}\left(  Q_{c}\left(  \psi\right)  \right)  \leqslant\psi,\text{
}Q_{c}\left(  Q_{\bar{c}}\left(  \psi\right)  \right)  \geqslant\psi.
\]
It follows that%
\[
\left(  \text{i}\right)  \text{ }Q_{\bar{c}}\left(  Q_{ce}\left(  \psi\right)
\right)  \leqslant\psi,\text{ }\left(  \text{ii}\right)  \text{ }Q_{ce}\left(
Q_{\bar{c}}\left(  \psi\right)  \right)  \geqslant\psi.
\]
Note in obtaining $\left(  \text{ii}\right)  $ the fact that $\psi$ is
subharmonic is used, while in obtaining $\left(  \text{i}\right)  $ the
subharmonicity of $\psi$ is \textit{not} used. Now applying $Q_{\bar{c}%
}\left(  \cdot\right)  $ on both sides of $\left(  \text{ii}\right)  ,$ we
have%
\[
Q_{\bar{c}}\left(  Q_{ce}\left(  Q_{\bar{c}}\left(  \psi\right)  \right)
\right)  \geqslant Q_{\bar{c}}\left(  \psi\right)  .
\]
Since $Q_{\bar{c}}\left(  \psi\right)  $ is bounded from below, we may replace
$\psi$ with $Q_{\bar{c}}\left(  \psi\right)  $ in $\left(  \text{i}\right)  $
and obtain%
\[
Q_{\bar{c}}\left(  Q_{ce}\left(  Q_{\bar{c}}\left(  \psi\right)  \right)
\right)  \leqslant Q_{\bar{c}}\left(  \psi\right)  .
\]
Therefore%
\[
Q_{\bar{c}}\left(  \psi\right)  =Q_{\bar{c}}\left(  Q_{ce}\left(  Q_{\bar{c}%
}\left(  \psi\right)  \right)  \right)  .
\]
Simiarly applying $Q_{ce}\left(  \cdot\right)  $ on both sides of $\left(
\text{i}\right)  ,$
\[
Q_{ce}Q_{\bar{c}}\left(  Q_{ce}\left(  \psi\right)  \right)  \leqslant
Q_{ce}\left(  \psi\right)  ,
\]
Since $Q_{ce}\left(  \psi\right)  $ is subharmonic, $Q_{ce}\left(
\psi\right)  $ cannot take $\infty,$ we may then replace $\psi$ with
$Q_{ce}\left(  \psi\right)  $ in $\left(  \text{ii}\right)  $ to get%
\[
Q_{ce}Q_{\bar{c}}\left(  Q_{ce}\left(  \psi\right)  \right)  \geqslant
Q_{ce}\left(  \psi\right)  .
\]
Therefore%
\[
Q_{ce}\left(  \psi\right)  =Q_{ce}Q_{\bar{c}}\left(  Q_{ce}\left(
\psi\right)  \right)  .
\]

\end{proof}

Lemma \ref{lm_Qcecbar} is crucial for the dual attainment for forward
subharmonic order projection. It enables us to use a verison of double
convexification trick. Note that the optimal solution in this case is
Lipschitz and subharmonic.

\begin{theorem}
\label{thm_dual_sh_fw}Let $X,$ $Y$ be bounded convex open subsets of
$\mathbb{R}^{d},$ $\mu\in P^{ac}\left(  X\right)  $ and $\nu\in P\left(
Y\right)  $.\ Then there exists $\bar{\psi}_{0}\in\mathcal{A}_{\text{sh}}\cap
Lip\left(  X\right)  $ which achieves the supremum of the dual value,%
\[
\mathcal{D}_{2}\left(  \mathrm{P}_{\mu\leqslant}^{\text{sh}},\nu\right)
\triangleq\sup_{\psi\in\mathcal{A}_{\text{sh}}\cap C_{b}\left(  X\right)
}\left\{  \int_{X}\psi d\mu-\int_{Y}Q_{\bar{2}}\left(  \psi\right)
d\nu\right\}  .
\]

\end{theorem}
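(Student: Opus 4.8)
The plan is to mirror the attainment argument of the forward convex order case (Theorem \ref{thm_dual_att_cx}), replacing the double convexification trick — which fails here because $Q_2$ and $Q_{\bar{2}}$ need not preserve subharmonicity — by the composite operation $Q_{ce}$ of Definition \ref{def_Qce}, whose stability is furnished by Lemma \ref{lm_Qcecbar}. First I would record that the dual value $\mathcal{D}_2\left(\mathrm{P}_{\mu\leqslant}^{\text{sh}},\nu\right)$ is finite: by Remark \ref{rmk_finiteness} the primal cost $\mathcal{T}_2\left(\mathrm{P}_{\mu\leqslant}^{\text{sh}},\nu\right)$ is finite, and it equals the dual value by Theorem \ref{thm_dual_sh}.

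The key reduction is to restrict the supremum to the subclass
\[
\mathcal{A}_{\text{sh}}^0=\left\{\psi\in\mathcal{A}_{\text{sh}}:\psi=Q_{ce}\left(Q_{\bar{2}}\left(\psi\right)\right)\right\}.
\]
Given any admissible $\psi\in\mathcal{A}_{\text{sh}}\cap C_b\left(X\right)$, set $\tilde{\psi}=Q_{ce}\left(Q_{\bar{2}}\left(\psi\right)\right)$. Since $Q_2\left(Q_{\bar{2}}\left(\psi\right)\right)\geqslant\psi$ and $\psi$ is itself subharmonic, $\psi$ is a competitor in the supremum defining the subharmonic envelope, whence $\tilde{\psi}\geqslant\psi$; in particular $\tilde{\psi}$ is bounded from below and lies in $\mathcal{A}_{\text{sh}}$. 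Applying the first identity of Lemma \ref{lm_Qcecbar} gives $Q_{\bar{2}}\left(\tilde{\psi}\right)=Q_{\bar{2}}\left(\psi\right)$, so that $Q_{ce}\left(Q_{\bar{2}}\left(\tilde{\psi}\right)\right)=\tilde{\psi}$ and $\tilde{\psi}\in\mathcal{A}_{\text{sh}}^0$. Combining $\tilde{\psi}\geqslant\psi$ (which does not decrease $\int_X\psi\,d\mu$) with $Q_{\bar{2}}\left(\tilde{\psi}\right)=Q_{\bar{2}}\left(\psi\right)$ (which leaves $\int_Y Q_{\bar{2}}\left(\psi\right)d\nu$ unchanged) shows the objective does not decrease under $\psi\mapsto\tilde{\psi}$, so the supremum may be computed over $\mathcal{A}_{\text{sh}}^0$.

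Next I would establish uniform regularity and compactness. Every $\psi\in\mathcal{A}_{\text{sh}}^0$ is the subharmonic envelope of $Q_2\left(Q_{\bar{2}}\left(\psi\right)\right)$, which on the bounded domain is Lipschitz with a constant $L_0$ inherited from the quadratic cost; on the convex domain $X$ the subharmonic envelope of an $L_0$-Lipschitz function is again $L_0$-Lipschitz (comparison with translated competitors), so $\mathcal{A}_{\text{sh}}^0$ is a uniformly Lipschitz family. As the objective is invariant under adding constants, I normalize a maximizing sequence $\bar{\psi}_n\in\mathcal{A}_{\text{sh}}^0$ by $\min_{\bar{X}}\bar{\psi}_n=0$; with the common Lipschitz bound and the boundedness of $X$ this yields a uniform $L^\infty$ bound. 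By Arzel\`a--Ascoli a subsequence converges uniformly to a Lipschitz limit $\bar{\psi}_0$, and subharmonicity passes to uniform limits through the sub-mean value inequality, so $\bar{\psi}_0\in\mathcal{A}_{\text{sh}}\cap Lip\left(X\right)$. Since $Q_{\bar{2}}$ is $1$-Lipschitz in the sup norm, $Q_{\bar{2}}\left(\bar{\psi}_n\right)\to Q_{\bar{2}}\left(\bar{\psi}_0\right)$ uniformly as well; passing to the limit in both integrals (legitimate because $\mu,\nu$ are probabilities and convergence is uniform) gives
\[
\mathcal{D}_2\left(\mathrm{P}_{\mu\leqslant}^{\text{sh}},\nu\right)=\int_X\bar{\psi}_0\,d\mu-\int_Y Q_{\bar{2}}\left(\bar{\psi}_0\right)d\nu,
\]
so $\bar{\psi}_0$ attains the supremum.

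The main obstacle is the reduction step: its legitimacy hinges entirely on treating $Q_{ce}$ as a subharmonic surrogate for double convexification, which in turn rests on the fixed-point identities of Lemma \ref{lm_Qcecbar} together with the fact that the subharmonic envelope preserves both the lower bound and the Lipschitz constant. The envelope regularity — that $Q_{ce}$ of a cost-Lipschitz function is Lipschitz with the same constant, which is exactly what feeds Arzel\`a--Ascoli — is where the convexity of $X$ and care near $\partial X$ enter, and where the absolute continuity of $\mu$ is convenient (it charges no null set, so integrating the envelope is unambiguous and only interior estimates are needed). Verifying this regularity is the technical heart; the remaining compactness-and-Fatou passage is routine, as in the convex case.
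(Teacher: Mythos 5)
Your proposal follows the paper's own route almost step for step: finiteness of the dual value, reduction of the supremum to the fixed-point class of $Q_{2e}\circ Q_{\bar{2}}$ via the identities of Lemma \ref{lm_Qcecbar} (the paper writes this class as $\mathcal{A}_{\text{sh}}^{1}=\left\{ Q_{2e}\left( Q_{\bar{2}}\left( \psi\right) \right) :\psi\in\mathcal{A}_{\text{sh}}\right\}$, which coincides with your $\mathcal{A}_{\text{sh}}^{0}$), normalization by an additive constant, a uniform Lipschitz bound feeding Arzel\`a--Ascoli, and passage to the limit using uniform convergence of both $\bar{\psi}_{n}$ and $Q_{\bar{2}}\left( \bar{\psi}_{n}\right)$. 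Your reduction step is carried out correctly and is exactly the paper's first display.

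The one place where you deviate is also the one genuine gap: the claim that on a convex domain the subharmonic envelope of an $L_{0}$-Lipschitz function is again $L_{0}$-Lipschitz ``by comparison with translated competitors.'' As written this fails: if $\psi$ is subharmonic on $X$ with $\psi\leqslant g$, the translate $\psi\left( \cdot+h\right) -L_{0}\left\vert h\right\vert$ is only defined, and only known subharmonic, on $X-h$, so it is not an admissible competitor for the envelope $g_{e}$ taken over $X$; moreover the envelope over the smaller set $X\cap\left( X-h\right)$ dominates the restriction of $g_{e}$, which is the wrong direction, and the difficulty sits precisely at $\partial X$ --- the point you flag as ``the technical heart'' but do not resolve. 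The paper closes exactly this step by citation: it invokes \cite[Theorem 2]{caffarelli1998obstacle} or \cite{ruan2021reg} to conclude that $\bar{\psi}_{n}=Q_{2e}\left( Q_{\bar{2}}\left( \psi_{n}\right) \right)$ and $Q_{\bar{2}}\left( \bar{\psi}_{n}\right)$ are Lipschitz with a common constant. To complete your argument you must either appeal to such an obstacle-problem regularity result or supply a genuine boundary estimate; note that Arzel\`a--Ascoli only requires \emph{some} common Lipschitz constant, not the sharp constant $L_{0}$. Two minor points: the paper normalizes by $\max\bar{\psi}_{n}=0$ rather than $\min\bar{\psi}_{n}=0$, which is immaterial since $Q_{\bar{2}}\left( \psi+c\right) =Q_{\bar{2}}\left( \psi\right) +c$ makes the objective translation-invariant; and the absolute continuity of $\mu$ plays no role in the attainment proof (uniform convergence already justifies the limit in $\int_{X}\bar{\psi}_{n}\,d\mu$) --- it enters only later, in the characterization of the projection map in Theorem \ref{thm_sh_prop_fw}.
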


\begin{proof}
In view of Lemma \ref{lm_Qcecbar} and Theorem \ref{thm_dual_sh},%
\begin{align*}
\mathcal{D}_{2}\left(  \mathrm{P}_{\mu\leqslant}^{\text{sh}},\nu\right)   &
=\sup_{\psi\in\mathcal{A}_{\text{sh}}\cap C_{b}}\left\{  \int_{X}\psi
d\mu-\int_{Y}Q_{\bar{2}}\left(  Q_{2e}\left(  Q_{\bar{2}}\left(  \psi\right)
\right)  \right)  d\nu\right\} \\
&  \leqslant\sup_{\psi\in\mathcal{A}_{\text{sh}}\cap C_{b}}\left\{  \int%
_{X}Q_{2e}\left(  Q_{\bar{2}}\left(  \psi\right)  \right)  d\mu-\int%
_{Y}Q_{\bar{2}}\left(  Q_{2e}\left(  Q_{\bar{2}}\left(  \psi\right)  \right)
\right)  d\nu\right\} \\
&  =\sup_{\bar{\psi}\in\mathcal{A}_{\text{sh}}^{1}\cap C_{b}}\left\{  \int%
_{X}\bar{\psi}d\mu-\int_{Y}Q_{\bar{2}}\left(  \bar{\psi}\right)  d\nu\right\}
\leqslant\mathcal{D}_{2}\left(  \mathrm{P}_{\mu\leqslant}^{\text{sh}}%
,\nu\right)  ,
\end{align*}
where%
\[
\mathcal{A}_{\text{sh}}^{1}=\left\{  \bar{\psi}:\bar{\psi}=Q_{2e}\left(
Q_{\bar{2}}\left(  \psi\right)  \right)  \text{ for some }\psi\in
\mathcal{A}_{\text{sh}}\right\}  \subset\mathcal{A}_{\text{sh}}.
\]
Let $\bar{\psi}_{n}\in\mathcal{A}_{\text{sh}}^{1}\cap\mathrm{S}_{b}$ be a
maximizing sequence$,$ i.e. $\bar{\psi}_{n}=Q_{2e}\left(  Q_{\bar{2}}\left(
\psi_{n}\right)  \right)  $ for some $\psi_{n}\in\mathcal{A}_{\text{sh}}$.
Note that, similar to Theorem \ref{thm_dual_sh_bw}, $Q_{\bar{2}}\left(
\psi_{n}\right)  $ and $Q_{2}\left(  Q_{\bar{2}}\left(  \psi_{n}\right)
\right)  $ are Lipschitz continuous with a common Lipschitz constant inherited
from the cost function. In view of \cite[Theorem 2]{caffarelli1998obstacle} or
\cite{ruan2021reg}, $\bar{\psi}_{n}$ and $Q_{\bar{2}}\left(  \bar{\psi}%
_{n}\right)  $ are Lipschitz continuous with a common Lipschitz constant$.$
Now we can assume without loss of generality that%
\[
\max_{x}\bar{\psi}_{n}\left(  x\right)  =0,\text{ }\forall n.
\]
Therefore the sequence $\bar{\psi}_{n},$ and thus $Q_{\bar{2}}\left(
\bar{\psi}_{n}\right)  ,$ are uniformly bounded and equicontinuous$.$ By
Arzela-Ascoli theorem, we may suppose that it holds uniformly that%
\[
\bar{\psi}_{n}\rightarrow\bar{\psi}_{0},\text{ }Q_{\bar{2}}\left(  \bar{\psi
}_{n}\right)  \rightarrow Q_{\bar{2}}\left(  \bar{\psi}_{0}\right)  ,
\]
where $\bar{\psi}_{0}$ is a Lipschitz continuous subharmonic function. It
follows that%
\[
\int_{Y}Q_{\bar{2}}\left(  \bar{\psi}_{n}\right)  \left(  y\right)
d\nu\rightarrow\int_{Y}Q_{\bar{2}}\left(  \bar{\psi}_{0}\right)  \left(
y\right)  d\nu.
\]
Therefore%
\begin{align*}
\mathcal{D}_{2}\left(  \mathrm{P}_{\mu\leqslant}^{\text{sh}},\nu\right)   &
=\lim_{n\rightarrow\infty}\left\{  \int_{X}\bar{\psi}_{n}\left(  x\right)
d\mu-\int_{Y}Q_{\bar{2}}\left(  \bar{\psi}_{n}\right)  \left(  y\right)
d\nu\right\} \\
&  =\int_{X}\bar{\psi}_{0}\left(  x\right)  d\mu-\int_{Y}Q_{\bar{2}}\left(
\bar{\psi}_{0}\right)  \left(  y\right)  d\nu\leqslant\mathcal{D}_{2}\left(
\mathrm{P}_{\mu\leqslant}^{\text{sh}},\nu\right)  .
\end{align*}
The last inequality uses the fact that $\bar{\psi}_{0}\in\mathcal{A}%
_{\text{sh}}\cap C_{b}$. Thus $\bar{\psi}_{0}$ is an optimal dual solution,
and, if we repeat the inequalities at the beginning of the proof, then we can
see that $\bar{\psi}_{0}$ satisfies $\bar{\psi}_{0}=Q_{2e}\left(  Q_{\bar{2}%
}\left(  \bar{\psi}_{0}\right)  \right)  .$
\end{proof}

It seems a similar proof cannot be applied to the backward dual attainment,
using the operator $Q_{\bar{2}e}\left(  \cdot\right)  $ defined similarly to
$Q_{2e}\left(  \cdot\right)  $ (Definition \ref{def_Qce}) To make the trick
work, we will need the idenity $Q_{2}\left(  \cdot\right)  =Q_{2}\left(
Q_{\bar{2}e}\left(  Q_{2}\left(  \cdot\right)  \right)  \right)  .$ However,
in general, we only have $Q_{2}\left(  \cdot\right)  =Q_{2}\left(  Q_{\bar{2}%
}\left(  Q_{2}\left(  \cdot\right)  \right)  \right)  .$

\section{\label{sec_char_sh}Characterization of subharmonic order projections}

In parallel with section \ref{sec_char_cx}, we show that the optimal mappings
for backward and forward subharmonic order projecitons are characterized by
Laplacian contraction and Laplacian expansion.

\begin{definition}
[\textbf{Laplacian contraction}]\label{def_Lap_contraction}Let $\psi$ be a
proper lower semicontinuous convex function. We call $\psi$ a Laplacian
contraction if $\psi=\phi^{\ast}$ for some proper function $\phi$ such that
$\phi$ is bounded from below and $\Delta\phi\geqslant d$ in the sense of distribution.
\end{definition}

\begin{definition}
[\textbf{Laplacian expansion}]\label{def_Lap_expansion}Let $\psi$ be a proper
lower semicontinuous convex function. We call $\psi$ a Laplacian expansion if
$\psi=\phi^{\ast}$ for some proper function $\phi$ such that $\phi$ is bounded
from below and $\Delta\phi\leqslant d$ in the sense of distribution.
\end{definition}

\begin{remark}
\label{rmk_Lap}Note the definition of Laplacian contraction and Laplacian
expansion are consistent with that of the convex contraction and convex
expansion defined in section \ref{sec_char_cx}, and the use of Legendre
transform instead of the function itself is essential for subharmonic order
case. While $D^{2}\phi\geqslant Id$ implies $D^{2}\phi^{\ast}\leqslant Id$
(Lemma \ref{lm_HJ_cx_duality})$,$ it is not generally true that $\Delta
\phi\geqslant Id$ implies $\Delta\phi^{\ast}\leqslant Id.$
\end{remark}

Laplacian expansion has a simple but significant geometric consequence.

\begin{lemma}
\label{lm_vol_exp}Let $\psi$ be a Laplacian expansion. Then the map
$\nabla\psi$ satisfies%
\[
\det D^{2}\psi\left(  x\right)  \geqslant1,\text{ }a.e.\text{ }x.
\]
This explains the term \textbf{expansion}, since the mapping $\nabla\psi$
increases volumes. Moreover, if $\nu$ absolutely continuous w.r.t. the
Lebesgue measure, then so is $\bar{\nu}\triangleq\left(  \nabla\psi\right)
_{\#}\nu,$ in addition, (still using $\nu,$ $\bar{\nu}$ to denote their
respective densities),
\[
\bar{\nu}\left(  \nabla\psi\left(  x\right)  \right)  \leqslant\nu\left(
x\right)  ,\text{ }a.e.\text{ }x.
\]

\end{lemma}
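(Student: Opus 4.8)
The plan is to establish the determinant bound $\det D^2\psi\geq 1$ first---this is the substantial part---and then read off the density inequality from the Monge--Amp\`ere change of variables. Unwinding Definition~\ref{def_Lap_expansion}, I would write $\psi=\phi^{\ast}$ with $\phi$ bounded from below and $\Delta\phi\leq d$ in the distributional sense. The guiding idea is to convert this trace bound into a determinant bound by the arithmetic--geometric mean inequality, which however needs convexity and genuine second derivatives that $\phi$ need not have. I would therefore mollify: with a standard nonnegative kernel set $\phi_{\epsilon}=\phi\ast\rho_{\epsilon}\in C^{\infty}$; convolving $\Delta\phi\leq d$ against $\rho_{\epsilon}$ preserves it, so $\Delta\phi_{\epsilon}\leq d$ holds classically. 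Put $\psi_{\epsilon}=(\phi_{\epsilon})^{\ast}$, a convex function; on the bounded domains at hand $\phi_{\epsilon}\to\phi$ and the conjugates converge pointwise, hence (being convex) locally uniformly, $\psi_{\epsilon}\to\psi$.

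The core is to show that the Monge--Amp\`ere measure of $\psi_{\epsilon}$ dominates Lebesgue measure, i.e. $|\partial\psi_{\epsilon}(E)|\geq|E|$ for every Borel $E$, equivalently $\det D^{2}\psi_{\epsilon}\geq 1$ a.e. For this I would pass to the conjugate $u_{\epsilon}:=(\psi_{\epsilon})^{\ast}=(\phi_{\epsilon})^{\ast\ast}$, the convex envelope of $\phi_{\epsilon}$. Two standard facts about convex envelopes apply: the Monge--Amp\`ere mass of $u_{\epsilon}$ is carried by the contact set $C_{\epsilon}=\{u_{\epsilon}=\phi_{\epsilon}\}$, and at a.e. point of $C_{\epsilon}$, where $u_{\epsilon}\leq\phi_{\epsilon}$ touches with equality, the Alexandrov Hessian satisfies $D^{2}u_{\epsilon}\leq D^{2}\phi_{\epsilon}$. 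Hence there $0\leq\Delta u_{\epsilon}\leq\Delta\phi_{\epsilon}\leq d$, and the arithmetic--geometric mean inequality yields $\det D^{2}u_{\epsilon}\leq(\Delta u_{\epsilon}/d)^{d}\leq 1$; off $C_{\epsilon}$ the envelope is affine and the density vanishes. Since the convex envelope of a $C^{1,1}$ function is again $C^{1,1}$, the measure has no singular part, so $|\partial u_{\epsilon}(A)|\leq|A|$ for all Borel $A$. The duality of subdifferentials now transfers the bound to $\psi_{\epsilon}$: because $y\in\partial u_{\epsilon}(x)\iff x\in\partial\psi_{\epsilon}(y)$, every Borel $E$ satisfies $E\subseteq\partial u_{\epsilon}(\partial\psi_{\epsilon}(E))$ up to a null set, whence $|E|\leq|\partial u_{\epsilon}(\partial\psi_{\epsilon}(E))|\leq|\partial\psi_{\epsilon}(E)|$, the desired domination. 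Finally, Monge--Amp\`ere measures are weakly continuous along locally uniformly convergent convex functions, so the bound passes to the limit and gives $\det D^{2}\psi\geq 1$ a.e.

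For the density statement I would invoke the change of variables for gradients of convex functions (McCann's theorem): with $\nu\ll$ Lebesgue of density $\nu(\cdot)$ and $T=\nabla\psi$, the Jacobian identity
\[
\nu(x)=\bar{\nu}(\nabla\psi(x))\,\det D^{2}\psi(x)
\]
holds for $\nu$-a.e. $x$, where $D^{2}\psi$ is the Alexandrov Hessian and $\bar{\nu}=(\nabla\psi)_{\#}\nu$. Since $\det D^{2}\psi\geq 1>0$ a.e. the map cannot collapse mass, which forces $\bar{\nu}$ to be absolutely continuous, and the identity immediately gives $\bar{\nu}(\nabla\psi(x))=\nu(x)/\det D^{2}\psi(x)\leq\nu(x)$ a.e.

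I expect the main obstacle to be the determinant bound, specifically passing from the distributional trace inequality on a merely proper, possibly nonconvex $\phi$ to pointwise determinant control on $\psi$. The two delicate ingredients are the concentration of the convex-envelope Monge--Amp\`ere measure on the contact set (so that the arithmetic--geometric mean inequality may be applied precisely where $u_{\epsilon}$ meets $\phi_{\epsilon}$) and the stability of Monge--Amp\`ere measures through the mollification limit; by comparison the subdifferential duality and the final change of variables are routine.
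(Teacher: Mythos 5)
Your proposal is correct in substance, but it takes a genuinely different, more machinery-heavy route than the paper. The paper argues directly at the nonsmooth level: for convex $\phi$ it combines Alexandrov a.e.\ second differentiability with the arithmetic--geometric inequality to get $\det D^{2}\phi\leqslant1$ a.e., and for nonconvex $\phi$ it replaces $\phi$ by $\phi^{\ast\ast}=\psi^{\ast}$, using $\Delta\phi^{\ast\ast}\leqslant\Delta\phi$ on the contact set $S=\{\phi=\phi^{\ast\ast}\}$ together with the observation that $\nu$-a.e.\ point is sent by $\nabla\psi$ into $S$ (points mapped outside $S$ lie in the Lebesgue-null set where $\psi$ is not differentiable, and $\nu$ is absolutely continuous); the density inequality is then read off from the inverse relation between $\nabla\psi$ and $\nabla\phi^{\ast\ast}$. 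You instead mollify $\phi$, run the same contact-set/AM--GM argument at the smooth level for the convex envelope $u_{\epsilon}$, encode it as the Monge--Amp\`ere measure bound $\left\vert\partial u_{\epsilon}(A)\right\vert\leqslant\left\vert A\right\vert$, dualize to $\left\vert\partial\psi_{\epsilon}(E)\right\vert\geqslant\left\vert E\right\vert$, and pass to the limit by weak continuity of Monge--Amp\`ere measures. The two proofs share the same two core ideas (AM--GM converting the trace bound into a determinant bound; reduction to the biconjugate via the contact set), and your version buys something the paper leaves implicit: it actually establishes the stated pointwise bound $\det D^{2}\psi\geqslant1$ a.e., whereas the paper only proves $\det D^{2}\phi\leqslant1$ and works through the inverse map. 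The price is the approximation layer (mollification only defined on a shrunk domain, and pointwise convergence of conjugates, which you assert but should justify, e.g.\ via the monotone convergence of radial mollifications of the superharmonic part $\phi-\left\vert x\right\vert^{2}/2$); also, you can avoid quoting $C^{1,1}$ regularity of convex envelopes by using the classical ABP-type lemma $\left\vert\partial u_{\epsilon}(A)\right\vert\leqslant\int_{A\cap C_{\epsilon}}\det D^{2}\phi_{\epsilon}\,dx$ directly. One step to tighten: the phrase ``$\det D^{2}\psi\geqslant1>0$ a.e.\ so the map cannot collapse mass'' is not by itself sufficient for absolute continuity of $\bar{\nu}$, since a pointwise a.e.\ bound on the Alexandrov Hessian does not control preimages of Lebesgue-null sets. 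Your own framework supplies the fix: pass $\left\vert\partial u_{\epsilon}(A)\right\vert\leqslant\left\vert A\right\vert$ to the limit to obtain the same bound for $\psi^{\ast}$, and use $\left(\nabla\psi\right)^{-1}(E)\subseteq\partial\psi^{\ast}(E)$ up to the null set of non-differentiability of $\psi$, so that $\left\vert E\right\vert=0$ forces $\bar{\nu}(E)=\nu\left(\left(\nabla\psi\right)^{-1}(E)\right)=0$; with that, McCann's Jacobian identity applies and your conclusion stands.
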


\begin{proof}
Let $\phi$ be a function such that $\phi$ is bounded from below, $\psi
=\phi^{\ast}$ and $\Delta\phi\leqslant d$.

\textbf{1}. First consider the case where $\phi$ is convex. An important
observation is that from the arithmetic-geometric inequality, we have for
convex $\phi$,
\[
\left(  \det D^{2}\phi\left(  x\right)  \right)  ^{1/d}\leqslant\frac{1}%
{d}\Delta\phi\left(  x\right)  ,\text{ }a.e.\text{ }x.
\]
Here we used the almost second-order differentiability of convex functions.
Then $\Delta\phi\leqslant d$ implies%
\begin{equation}
\left(  \det D^{2}\phi\left(  x\right)  \right)  ^{1/d}\leqslant1,\text{
}a.e.\text{ }x. \label{lm_vol_incr1}%
\end{equation}
Recall that $\left(  \nabla\psi\right)  _{\#}\nu=\bar{\nu}$ is equivalent to%
\[
\bar{\nu}\left(  E\right)  =\int_{\left(  \nabla\psi\right)  ^{-1}E}%
d\nu\left(  x\right)  \text{ for any measurable }E,
\]
where $\left(  \nabla\psi\right)  ^{-1}\left(  E\right)  $ denotes the inverse
image of $E$. We infer from $\left(  \ref{lm_vol_incr1}\right)  $ that
$\bar{\nu}\left(  E\right)  \geqslant\nu\left(  F\right)  $. If $\bar{\nu}$ is
absolutely continuous, then so is $\nu$. Still using $\nu,$ $\bar{\nu}$ to
denote their respective density, we then have%
\[
\bar{\nu}\left(  \nabla\psi\left(  x\right)  \right)  \leqslant\nu\left(
x\right)  ,\text{ }a.e.\text{ }x,
\]
which means the target density gets smaller than the source under the mapping
$\nabla\psi$.

\textbf{2}. Now consider the case where the convexity of $\phi$ is not
sassumed. Note, the function $\phi^{\ast\ast}=\psi^{\ast}$ is convex and we
have
\[
\Delta\phi^{\ast\ast}\left(  x\right)  \leqslant\Delta\phi\left(  x\right)
\text{ on }S\triangleq\left\{  x:\phi\left(  x\right)  =\phi^{\ast\ast}\left(
x\right)  \right\}  .
\]
Moreover, $\nabla\phi^{\ast\ast}$ (which is nothing but $\nabla\psi^{\ast}$)
is the inverse map (almost surely) to $\nabla\psi$. If a point $x$ is mapped
by $\nabla\psi$ (the optimal map from $\nu$ to $\bar{\nu}$) outside of the
contact set $S$, then the point $x$ belongs to the set where $\psi$ is not
differentiable, and this set has zero mass, so zero mass under $\nu$ by
absolute continuity. Therefore, what happens outside the contact set $S$ does
not affect the densities. As a result, we can replace $\phi$ with $\phi
^{\ast\ast}$ in step 1 and follow the same argument there to conclude the same
result for $\psi$.
\end{proof}

A similar argument does not work in the Laplacian contraction case, since we
cannot use the arithmetic-geometric inequality as above$.$

\subsection{Backward projection}

\begin{theorem}
\label{thm_sh_prop}Let $X,$ $Y$ be bounded convex open subsets of
$\mathbb{R}^{d},$ $\mu\in P\left(  X\right)  $ and $\nu\in P^{ac}\left(
Y\right)  $. Assume that the conditions of Theorem \ref{thm_dual_sh_bw} are
satisfied and $\psi\in\mathcal{A}_{\text{sh}}\cap L^{1}\left(  Y,d\nu\right)
$ is the optimal dual solution for $\mathcal{D}_{2}\left(  \mu,\mathrm{P}%
_{\leqslant\nu}^{\text{sh}}\right)  $ obtained there. Then the unique
projection $\bar{\mu}$\ of $\mu$ onto $\mathrm{P}_{\leqslant\nu}^{\text{sh}}$
is given by $\left(  \nabla\psi_{0}^{\ast}\right)  _{\#}\mu$, where%
\[
\psi_{0}\left(  y\right)  =\frac{1}{2}\left\vert y\right\vert ^{2}+\frac{1}%
{2}\psi\left(  y\right)  .
\]

\end{theorem}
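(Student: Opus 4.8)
The plan is to follow the template of Theorem~\ref{thm_cx_prop}, replacing the forward convex data by the backward subharmonic data and the Legendre conjugacy by the quadratic $c$-transform $Q_2$. First I would invoke uniqueness of the projection: the cost $|x-y|^2$ is strictly convex and the backward cone $\mathrm{P}_{\leqslant\nu}^{\text{sh}}$ is convex under linear interpolation (if $\eta_0,\eta_1\leqslant_{\text{sh}}\nu$ then so is every convex combination), so Theorem~\ref{thm_uniqueness} gives a unique $\bar\mu$; let $\pi\in\Pi(\mu,\bar\mu)$ be optimal for $\mathcal{T}_2(\mu,\bar\mu)$. Since $\psi\in\mathcal{A}_{\text{sh}}\cap L^1(Y,d\nu)$ is bounded from below and attains the optimal dual value (Theorem~\ref{thm_dual_sh_bw}), the integrability required by Theorem~\ref{thm_aproperty}(i) holds, and that theorem yields $\int\psi\,d\bar\mu=\int\psi\,d\nu$ together with the fact that $\psi$ is an optimal Kantorovich potential for $\mathcal{T}_2(\mu,\bar\mu)$. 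Pairing this with the backward duality of Theorem~\ref{thm_dual_sh}(i) gives
\[
\int|x-y|^2\,d\pi=\int Q_2(\psi)\,d\mu-\int\psi\,d\bar\mu=\int\bigl(Q_2(\psi)(x)-\psi(y)\bigr)\,d\pi.
\]

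Next, because $Q_2(\psi)(x)-\psi(y)\leqslant|x-y|^2$ holds for all $(x,y)$ by the definition of $Q_2$, the nonnegative integrand above must vanish, so $|x-y|^2=Q_2(\psi)(x)-\psi(y)$ for $\pi$-a.e.\ $(x,y)$. I would then substitute $\psi(y)=2\psi_0(y)-|y|^2$ and complete the square (cf.~Lemma~\ref{lm_Q2Q2bar}) to obtain the elementary identity $Q_2(\psi)(x)=|x|^2-2\psi_0^{\ast}(x)$, where $\psi_0^{\ast}$ denotes the Legendre transform of $\psi_0$. Inserting this and simplifying collapses the pointwise equality to the Fenchel--Young saturation
\[
x\cdot y=\psi_0^{\ast}(x)+\psi_0(y),\quad\pi\text{-a.e. }(x,y).
\]
Since $\psi_0^{\ast}$ is convex (being a Legendre transform) and finite on $X$, saturation is equivalent to $y\in\partial\psi_0^{\ast}(x)$, so $\pi$ is carried by the graph of the (a priori multivalued) subdifferential of $\psi_0^{\ast}$.

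The hard part will be the final step: upgrading $y\in\partial\psi_0^{\ast}(x)$ to $y=\nabla\psi_0^{\ast}(x)$, which then identifies $\bar\mu$ as the second marginal $(\nabla\psi_0^{\ast})_\#\mu$. This is precisely where the subharmonic case is genuinely more delicate than the convex one. In Theorem~\ref{thm_cx_bw_prop} the potential $\varphi_0=\tfrac12|y|^2+\varphi$ is \emph{strongly} convex because $\varphi$ is convex, whence $\varphi_0^{\ast}\in C^1$ automatically and no regularity of the source measure is needed. Here $\psi$ is only subharmonic, so $\psi_0=\tfrac12|y|^2+\tfrac12\psi$ need not be convex and $\psi_0^{\ast}$ need not be $C^1$; its non-differentiability set, although $(d-1)$-rectifiable and Lebesgue-null by convexity, could in principle be charged by $\mu$. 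I would close this gap exactly as in the differentiability step of Theorem~\ref{thm_cx_prop}, using the absolute continuity built into the hypotheses (the same assumption that underlies the uniqueness of $\bar\mu$ through Theorem~\ref{thm_uniqueness}): a Lebesgue-null set carries no mass, so $\nabla\psi_0^{\ast}$ is defined $\mu$-a.e.\ and $y=\nabla\psi_0^{\ast}(x)$ holds $\pi$-a.e., yielding $\bar\mu=(\nabla\psi_0^{\ast})_\#\mu$ as claimed.
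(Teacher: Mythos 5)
Your proposal is correct and takes essentially the same route as the paper's own (deliberately brief) proof, which defers to the template of Theorem~\ref{thm_cx_prop}: Theorem~\ref{thm_aproperty} to replace $\int\psi\,d\nu$ by $\int\psi\,d\bar\mu$, the vanishing of the nonnegative integrand $|x-y|^2-\bigl(Q_2(\psi)(x)-\psi(y)\bigr)$, Lemma~\ref{lm_Q2Q2bar} to turn the $\pi$-a.e.\ equality into the Fenchel--Young saturation $x\cdot y=\psi_0^{\ast}(x)+\psi_0(y)$, and finiteness of $Q_2(\psi)$ in $L^1(X,d\mu)$ plus Lebesgue-a.e.\ differentiability of the convex function $\psi_0^{\ast}$ to conclude $y=\nabla\psi_0^{\ast}(x)$ for $\pi$-a.e.\ $(x,y)$. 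One caveat: the absolute continuity you invoke at the end (and for uniqueness via Theorem~\ref{thm_uniqueness}) is that of $\mu$, which the theorem's literal hypotheses ($\mu\in P(X)$, $\nu\in P^{ac}(Y)$) do not grant --- but the paper's own proof note relies on exactly the same assumption (``we can use the absolute continuity of $\mu$''), so this is an imprecision inherited from the statement rather than a gap in your argument.
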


\begin{proof}
We skip the proof, since it is similar to Theorem \ref{thm_cx_prop}. Only note
that since $Q_{2}\left(  \psi\right)  \in L^{1}\left(  X,d\mu\right)  ,$ it is
finite for $\mu$-$a.e.$ $x.$ Hence $\psi_{0}^{\ast}$ is finite for $\mu
$-$a.e.$ $x$. Then we can use the absolute continuity of $\mu$ to infer the
desired conclusion.
\end{proof}

\begin{theorem}
\label{thm_sh_equiv}Let $X,$ $Y$ be bounded convex open subsets of
$\mathbb{R}^{d},$ $\mu\in P\left(  X\right)  $ and $\nu\in P^{ac}\left(
Y\right)  $. Assume that the conditions of Theorem \ref{thm_dual_sh_bw} are
satisfied. Consider the following statements.

(i) The projection of $\mu$ onto $\mathrm{P}_{\leqslant\nu}^{\text{sh}}$ is
$\nu.$

(ii) There is a Laplacian contraction $\phi$\ such that $\left(  \nabla
\phi\right)  _{\#}\mu=\nu$.

\noindent We always have that (i) implies (ii). If $\phi$ in (ii) is such that
$\phi^{\ast}\in C_{b}$ and $\Delta\phi^{\ast}\geqslant d,$ then (ii) implies (i).
\end{theorem}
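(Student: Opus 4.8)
The plan is to prove the two implications separately, following the template of the convex-order result Theorem \ref{thm_cx_exp}, with the Hessian condition $D^{2}\geqslant Id$ everywhere replaced by the Laplacian condition $\Delta\geqslant d$.

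For the implication (i)$\Rightarrow$(ii) I would simply read off the conclusion from Theorem \ref{thm_sh_prop}. Under hypothesis (i) the projection $\bar{\mu}$ equals $\nu$, so the auxiliary inequality \eqref{thm_dual_sh_attL00} of Theorem \ref{thm_dual_sh_bw} holds trivially (with equality); hence the optimal dual solution $\psi\in\mathcal{A}_{\text{sh}}\cap L^{1}(Y,d\nu)$, bounded from below, exists and Theorem \ref{thm_sh_prop} applies, giving the projection as $(\nabla\psi_{0}^{\ast})_{\#}\mu$ with $\psi_{0}=\frac{1}{2}|\cdot|^{2}+\frac{1}{2}\psi$. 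Since $\psi$ is subharmonic, $\Delta\psi_{0}=d+\frac{1}{2}\Delta\psi\geqslant d$ in the sense of distributions, and $\psi_{0}$ is bounded from below; therefore $\phi:=\psi_{0}^{\ast}$ is a proper lower semicontinuous convex function with $\phi=\psi_{0}^{\ast}$ where the underlying function $\psi_{0}$ satisfies the hypotheses of Definition \ref{def_Lap_contraction}, i.e.\ $\phi$ is a Laplacian contraction. By (i), $(\nabla\phi)_{\#}\mu=\bar{\mu}=\nu$, which is precisely (ii).

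For the converse (ii)$\Rightarrow$(i) under the extra hypotheses $\phi^{\ast}\in C_{b}$ and $\Delta\phi^{\ast}\geqslant d$, I would build an admissible dual competitor out of $\phi$. Set $\psi:=2\phi^{\ast}-|\cdot|^{2}$. Then $\psi$ is bounded (so bounded from below) because $\phi^{\ast}\in C_{b}$ and $Y$ is bounded, and $\Delta\psi=2\Delta\phi^{\ast}-2d\geqslant 0$, so $\psi\in\mathcal{A}_{\text{sh}}\cap C_{b}(Y)$ and is admissible in the backward dual of Theorem \ref{thm_dual_sh}. Using that $\phi=h^{\ast}$ is convex and lower semicontinuous, hence $\phi^{\ast\ast}=\phi$, a direct computation of the $c$-transform gives
\[
Q_{2}(\psi)(x)=\inf_{y}\left\{2\phi^{\ast}(y)-|y|^{2}+|x-y|^{2}\right\}=|x|^{2}-2\phi(x).
\]
Then, invoking $\nu=(\nabla\phi)_{\#}\mu$, the Legendre identity $\phi^{\ast}(\nabla\phi(x))=x\cdot\nabla\phi(x)-\phi(x)$ valid $\mu$-a.e., and $\int_{Y}|y|^{2}d\nu=\int_{X}|\nabla\phi|^{2}d\mu$, all cross terms cancel and one finds
\[
\int_{X}Q_{2}(\psi)\,d\mu-\int_{Y}\psi\,d\nu=\int_{X}|x-\nabla\phi(x)|^{2}\,d\mu=\mathcal{T}_{2}(\mu,\nu),
\]
the last equality because the coupling $(\text{id},\nabla\phi)_{\#}\mu$ is concentrated on the graph of a gradient of a convex function, hence is cyclically monotone and optimal for the quadratic cost. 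Since $\psi$ is dual-admissible and $\nu\in\mathrm{P}_{\leqslant\nu}^{\text{sh}}$, combining this with the duality of Theorem \ref{thm_dual_sh} yields
\[
\mathcal{T}_{2}(\mu,\nu)=\int_{X}Q_{2}(\psi)\,d\mu-\int_{Y}\psi\,d\nu\leqslant\mathcal{D}_{2}(\mu,\mathrm{P}_{\leqslant\nu}^{\text{sh}})=\mathcal{T}_{2}(\mu,\mathrm{P}_{\leqslant\nu}^{\text{sh}})\leqslant\mathcal{T}_{2}(\mu,\nu),
\]
so equality holds throughout. Thus $\nu$ realizes the projection cost, and being a member of the cone it is (by the uniqueness recorded in Theorem \ref{thm_sh_prop}) the projection of $\mu$ onto $\mathrm{P}_{\leqslant\nu}^{\text{sh}}$, which is (i).

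I expect the main obstacle to be the verification that $\psi=2\phi^{\ast}-|\cdot|^{2}$ is genuinely subharmonic from the distributional inequality $\Delta\phi^{\ast}\geqslant d$, together with the bookkeeping ensuring all integrals $\int\phi\,d\mu$, $\int\phi^{\ast}\,d\nu$ are finite (which follows from $\phi^{\ast}\in C_{b}$, boundedness of $Y$, and the Legendre identity bounding $\phi$ on $\operatorname{supp}\mu$). The conceptual crux — and the reason the extra assumption $\Delta\phi^{\ast}\geqslant d$ cannot be dropped — is exactly that this condition is what places $\psi$ in $\mathcal{A}_{\text{sh}}$ and makes it a legitimate dual test function; in its absence the competitor need not be subharmonic and the reverse implication genuinely fails, in contrast to the convex case where $Q_{2}$ automatically preserves the relevant class.
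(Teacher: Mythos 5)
Your proposal is correct and follows essentially the same route as the paper: (i)$\Rightarrow$(ii) is read off from Theorem \ref{thm_sh_prop} (your observation that $\left(\ref{thm_dual_sh_attL00}\right)$ holds automatically when $\bar{\mu}=\nu$ is precisely the detail legitimizing that application), and (ii)$\Rightarrow$(i) uses the identical competitor $\psi=2\phi^{\ast}-\left\vert \cdot\right\vert ^{2}$, the Legendre identity along $\nabla\phi$, and the duality chain closed by $\nu\in\mathrm{P}_{\leqslant\nu}^{\text{sh}}$. The only cosmetic difference is that you invoke cyclical monotonicity to get the equality $\int\left\vert x-\nabla\phi\left(  x\right)  \right\vert ^{2}d\mu=\mathcal{T}_{2}\left(  \mu,\nu\right)$, whereas the paper needs only the one-sided bound $\mathcal{T}_{2}\left(  \mu,\nu\right)  \leqslant\int\left\vert x-\nabla\phi\left(  x\right)  \right\vert ^{2}d\mu$ coming from admissibility of the coupling $\left(  \text{id},\nabla\phi\right)  _{\#}\mu$.
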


\begin{proof}
That (i) implies (ii) follows from \ref{thm_sh_prop}. To see the reverse
implication, we note by definition of Laplacian contraction, there is a
function $\xi$ such that $\xi$ is bounded from below, $\Delta\xi\geqslant d$
and $\phi=\xi^{\ast}.$ Define%
\[
\psi\left(  y\right)  \triangleq2\phi^{\ast}\left(  y\right)  -\left\vert
y\right\vert ^{2}=2\xi^{\ast\ast}\left(  y\right)  -\left\vert y\right\vert
^{2}.
\]
Then $\psi$ is continuous and subharmonic by assumption. Using the convexity
of $\phi$ we can write
\[
\nabla\phi\left(  x\right)  \cdot x=\phi^{\ast}\left(  \nabla\phi\left(
x\right)  \right)  +\phi\left(  x\right)  ,\text{ }a.e.\text{ }x.
\]
Therefore, similar to Theorem \ref{thm_cx_exp}, we have%
\begin{align*}
\mathcal{T}_{2}\left(  \mu,\nu\right)   &  \leqslant\int\left(  \left\vert
x\right\vert ^{2}-2\phi\left(  x\right)  \right)  d\mu-\int\left(  2\phi
^{\ast}\left(  y\right)  -\left\vert y\right\vert ^{2}\right)  d\nu\\
&  =\int Q_{2}\left(  \psi\right)  \left(  x\right)  d\mu-\int\psi\left(
y\right)  d\nu\text{ (}\psi\text{ is subharmonic by (ii))}\\
&  \leqslant\sup_{g\in\mathcal{A}_{\text{sh}}\cap C_{b}}\left\{  \int
Q_{2}\left(  g\right)  \left(  x\right)  d\mu-\int g\left(  y\right)
d\nu\right\}  \leqslant\mathcal{T}_{2}\left(  \mu,\nu\right)  ,
\end{align*}
which shows that $\psi$ is an optimizer of $\mathcal{D}_{2}\left(
\mu,\mathrm{P}_{\leqslant\nu}^{\text{sh}}\right)  $. Then the desired
conclusion follows.
\end{proof}

\subsection{Forward projection}

\begin{theorem}
\label{thm_sh_prop_fw}Let $X,$ $Y$ be bounded convex open subsets of
$\mathbb{R}^{d},$ $\mu\in P^{ac}\left(  X\right)  $ and $\nu\in P\left(
Y\right)  $. Assume that $\psi\in\mathcal{A}_{\text{sh}}\cap Lip\left(
X\right)  $ is the optimal dual solution for $\mathcal{D}_{2}\left(
\mathrm{P}_{\mu\leqslant}^{\text{sh}},\nu\right)  $ obtained in Theorem
\ref{thm_dual_sh_fw}. Then the unique projection of $\nu$ onto $\mathrm{P}%
_{2,\mu\leqslant}^{\text{sh}}$ is given by $\left(  \nabla\bar{\psi}_{0}%
^{\ast}\right)  _{\#}\nu$, where%
\[
\bar{\psi}_{0}\left(  x\right)  =\frac{1}{2}\left\vert x\right\vert ^{2}%
-\frac{1}{2}\psi\left(  x\right)  .
\]

\end{theorem}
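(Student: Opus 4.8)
The plan is to transcribe the proof of Theorem~\ref{thm_cx_prop} to the subharmonic setting, reading off a Laplacian expansion (Definition~\ref{def_Lap_expansion}) where the convex case produced a convex expansion. By Remark~\ref{rmk_existence} there is a forward projection $\bar\nu$ of $\nu$ onto $\mathrm{P}_{\mu\leqslant}^{\text{sh}}$ and an optimal coupling $\pi\in\Pi(\bar\nu,\nu)$ for $\mathcal{T}_2(\bar\nu,\nu)$. Combining the forward duality in Theorem~\ref{thm_dual_sh}(ii), the optimality of $\psi$, and the identity $\int\psi\,d\mu=\int\psi\,d\bar\nu$ furnished by Theorem~\ref{thm_aproperty}(ii), I would write
\[
\int|x-y|^{2}\,d\pi=\int\psi\,d\mu-\int Q_{\bar 2}(\psi)\,d\nu=\int\bigl(\psi(x)-Q_{\bar 2}(\psi)(y)\bigr)\,d\pi .
\]
Since $Q_{\bar 2}(\psi)(y)\geqslant\psi(x)-|x-y|^{2}$ for all $x,y$ by definition of $Q_{\bar 2}$, the integrand $|x-y|^{2}-\bigl(\psi(x)-Q_{\bar 2}(\psi)(y)\bigr)$ is nonnegative, hence it vanishes $\pi$-a.e.; that is, $|x-y|^{2}=\psi(x)-Q_{\bar 2}(\psi)(y)$ for $\pi$-a.e. $(x,y)$.

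Next comes the algebraic normalization, identical to the convex case. By the elementary identity of Lemma~\ref{lm_Q2Q2bar}, $Q_{\bar 2}(\psi)(y)=2\bar\psi_{0}^{\ast}(y)-|y|^{2}$ with $\bar\psi_{0}(x)=\tfrac12|x|^{2}-\tfrac12\psi(x)$; inserting this into the pointwise equality and expanding $|x-y|^{2}$ collapses it to the Fenchel equality
\[
x\cdot y=\bar\psi_{0}(x)+\bar\psi_{0}^{\ast}(y),\qquad\pi\text{-a.e. }(x,y),
\]
so that $x\in\partial\bar\psi_{0}^{\ast}(y)$ for $\pi$-a.e. $(x,y)$. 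The Laplacian expansion property is then immediate from subharmonicity: $\Delta\bar\psi_{0}=d-\tfrac12\Delta\psi\leqslant d$ in the distributional sense, and $\bar\psi_{0}$ is bounded below on the bounded domain because $\psi$ is Lipschitz, so $\bar\psi_{0}^{\ast}$ is a Laplacian expansion (with witness $\phi=\bar\psi_{0}$) and Lemma~\ref{lm_vol_exp} yields its volume-increasing character.

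It remains to promote $x\in\partial\bar\psi_{0}^{\ast}(y)$ to the single-valued relation $x=\nabla\bar\psi_{0}^{\ast}(y)$, which identifies $\bar\nu=(\nabla\bar\psi_{0}^{\ast})_{\#}\nu$ and, the projection being thereby pinned down explicitly, its uniqueness (compare Theorem~\ref{thm_uniqueness}). I expect this to be the genuine obstacle. The function $\bar\psi_{0}^{\ast}$ is convex and, since $Q_{\bar 2}(\psi)$ is Lipschitz (Theorem~\ref{thm_dual_sh_fw}), locally Lipschitz, hence differentiable off a Lebesgue-null set; but reading the map at $\nu$-a.e. $y$ requires controlling $\nu$ on the non-differentiability set. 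In Theorem~\ref{thm_cx_prop} this was supplied directly by the absolute continuity of $\nu$, whereas here $\psi$ is only subharmonic, so $\bar\psi_{0}$ is in general non-convex and $\bar\psi_{0}^{\ast}$ may genuinely fail to be $C^{1}$, and the standing hypothesis gives absolute continuity of $\mu$ rather than of $\nu$. The resolution I would pursue is to show the optimal coupling concentrates on the contact set $\{\bar\psi_{0}=\bar\psi_{0}^{\ast\ast}\}$, where $\partial\bar\psi_{0}^{\ast}$ is single-valued, and to combine this with $\bar\psi_{0}^{\ast}$ being finite $\nu$-a.e. (from $Q_{\bar 2}(\psi)\in L^{1}(d\nu)$) to force $x=\nabla\bar\psi_{0}^{\ast}(y)$ $\nu$-a.e.; absent further structure one falls back, exactly as in the convex forward case, on absolute continuity of $\nu$. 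Everything else in the argument is routine.
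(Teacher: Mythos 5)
Your proposal is correct and follows precisely the route the paper intends: the paper omits the proof of Theorem \ref{thm_sh_prop_fw}, saying only that it is similar to Theorem \ref{thm_cx_prop}, and your transcription (forward duality plus Theorem \ref{thm_aproperty} to get the $\pi$-a.e.\ saturation, Lemma \ref{lm_Q2Q2bar} to collapse it to the Fenchel identity $x\cdot y=\bar{\psi}_{0}\left(  x\right)  +\bar{\psi}_{0}^{\ast}\left(  y\right)  $, and $\Delta\bar{\psi}_{0}=d-\tfrac{1}{2}\Delta\psi\leqslant d$ for the Laplacian expansion) is exactly that argument. The obstacle you flag at the final step is genuine but is a defect of the stated hypotheses rather than of your proof: promoting $x\in\partial\bar{\psi}_{0}^{\ast}\left(  y\right)  $ to $x=\nabla\bar{\psi}_{0}^{\ast}\left(  y\right)  $ $\nu$-a.e.\ (and the uniqueness claim, since the forward case of Theorem \ref{thm_uniqueness} also requires $\nu\in P^{ac}$) needs absolute continuity of $\nu$, exactly as used in Theorem \ref{thm_cx_prop}; your contact-set idea alone cannot substitute for it, because concentration of $\pi$ on $\left\{  \bar{\psi}_{0}=\bar{\psi}_{0}^{\ast\ast}\right\}  $ does not prevent $\bar{\psi}_{0}^{\ast\ast}$ from being affine on segments, and a general $\nu$ may charge the Lebesgue-null non-differentiability set of the convex Lipschitz function $\bar{\psi}_{0}^{\ast}$.
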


We omit the proof, since it is similar to Theorem \ref{thm_cx_prop}.

\begin{theorem}
\label{thm_sh_equiv_fw}Let $X,$ $Y$ be bounded convex open subsets of
$\mathbb{R}^{d},$ $\mu\in P^{ac}\left(  X\right)  $ and $\nu\in P\left(
Y\right)  $. Consider the following statements.

(i) The projection of $\nu$ onto $\mathrm{P}_{\mu\leqslant}^{\text{sh}}$ is
$\mu.$

(ii) There is a Laplacian expansion $\phi$\ such that $\left(  \nabla
\phi\right)  _{\#}\nu=\mu$.

\noindent We always have that (i) implies (ii). If $\phi$ in (ii) is such that
$\phi^{\ast}\in C_{b}$ and $\Delta\phi^{\ast}\leqslant d,$ then (ii) implies (i).
\end{theorem}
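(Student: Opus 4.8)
The plan is to follow the template already established for the backward subharmonic case (Theorem \ref{thm_sh_equiv}) and for the forward convex case (Theorem \ref{thm_cx_exp}), combining the forward duality (Theorem \ref{thm_dual_sh}) with the characterization of the forward projection map (Theorem \ref{thm_sh_prop_fw}). For the implication (i) $\Rightarrow$ (ii) I would simply read off Theorem \ref{thm_sh_prop_fw}, whose hypotheses ($\mu\in P^{ac}(X)$, $\nu\in P(Y)$, $X,Y$ bounded convex open) coincide with ours. Let $\psi\in\mathcal{A}_{\text{sh}}\cap Lip(X)$ be the optimal dual solution, so that the unique projection of $\nu$ onto $\mathrm{P}_{\mu\leqslant}^{\text{sh}}$ equals $(\nabla\bar{\psi}_0^{\ast})_{\#}\nu$ with $\bar{\psi}_0=\frac12|x|^2-\frac12\psi$. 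Since $\psi$ is subharmonic one has $\Delta\bar{\psi}_0=d-\frac12\Delta\psi\leqslant d$, and $\bar{\psi}_0$ is bounded (hence bounded below) because $\psi$ is Lipschitz on the bounded set $X$; therefore $\phi:=\bar{\psi}_0^{\ast}$ is a Laplacian expansion in the sense of Definition \ref{def_Lap_expansion}. If (i) holds, the projection is $\mu$, so $(\nabla\phi)_{\#}\nu=\mu$, which is exactly (ii).

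For (ii) $\Rightarrow$ (i) under the extra conditions, I would reverse the construction. Given a Laplacian expansion $\phi$ with $\phi^{\ast}\in C_b$, $\Delta\phi^{\ast}\leqslant d$, and $(\nabla\phi)_{\#}\nu=\mu$, set $\psi(x):=|x|^2-2\phi^{\ast}(x)$. First I would verify $\psi\in\mathcal{A}_{\text{sh}}\cap C_b(X)$: it is continuous and bounded because $\phi^{\ast}\in C_b$ and $X$ is bounded, and it is subharmonic because $\Delta\psi=2d-2\Delta\phi^{\ast}\geqslant 0$. A direct $c$-transform computation, using that $\phi$ is convex (so $\phi^{\ast\ast}=\phi$), gives $Q_{\bar 2}(\psi)(y)=2\phi(y)-|y|^2$. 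Then, taking the admissible coupling carried by $\nabla\phi$, the Fenchel identity $\nabla\phi(y)\cdot y=\phi^{\ast}(\nabla\phi(y))+\phi(y)$ (valid $\nu$-a.e., since $\nabla\phi$ is defined $\nu$-a.e. by the very existence of the pushforward), together with $\int|\nabla\phi|^2\,d\nu=\int|x|^2\,d\mu$ and $\int\phi^{\ast}(\nabla\phi)\,d\nu=\int\phi^{\ast}\,d\mu$, yields
\[
\mathcal{T}_2(\mu,\nu)\leqslant\int_Y|\nabla\phi(y)-y|^2\,d\nu
=\int_X\psi\,d\mu-\int_Y Q_{\bar 2}(\psi)\,d\nu
\leqslant\mathcal{D}_2(\mathrm{P}_{\mu\leqslant}^{\text{sh}},\nu)
\leqslant\mathcal{T}_2(\mu,\nu),
\]
where the middle inequality uses $\psi\in\mathcal{A}_{\text{sh}}\cap C_b$ and the last uses $\mu\in\mathrm{P}_{\mu\leqslant}^{\text{sh}}$ ($\mu\leqslant_{\text{sh}}\mu$) with the duality $\mathcal{D}_2(\mathrm{P}_{\mu\leqslant}^{\text{sh}},\nu)=\mathcal{T}_2(\mathrm{P}_{\mu\leqslant}^{\text{sh}},\nu)$ of Theorem \ref{thm_dual_sh}. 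All inequalities are thus equalities, so $\psi$ is an optimal dual solution. Finally, since $\bar{\psi}_0=\frac12|x|^2-\frac12\psi=\phi^{\ast}$, Theorem \ref{thm_sh_prop_fw} identifies the projection as $(\nabla\bar{\psi}_0^{\ast})_{\#}\nu=(\nabla\phi)_{\#}\nu=\mu$, establishing (i).

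The hard part, and the reason the converse is stated only under the extra hypotheses, is precisely the subharmonicity of the constructed potential $\psi=|x|^2-2\phi^{\ast}$. Writing $\phi=\xi^{\ast}$ with $\xi$ bounded below and $\Delta\xi\leqslant d$, one has $\phi^{\ast}=\xi^{\ast\ast}$, and there is in general no way to pass the bound $\Delta\xi\leqslant d$ to $\Delta\xi^{\ast\ast}\leqslant d$: convexification does not preserve bounds on the Laplacian, as warned in Remark \ref{rmk_Lap}. This is in sharp contrast with the convex order case, where $D^2\xi\leqslant Id$ does transfer to $D^2\xi^{\ast\ast}\leqslant Id$ and so the analogous converse (Theorem \ref{thm_cx_exp}) needs no side condition. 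The assumption $\Delta\phi^{\ast}\leqslant d$ is exactly what makes $\psi$ subharmonic, and it is the analogue of the condition $\Delta\phi^{\ast}\geqslant d$ imposed in the backward statement Theorem \ref{thm_sh_equiv}. The only other point requiring mild care is that $\nu$ is not assumed absolutely continuous, so the Fenchel identity is available merely $\nu$-a.e.; this is harmless because the pushforward hypothesis already forces $\nabla\phi$ to exist $\nu$-a.e.
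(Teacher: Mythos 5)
Your proposal is correct and follows essentially the same route as the paper: (i) $\Rightarrow$ (ii) is read off from Theorem \ref{thm_sh_prop_fw}, and (ii) $\Rightarrow$ (i) proceeds by setting $\psi=\left\vert x\right\vert ^{2}-2\phi^{\ast}$, using the extra hypothesis $\Delta\phi^{\ast}\leqslant d$ to secure subharmonicity, and running the duality sandwich to show $\psi$ is an optimal dual solution, exactly the argument the paper delegates to Theorems \ref{thm_sh_equiv} and \ref{thm_cx_exp}. You merely make explicit some details the paper leaves implicit (the verification that $\bar{\psi}_{0}^{\ast}$ is a Laplacian expansion, the computation of $Q_{\bar{2}}\left(  \psi\right)  $, and the $\nu$-a.e.\ validity of the Fenchel identity), all of which are sound.
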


\begin{proof}
That (i) implies (ii) follows from \ref{thm_sh_prop_fw}. To see the reverse
implication, we note by definition of Laplacian expansion, there is a function
$\eta$ such that $\eta$ is bounded from below, $\Delta\eta\leqslant d$ and
$\phi=\eta^{\ast}.$ Define%
\[
\psi\left(  x\right)  \triangleq\left\vert x\right\vert ^{2}-2\phi^{\ast
}\left(  x\right)  =\left\vert x\right\vert ^{2}-2\eta^{\ast\ast}\left(
x\right)  .
\]
Then the rest of the proof follows as in Theorem \ref{thm_sh_equiv}.
\end{proof}

\appendix

\section{$c$-transforms}

We recall a few properties of the $c$-transforms $Q_{c}\left(  \cdot\right)  $
and $Q_{\bar{c}}\left(  \cdot\right)  $:%
\[
Q_{c}\left(  g\right)  \left(  x\right)  =\inf_{y\in Y}\left\{  g\left(
y\right)  +c\left(  x,y\right)  \right\}  ,\text{ }Q_{\bar{c}}\left(
g\right)  \left(  y\right)  =\sup_{x\in X}\left\{  g\left(  x\right)
-c\left(  x,y\right)  \right\}
\]
It is well-known that $Q_{c}\left(  \cdot\right)  $ preserves Lipschitz
continuity, convexity and concavity. Since%
\[
Q_{\bar{c}}\left(  g\right)  =-Q_{c}\left(  -g\right)  .
\]
$Q_{\bar{c}}$ has the same properties as $Q_{c}.$ Moreover, for any function
$g:Y\mapsto\mathbb{R}\cup\left\{  \infty\right\}  ,$%
\begin{equation}
Q_{\bar{c}}\left(  Q_{c}\left(  g\right)  \right)  \leqslant g,
\label{ineq_Qccbar1}%
\end{equation}
and for any function $g:X\mapsto\mathbb{R}\cup\left\{  -\infty\right\}  ,$%
\begin{equation}
Q_{c}\left(  Q_{\bar{c}}\left(  g\right)  \right)  \geqslant g.
\label{ineq_Qccbar2}%
\end{equation}
In particular,%
\begin{equation}
Q_{c}\left(  g\right)  =Q_{c}\left(  Q_{\bar{c}}\left(  Q_{c}\left(  g\right)
\right)  \right)  ,\text{ }Q_{\bar{c}}\left(  g\right)  =Q_{\bar{c}}\left(
Q_{c}\left(  Q_{\bar{c}}\left(  g\right)  \right)  \right)  .
\label{eq_Qccbar12}%
\end{equation}

When $c\left(  x,y\right)  =\left\vert x-y\right\vert ^{2}$, the
$c$-transforms are written $Q_{2}\left(  \cdot\right)  ,$ $Q_{\bar{2}}\left(
\cdot\right)  $.

\begin{lemma}
\label{lm_Q2Q2bar}Let $g$ be a function defined in a subset $\Omega$ of
$\mathbb{R}^{d}.$ The following identities hold.

(i) For $x\in\mathbb{R}^{d},$
\[
Q_{2}\left(  g\right)  \left(  x\right)  =\left\vert x\right\vert ^{2}%
-2g_{0}^{\ast}\left(  x\right)  ,\text{ where }g_{0}\left(  y\right)
=\frac{1}{2}\left\vert y\right\vert ^{2}+\frac{1}{2}g\left(  y\right)  .
\]

(ii) For $y\in\mathbb{R}^{d},$%
\[
Q_{\bar{2}}\left(  g\right)  \left(  y\right)  =2\bar{g}_{0}^{\ast}\left(
y\right)  -\left\vert y\right\vert ^{2},\text{ where }\bar{g}_{0}\left(
x\right)  =\frac{1}{2}\left\vert x\right\vert ^{2}-\frac{1}{2}g\left(
x\right)  .
\]

\end{lemma}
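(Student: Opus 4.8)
The plan is to prove both identities by a direct completion-of-square computation that reduces each $c$-transform for the quadratic cost to a Legendre--Fenchel transform, using the convention $h^{\ast}(x)=\sup_{y}\{x\cdot y-h(y)\}$. There is no serious obstacle here beyond careful algebraic bookkeeping: the only points requiring attention are the factor $1/2$ built into the definitions of $g_{0}$ and $\bar g_{0}$ against the factor $2$ produced when a quadratic supremum is recognized as a Legendre dual, together with the sign flip that distinguishes the infimum in $Q_{2}$ from the supremum in $Q_{\bar 2}$.

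For part (i), I would expand the cost as $|x-y|^{2}=|x|^{2}-2x\cdot y+|y|^{2}$ inside $Q_{2}(g)(x)=\inf_{y}\{g(y)+|x-y|^{2}\}$ and pull the $x$-only term $|x|^{2}$ out of the infimum, obtaining
\[
Q_{2}(g)(x)=|x|^{2}+\inf_{y}\left\{g(y)+|y|^{2}-2x\cdot y\right\}=|x|^{2}-\sup_{y}\left\{2x\cdot y-\left(|y|^{2}+g(y)\right)\right\}.
\]
Since $2g_{0}(y)=|y|^{2}+g(y)$ by definition of $g_{0}$, the supremum equals $2\sup_{y}\{x\cdot y-g_{0}(y)\}=2g_{0}^{\ast}(x)$, which gives exactly $Q_{2}(g)(x)=|x|^{2}-2g_{0}^{\ast}(x)$.

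For part (ii), the computation is the mirror image: I would expand the same quadratic inside $Q_{\bar 2}(g)(y)=\sup_{x}\{g(x)-|x-y|^{2}\}$, this time extracting the $y$-only term $-|y|^{2}$, to get
\[
Q_{\bar 2}(g)(y)=-|y|^{2}+\sup_{x}\left\{2x\cdot y-\left(|x|^{2}-g(x)\right)\right\}.
\]
Recognizing that $2\bar g_{0}(x)=|x|^{2}-g(x)$, the supremum becomes $2\bar g_{0}^{\ast}(y)$, yielding $Q_{\bar 2}(g)(y)=2\bar g_{0}^{\ast}(y)-|y|^{2}$. As a cross-check I would note that (ii) also follows from (i): the appendix records $Q_{\bar c}(g)=-Q_{c}(-g)$, and applying (i) to $-g$ reproduces the formula, since $\overline{(-g)}_{0}$ differs from $g_{0}$ only by the replacement used there. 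I would present the direct computation for transparency. No finiteness or measurability caveats intervene, as both identities hold pointwise in the extended reals for an arbitrary $g$.
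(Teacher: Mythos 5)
Your proposal is correct and matches the paper's own proof essentially verbatim: both expand $\left\vert x-y\right\vert ^{2}$, pull out the one-variable quadratic term, and recognize the remaining supremum as $2g_{0}^{\ast}$ (resp. $2\bar{g}_{0}^{\ast}$). Your added cross-check that (ii) follows from (i) via $Q_{\bar{c}}\left(  g\right)  =-Q_{c}\left(  -g\right)  $ is a correct, harmless extra not present in the paper.
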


\begin{proof}
Straightforward calculations yield%
\begin{align*}
Q_{2}\left(  g\right)  \left(  x\right)   &  =\inf_{y\in\Omega}\left\{
g\left(  y\right)  +\left\vert x-y\right\vert ^{2}\right\}  =\inf_{y\in\Omega
}\left\{  g\left(  y\right)  +\left\vert y\right\vert ^{2}-2x\cdot y\right\}
+\left\vert x\right\vert ^{2}\\
&  =\left\vert x\right\vert ^{2}-2\sup_{y\in\Omega}\left\{  x\cdot y-\left(
\frac{1}{2}\left\vert y\right\vert ^{2}+\frac{1}{2}g\left(  y\right)  \right)
\right\} \\
&  =\left\vert x\right\vert ^{2}-2g_{0}^{\ast}\left(  x\right)  .
\end{align*}
Similarly%
\begin{align*}
Q_{\bar{2}}\left(  g\right)  \left(  y\right)   &  =\sup_{x\in\Omega}\left\{
g\left(  x\right)  -\left\vert x-y\right\vert ^{2}\right\}  =\sup_{x\in\Omega
}\left\{  g\left(  x\right)  -\left\vert x\right\vert ^{2}+2x\cdot y\right\}
-\left\vert y\right\vert ^{2}\\
&  =2\sup_{x\in\Omega}\left\{  x\cdot y-\left(  \frac{1}{2}\left\vert
x\right\vert ^{2}-\frac{1}{2}g\left(  x\right)  \right)  \right\}  -\left\vert
y\right\vert ^{2}\\
&  =2\bar{g}_{0}^{\ast}\left(  y\right)  -\left\vert y\right\vert ^{2}.
\end{align*}

\end{proof}

\begin{lemma}
\label{lm_HJ_cx_duality}Let $g:\mathbb{R}^{d}\mapsto\mathbb{R}$ be a function.

(i) If $\frac{1}{2}\left\vert x\right\vert ^{2}-g\left(  x\right)  $ is
convex, then $g^{\ast}\left(  y\right)  -\frac{1}{2}\left\vert y\right\vert
^{2}$ is convex.

(ii) If $g\left(  y\right)  -\frac{1}{2}\left\vert y\right\vert ^{2}$ is
convex, then $\frac{1}{2}\left\vert x\right\vert ^{2}-g^{\ast}\left(
x\right)  $ is convex.

(iii) If $g$ is a lower semicontinuous proper convex function, then%
\[
\frac{1}{2}\left\vert x\right\vert ^{2}-g\left(  x\right)  \text{ is convex if
and only if }g^{\ast}\left(  y\right)  -\frac{1}{2}\left\vert y\right\vert
^{2}\text{ is convex.}%
\]
In either case of (iii), we have $g\in C^{1}.$ In matrix form, (iii) can be
rewritten as: $D^{2}g\leqslant Id$ if and only if $D^{2}g^{\ast}\geqslant Id$.
Second order derivatives of convex functions are understood in distributional
sense and $Id$ is the $d\times d$ identity matrix.
\end{lemma}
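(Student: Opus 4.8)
The plan is to prove (i) and (ii) by direct Legendre-transform computations, and then obtain (iii) as a formal consequence by combining the two one-directional implications with the biconjugate identity $g^{\ast\ast}=g$ valid for lower semicontinuous proper convex $g$.

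For (i), I would set $P=\frac12|\cdot|^2-g$, which is convex by hypothesis, so that $g(x)=\frac12|x|^2-P(x)$. Completing the square via $\langle x,y\rangle-\frac12|x|^2=\frac12|y|^2-\frac12|x-y|^2$ gives
\[
g^{\ast}(y)-\tfrac12|y|^2=\sup_{x}\left\{P(x)-\tfrac12|x-y|^2\right\}=:F(y).
\]
This algebraic step is routine and uses no convexity of $g$ itself; the substance of (i) is that $F$ is convex, and here lies the main obstacle: for fixed $x$ the map $y\mapsto P(x)-\frac12|x-y|^2$ is \emph{concave}, so convexity of a supremum is not automatic. I would overcome this with a synchronized-shift argument. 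Given $y_t=(1-t)y_0+ty_1$ and a maximizer (or near-maximizer) $x_t$ for $F(y_t)$, put $x_0=x_t+(y_0-y_t)$ and $x_1=x_t+(y_1-y_t)$, so that $x_i-y_i=x_t-y_t$ and $(1-t)x_0+tx_1=x_t$. Then
\[
(1-t)F(y_0)+tF(y_1)\geq(1-t)P(x_0)+tP(x_1)-\tfrac12|x_t-y_t|^2\geq P(x_t)-\tfrac12|x_t-y_t|^2=F(y_t),
\]
where the last inequality uses only convexity of $P$; a routine $\varepsilon$-optimizer argument handles the case where the supremum fails to be attained or equals $+\infty$.

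For (ii), I would set $Q=g-\frac12|\cdot|^2$, convex by hypothesis, and the analogous computation yields
\[
\tfrac12|x|^2-g^{\ast}(x)=\inf_{y}\left\{\tfrac12|x-y|^2+Q(y)\right\},
\]
which is the inf-convolution $Q\,\square\,\tfrac12|\cdot|^2$ of two convex functions (equivalently the Moreau envelope of the convex $Q$), hence convex. This direction is easier, since inf-convolution of convex functions is manifestly convex.

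Finally, for (iii) assume $g$ is lower semicontinuous proper convex. The forward implication is exactly (i). For the converse I would apply (ii) with $g^{\ast}$ in place of $g$: the hypothesis ``$g^{\ast}-\frac12|\cdot|^2$ convex'' yields ``$\frac12|\cdot|^2-(g^{\ast})^{\ast}$ convex,'' and $(g^{\ast})^{\ast}=g$ by Fenchel--Moreau, giving the equivalence. The matrix form is then immediate, since ``$\frac12|\cdot|^2-g$ convex'' means precisely $D^2g\leq Id$ distributionally, and likewise ``$g^{\ast}-\frac12|\cdot|^2$ convex'' means $D^2g^{\ast}\geq Id$. For the $C^1$ (indeed $C^{1,1}$) regularity, note that in either case $g$ is simultaneously convex ($D^2g\geq0$) and $1$-semiconcave ($D^2g\leq Id$), so its distributional Hessian is squeezed between $0$ and $Id$; a convex function with locally bounded Hessian is $C^{1,1}$, in particular $C^1$. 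I expect the only delicate point to be the $\varepsilon$-optimizer bookkeeping in the synchronized-shift step of (i); the remaining arguments are standard facts of convex analysis.
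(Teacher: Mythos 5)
Your proposal is correct, and in substance it follows the same route as the paper: your completing-the-square identities are exactly the paper's computations
\[
Q_{\bar{2}}\left(\left[\left\vert x\right\vert^{2}-2g\right]\right)\left(y\right)=2g^{\ast}\left(y\right)-\left\vert y\right\vert^{2},\qquad Q_{2}\left(\left[2g-\left\vert y\right\vert^{2}\right]\right)\left(x\right)=\left\vert x\right\vert^{2}-2g^{\ast}\left(x\right)
\]
(cf.\ Lemma \ref{lm_Q2Q2bar}), and your handling of (iii) — forward direction from (i), converse by applying (ii) to $g^{\ast}$ and invoking Fenchel--Moreau $g^{\ast\ast}=g$ — is precisely the paper's argument. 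You diverge in two places, both defensibly. First, where the paper simply cites as well known that $Q_{2}$ and $Q_{\bar{2}}$ preserve convexity, you prove the nontrivial half from scratch: your synchronized-shift argument (choosing $x_{i}=x_{t}+y_{i}-y_{t}$ so that the offsets $x_{i}-y_{i}$ coincide and $(1-t)x_{0}+tx_{1}=x_{t}$) is the standard proof that the sup-convolution of a convex $P$ with $-\tfrac12\left\vert\cdot-y\right\vert^{2}$ is convex, and your $\varepsilon$-optimizer bookkeeping correctly covers non-attainment and the extended-real-valued case $F\equiv+\infty$; this buys self-containedness at the cost of length. Second, for the regularity you argue that $g$ is simultaneously convex and $1$-semiconcave, hence $C^{1,1}$ (sub- and supergradients coexist at every point, forcing differentiability, with Lipschitz gradient from the two-sided Hessian bound); the paper instead observes that $g^{\ast}\left(y\right)=\left(g^{\ast}\left(y\right)-\tfrac12\left\vert y\right\vert^{2}\right)+\tfrac12\left\vert y\right\vert^{2}$ is uniformly convex and concludes $g=g^{\ast\ast}\in C^{1}$ by conjugate duality. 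Both are standard facts of convex analysis; your version yields the slightly stronger $C^{1,1}$ conclusion, while the paper's is a one-line consequence of machinery it already has in place.
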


\begin{proof}
(i) and (ii) follow from the straighforward calculations%
\[
Q_{\bar{2}}\left(  \left[  \left\vert x\right\vert ^{2}-2g\right]  \right)
\left(  y\right)  =\sup_{x}\left\{  \left\vert x\right\vert ^{2}-2g\left(
x\right)  -\left\vert x-y\right\vert ^{2}\right\}  =2g^{\ast}\left(  y\right)
-\left\vert y\right\vert ^{2},
\]
and%
\[
Q_{2}\left(  \left[  2g-\left\vert y\right\vert ^{2}\right]  \right)  \left(
x\right)  =\inf_{y}\left\{  2g\left(  y\right)  -\left\vert y\right\vert
^{2}+\left\vert x-y\right\vert ^{2}\right\}  =\left\vert x\right\vert
^{2}-2g^{\ast}\left(  x\right)  .
\]
If $g$ is a lower semicontinuous proper convex function, then $g=g^{\ast\ast
}.$ So (iii) follows from (ii). If either case of (iii) is true, then%
\[
g^{\ast}\left(  y\right)  =\left(  g^{\ast}\left(  y\right)  -\frac{1}%
{2}\left\vert y\right\vert ^{2}\right)  +\frac{1}{2}\left\vert y\right\vert
^{2}%
\]
is uniformly convex, hence $g=g^{\ast\ast}\in C^{1}$.
\end{proof}

\section{Convergence of subharmonic functions}

The following result is adapted from \cite[Theorem 4.1.9]%
{hormander1980analysis} to serve its purpose in our setting.

\begin{lemma}
\label{lm_convg_subH}Let $X$ be a connected open subset of $\mathbb{R}^{d}$
and $\psi_{n}$ a sequence of subharmonic functions such that $\int%
_{X}\left\vert \psi_{n}\right\vert dx$ is bounded by some constant $A>0$. Then
up to a subsequence $\psi_{n}$ converges almost surely to a subharmonic
function $\psi\in L^{1}\left(  X,dx\right)  $ in the usual sense of Definition
\ref{def_subH}.
\end{lemma}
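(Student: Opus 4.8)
The plan is to reduce the statement to the standard compactness theorem for subharmonic functions, \cite[Theorem 4.1.9]{hormander1980analysis}, whose hypotheses I will verify from the $L^{1}$ bound, and then upgrade its $L^{1}_{loc}$ conclusion to the almost-everywhere convergence and global integrability asserted here. The governing principle is that a function is subharmonic in the sense of Definition \ref{def_subH} precisely when it is upper semicontinuous, locally integrable, and has nonnegative distributional Laplacian; Hörmander's theorem says that a sequence of subharmonic functions on a connected open set which is locally bounded above either tends to $-\infty$ locally uniformly, or has a subsequence converging in $L^{1}_{loc}$ to a subharmonic limit. Thus the bulk of the work is to produce the local upper bound, exclude the $-\infty$ alternative, and then pass from $L^{1}_{loc}$ convergence to the required conclusions.

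First I would extract a local upper bound directly from the sub-mean value inequality. For $x$ in a compact $K\subset X$ and $r=\tfrac12\operatorname{dist}(K,\partial X)$, integrating the spherical sub-mean value inequality over radii gives the solid-ball estimate $\psi_{n}(x)\leqslant \frac{1}{|B_{r}(x)|}\int_{B_{r}(x)}\psi_{n}\,dy\leqslant \frac{1}{|B_{r}|}\int_{X}|\psi_{n}|\,dx\leqslant A/|B_{r}|$, so the sequence is uniformly in $n$ bounded above on every compact subset. The same bound excludes the degenerate alternative: if $\psi_{n}\to-\infty$ uniformly on some compact $K$ of positive measure, then $\int_{K}|\psi_{n}|\geqslant |K|\inf_{K}(-\psi_{n})\to\infty$, contradicting $\int_{X}|\psi_{n}|\,dx\leqslant A$. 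Hörmander's theorem then yields a subsequence, still written $\psi_{n}$, converging in $L^{1}_{loc}(X)$ to some subharmonic $\psi$.

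To finish, I would pass to a further subsequence so that $\psi_{n}\to\psi$ almost everywhere, which is legitimate since $L^{1}_{loc}$ convergence always forces a.e. convergence along a subsequence. The limit is subharmonic in the sense of Definition \ref{def_subH}: since $\psi_{n}\to\psi$ in $L^{1}_{loc}$, one has $\Delta\psi_{n}\to\Delta\psi$ in the distributional sense, and each $\Delta\psi_{n}\geqslant0$, so $\Delta\psi\geqslant0$, and one takes the upper semicontinuous representative. Global integrability of the limit then follows from Fatou's lemma applied to $|\psi_{n}|$: $\int_{X}|\psi|\,dx\leqslant\liminf_{n}\int_{X}|\psi_{n}|\,dx\leqslant A$, whence $\psi\in L^{1}(X,dx)$.

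The main obstacle is the $L^{1}_{loc}$ compactness itself, the content I am borrowing from Hörmander. A self-contained proof of that step rests on the Riesz representation, writing $\psi_{n}$ on each ball as a Green potential of its Riesz measure $\mu_{n}=\Delta\psi_{n}$ plus a harmonic remainder. The measures $\mu_{n}$ have locally uniformly bounded mass: testing against a cutoff $\chi\in C_{0}^{\infty}(X)$ with $\chi\equiv1$ on $K$ gives $\mu_{n}(K)\leqslant\int\psi_{n}\,\Delta\chi\,dx\leqslant\|\Delta\chi\|_{\infty}A$, so they are vaguely precompact; the potential parts converge accordingly, while the harmonic remainders, being $L^{1}$-bounded, converge locally uniformly by interior estimates. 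Controlling these two pieces simultaneously and checking that the vague limit of $\mu_{n}$ is the Riesz measure of $\psi$ is the only genuinely delicate point, and it is precisely what the distributional-Laplacian reformulation of Definition \ref{def_subH} lets us package cleanly.
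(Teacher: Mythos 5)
Your proof is correct, but it is organized differently from the paper's. The paper does not invoke the dichotomy of \cite[Theorem 4.1.9]{hormander1980analysis} as a black box; it adapts the proof of that theorem: from the bound $\int_{X}|\psi_{n}|\,dx\leqslant A$ it extracts weak-$\ast$ convergent subsequences of the measures $\psi_{n}\,dx$ and $\Delta\psi_{n}$, identifies the limit as a subharmonic function via the static characterization \cite[Theorem 4.1.8]{hormander1980analysis} (a distribution with nonnegative Laplacian is given by a subharmonic function), and then upgrades weak convergence to $L_{loc}^{1}$ convergence by hand, using that $\psi_{n}\ast\rho_{\delta}\to\psi\ast\rho_{\delta}$ uniformly on compacts together with the pointwise inequalities $\psi_{n}\leqslant\psi_{n}\ast\rho_{\delta}$ and $\psi\leqslant\psi\ast\rho_{\delta}$ furnished by subharmonicity. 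You instead cite Theorem 4.1.9 directly and verify its hypotheses, which requires two observations absent from the paper: the uniform-on-compacts upper bound $\psi_{n}(x)\leqslant A/\left\vert B_{r}\right\vert$ obtained from the solid-ball sub-mean inequality, and the exclusion of the locally-uniform $-\infty$ alternative, both correctly derived from the $L^{1}$ bound. (The paper never needs a local sup bound because the $L^{1}$ bound already gives total-variation compactness in $M\left(X\right)$.) From the $L_{loc}^{1}$ convergence onward the two arguments coincide: a.e.\ convergence along a further subsequence by a diagonal extraction over an exhaustion by compacts, the upper semicontinuous representative for the limit, and Fatou's lemma for $\psi\in L^{1}\left(X,dx\right)$. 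What each route buys: yours is shorter and cleanly modular, isolating exactly what the $L^{1}$ hypothesis contributes; the paper's is self-contained modulo the weaker citation of Theorem 4.1.8 and gives the $L_{loc}^{1}$ upgrade by an explicit mollification estimate. Your closing sketch via Riesz decomposition and Green potentials is a third, classical potential-theoretic route to the compactness step, but it is optional padding once the citation is accepted, and note that your mass bound there should read $\mu_{n}\left(K\right)\leqslant\int\chi\,d\mu_{n}=\int\psi_{n}\,\Delta\chi\,dx\leqslant\left\Vert \Delta\chi\right\Vert _{\infty}A$, i.e.\ it passes through $\int\left\vert \psi_{n}\right\vert \left\vert \Delta\chi\right\vert\,dx$ rather than the signed integral alone.
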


\begin{proof}
\textbf{1}. By identifying $L^{1}\left(  X,dx\right)  $ with a subset of
$M\left(  X\right)  ,$ we may assume that $\psi_{n}$ converges weakly to some
$\xi\in M\left(  X\right)  .$ Since $\psi_{n}$ is subharmonic, $\Delta\psi
_{n}$ is a nonnegative distribution, thus a nonnegative measure by
\cite[Theorem 2.1.7]{hormander1980analysis}. Hence we may also assume that
$\Delta\psi_{n}$ converges weakly to some nonnegative distribution
$\eta\geqslant0.$ Then for any $\phi\in C_{0}^{\infty}\left(  X\right)  ,$%
\[
\int_{X}\Delta\phi d\xi\left(  x\right)  =\lim_{n\rightarrow\infty}\int%
_{X}\Delta\phi\psi_{n}dx=\lim_{n\rightarrow\infty}\int_{X}\phi\Delta\psi
_{n}dx=\int_{X}\phi d\eta\left(  x\right)  .
\]
Therefore $\Delta\xi=\eta\geqslant0$ in the sense of distribution. By
\cite[Theorem 4.1.8]{hormander1980analysis}, $\xi$ is a subharmonic function
$\psi\in L_{loc}^{1}\left(  X,dx\right)  $ in the usual sense of Definition
\ref{def_subH}.

\textbf{2}. Let $B_{r}=\left\{  \left\vert x\right\vert \leqslant r\right\}
,$ $\delta>0$ and $0\leqslant\rho\in C_{0}^{\infty}\left(  \mathbb{R}%
^{d}\right)  $ be a radially symmetric function supported in $B_{1}$ such that
$\int\rho=1$. Given a compact set $K\subset X,$ we claim that, as
$n\rightarrow\infty,$%
\begin{equation}
\psi_{n}\ast\rho_{\delta}\left(  x\right)  \rightarrow\psi\ast\rho_{\delta
}\left(  x\right)  \text{ uniformly for }x\in K. \label{lm_convg_subH0}%
\end{equation}
By assumption%
\begin{equation}
\left\vert \int\psi_{n}\varphi dx\right\vert \leqslant A\sup_{x}\left\vert
\varphi\right\vert \text{, }\forall\varphi\in C_{0}^{\infty}\left(  K_{\delta
}\right)  , \label{lm_convg_subH1}%
\end{equation}
where $K_{\delta}$ is the compact set given by $K+B_{\delta}$.\ $\delta$ is
small so that $K_{\delta}\subset X$. Since $\psi_{n}$ converges to the
function $\psi$ in distribution, it follows that%
\begin{equation}
\left\vert \int\psi\varphi dx\right\vert \leqslant A\sup_{x}\left\vert
\varphi\right\vert \text{, }\forall\varphi\in C_{0}^{\infty}\left(  K_{\delta
}\right)  . \label{lm_convg_subH2}%
\end{equation}
Let $\epsilon>0.$ Since $K$ is compact, there is a finite net $K_{net}%
=\left\{  x_{j}:j=1,...,N\right\}  \subset K$ satisfying: for each $x\in K$
there is a $x_{j_{0}}\in K_{net}$ such that%
\[
2A\sup_{z\in\mathbb{R}^{d}}\left\vert \rho\left(  \frac{x-z}{\delta}\right)
-\rho\left(  \frac{x_{j_{0}}-z}{\delta}\right)  \right\vert \leqslant
\frac{\epsilon}{2}.
\]
This is feasible since $\rho$ is uniformly continuous on $\mathbb{R}^{d}.$ For
each $x_{j}\in K_{net},$ we have%
\[
\psi_{n}\ast\rho_{\delta}\left(  x_{j}\right)  =\int\psi_{n}\left(  z\right)
\rho\left(  \frac{x_{j}-z}{\delta}\right)  dz\rightarrow\int\psi\left(
z\right)  \rho\left(  \frac{x_{j}-z}{\delta}\right)  dz.
\]
Hence, there is $n_{0}$ such that%
\[
\left\vert \psi_{n}\ast\rho_{\delta}\left(  x_{j}\right)  -\psi\ast
\rho_{\delta}\left(  x_{j}\right)  \right\vert \leqslant\frac{\epsilon}%
{2}\text{ for }j=1,...,N,\text{ }n\geqslant n_{0}.
\]
Therefore, for $x\in K,$ $n\geqslant n_{0}$ and some $x_{j_{0}}\in K_{net}$
depending on $x,$%
\begin{align*}
&  \left\vert \psi_{n}\ast\rho_{\delta}\left(  x\right)  -\psi\ast\rho
_{\delta}\left(  x\right)  \right\vert \\
&  \leqslant\left\vert \psi_{n}\ast\rho_{\delta}\left(  x\right)  -\psi
\ast\rho_{\delta}\left(  x\right)  -\left(  \psi_{n}\ast\rho_{\delta}\left(
x_{j_{0}}\right)  -\psi\ast\rho_{\delta}\left(  x_{j_{0}}\right)  \right)
\right\vert +\frac{\epsilon}{2}\\
&  =\left\vert \int\left(  \psi_{n}\left(  z\right)  -\psi\left(  z\right)
\right)  \left(  \rho\left(  \frac{x-z}{\delta}\right)  -\rho\left(
\frac{x_{j_{0}}-z}{\delta}\right)  \right)  dz\right\vert +\frac{\epsilon}%
{2}\\
&  \leqslant2A\sup_{z\in\mathbb{R}^{d}}\left\vert \rho\left(  \frac
{x-z}{\delta}\right)  -\rho\left(  \frac{x_{j_{0}}-z}{\delta}\right)
\right\vert +\frac{\epsilon}{2}\leqslant\epsilon\text{.}%
\end{align*}
Note the second inequality is due to the fact that%
\[
z\mapsto\rho\left(  \frac{x-z}{\delta}\right)  -\rho\left(  \frac{x_{j_{0}}%
-z}{\delta}\right)
\]
is a function in $C_{0}^{\infty}\left(  K_{\delta}\right)  $ so that $\left(
\ref{lm_convg_subH1}\right)  \left(  \ref{lm_convg_subH2}\right)  $ apply.

\textbf{3}. Let $\varrho_{K}\in C_{0}^{\infty}\left(  X\right)  $ such that
$0\leqslant\varrho_{K}\leqslant1$ and $\varrho_{K}=1$ on $K.$ Let $\epsilon>0$
and $\delta>0$ be smaller than the distance between $\partial X$ and
$supp\left(  \varrho_{K}\right)  .$ By the definition of $\rho_{\delta}$ (in
step 2) and the subharmonicity,
\[
\psi_{n}\ast\rho_{\delta}\left(  x\right)  \geqslant\psi_{n}\left(  x\right)
,\text{ }\psi\ast\rho_{\delta}\left(  x\right)  \geqslant\psi\left(  x\right)
,\text{ }\forall x\in K.
\]
Together with $\left(  \ref{lm_convg_subH0}\right)  $, this implies that for
large $n,$%
\[
\psi\ast\rho_{\delta}\left(  x\right)  +\epsilon-\psi_{n}\left(  x\right)
>0,\text{ }\psi\ast\rho_{\delta}\left(  x\right)  +\epsilon-\psi\left(
x\right)  >0,\text{ }\forall x\in K.
\]
Then, for $x\in K,$%
\begin{align*}
\left\vert \psi_{n}\left(  x\right)  -\psi\left(  x\right)  \right\vert  &
\leqslant\left\vert \psi_{n}\left(  x\right)  -\psi\left(  x\right)
\right\vert \varrho_{K}\left(  x\right) \\
&  \leqslant\left\vert \psi\ast\rho_{\delta}\left(  x\right)  +\epsilon
-\psi_{n}\left(  x\right)  \right\vert \varrho_{K}\left(  x\right)
+\left\vert \psi\ast\rho_{\delta}\left(  x\right)  +\epsilon-\psi\left(
x\right)  \right\vert \varrho_{K}\left(  x\right) \\
&  =\left(  \psi\ast\rho_{\delta}\left(  x\right)  +\epsilon-\psi_{n}\left(
x\right)  \right)  \varrho_{K}\left(  x\right)  +\left(  \psi\ast\rho_{\delta
}\left(  x\right)  +\epsilon-\psi\left(  x\right)  \right)  \varrho_{K}\left(
x\right)  .
\end{align*}
Therefore%
\[
\limsup_{n\rightarrow\infty}\int_{K}\left\vert \psi_{n}-\psi\right\vert
dx\leqslant2\int_{X}\left(  \psi\ast\rho_{\delta}+\epsilon-\psi\right)
\varrho_{K}dx.
\]
Sending $\delta\rightarrow0,$ $\epsilon\rightarrow0,$ we obtain $\psi
_{n}\rightarrow\psi$ in $L^{1}\left(  K,dx\right)  .$ Since $K$ is an
arbitrary compact set, it follows that $\psi_{n}\rightarrow\psi$ in
$L_{loc}^{1}\left(  X,dx\right)  .$

\textbf{4}. Since the open set $X$ can be covered by a countable number of
closed balls, on each of these balls we may extract a subsequence of $\psi
_{n}$ which converges almost surely to $\psi.$ Utilizing a diagonal precedure,
we obtain that up to a subsequence $\psi_{n}$ converges to $\psi$ almost
surely. By Fatou lemma,%
\[
\liminf_{n\rightarrow\infty}\int_{X}\left\vert \psi_{n}\right\vert
dx\geqslant\int_{X}\left\vert \psi\right\vert dx,
\]
hence $\psi\in L^{1}\left(  X,dx\right)  .$
\end{proof}

\bibliographystyle{plainnat}
\bibliography{WStochOrderProj}
\end{document}